\documentclass[11.05pt,a4paper]{amsart}
\usepackage{rotating}
\usepackage[all]{xy}

\usepackage[utf8]{inputenc}
\usepackage{amscd,amssymb,amsopn,amsmath,amsthm,graphics,amsfonts,enumerate,verbatim,calc}
\headsep=1cm \numberwithin{equation}{section}
\newtheorem{theorem}{Theorem}[section]
\newtheorem{lemma}[theorem]{Lemma}
\newtheorem{proposition}[theorem]{Proposition}
\newtheorem{corollary}[theorem]{Corollary}
\newtheorem{problem}[theorem]{Problem}
\newtheorem{observation}[theorem]{Observation}
\theoremstyle{definition}

\theoremstyle{remark}
\newtheorem{remark}[theorem]{Remark}\newtheorem{fact}[theorem]{Fact}
\newtheorem{example}[theorem]{Example}
\newtheorem{question}[theorem]{Question}
\newtheorem{conjecture}[theorem]{Conjecture}

\newcommand{\Ass}{\operatorname{Ass}}

\newcommand{\grade}{\operatorname{grade}}

\newcommand{\Min}{\operatorname{Min}}

\newcommand{\Spec}{\operatorname{Spec}}

\newcommand{\Tr}{\operatorname{Tr}}

\newcommand{\Ht}{\operatorname{ht}}
\newcommand{\id}{\operatorname{id}}

\newcommand{\pd}{\operatorname{pd}}
\newcommand{\Gdim}{\operatorname{Gdim}}

\newcommand{\Ext}{\operatorname{Ext}}
\newcommand{\Supp}{\operatorname{Supp}}

\newcommand{\Tor}{\operatorname{Tor}}
\newcommand{\Hom}{\operatorname{Hom}}

\newcommand{\Ann}{\operatorname{Ann}}

\newcommand{\depth}{\operatorname{depth}}

\newcommand{\Coker}{\operatorname{Coker}}

\newcommand{\lo}{\longrightarrow}
\newcommand{\fm}{\frak{m}}
\newcommand{\fp}{\frak{p}}

\newcommand{\CMdef}{\operatorname{CMdef}}

\begin{document}

\author[M. Asgharzadeh  and E. Mahdavi]{Mohsen Asgharzadeh and Elham Mahdavi}

\title[Remarks on modules of finite projective dimension]{Remarks on modules of finite projective dimension}

\address{M. Asgharzadeh }
\email{mohsenasgharzadeh@gmail.com}

\address{E. Mahdavi}
\email{elham.mahdavi.gh@gmail.com}

\subjclass[2020]{13C14; 13D02; 13D09}

\keywords{Dimenstion and grade; Ext-modules; projective dimension;  Gorenstein rings; tensor product. }

\begin{abstract}
	We investigate homological and depth-theoretic properties of finitely generated modules of finite projective dimension over Noetherian local rings. A central theme is the study of criteria for freeness and reflexivity derived from the torsion-freeness or reflexivity of tensor products of the form \( M \otimes_R M \) and \( M \otimes_R M^* \). Under mild homological assumptions, we prove that such properties of these tensor products impose strong structural constraints on \( M \), often forcing it to be free. These results generalize classical theorems of Auslander beyond the regular case.
	
	The second part of the paper is devoted to the dimension and support of Ext-modules, particularly \( \operatorname{Ext}^i_R(M, R) \) for critical values of \( i \), when \( M \) has finite projective dimension. We establish sharp bounds on their Krull dimensions, analyze their behavior for prime and equidimensional modules, and relate these findings to the grade conjecture and other homological conjectures, i.e.,  whenever
	$\operatorname{grade}(M) = \operatorname{ht}(\operatorname{Ann}(M))$ where $\Gdim(M)<\infty$.
	 We consider the problem that asks whenever is \( \pd_R(M \otimes_R N) = 1 \)? Applications include new cases of a question of Jorgensen, which asks whether \( \operatorname{pd}(M) < i \) whenever \( \operatorname{Ext}^i_R(M, M) = 0 \) and \( M \) has finite projective dimension over a complete intersection ring.
	Finally, we examine the projective dimension of prime ideals in rings that fail chain conditions. 
\end{abstract}

\maketitle
 \tableofcontents

\section{Introduction}

Let \( (R, \mathfrak{m}) \) be a Noetherian local ring and let \( M \) be a finitely generated \( R \)-module. Modules of finite projective dimension occupy a central position in commutative algebra, serving as a bridge between homological properties and structural features such as depth, dimension, and freeness. The systematic study of such modules has led to profound connections with the regularity and singularities of the base ring, as exemplified by the Auslander--Buchsbaum--Serre theorem, which characterizes regular local rings as precisely those rings over which every finitely generated module has finite projective dimension.

One of the most influential directions in this area concerns detecting freeness or reflexivity of a module from properties of its tensor powers. In his seminal work, Auslander proved that over a regular local ring, if \( M \otimes_R M \) is torsion-free, then \( M \) is reflexive; and if \( M \otimes_R M^* \) (where \( M^* = \operatorname{Hom}_R(M, R) \)) is reflexive, then \( M \) is free. These results inspired a series of conjectures attributed to Vasconcelos, predicting that freeness of a module should often be detectable from homological or depth properties of its tensor products. Over the years, these questions have motivated extensive research, particularly in understanding the rigidity imposed by tensor-product behavior.

The first main objective of this paper is to extend such rigidity phenomena beyond the setting of regular rings to arbitrary local rings, under the assumption that \( M \) has finite projective dimension. We show that, under mild depth and dimension hypotheses, reflexivity or torsion-freeness of the tensor products \( M \otimes_R M \) or \( M \otimes_R M^* \) forces strong structural consequences for \( M \), often leading to freeness.

\begin{theorem}\label{1.1}
	\begin{enumerate}
		\item
		Let \( P \in \Spec(R) \) with \( \dim (R/P) \leq 2 \). If \( \pd(P) < \infty \) and \( P^{\otimes i} \) is torsion-free for some \( i \ge 2 \), then \( P \) is free.
		
		\item
		Let \( M \) be a module with \( \pd(M) < \infty \). If \( M \otimes_R M^* \) is reflexive, then \( M \) is free.
		
		\item
		Let \( M \) be a module with \( \pd(M) < 2 \). If \( M \otimes_R M \) is reflexive, then \( M \) is reflexive.
		
		\item
		Suppose \( R \) is regular and \( M \otimes_R M \) satisfies \( (S_r) \). Then \( M \) satisfies \( (S_{r+1}) \).
		
		\item
		Suppose \( R \) satisfies \( (S_2) \), \( \pd(M) < \infty \), and both \( M \otimes_R M \) and \( \Hom_R(M,M) \) are reflexive. Then \( M \) is free.
	\end{enumerate}
\end{theorem}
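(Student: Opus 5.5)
The plan is to prove the five items separately. All of them rest on the same tools: the Auslander--Buchsbaum formula, localization at primes of small height, and the depth formula for Tor-independent modules. For modules of finite projective dimension, Auslander's rigidity and the dependency formula are available: if $\pd M<\infty$ and $\Tor_i^R(M,N)=0$ for $i>0$, then $\depth M+\depth N=\depth R+\depth(M\otimes_R N)$.

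For (2), I would argue by induction on $\dim R$, localizing to reduce to the case where $M$ is locally free on the punctured spectrum. Reflexivity of $M\otimes_R M^*$ gives depth at least $\min\{2,\depth R\}$. I would compare $M\otimes M^*$ with $\Hom(M,M)$ via the natural map $M\otimes M^*\to \Hom(M,M)$, an isomorphism on the punctured spectrum. Since both sides are reflexive (or at least satisfy $(S_2)$ relative to $R$), this map is an isomorphism. The trace map $M\otimes M^*\to R$ then splits, so $R$ becomes a direct summand of $\Hom(M,M)$ through a map factoring through the evaluation. Its image is the trace ideal of $M$, and it must equal $R$; hence $M$ has a free summand, and we peel it off and induct on rank.

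Item (5) would follow the same route. The hypothesis that $\Hom(M,M)$ is reflexive together with $(S_2)$ replaces the missing information. Then $M\otimes M\cong$ a reflexive module agreeing on the punctured spectrum forces $\Tor_i(M,M)$ to vanish, and the depth formula gives $2\depth M=\depth R+\depth(M\otimes M)$. Combined with Auslander--Buchsbaum, this yields $\pd M=0$.

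For (1), I localize at primes containing $P$. Since $\dim R/P\le 2$, the relevant primes are few. Torsion-freeness of $P^{\otimes i}$ forces $\Tor_1(P,P^{\otimes (i-1)})$ to be torsion, hence zero after a rigidity argument. The depth formula then gives
\[
i\,\depth P\le (i-1)\depth R+\depth P^{\otimes i},
\]
so $\depth P\ge\depth R$ in the range permitted by $\dim R/P\le 2$. Thus $P$ is free.

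For (3), $\pd M\le1$ and reflexivity of $M\otimes M$ give the vanishing of $\Tor_1(M,M)$ by rigidity. I would then check $M$ is $(S_2)$ at each prime using the depth formula.

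Item (4) is Auslander's classical argument over regular rings, localizing and applying the depth formula:
\[
2\depth M_p=\dim R_p+\depth (M\otimes M)_p\ge \dim R_p+\min\{r,\dim R_p\}.
\]

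The main obstacle throughout is establishing Tor-vanishing (rigidity) from torsion-freeness or reflexivity when $R$ is not regular. I would first try Huneke--Wiegand style second rigidity, which requires finite projective dimension together with constant rank.
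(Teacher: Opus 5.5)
\textbf{Items (2) and (4).} Your plans here essentially match the paper. In (2) the paper also inducts on $\dim R$ to make $M$ locally free on the punctured spectrum. It then invokes [ACS, Lemma 8.1] ($\depth R\ge 2$ and $H^1_{\fm}(M\otimes_R M^*)=0$ force freeness), and that lemma is precisely your comparison of $M\otimes_R M^*$ with $\Hom_R(M,M)$. Your trace-ideal and peeling step is superfluous: once $M\otimes_R M^*\to\Hom_R(M,M)$ is onto, $\mathrm{id}_M=\sum_j f_j(-)m_j$ already exhibits $M$ as a summand of $R^n$. Like the paper, you tacitly need $M$ torsion-free; for $M=R/xR$ with $x$ regular, $M\otimes_R M^*=0$ is reflexive but $M$ is not free. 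Item (4) is the paper's argument, and writing it at each localization, as you do, is the correct form.

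\textbf{Item (1).} No rigidity theorem turns torsion-freeness of $P^{\otimes i}$ into Tor-vanishing over an arbitrary local ring. Huneke--Wiegand's second rigidity is a hypersurface result, and Tor-rigidity genuinely fails for some modules of finite projective dimension. Even granting Tor-vanishing, your depth relation with $\depth P^{\otimes i}\ge 1$ only gives $i\,\pd P\le\depth R-1$, not $\pd P=0$. The missing idea is to localize at $P$ itself. Since $\pd_{R_P}(PR_P)<\infty$, the residue field of $R_P$ has finite projective dimension, so $R_P$ is regular. Auslander's theorem there makes $PR_P$ free, i.e.\ $\Ht P=1$. The global step is then the Auslander--Buchsbaum result [ABU, Corollary 1.2] that a height-one prime of finite projective dimension in a local domain is principal. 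The hypothesis $\dim R/P\le 2$ plays no role.

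\textbf{Item (3).} $\Tor_1^R(M,M)=0$ does not come from rigidity. It comes from an embedding $0\to M\to R^n\to L\to 0$: since $\pd M\le 1$, $\Tor_1^R(M,M)\cong\Tor_2^R(M,L)=0$. After that, your depth-formula check at each prime goes through. The paper uses the same sequence but kills $\Tor_1^R(M,L)\subseteq M\otimes_R M$ by torsion-freeness, gets $\depth L\ge 1$, and hence $\depth M\ge 2$. Both routes need $M$ torsionless.

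\textbf{Item (5).} Your final step is false, and (5) cannot be proved as stated. The relation $2\depth M=\depth R+\depth(M\otimes_R M)$ is compatible with $\depth R=4$, $\pd M=1$, $\depth M=3$, $\depth(M\otimes_R M)=2$, and this really occurs.

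Over a regular local ring of dimension $4$, take $M=\Omega^3_R k\cong\Coker(R\to R^4)$, the map given by $(x_1,\dots,x_4)^{T}$ up to sign. Then:
\begin{itemize}
\item $M$ is reflexive, free on the punctured spectrum, and $\pd M=1$.
\item Tensoring $0\to R\to R^4\to M\to 0$ with $M$ gives $\Tor_1^R(M,M)=(0:_M\fm)=0$, so the depth formula yields $\depth(M\otimes_R M)=2$.
\item Hence $M\otimes_R M$ is reflexive, and $\Hom_R(M,M)$ is reflexive because $M$ is.
\end{itemize}
Yet $M$ is not free. The paper's own argument for (5) also breaks. The Auslander--Bridger sequence with end terms $\Ext^1_R(\Tr M,M)$ and $\Ext^2_R(\Tr M,M)$ compares $M\otimes_R M$ with $\Hom_R(M^*,M)$, not $M\otimes_R M^*$ with $\Hom_R(M,M)$.
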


Concerning the first statement, Vasconcelos \cite{2010} conjectured that there exists an integer \( e \) such that if \( M^{\otimes e} \) is torsion-free over an integral domain, then \( M \) is free.

A second fundamental theme of this work is the detailed analysis of the dimension and support of the modules \( \Ext^i_R(M,R) \) when \( M \) has finite projective dimension. Understanding the size and geometry of these modules is closely tied to the long-standing Grade Conjecture, which predicts that
\[
\operatorname{grade}(M) + \dim(M) = \dim(R).
\]
We prove sharp bounds on the Krull dimension of \( \Ext^i_R(M,R) \), with particular emphasis on extremal homological degrees.

\begin{theorem}\label{3}Suppose  \( N \) is locally free and torsion-free.
	\begin{enumerate}
		\item
		Let \( R \) be a regular ring of dimension \(4\) containing \( \mathbb{Q} \), and let \( M \) satisfy \( (S_3) \).  Then
		\[
		\dim \big(\Ext^{1}_R(M,N)\big) \le 2.
		\]

		\item
		Let \( c := \operatorname{codim}(M) \) and assume \( \pd(M) < \infty \). Then
		\(
		\dim \big(\Ext^{c}_R(M,R)\big) \ge \dim(R) - c.
		\)
		
		\item
		Assume in addition to {\rm (2)} that \( M \) is equidimensional. Then
		\(\dim\!\big(\Ext^{c}_R(M,N)\big) = \dim(M).\) 
		
		\item
		Let \( R \) be \( d \)-dimensional, and let \( M \) be a 1-dimensional module with \( \pd(M) < \infty \). Then
		\(
		\dim(R/\mathfrak{p}) + \operatorname{ht}(\mathfrak{p}) = \dim(R)
		\)
		for every \( \mathfrak{p} \in \Supp\!\big(\Ext^{d-1}_R(M,R)\big) \).
	\end{enumerate}
\end{theorem}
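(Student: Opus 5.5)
Throughout, $(R,\fm)$ is local and $N$ is locally free (so free) and torsion-free. Then $\Ext^i_R(M,N)\cong \Ext^i_R(M,R)^{\oplus n}$ with $n=\operatorname{rank} N$, and every statement reduces to $N=R$. I take $N\neq 0$ in (1) and (3); if $N=0$ the Ext modules vanish and (3) is false for $M\neq 0$.

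\emph{Part (1).} I will show that $\Ext^1_R(M,R)_\fp=0$ for every prime $\fp$ with $\Ht\fp\le 2$, so that every prime in the support has height at least $3$ and hence $\dim\Ext^1_R(M,R)\le 4-3=1\le 2$. Here $R$ is regular of dimension $4$, so it is catenary and $\Ht\fp+\dim R/\fp=4$. Fix $\fp$ with $\Ht\fp\le 2$.
\begin{itemize}
\item If $\fp\notin\Supp M$, then $M_\fp=0$ and there is nothing to prove.
\item Otherwise $(S_3)$ gives $\depth M_\fp\ge\min(3,\dim M_\fp)$. If $M$ has full support, $\dim M_\fp=\Ht\fp$, so $M_\fp$ is maximal Cohen--Macaulay over the regular ring $R_\fp$. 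It is therefore free, and $\Ext^1$ vanishes there.
\end{itemize}
The main obstacle is the case where $\Supp M$ is a proper closed subset, i.e. $\dim M_\fp<\Ht\fp$ for some such $\fp$. There I will use Auslander--Buchsbaum: $\pd M_\fp=\Ht\fp-\depth M_\fp\le \Ht\fp-\dim M_\fp\cdot[\text{by }(S_3)]$. I expect the hypothesis $\mathbb Q\subseteq R$ only matters through regularity and the known structure results in that setting.

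\emph{Part (2).} Let $c=\operatorname{codim}M=\Ht\Ann M$. Choose a minimal prime $\fp$ of $\Ann M$ with $\Ht\fp=c$ and $\dim R/\fp$ as large as possible.
\begin{itemize}
\item Over $R_\fp$, the module $M_\fp$ has finite length and finite projective dimension, and it is nonzero.
\item By Auslander--Buchsbaum, $\pd M_\fp=\depth R_\fp\le c$.
\item The grade of $M_\fp$ is the depth of $R_\fp$ at its maximal ideal, so $\Ext^{\depth R_\fp}_{R_\fp}(M_\fp,R_\fp)\neq0$.
\end{itemize}
I need this degree to equal $c$, which amounts to $R_\fp$ being Cohen--Macaulay. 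This is the delicate point. I will instead argue via the grade conjecture, which the paper places in the finite projective dimension setting: $\grade M=\Ht\Ann M$. Then $\grade M_\fp\ge\grade M=c=\dim R_\fp\ge\depth R_\fp$, forcing equality.

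With $\Ext^c_R(M,R)_\fp\neq0$, I get $\dim\Ext^c_R(M,R)\ge\dim R/\fp$. Finally I need $\dim R/\fp\ge \dim R-c$, using a prime realizing $\dim M+c=\dim R$. This also follows from the grade equality $\grade M+\dim M=\dim R$ stated in the introduction.

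\emph{Part (3).} $\Ext^c_R(M,R)$ is a subquotient of $\Hom(F_c,R)$ and is annihilated by $\Ann M$, so its dimension is at most $\dim M$. For the reverse inequality, equidimensionality means every minimal prime $\fp$ of $M$ satisfies $\dim R/\fp=\dim M$. The local argument of (2) gives $\Ext^c$ nonzero at some minimal prime of height $c$, whose $\dim R/\fp$ equals $\dim M$.

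\emph{Part (4).} Here $\dim M=1$ and $\pd M<\infty$. Let $\fp\in\Supp\Ext^{d-1}_R(M,R)$; then $\fp\in\Supp M$, so $\dim R/\fp\le1$.
\begin{itemize}
\item If $\fp=\fm$, the identity $\Ht\fm+0=d$ is trivial.
\item Otherwise $\dim R/\fp=1$ and $\Ext^{d-1}_{R_\fp}(M_\fp,R_\fp)\ne0$. Since $M_\fp$ has finite length, $d-1\le\pd M_\fp=\depth R_\fp\le\Ht\fp\le d-1$.
\end{itemize}
In the second case all these are equalities, so $\Ht\fp=d-1$ and $\dim R/\fp+\Ht\fp=d$.
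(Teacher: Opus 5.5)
You have a genuine gap in Part (1). The case you call the main obstacle, \(\Supp M\ne\Spec R\), is left open, and no argument can close it. Your Auslander--Buchsbaum computation only yields \(\pd M_\fp=\Ht\fp-\dim M_\fp>0\), and that does not make \(\Ext^1\) vanish. Indeed, suppose \((S_3)\) means \(\depth M_\fp\ge\min(3,\dim M_\fp)\). Take \(M=R/xR\) with \(x\) a regular parameter. Then \(M\) is Cohen--Macaulay, so it satisfies \((S_3)\), yet \(\Ext^1_R(R/xR,R)\cong R/xR\) has dimension \(3\).

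What makes (1) true is the convention the paper uses: it deduces from \((S_3)\) that \(M\) is torsion-free and reflexive, i.e.\ \(\depth M_\fp\ge\min(3,\depth R_\fp)\) for all \(\fp\). Under this convention:
\begin{enumerate}
\item \((S_1)\) gives \(\Ass M\subseteq\Ass R=\{(0)\}\), so a nonzero \(M\) has full support and your obstacle never arises.
\item For every \(\fp\ne\fm\) one has \(\depth M_\fp\ge\min(3,\Ht\fp)=\Ht\fp\), so \(M_\fp\) is maximal Cohen--Macaulay over the regular ring \(R_\fp\), hence free.
\item Therefore \(M\) is locally free on the punctured spectrum, and \(\Ext^1_R(M,N)\) has finite length for every \(N\). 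No reduction to \(N=R\) is needed.
\end{enumerate}
Once repaired this way, your route is much shorter and sharper than the paper's. The paper uses \(\mathbb{Q}\subseteq R\) to get Miller's sequence \(0\to F\to M\to P\to 0\) with \(P\) a perfect height-two prime. It then uses Lemma~\ref{45} to get \(\dim\Ext^1_R(P,R)=2\), and the Auslander--Bridger four-term sequence to pass from \(R\) to \(N\).

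In Part (2), the point you call delicate, that \(R_\fp\) is Cohen--Macaulay, is exactly Fact~\ref{art}, which the paper applies directly. The equality \(\grade M=\Ht(\Ann M)\) for \(\pd M<\infty\) is a theorem deduced from that same fact; the paper's Conjecture~\ref{con} concerns G-dimension. So your detour is valid but avoids nothing. The real problem is your last step. The equality \(\grade M+\dim M=\dim R\) appears in the paper as the Grade Conjecture, not as a result, so you cannot cite it. Nor can it be bypassed: since \(\Supp\Ext^c_R(M,R)\subseteq\Supp M\), the inequality in (2) itself forces \(c+\dim M\ge\dim R\), i.e.\ the grade equality for \(M\). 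The paper's proof passes over the same point by simply writing \(\dim(R/\fp)=\dim R-c\) for a height-\(c\) prime. That needs \(\Ht\fp+\dim R/\fp=\dim R\) (true, e.g., for \(R\) catenary and equidimensional) and is unjustified in general. Under such a hypothesis, your choice of \(\fp\) maximizing \(\dim R/\fp\) finishes the argument.

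Your Part (3) is more robust than the paper's. You combine \(\dim\Ext^c_R(M,R)\le\dim M\) with a height-\(c\) minimal prime \(\fp\), where \(\dim R/\fp=\dim M\) by equidimensionality. You therefore never need \(\dim R-c=\dim M\), which the paper again obtains from a catenarity-type claim. Part (4) is the paper's argument, and your chain \(d-1\le\pd M_\fp=\depth R_\fp\le\Ht\fp\le d-1\) does not even need Fact~\ref{art}.
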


These results unify and extend earlier partial theorems and provide new insight into the homological behavior of modules of finite projective dimension. As applications, we analyze the structure of \( \Ext \)-modules for prime and equidimensional modules, showing in several cases that \( \Ext^i_R(M,R) \) has finite length. This further highlights the close connection between depth conditions, tensor product rigidity, and homological inequalities.

In Section~4 we collect preliminary inequalities relating depth and dimension. In particular, we study the Cohen--Macaulay defect
\(
\CMdef(M) := \dim(M) - \depth(M).
\)
The main result of that section is the following.

\begin{theorem}\label{1.2}
	\begin{enumerate}
		\item
		If \( \pd_R(M) < \infty \), then \( \CMdef(M) \ge \CMdef(R) \).
		
		\item
		Let \( R \) be complete and \( P \in \Spec(R) \) with finite projective dimension. Then
		\(
		\CMdef(R_P) \le \CMdef(R/P).
		\)
		In particular, if \( R/P \) is Cohen--Macaulay, then so is \( R_P \).
		
		\item
		Let \( P \in \Spec(R) \) have finite projective dimension. If \( \operatorname{ht}(P) = \dim(R)-1 \), then \( R \) is Cohen--Macaulay.
		
		\item
		Let \( R \) be non-Cohen--Macaulay and let \( P \in \Spec(R) \) have finite projective dimension. If \( \operatorname{ht}(P) = \dim(R)-2 \), then \( R/P \) is perfect. Moreover, both \( R \) and \( R/P \) are almost Cohen--Macaulay.
	\end{enumerate}
\end{theorem}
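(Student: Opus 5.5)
Part (1) comes first, since the other three parts lean on it. Let $d=\dim R$ and let $p=\pd_R(M)<\infty$. By Auslander--Buchsbaum, $\depth M = \depth R - p \le \depth R$. We also always have $\dim M\le \dim R$. These two facts alone are not enough, because the inequality needed, $\dim M-\depth M\ge \dim R-\depth R$, compares dimensions the wrong way. I would use the intersection theorem (Peskine--Szpiro, Roberts) in the form $\pd_R M\ge \dim R-\dim M$; equivalently, $\dim R-\dim M\le p$. Then
\[
\CMdef(M)=\dim M-\depth R+p\ \ge\ \dim M-\depth R+(\dim R-\dim M)=\CMdef(R).
\]
This is the key step, and I expect invoking the New Intersection Theorem to be the only nontrivial ingredient. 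For $M=R/P$ one can also argue directly: $\grade P\le \Ht P$ and $\dim R/P+\Ht P\le d$.

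For (2), set $M=R/P$ with $\pd(R/P)<\infty$ and localize at $P$. Then $\pd_{R_P}(k(P))<\infty$, so $R_P$ is regular and $\CMdef(R_P)=0$. The inequality $\CMdef(R_P)\le \CMdef(R/P)$ is then trivial, since $\CMdef(R/P)\ge 0$. This argument does not use completeness; perhaps the authors intended a different, sharper reading, but the statement as given follows from this.

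For (3), apply (1) to $M=R/P$. Since $\dim R/P\le d-\Ht P=1$, we get $\CMdef(R/P)\le 1$. More precisely, by Auslander--Buchsbaum, $\depth R/P=\depth R-\pd(R/P)$. Finite projective dimension gives $\pd(R/P)\ge \grade P$, and by the intersection theorem $\pd(R/P)\ge d-\dim R/P\ge d-1$. Hence $\depth R\ge d-1+\depth(R/P)$. If $\depth R/P\ge 1$, then $\depth R\ge d$ and $R$ is Cohen--Macaulay. If $\depth R/P=0$, then $P=\mathfrak m$ (a prime with $\depth R/P=0$ is the maximal ideal, as $R/P$ is a domain). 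Then $\Ht P=d$, contradicting $\Ht P=d-1$. So $R$ is Cohen--Macaulay.

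For (4), the same bookkeeping applies. Now $\dim R/P\le 2$, $\pd(R/P)\ge d-2$, and $\depth R/P\ge 1$ since $P\ne\mathfrak m$. Therefore $\depth R\ge d-2+\depth R/P\ge d-1$. Non-Cohen--Macaulayness of $R$ forces $\depth R=d-1$ and $\depth R/P=1$, with equality everywhere, so $\pd(R/P)=d-2$. Then $\dim R/P=2$; otherwise $\dim R/P\le 1$ would give $\Ht P\ge d-1$. From $\grade P\le \Ht P=d-2$, I need to show $\grade P=d-2$ in order to get perfection. I would do this by localizing at $P$: there $R_P$ is regular, and depth considerations along a maximal regular sequence in $P$ should give it, perhaps via $\grade P=\depth R-\sup\{\ldots\}$ or via $\Ext^{d-2}(R/P,R)\neq 0$ together with $\grade P=\min\{i:\Ext^i\ne0\}$. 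This equality is the main obstacle. Once it holds, $R/P$ is perfect, and the depth counts $\depth R=\dim R-1$ and $\depth R/P=\dim R/P-1$ show that both $R$ and $R/P$ are almost Cohen--Macaulay.
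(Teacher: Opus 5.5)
\textbf{Parts (1)--(3).} These are correct, and for (1) your route is stronger than the paper's. The Intersection Theorem in the form $\pd_R M+\dim M\ge\dim R$ proves (1) for every $M$ of finite projective dimension. The paper's Proposition~\ref{pdef} instead goes through $\pd M\ge\grade M$, so it only covers $M$ satisfying the grade conjecture. For (2), regularity of $R_P$ makes the inequality trivial and completeness is not needed; the paper instead chains $\CMdef(R_P)\le\CMdef(R)\le\CMdef(R/P)$.

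Two asides need fixing.
\begin{enumerate}
\item The ``direct'' argument for $M=R/P$ in (1) does not work. The inequalities $\grade P\le\Ht P$ and $\dim R/P+\Ht P\le d$ only give $\grade P+\dim R/P\le d$, which is the wrong direction.
\item In (4), $\dim R/P\le 1$ does not force $\Ht P\ge d-1$, since no catenarity is available. Use your own bound instead: $\dim R/P\le1$ would give $\pd(R/P)\ge d-\dim R/P\ge d-1$, contradicting $\pd(R/P)=d-2$.
\end{enumerate}

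\textbf{The gap: perfection of $R/P$ in (4).} You correctly get $\pd(R/P)=d-2$, $\depth R=d-1$ and $\depth R/P=1$. Perfection, however, needs $\grade P=d-2$, and you leave this open. Your suggestion via $\Ext^{d-2}_R(R/P,R)\neq0$ only yields $\grade P\le d-2$, which you already had from $\grade P\le \Ht P$. The Intersection Theorem cannot help here either, because it bounds $\pd$ from below, not $\grade$.

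The missing ingredient is the lower bound $\grade P\ge\Ht P$. This is exactly Fact~\ref{beder}, which the paper uses in Proposition~\ref{10}: whenever $\pd(R/P)<\infty$, one has $\grade(R/P)=\Ht P$. It holds without completeness. By Fact~\ref{grs}, $\grade P=\inf\{\depth R_Q : Q\supseteq P\}$. For each prime $Q\supseteq P$, use the Auslander--Buchsbaum formula over $R_Q$, the fact that projective dimension does not increase under localization, and the regularity of $R_P$:
\[
\depth R_Q\ \ge\ \pd_{R_Q}(R_Q/PR_Q)\ \ge\ \pd_{R_P}(R_P/PR_P)\ =\ \dim R_P\ =\ \Ht P .
\]
Hence $\grade P=\Ht P=d-2=\pd(R/P)$, so $R/P$ is perfect. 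The paper also uses this fact in (3), where your Intersection-Theorem bound lets you bypass it; in (4) there is no way around it.
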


We also recover a classical result from EGA~IV by Grothendieck and Dieudonné, showing via elementary methods that
\( ``
\dim(M_{\mathfrak{p}}) - \depth(M_{\mathfrak{p}})
\le
\dim(M) - \depth(M)''.
\)
In Section 5, we address a problem posed by Jorgensen \cite[Question 2.7]{Jo}:

\begin{question}
	Suppose \( \Ext^i_R(M,M) = 0 \), \( \pd(M) < \infty \), and \( R \) is a complete intersection. Must one have \( \pd(M) < i \)?
\end{question}

We provide several affirmative cases.

\begin{theorem}\label{1.4}
	\begin{enumerate}
		\item
		Let \( R \) be a complete intersection and let \( I \) be a reflexive ideal. If \( \Ext^2_R(I,I) = 0 \), then \( I \) is free.
		
		\item
		Let \( R \) be a \( d \)-dimensional Gorenstein ring with \( d>1 \), \( \pd(M) < \infty \), and \( \Ext^{d-1}_R(M,M)=0 \). If \( M \) is quasi-Buchsbaum, then \( \pd(M) \le d-2 \).
		
		\item
		Let \( R \) be Cohen--Macaulay, \( I \subset R \) a radical ideal with \( \pd(I) < \infty \), and assume \( R/I \) is Cohen--Macaulay. If \( \Ext^i_R(I,I) = 0 \), then \( \pd(I) < i \).
	\end{enumerate}
\end{theorem}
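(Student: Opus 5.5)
We treat the three parts separately. In each we turn the vanishing of $\Ext^i(M,M)$ into depth information via the Auslander--Buchsbaum formula. We also use the standard fact that if $\pd(M)=p<\infty$ then $\Ext^p_R(M,N)\neq 0$ for every nonzero finitely generated $N$. This holds because $\Ext^p_R(M,-)$ is right exact, so $\Ext^p_R(M,N)\cong \Ext^p_R(M,R)\otimes_R N$, and $\Ext^p_R(M,R)\neq 0$. Nakayama's lemma then gives nonvanishing.

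\emph{Part (1).} Since $R$ is a complete intersection, it is Gorenstein. Since $I$ is reflexive, it is a second syzygy, so $\depth I_P\ge \min\{2,\depth R_P\}$ at every prime. If $\pd(I)<\infty$, Auslander--Buchsbaum gives $\pd(I)\le \dim R-2$. The key step is to show that $\pd(I)\le 1$, and then use $\Ext^2(I,I)=0$ to reduce further.
\begin{itemize}
\item For finite projective dimension, we would like to use Jorgensen-type rigidity results over complete intersections. Over a complete intersection, the vanishing of $\Ext^{2}(I,I)$ should force $\pd(I)<\infty$, by the theory of support varieties: $V(I)\cap V(I)=\emptyset$ implies $V(I)=\emptyset$. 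This step requires $\Ext^{n}(I,I)=0$ for all $n\gg0$, not just $n=2$. So we must instead assume $\pd(I)<\infty$ is implicit, or reduce via $\Ext^2(I,I)\cong\Ext^3(R/I,I)$.
\item Once $\pd(I)=p<\infty$, the fact above gives $\Ext^p(I,I)\ne 0$. Hence $p\ne 2$. We would then like to rule out $p\ge 3$ by showing that the vanishing propagates upward. By induction on $\dim R$, localizing at primes on the punctured spectrum, we may assume $I$ is free there.
\item $I$ is an ideal of rank one that is locally free on the punctured spectrum and reflexive over a UFD-like (locally factorial) complete intersection. On the punctured spectrum it is then an invertible reflexive module, and we would use Grothendieck's parafactoriality to get freeness when $\dim\ge4$.
\end{itemize}
We expect this to be the main obstacle. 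The low-dimensional cases $\dim R\le 3$ would be handled directly by $\pd(I)\le\dim R-2\le 1$ together with the observation that $p\neq$ the vanishing index.

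\emph{Part (2).} Let $p=\pd(M)$, so $p\le d$. It suffices to exclude $p=d-1$ and $p=d$.
\begin{itemize}
\item If $p=d-1$, the fact above gives $\Ext^{d-1}(M,M)\ne0$, which is a contradiction.
\item If $p=d$, then $\depth M=0$. Quasi-Buchsbaumness gives $\mathfrak m H^0_{\mathfrak m}(M)=0$, so $k$ is a direct summand-like submodule of $M$. Consider the Ext sequence of $0\to H^0_{\mathfrak m}(M)\to M\to M'\to0$. Via Gorenstein local duality, $\Ext^{d-1}(M,M)$ is related to $\Ext^{d-1}$ into a $k$-vector space, and $\Ext^{d}(M,k)\ne0$ would propagate to a nonzero class in $\Ext^{d-1}(M,M)$. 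We would use $\Ext^{d}(M,H^0_{\mathfrak m}(M))\ne0$ and check that the connecting map kills it.
\end{itemize}
This step requires care. If needed, we would use that $d>1$ gives a quasi-Buchsbaum parameter element $x$ with $xH^0_{\mathfrak m}(M)=0$, and then pass to $M/xM$.

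\emph{Part (3).} Since $R/I$ is Cohen--Macaulay of finite projective dimension, $\pd(R/I)=\Ht I=h$ and $\pd(I)=h-1$. Hence $\Ext^j(I,-)=\Ext^{j+1}(R/I,-)$ for $j\ge1$. Because $I$ is radical and $R/I$ is Cohen--Macaulay, $\Ext^{h}(R/I,R)=\omega_{R/I}$ (twisted) is faithful over $R/I$. We then compute
\[
\Ext^{h-1}(I,I)\cong\Ext^{h}(R/I,R)\otimes I \twoheadrightarrow \omega\otimes I/I^2 .
\]
Since $I/I^2$ is generically free over the reduced ring $R/I$, this quotient is nonzero. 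So $\Ext^{h-1}(I,I)\ne0$ (as in the general fact), and $\Ext^i(I,I)=0$ for $i>h-1$. Hence vanishing forces $i\ne h-1$. For $i<h-1$ we would show that $\Ext^i(I,I)\cong\Ext^{i+1}(R/I,I)$ is nonzero via the grade of $I$ on $I$. This is the remaining point to verify.
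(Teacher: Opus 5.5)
Each of the three parts has a genuine gap, and in each case it sits at the step that carries the real content.

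In (1), you are right to take $\pd(I)<\infty$ as given; the paper does the same. The problem is your base case $\dim R=3$, which fails. There Auslander--Buchsbaum already gives $\pd(I)\le1$, so $\Ext^2_R(I,I)=0$ holds automatically and ``$p\neq 2$'' excludes nothing; $p=1$ is not ruled out. Your induction for $\dim R\ge4$ needs freeness at height-$3$ primes, so everything rests on this case. Grothendieck's parafactoriality is unavailable in dimension $3$, and complete intersections need not be locally factorial (e.g.\ $k[[x,y,z,w]]/(xy-zw)$). What is missing is $\Ext^1_R(I,I)=0$. The paper first gets $\pd(I)\le1$ from Jorgensen's result and makes $I$ locally free on the punctured spectrum by induction. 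It then uses the embedding $\Ext^1_R(I,I)\hookrightarrow H^2_{\fm}(\Hom_R(I,I))$, available since $\depth I\ge2$ and $I$ is locally free off $\fm$. Next, $R\to\Hom_R(I,I)$ is injective with finite-length cokernel, so $H^2_{\fm}(\Hom_R(I,I))\cong H^2_{\fm}(R)=0$ once $\depth R\ge3$. Hence $\Ext^1_R(I,I)=0$, which together with $\pd(I)\le1$ forces $I$ free. This inductive step works uniformly for all $d\ge3$ and needs no parafactoriality.

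In (2), the case $\pd(M)=d$ is the whole content. Applying $\Ext_R(M,-)$ to $0\to\Gamma_{\fm}(M)\to M\to\overline M\to0$ only yields an injection $\Ext^{d-1}_R(M,\overline M)\hookrightarrow\Ext^d_R(M,\Gamma_{\fm}(M))$, which contradicts nothing. The paper applies $\Hom_R(-,M)$ instead. Since $R$ is Gorenstein, $\id M=d$, so Ischebeck's formula and $\depth\overline M\ge1$ give $\Ext^d_R(\overline M,M)=0$. Hence $\Ext^{d-1}_R(M,M)$ surjects onto $\Ext^{d-1}_R(\Gamma_{\fm}(M),M)\cong\Ext^{d-1}_R(k,M)^{\oplus n}$ with $n\ge1$. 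This is nonzero by Roberts (Fact~\ref{rob}), since $d-1\in[\depth M,\id M]=[0,d]$.

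In (3), you prove only the easy half: $\Ext^{h-1}_R(I,I)\neq0$ is just $\Ext^{\pd}\neq0$. The nonvanishing of $\Ext^i_R(I,I)$ for $i<h-1$ is left open, and the grade of $I$ on $I$ cannot settle it, since it only locates the first nonvanishing index, not the ones between it and $h-1$. The missing step, which is where radicality is used, is to localize at a minimal prime $\fp$ of $I$ with $\Ht\fp=h$. Then $I_{\fp}=\fp R_{\fp}$, and $R_\fp$ is regular because $\pd_{R_\fp}k(\fp)<\infty$. For $i\ge1$, $\Ext^i_{R_\fp}(\fp R_\fp,\fp R_\fp)\cong\Ext^{i+1}_{R_\fp}(k(\fp),\fp R_\fp)$ is a Bass number of $\fp R_\fp$ in degree $i+1\in[1,h]=[\depth,\id]$, hence nonzero (Lemma~\ref{118}). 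Staying closer to your own set-up also works: $\grade I=h$ gives $\Ext^i_R(I,I)\cong\Ext^i_R(R/I,R/I)$ for $1\le i\le h-2$, and localizing at $\fp$ gives $\Ext^i_{R_\fp}(k(\fp),k(\fp))\neq0$. Either way, $\Ext^i_R(I,I)=0$ forces $i\ge h>\pd(I)$.
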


Our arguments combine classical homological tools with more recent techniques involving local cohomology and depth formulas.

We further investigate Gorenstein dimension (\( \operatorname{Gdim} \)), the Gorenstein analogue of projective dimension. Motivated by properties of grade and support, we formulate a conjecture stating that if \( \operatorname{Gdim}(M) < \infty \), then
\[
\operatorname{grade}(M) = \operatorname{ht}(\operatorname{Ann}(M)).
\]
We verify this conjecture for symbolic and ordinary powers of prime ideals and explore its connections with other homological conjectures. In the same spirit, we settle a question implicit in our previous work \cite{elham} concerning the integral closure \( \overline{R} \): if \( R \) is analytically unramified and \( \pd_R(\overline{R}) < \infty \), then \( R \) is normal; and if \( \operatorname{Gdim}_R(\overline{R}) < \infty \), then \( R \) is quasi-normal. As a byproduct, we close Section 6 with Remark~\ref{disc:hypersurface-question}.

Section 7 is devoted to another direction pursued here concerns the projective dimension of tensor products \( M \otimes_R N \). A question raised in the literature asks whether there exist nonfree torsion-free modules \( M \) and \( N \) over a local ring \( R \) of depth two such that \( \pd_R(M \otimes_R N) = 1 \) (see \cite[Question 3.3]{celp}). We provide positive answers under suitable structural assumptions on \( R \) and the modules. In particular, we construct explicit examples over certain hypersurfaces and study the role of symmetry in the presentation matrix of \( M \). Under additional bimodule structure, we also obtain a negative result.

Finally, we study the interplay between projective dimension and chain conditions in the prime spectrum. We show that in a non-catenary three-dimensional ring, any nonzero prime ideal appearing in a bad saturated chain must have infinite projective dimension.

\begin{theorem}\label{1.8}
	\begin{enumerate}
		\item
		Let \( R \) be a three-dimensional non-catenary ring. Then any nonzero prime ideal appearing in a bad saturated chain has infinite projective dimension.
		
		\item
		Let \( R \) be a domain of dimension \( d \) with a strict saturated chain of prime ideals
		\[
		0=\fp_0 \subsetneqq \cdots \subsetneqq \fp_{d-2}\subsetneqq\fm.
		\]
		Then \( \pd(\fp_{d-2}) = \infty \).
	\end{enumerate}
\end{theorem}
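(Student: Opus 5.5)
The plan is to derive a contradiction from Theorem~\ref{1.2}(1) applied to the cyclic module $R/P$. The first step is to transfer finiteness of projective dimension from the ideal to the quotient. If $\pd_R(P)<\infty$, the exact sequence $0\to P\to R\to R/P\to 0$ gives $\pd_R(R/P)\le \pd_R(P)+1<\infty$. Theorem~\ref{1.2}(1) then yields
\[
\CMdef(R/P)\ \ge\ \CMdef(R).
\]
So it suffices to show that, in each situation of the statement, $R/P$ is a one-dimensional local domain. Such a ring is Cohen--Macaulay: its depth is positive because it is a domain, and its depth is at most its dimension. Hence $\CMdef(R/P)=0$, which forces $\CMdef(R)=0$, so $R$ is Cohen--Macaulay. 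A Cohen--Macaulay local ring is (universally) catenary, and every saturated chain from a minimal prime to $\fm$ has length $\dim R$.

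The dimension computation comes from saturation. Let $P\subsetneqq\fm$ be a step in a saturated chain. Then no prime lies strictly between $P$ and $\fm$. Hence the local domain $R/P$ has exactly two primes, $0$ and $\fm/P$, so $\dim(R/P)=1$.

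For part (2), take $P=\fp_{d-2}$, which sits immediately below $\fm$ in the given saturated chain, so $\dim R/P=1$. If $\pd(P)<\infty$, the argument above makes $R$ a Cohen--Macaulay domain. In such a ring every maximal saturated chain $0\subsetneqq\cdots\subsetneqq\fm$ has length $d$. But the displayed chain is saturated, runs from $0$ to $\fm$, and has length $d-1$, a contradiction. Hence $\pd(\fp_{d-2})=\infty$.

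For part (1), let $R$ be three-dimensional and non-catenary, and let $P\neq 0$ lie in a bad saturated chain, i.e.\ a saturated chain from a minimal prime $\fp_0$ to $\fm$ of length $<3$. There are two cases.
\begin{enumerate}
\item If $P=\fm$ and $\pd(\fm)<\infty$, then $k=R/\fm$ has finite projective dimension. By Auslander--Buchsbaum--Serre $R$ is regular, hence catenary, a contradiction.
\item If $P\neq\fm$, the chain has length $\le 2$, so it is $\fp_0\subsetneqq\fm$ or $\fp_0\subsetneqq \fp_1\subsetneqq\fm$. Any non-maximal member $P$ is then the term immediately preceding $\fm$; this includes $P=\fp_0$, a nonzero minimal prime, when the chain has length one. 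Thus $\dim R/P=1$. The first paragraph makes $R$ Cohen--Macaulay, hence catenary, contradicting the hypothesis. (Alternatively, Cohen--Macaulayness forces every saturated chain from a minimal prime to $\fm$ to have length $3$, contradicting badness.)
\end{enumerate}

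The main point needing care is the precise meaning of ``bad saturated chain'', i.e.\ a saturated chain from a minimal prime to $\fm$ of length less than $\dim R$. I would also check that the case $P=\fm$ is either excluded by that definition or handled by the regularity argument as above. Beyond that, the only nontrivial inputs are Theorem~\ref{1.2}(1) and the standard fact that Cohen--Macaulay local rings are catenary and equidimensional, with all maximal saturated chains of length $\dim R$.
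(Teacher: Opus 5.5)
Your proof is correct, but it takes a genuinely different route from the paper's.

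\textbf{How the paper argues.} For (1) the paper works with domains, which suffices by Fact~\ref{Auslander}, and its argument is elementary. If $\pd(P)<\infty$, saturation of $0\subsetneqq P$ gives $\Ht(P)=1$. The Auslander--Buchsbaum result on height-one primes of finite projective dimension then makes $P=(x)$ principal. Saturation of $P\subsetneqq\fm$ makes $\fm$ minimal over $(x,y)$ for any $y\in\fm\setminus P$, so Krull's height theorem gives $\dim R\le 2$. For (2) the paper reduces to $d\ge 4$ and splits on $\Ht(\fp_{d-2})\in\{d-2,d-1\}$. One case uses regularity, hence catenarity, of $R_{\fp_{d-2}}$. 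The other uses Proposition~\ref{10}, the Peskine--Szpiro grade equality for perfect modules, and Fact~\ref{beder}.

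\textbf{How your route compares.} You prove a single lemma: a prime $P$ with $\dim(R/P)\le 1$ and $\pd(R/P)<\infty$ forces $R$ to be Cohen--Macaulay. Both parts then follow at once, because Cohen--Macaulay local rings are catenary and equidimensional. This is shorter, uniform in $d$, and proves a more general statement. It also avoids Proposition~\ref{10}, whose proof uses $\dim(R/P)=\dim R-\Ht(P)$; that equality is unavailable here, since $\dim R/\fp_{d-2}=1$. The cost is that your key input is the Intersection Theorem, whereas the paper's argument for (1) uses only classical facts. For (2) both routes in fact rest on the Intersection Theorem, because the grade equality for perfect modules is itself a consequence of it.

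\textbf{Two points to tighten.}
\begin{enumerate}
\item The inequality $\CMdef(R/P)\ge\CMdef(R)$ is proved in the paper (Proposition~\ref{pdef}) only for modules satisfying the grade conjecture. For $M=R/P$ of finite projective dimension one has $\grade(R/P)=\Ht(P)$ by Proposition~\ref{99a}. So that hypothesis reads $\Ht(P)+\dim(R/P)=\dim R$, which is exactly what fails for your primes; for instance $1+1<3$ for a bad chain $0\subsetneqq\fp_1\subsetneqq\fm$ in (1). You should instead cite the Intersection Theorem of Peskine--Szpiro and Roberts: $\dim R\le\pd M+\dim M$ whenever $\pd M<\infty$. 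Combined with Auslander--Buchsbaum, this is precisely $\CMdef(M)\ge\CMdef(R)$ with no further hypothesis.
\item In (1), if you do not assume $R$ is a domain, your claim that every non-maximal member of a bad chain sits directly below $\fm$ is false. Take a chain $\fp_0\subsetneqq\fp_1\subsetneqq\fm$ with $\fp_0\neq 0$: then $\dim R/\fp_0$ may be $2$ or $3$, and your lemma does not apply. This case is vacuous by Fact~\ref{Auslander}: $\pd(\fp_0)<\infty$ would make $R$ a domain, forcing $\fp_0=0$.
\end{enumerate}
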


These results complement and extend earlier work of Seydi \cite[Proposition II.3]{S} on chain conditions in UFDs, illustrating how homological methods can detect subtle geometric pathologies in the spectrum.
Motivated by Theorem~\ref{1.2}(2), which relates the Cohen--Macaulayness of \( R/P \) to that of \( R_P \) when \( \pd(R/P) < \infty \), we construct a ring \( R \) such that for every \( \mathfrak{p} \in \Spec(R)\setminus\{0,\mathfrak{m}\} \) one has \( \pd(R/\mathfrak{p}) = \infty \) but \( \pd(R_{\mathfrak{p}}) < \infty \).

Throughout, all rings are commutative, Noetherian, and local with identity; all modules are finitely generated unless stated otherwise. We follow the notation and conventions of \cite{BH,mat} for homological and commutative algebra.

\medskip
\section{Homology of Tensor Products}

Throughout this paper, \( (R,\fm,k) \) denotes a commutative Noetherian local ring, and all modules are finitely generated unless otherwise specified. The notation \( \pd_R(-) \) (resp. \( \id_R(-) \)) denotes projective (resp. injective) dimension, and \( \Gdim_R(-) \) denotes Gorenstein dimension.

We begin with the following conjecture.

\begin{conjecture}[Vasconcelos {\cite{2010}}]
	There exists an integer \( e \) such that if \( M^{\otimes e} \) is torsion-free over an integral domain, then \( M \) is free.
\end{conjecture}

We will repeatedly use the following well-known fact.

\begin{fact}\label{Auslander}
	The ring \( R \) is an integral domain if it possesses a prime ideal of finite projective dimension.
\end{fact}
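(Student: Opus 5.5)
The statement to prove is Fact~\ref{Auslander}: if some prime $P$ of the local ring $R$ has $\pd_R(P)<\infty$ (equivalently $\pd_R(R/P)<\infty$), then $R$ is a domain. The plan is to show that $R$ is reduced with a unique minimal prime. The tool throughout is the Auslander zero-divisor theorem, in the form due to Peskine--Szpiro and Roberts: if $N\neq 0$ has finite projective dimension, then every $R$-regular element is $N$-regular; in particular $\grade(\Ann N)>0$ when $\depth$-considerations allow.

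The first step treats the case $P=0$. Then $R=R/P$ is a domain and there is nothing to show. So assume $P\neq 0$.

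The second step uses Auslander's classical lemma. An ideal $I$ with $\pd_R(I)<\infty$ contains a nonzerodivisor, unless $I=0$. One proof goes through the Euler characteristic, or the rank of a finite free resolution of $R/I$, together with the McCoy / Auslander--Buchsbaum theorem: a module with a finite free resolution whose Euler characteristic is zero has an annihilator containing a nonzerodivisor. Applying this to $I=P$, the prime $P$ contains an $R$-regular element $x$.

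The third step localizes. Localization at $P$ gives $\pd_{R_P}(R_P/PR_P)<\infty$, so $R_P$ is regular by Auslander--Buchsbaum--Serre, and in particular $R_P$ is a domain. Hence exactly one minimal prime $\fp_0$ of $R$ lies inside $P$, and $R_{\fp_0}$ is a field.

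The fourth step, which I expect to be the main obstacle, is to show that $\fp_0$ is the only associated prime of $R$ and that the nilradical is zero. I plan to use the finite free resolution $F_\bullet\to R/P$ and compute its Euler characteristic at each minimal prime $\fq$.
\begin{itemize}
\item At $\fq=\fp_0$, we have $P\not\subseteq\fp_0$ because $P\neq 0$ contains a nonzerodivisor. Hence $(R/P)_{\fp_0}=0$ and the rank alternating sum is $0$.
\item The same holds at every $\fq\in\Ass R$, since $P$ contains the nonzerodivisor $x$ and so $(R/P)_\fq=0$.
\end{itemize}
This alone does not separate the components. Instead I would use the ideal $P$ itself. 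The module $P$ has finite free resolution, and $P_\fq$ is free of constant rank $r$ at every associated prime $\fq$. At $\fp_0$ we have $P_{\fp_0}=R_{\fp_0}$, so $r=1$ as a length-free rank. Combining the constancy of rank with $P\subseteq R$ should force $P\otimes R_\fq\cong R_\fq$ for every associated prime.

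Finally I would show that $P\cong$ (something locally principal at $\Ass R$). Together with $R_P$ being a domain and $P$ lying in every embedded or minimal component after passing through a regular sequence, this should give $\Ass R=\{\fp_0\}$ and $R_{\fp_0}$ a field, i.e.\ $R\hookrightarrow R_{\fp_0}$ is a domain. If this direct route stalls, I would fall back on citing Auslander's original theorem and the Peskine--Szpiro / Roberts New Intersection Theorem, which gives the statement for any local ring.
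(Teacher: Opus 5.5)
Your Steps 1--3 are fine: the rank argument shows $P=0$ or $P$ contains a nonzerodivisor, and $R_P$ is regular, hence a domain. The gap is Step 4.

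What you must show is that $R\to R_P$ is injective, i.e.\ that every element of $R\setminus P$ is $R$-regular. Equivalently, every $\mathfrak{q}\in\Ass R$ must be contained in $P$. Your rank computations cannot detect this. Once $P$ contains a nonzerodivisor, $(R/P)_{\mathfrak{q}}=0$ and $P_{\mathfrak{q}}=R_{\mathfrak{q}}$ hold automatically for every $\mathfrak{q}\in\Ass R$. So the ranks $0$ and $1$ say nothing about where the associated primes of $R$ sit relative to $P$. The phrases ``should force'' and ``after passing through a regular sequence'' do not supply an argument. Reducing modulo a regular element $x\in P$ also does not preserve finiteness of $\pd(R/P)$: for $P=(t)$ and $x=t^2$ in $k[[t]]$, $\pd(k)=1$ over $k[[t]]$ but $\pd(k)=\infty$ over $k[[t]]/(t^2)$. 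Finally, falling back on ``Auslander's original theorem'' is circular.

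The missing ingredient is Auslander's zero-divisor theorem, which you stated backwards. The correct statement is: if $N\neq 0$, $\pd_R N<\infty$ and $y$ is $N$-regular, then $y$ is $R$-regular. That is, the zero-divisors of $R$ are contained in those of $N$. Your version, ``every $R$-regular element is $N$-regular'', already fails for $N=R/xR$ with $x$ a nonzerodivisor. The correct version is now a theorem, a consequence of the New Intersection Theorem (Peskine--Szpiro, Hochster, Roberts).

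Apply it to $N=R/P$. Since $\Ass_R(R/P)=\{P\}$, every $s\in R\setminus P$ is $R/P$-regular, hence $R$-regular, so $R\hookrightarrow R_P$. As $\pd_{R_P}(R_P/PR_P)<\infty$, $R_P$ is regular, hence a domain, and therefore so is $R$. This is the standard argument behind the fact, which the paper records without proof. With it, your Step 2 is not needed.
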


\begin{proposition}
	Let \( P \in \Spec(R) \) with \( \dim(R/P) \leq 2 \). If \( \pd(P) < \infty \) and \( P^{\otimes i} \) is torsion-free for some \( i \ge 2 \), then \( P \) is free.
\end{proposition}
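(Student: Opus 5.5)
The plan is to show that $P$ is principal. By Fact~\ref{Auslander}, $\pd(P)<\infty$ forces $R$ to be an integral domain. Then torsion-freeness has its usual meaning (no nonzero element killed by a nonzero element of $R$), and a nonzero principal ideal $P=(x)$ is isomorphic to $R$, hence free. If $P=0$ there is nothing to prove. So assume $P\neq 0$ and, for a contradiction, that $\mu(P)\ge 2$. Choose $x,y\in P$ forming part of a minimal generating set, with $x\neq 0$.

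The key step is to exhibit a nonzero torsion element in $P^{\otimes i}$. Put
\[
\xi:=(x\otimes y-y\otimes x)\otimes x^{\otimes (i-2)}\in P^{\otimes i}.
\]
Scalars move freely across tensor factors, so
\[
x(x\otimes y)=x\otimes xy=y(x\otimes x)=xy\otimes x=x(y\otimes x)
\]
in $P\otimes_R P$. Hence $x\xi=0$, and since $x\neq 0$ in the domain $R$, $\xi$ is a torsion element.

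It remains to check that $\xi\neq 0$; this is the only point needing care. I would use the natural surjection
\[
P^{\otimes i}\twoheadrightarrow \textstyle\bigwedge^2 P\otimes_R P^{\otimes(i-2)}
\]
and then reduce modulo $\fm$. The reduction is compatible with exterior and tensor powers: it yields $\bigwedge^2_k(P/\fm P)\otimes_k (P/\fm P)^{\otimes (i-2)}$. The image of $\xi$ there is $2\,\bar x\wedge\bar y\otimes\bar x^{\otimes(i-2)}$.

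The factor $2$ must be avoided, since $2$ might lie in $\fm$. So I would instead map $\xi$ directly to $(P/\fm P)^{\otimes i}$. There its image is
\[
(\bar x\otimes\bar y-\bar y\otimes\bar x)\otimes\bar x^{\otimes(i-2)},
\]
which is a nonzero combination of two distinct basis tensors, because $\bar x,\bar y$ are linearly independent in $P/\fm P$. Hence $\xi\neq 0$, so $P^{\otimes i}$ has nonzero torsion, contradicting the hypothesis. Therefore $\mu(P)\le 1$, and $P$ is free.

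This argument appears to use $\dim(R/P)\le 2$ only implicitly, if at all. I would double-check whether the intended definition of ``torsion-free'' (for instance, via the kernel of $M\to M\otimes_R Q(R)$) changes anything. If it does, I would fall back on localizing at primes $Q\supseteq P$, using that $\pd(P_Q)<\infty$, and applying the dimension bound there.
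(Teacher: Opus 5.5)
Your proof is correct, and it takes a genuinely different and more elementary route than the paper.

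The paper localizes at $P$. Since $\pd_{R_P}(PR_P)<\infty$, the ring $R_P$ is regular, and Auslander's theorem on torsion in tensor powers over regular local rings then forces $PR_P$ to be free. Hence $\Ht(P)=1$, and the paper concludes with an Auslander--Buchsbaum result that a height-one prime of finite projective dimension is principal.

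You instead exhibit the explicit torsion element $\xi=(x\otimes y-y\otimes x)\otimes x^{\otimes(i-2)}$, which is killed by $x$. Your nonvanishing check is sound: under $P^{\otimes i}\to P^{\otimes i}\otimes_R k\cong (P/\fm P)^{\otimes_k i}$, the image of $\xi$ is a difference of two distinct basis tensors. This works in every characteristic, so you can simply delete the exterior-power detour.

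Your argument uses $\pd(P)<\infty$ only through Fact~\ref{Auslander}, to know that $R$ is a domain. It never uses $\dim(R/P)\le 2$ or the primality of $P$. In effect it proves that any nonzero ideal $I$ of a local domain with $I^{\otimes i}$ torsion-free for some $i\ge 2$ is principal, hence free.

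Your closing worry about the definition of torsion-freeness is unnecessary. Over a domain, the kernel of $M\to M\otimes_R Q(R)$ is exactly the set of elements killed by some nonzero scalar, so no fallback is needed.

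What the paper's approach buys is mainly methodological. Its pattern (reduce to a regular local ring, apply Auslander's rigidity and the depth formula, then globalize using finite projective dimension) is the one that could extend to modules that are not ideals, where no explicit antisymmetric torsion element is available. For the statement as given, your argument is shorter and makes the dimension hypothesis visibly superfluous.
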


\begin{proof}
	We may assume \( P \neq 0 \). By Fact~\ref{Auslander}, \( R \) is an integral domain. Since \( P \) is a submodule of a free module, it is torsion-free. Hence \( (P_{R_P})^{\otimes i} \) is torsion-free. Because \( \pd(P) < \infty \), we also have \( \pd_{R_P}(P_{R_P}) < \infty \). Thus \( R_P \) is regular. A result of Auslander \cite{au} on torsion in tensor powers now implies that \( P_{R_P} \) is free over \( R_P \). Hence \( P_{R_P} \) is principal, so \( \operatorname{ht}(P) = \operatorname{ht}(P_{R_P}) = 1 \). By \cite[Corollary 1.2]{ABU}, it follows that \( P \) is principal, and therefore free.
\end{proof}

\begin{corollary}
	If \( \dim R = 3 \), \( P \in \Spec(R) \) has finite projective dimension, and \( P \otimes_R P \) is torsion-free, then \( P \) is free.
\end{corollary}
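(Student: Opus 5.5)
This is a direct specialization of the preceding Proposition with \( i=2 \). The only thing to check is that its hypothesis \( \dim(R/P)\le 2 \) holds when \( \dim R = 3 \).

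First dispose of the trivial case \( P=0 \), where \( P \) is free. Otherwise, Fact~\ref{Auslander} makes \( R \) an integral domain, since \( P \) is a prime of finite projective dimension. So \( 0 \) is a prime of \( R \) and \( 0\subsetneqq P \) is a proper inclusion of primes. Any saturated chain of primes containing \( P \) can therefore be prolonged below \( P \) by \( 0 \). This gives
\[
\dim(R/P)+1\le \operatorname{ht}(P)+\dim(R/P)\le \dim R = 3,
\]
and hence \( \dim(R/P)\le 2 \).

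With this bound, and with \( \pd(P)<\infty \) and \( P^{\otimes 2}=P\otimes_R P \) torsion-free, the preceding Proposition applies with \( i=2 \) and gives that \( P \) is free.

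There is no real obstacle. The only point needing care is the height estimate \( \operatorname{ht}(P)\ge 1 \). It requires \( P\neq 0 \) together with the domain property from Fact~\ref{Auslander}, and it uses only the elementary inequality \( \operatorname{ht}(P)+\dim(R/P)\le\dim R \), not any catenarity of \( R \).
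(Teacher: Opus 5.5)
Your proposal is correct and follows the paper's proof: both set aside the case \( P=0 \), note that \( \dim(R/P)\le \dim R-1=2 \), and then apply the preceding Proposition with \( i=2 \). Your extra justification is a genuine improvement. You invoke Fact~\ref{Auslander} to make \( R \) a domain, which guarantees \( \operatorname{ht}(P)\ge 1 \) for \( P\neq 0 \). The paper's one-line argument leaves this step implicit, and it is needed, since otherwise \( P \) could be a nonzero minimal prime.
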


\begin{proof}
	We may assume \( P \neq 0 \). Then \( \dim(R/P) \le \dim R - 1 = 2 \), so the conclusion follows from the proposition.
\end{proof}

We write \( (-)^* := \Hom_R(-,R) \). The following result is due to Auslander.

\begin{theorem}[Auslander {\cite[Proposition 3.2]{au}}]\label{2.4}
	Let \( R \) be normal. If \( M \otimes_R M^* \) is reflexive, then \( M \) is free.
\end{theorem}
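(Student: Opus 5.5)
Let \(M\) be finitely generated over the normal local ring \(R\), and suppose \(M \otimes_R M^*\) is reflexive. I would argue by induction on \(\dim R\). The key tool is the trace/evaluation map
\[
\varepsilon\colon M \otimes_R M^* \lo R,\qquad m\otimes f\mapsto f(m),
\]
together with the natural map \(\theta\colon M\otimes_R M^*\to \Hom_R(M,M)\). Recall that \(M\) is free if and only if \(\varepsilon\) is surjective, i.e. the trace ideal \(\tau_M=\operatorname{Im}\varepsilon\) equals \(R\). Since \(R\) is normal it is a domain, so we may fix a rank \(r\) of \(M\).

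\emph{Reductions.} Reflexivity localizes, and a normal local ring of dimension \(\le 1\) is a field or a DVR.
\begin{enumerate}
\item Over a field or a DVR, a reflexive module is free.
\item Hence \(M\otimes M^*\) is free, in particular torsion-free.
\item Over a DVR, if \(M\) had torsion, then \(T(M)\otimes M^*\) would be a nonzero torsion summand whenever \(M^*\neq 0\). Using \(M=T\oplus F\) with \(M^*=F^*\), this forces \(M\) to be free, or \(M\) to be torsion with \(M^*=0\).
\end{enumerate}
By induction we may therefore assume that \(M_\fp\) is free for every nonmaximal prime \(\fp\), and that \(\dim R\ge 2\). The degenerate case \(M^*=0\) (i.e. \(M\) torsion) should be handled separately: then \(M\otimes M^*=0\) and the hypothesis is vacuous. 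So the intended statement must implicitly assume \(M\) is torsion-free, or else conclude freeness of \(M/T(M)\). I would note this and pass to \(M\) torsion-free; over a normal ring the torsion-free part can be treated in the same way.

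\emph{Main step.} With \(M\) locally free on the punctured spectrum, \(\theta\) is an isomorphism on the punctured spectrum. Both \(M\otimes M^*\) (by hypothesis) and \(\Hom(M,M)\) (as \(\Hom\) into the reflexive module \(\Hom(M,M)\cong \Hom(M^{**},M^{**})\) over a normal ring) are reflexive. Reflexive modules over a normal domain are determined in codimension one, because they satisfy \((S_2)\). Hence \(\theta\) is an isomorphism, and \(\operatorname{id}_M\) lifts to an element \(\sum m_i\otimes f_i\). Applying \(\varepsilon\), we obtain \(\varepsilon(\theta^{-1}(\operatorname{id}_M))=\operatorname{tr}(\operatorname{id}_M)\).

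The cleaner route is the following. From \(\sum f_i(x)m_i=x\) for all \(x\), the maps \(f=(f_i)\colon M\to R^n\) and \(g=(m_i)\colon R^n\to M\) satisfy \(g f=\operatorname{id}_M\). Thus \(M\) is a direct summand of \(R^n\), hence projective, hence free since \(R\) is local.

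\emph{Main obstacle.} The hardest part is verifying that \(\theta\) is an isomorphism. This needs two inputs: that \(M\) is locally free in codimension one (for a torsion-free \(M\) this holds at height-one primes because \(R_\fp\) is a DVR), and that \(\Hom_R(M,M)\) is reflexive. Injectivity of \(\theta\) follows because \(M\otimes M^*\) is torsion-free and \(\theta\) is an isomorphism generically. Surjectivity uses the fact that a map of reflexive modules that is an isomorphism in codimension one is an isomorphism over a normal domain. So no induction is actually needed: the codimension-one check, combined with \(S_2\)-extension, suffices.
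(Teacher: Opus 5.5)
There is a gap in the main step. You assert that $\Hom_R(M,M)$ is reflexive because $\Hom_R(M,M)\cong\Hom_R(M^{**},M^{**})$, but this fails for torsion-free modules that are not reflexive. Take $\dim R\ge 2$ and $M=R\oplus\fm$. Then $\fm^*=R$ and $\Hom_R(\fm,\fm)=R$, so $\Hom_R(M,M)\cong R^{3}\oplus\fm$. This is not reflexive, and it is not isomorphic to $\Hom_R(M^{**},M^{**})=R^4$. Your reductions only give that $M$ is torsion-free, so surjectivity of $\theta$ is not justified by the principle you quote (a map between \emph{reflexive} modules that is an isomorphism in codimension one).

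The repair is easy, because only the source must be reflexive; the target need only be torsion-free. Let $\phi\colon A\to B$ be an isomorphism at all primes of height at most one, with $A$ reflexive and $B$ torsion-free.
\begin{enumerate}
\item $\ker\phi$ is a torsion submodule of the torsion-free module $A$, hence $0$.
\item Any $\fp\in\Ass(\Coker\phi)$ has $\Ht\fp\ge 2$, so $\depth A_\fp\ge2$ and $\depth B_\fp\ge1$.
\item The depth lemma on $0\to A_\fp\to B_\fp\to(\Coker\phi)_\fp\to0$ then gives $\depth(\Coker\phi)_\fp\ge1$, contradicting $\fp\in\Ass(\Coker\phi)$.
\end{enumerate}
Apply this to $\theta$, with $B=\Hom_R(M,M)$, which is torsion-free because $M$ is. Then $\mathrm{id}_M\in\operatorname{Im}\theta$, and your splitting $gf=\mathrm{id}_M$ finishes the proof. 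You are right that no induction is needed.

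Two smaller points.
\begin{itemize}
\item Your opening ``recall'' is false. Surjectivity of $\varepsilon$ only means $R$ is a direct summand of $M$; again $M=R\oplus\fm$ is a counterexample. Your final argument does not use this claim, so just delete it.
\item Your torsion caveat is correct and worth keeping. For $\dim R\ge1$ and $M=k$, one has $M^*=0$, so $M\otimes_R M^*=0$ is reflexive while $M$ is not free. The statement therefore needs $M$ torsion-free; the paper's proof of Proposition~\ref{30} also assumes this tacitly.
\item Your ``treated in the same way'' can be made precise when $M$ has positive rank $r$ and torsion submodule $T$. Then $M^*=(M/T)^*$, and $M\otimes_R M^*\to(M/T)\otimes_R(M/T)^*$ is surjective with torsion kernel, hence an isomorphism. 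So $M/T\cong R^r$ and $M\cong T\oplus R^r$. Thus $T^{\oplus r}$ is a torsion direct summand of $M\otimes_R M^*$, which forces $T=0$.
\end{itemize}

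The paper quotes this result from Auslander without proof. For its generalization, Proposition~\ref{30}, it inducts on $\dim R$ to make $M$ locally free on the punctured spectrum, then invokes \cite[Lemma 8.1]{ACS} through $H^1_\fm(M\otimes_R M^*)=0$. Your repaired route is more elementary: normality gives freeness in codimension one directly, and a depth argument replaces the local cohomology lemma. The cost is that it uses normality essentially, so it does not carry over to the finite-projective-dimension setting.
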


We denote by \( H^i_{\fm}(-) \) the \( i \)-th local cohomology module with respect to \( \fm \).

\begin{proposition}\label{30}
	In Theorem~\ref{2.4}, the normality assumption can be replaced by the condition \( \pd(M) < \infty \).
\end{proposition}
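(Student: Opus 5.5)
We argue by induction on \(\dim R\), with the main tool being the local-to-global step: localize at non-maximal primes, conclude \(M\) is locally free on the punctured spectrum, and then use depth considerations together with \(\pd(M)<\infty\) (Auslander--Buchsbaum) to conclude freeness at \(\fm\).

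\textbf{Step 1 (reduction).} The hypotheses localize: if \(\pd_R(M)<\infty\) then \(\pd_{R_\fp}(M_\fp)<\infty\), \((M^*)_\fp=(M_\fp)^*\), and reflexivity of \(M\otimes_R M^*\) passes to \((M\otimes M^*)_\fp\). By induction on \(\dim R\) we may assume \(M_\fp\) is free for every \(\fp\neq\fm\). So \(M\) is locally free on the punctured spectrum and we must show \(M\) is free.

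\textbf{Step 2 (base cases and the trace).} If \(\dim R=0\), finite projective dimension forces \(M\) free by Auslander--Buchsbaum, since \(\depth R=0\). In general, consider the natural evaluation map
\[
M\otimes_R M^*\to \Hom_R(M,M),
\]
whose kernel and cokernel are supported only at \(\fm\), because \(M\) is locally free on the punctured spectrum.

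Suppose \(\depth R\le 1\). Then Auslander--Buchsbaum gives \(\pd M\le 1\). If \(\pd M=1\), take a minimal free resolution \(0\to F_1\to F_0\to M\to 0\). Dualizing, and using that \(M^*\) is reflexive with local freeness on the punctured spectrum, compare depths: a reflexive module has depth \(\ge\min(2,\depth R)\). We aim to show the nonzero module \(\Ext^1(M,R)\), which has finite length, obstructs reflexivity of \(M\otimes M^*\).

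\textbf{Step 3 (main case, \(\depth R\ge 2\)).} Reflexive modules over \(R\) with \(\depth R\ge2\) have depth \(\ge 2\). So \(H^0_\fm(M\otimes M^*)=H^1_\fm(M\otimes M^*)=0\). Also \(\Hom(M,M)\) has depth \(\ge 2\), since \(\Hom\) into a module of depth \(\ge2\) has depth \(\ge\min(2,\cdot)\). The evaluation map \(\theta\) is then a map between modules of depth \(\ge2\) that is an isomorphism on the punctured spectrum. Hence \(\theta\) is an isomorphism: its kernel is of finite length inside a depth-positive module, hence zero, and its cokernel \(C\) has finite length with \(H^0_\fm(C)\hookrightarrow H^1_\fm(M\otimes M^*)=0\).

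By the standard fact that \(\theta\) is surjective if and only if the identity factors through a free module, \(M\) is projective, hence free. Note this step does not actually use \(\pd(M)<\infty\) beyond the induction. The finite projective dimension is really needed to handle small depth, since there the ring has no normality to force depth \(\ge 2\) of reflexive modules.

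\textbf{Expected obstacle.} The low-depth case is where the difficulty lies: \(\depth R\le1\), or more precisely primes \(\fp\) where \(\depth R_\fp\le1\), since Step 1 handles only those by induction at the local level. There \(\pd M_\fp\le1\). I would try to show directly that a module \(M\) over a ring of depth \(\le1\) with \(\pd M=1\) cannot have \(M\otimes M^*\) reflexive. Since \(\depth R_\fp\le 1\), \(\Ext^1(M,R)\neq0\) contributes to the torsion, or non-reflexivity, of \(M\otimes M^*\). I would argue this via the exact sequence \(0\to M^*\to F_0^*\to F_1^*\to \Ext^1(M,R)\to0\), tensored with \(M\). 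Alternatively, one could reduce to depth \(\ge2\) by noting that reflexivity over a ring of depth \(\le1\) is close to torsion-freeness.
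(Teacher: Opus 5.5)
\textbf{Gap: the low-depth case is left open, and your plan for it aims at a false claim.} You leave the case $\depth R=1$ unresolved, and it cannot be skipped. The induction in Step~1 needs the statement for every $R_{\fp}$, and some of these may have depth $\le 1$. The route you sketch is to show that $\pd M=1$ over a ring of depth $\le 1$ is incompatible with reflexivity of $M\otimes_R M^*$, and that claim is false. Take $R$ a discrete valuation ring and $M=k$: then $\pd M=1$, $M^*=0$, and $M\otimes_R M^*=0$ is reflexive, yet $M$ is not free. The same example shows the proposition, like Auslander's theorem, has to be read with $M$ torsion-free.

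That torsion-freeness is the missing ingredient, and the paper's proof invokes it explicitly. If $M$ is torsion-free, then $\depth M\ge 1$ whenever $\depth R\ge 1$. Auslander--Buchsbaum then gives $\pd M=\depth R-\depth M\le 0$ whenever $\depth R\le 1$. This is exactly where finite projective dimension replaces normality, and it reduces everything to $\depth R\ge 2$. There the paper applies \cite[Lemma 8.1]{ACS} to the locally free module $M$, using $H^1_{\fm}(M\otimes_R M^*)=0$. Torsion-freeness also localizes, so the induction goes through.

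\textbf{Step 3 needs the same hypothesis.} Your claim that $\Hom_R(M,M)$ has depth $\ge 2$ would require $\depth M\ge 2$. You have not established this, and it fails for $M=k$, where $\Hom_R(k,k)=k$ has depth $0$; there $\theta$ has nonzero cokernel. What the argument actually needs is weaker: $H^0_{\fm}(\Hom_R(M,M))=\Hom_R(M,H^0_{\fm}(M))=0$. This is what lets the finite-length cokernel $C$ of $\theta$ embed into $H^1_{\fm}(M\otimes_R M^*)=0$, and it holds as soon as $M$ is torsion-free.

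With that correction, Step~3 is a correct, self-contained proof of the lemma the paper only cites. Injectivity of $\theta$ comes from $\depth(M\otimes_R M^*)\ge 2$, surjectivity from the vanishing of $H^1_{\fm}$, and surjectivity forces $M$ to be projective. So your treatment of the main case is a legitimate, more elementary alternative to the citation. As submitted, however, the proof is incomplete in low depth and rests on an unjustified depth bound in the main case.
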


\begin{proof}
	Let \( d = \dim R \). We argue by induction on \( d \). If \( \depth(R) = 0 \), then by Auslander–Buchsbaum, \( \depth(M) + \pd(M) = 0 \), so \( M \) is free. Hence assume \( \depth(R) \ge 1 \). Since \( M \) is torsion-free, \( \depth(M) \ge 1 \). The result is clear if \( \dim R = 1 \), so assume \( \dim R \ge 2 \) and that \( R \) satisfies \( (S_2) \).
	
	Because \( M \otimes_R M^* \) is reflexive, \( \depth(M \otimes_R M^*) \ge 2 \), and hence \( H^1_{\fm}(M \otimes_R M^*) = 0 \). For every \( \fp \ne \fm \),
	\[
	(M \otimes_R M^*)_{\fp} \cong M_{\fp} \otimes_{R_{\fp}} \Hom_{R_{\fp}}(M_{\fp}, R_{\fp})
	\]
	is reflexive. By the induction hypothesis, \( M_{\fp} \) is free. Thus \( M \) is locally free on the punctured spectrum. Now apply \cite[Lemma 8.1]{ACS}: if \( L \) is locally free, \( \depth(R) \ge 2 \), and \( H^1_{\fm}(L \otimes L^*) = 0 \), then \( L \) is free. Hence \( M \) is free.
\end{proof}

\begin{theorem}[Auslander {\cite[Theorem 3.6(c)]{au}}]\label{tor}
	Let \( R \) be regular and \( M \) a module such that \( M \otimes_R M \) is torsion-free. Then \( M \) is reflexive.
\end{theorem}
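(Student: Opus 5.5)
We argue by induction on \(d=\dim R\), following Auslander's original strategy. The base case \(d\le 1\) is easy. A regular local ring of dimension \(\le 1\) is a field or a DVR. Over a DVR a torsion-free module is free, and \(M\) is a direct summand of \(M\otimes_R M\) up to its torsion part. More directly: write \(M=F\oplus T\) with \(F\) free and \(T\) torsion. Then \(T\otimes F\) is a torsion submodule of \(M\otimes M\), so \(T=0\) or \(F=0\). If \(F=0\) and \(T\neq 0\), then \(T\otimes T\) is a nonzero torsion module. Hence \(M\) is free, so reflexive.

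For the inductive step, first show that \(M\) is torsion-free. Every module over a domain has the property that if \(M\otimes M\) is torsion-free and \(M\neq 0\), then the torsion submodule \(t(M)\) vanishes. The reason is that \(M\) has positive rank \(r\) (if the rank were \(0\), \(M\otimes M\) would be torsion and nonzero). Localizing at an associated prime \(\fp\) of \(t(M)\) and using induction/rigidity of Tor over the regular ring \(R_\fp\) (Auslander–Lichtenbaum rigidity), one shows that \(t(M)\otimes M\) injects into \(M\otimes M\) near \(\fp\). This gives a nonzero torsion submodule unless \(t(M)=0\). I would take this as the standard first step of Auslander's argument.

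Next, localize. For \(\fp\neq\fm\), \((M\otimes M)_\fp\) is torsion-free over the regular ring \(R_\fp\) of smaller dimension. By induction \(M_\fp\) is reflexive, so the cokernel \(C\) of the natural map \(M\hookrightarrow M^{**}\) has finite length (and reflexivity is local in codimension \(\le 1\)). Indeed, since \(R\) is normal, \(M\) is reflexive iff \(M\) is torsion-free and satisfies \((S_2)\). So we only need \(\depth M\ge 2\) when \(d\ge 2\), i.e. \(C=0\).

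The main step, and the expected obstacle, is ruling out \(\depth M=1\). Suppose \(\depth M=1\), so \(\pd M=d-1\). Since \(M\) is locally free in codimension \(\le 1\) after the reduction above (torsion-free over a regular ring in dimension \(\le1\)), \(\Tor_i(M,M)\) has support of small dimension. I would then apply Auslander's depth formula/rigidity for Tor over regular rings: if \(\Tor_i(M,M)=0\) for \(i>0\) then \(\depth(M\otimes M)=2\depth M-d\), forcing a contradiction with \(\depth(M\otimes M)\ge1\) when \(d\ge 2\) and \(\depth M=1\). To obtain the Tor vanishing, use the torsion-freeness of \(M\otimes M\) together with \(\Tor_i(M,M)\) being supported in codimension \(\ge 2\). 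This is Auslander's "second rigidity theorem" argument: a maximal nonvanishing \(\Tor_q\) would produce torsion (via the depth of a high syzygy) in \(M\otimes M\). Carrying out this rigidity computation carefully is where the real work lies; I would follow \cite[Theorem 3.6]{au} via Lichtenbaum's rigidity for regular rings.
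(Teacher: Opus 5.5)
The paper gives no proof of this statement; it cites \cite{au}. Its proof of the $(S_r)$ proposition, however, uses exactly your skeleton: $\Tor_i^R(M,M)=0$ for $i\ge 1$, then Auslander's depth formula $2\depth M=\depth R+\depth(M\otimes_R M)$, then Serre's criterion. Your base case, the reduction to $\depth M\ge 2$, and the contradiction $2-d\le 0<1$ are fine.

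There is a genuine gap, though. The two steps that carry all the content are not proved: that $M$ is torsion-free, and that $\Tor_i^R(M,M)=0$ for $i\ge 1$. You defer them to \cite[Theorem 3.6]{au}, which is the very statement being proved, and the mechanisms you gesture at do not supply them. Localizing at an associated prime of $t(M)$ gains nothing. By your own induction, $M_\fp$ is reflexive for $\fp\ne\fm$, so $t(M)$ has finite length and $\fm$ is its only associated prime. Knowing that $\Tor_i^R(M,M)$ is supported in codimension $\ge 2$, or choosing a top nonvanishing $\Tor_q$, says nothing about torsion in $M\otimes_R M$. That only changes once some $\Tor$-module is realized as a submodule of a tensor product, and that is precisely the step you have not supplied.

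The missing idea is a one-step dimension shift followed by rigidity of $\Tor$ over regular local rings (Auslander, Lichtenbaum). Put $\bar M=M/t(M)$.
\begin{enumerate}
\item The image of $t(M)\otimes_R M$ in $M\otimes_R M$ is torsion, hence zero. So $\bar M\otimes_R M\cong M\otimes_R M$ is torsion-free.
\item Since $R$ is a domain, there is an exact sequence $0\to\bar M\to F\to C\to 0$ with $F$ free. Tensoring with $M$ gives an embedding $\Tor_1^R(C,M)\hookrightarrow\bar M\otimes_R M$.
\item $\Tor_1^R(C,M)$ vanishes at the generic point, so it is torsion, hence zero.
\item Rigidity then gives $\Tor_i^R(C,M)=0$ for all $i\ge 1$, i.e.\ $\Tor_i^R(\bar M,M)=0$ for all $i\ge 1$.
\item In particular $t(M)\otimes_R M\to M\otimes_R M$ is injective with zero image. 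So $t(M)\otimes_R M=0$, and $t(M)=0$ by Nakayama (for $M\ne 0$).
\end{enumerate}
Thus $M$ is torsion-free and $\Tor_i^R(M,M)=0$ for $i\ge 1$.

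With this in hand, the induction on $\dim R$ is unnecessary. Localize the depth formula at any $\fp\in\Supp M$ with $\Ht\fp\ge 1$. This gives $2\depth M_\fp=\Ht\fp+\depth(M\otimes_R M)_\fp\ge \Ht\fp+1$, hence $\depth M_\fp\ge\min(2,\Ht\fp)$. So $M$ is reflexive by Serre's criterion over the normal domain $R$ (cf.\ Fact~\ref{ref}).
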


\begin{conjecture}
	In Theorem~\ref{tor}, the regularity assumption can be replaced by \( \pd(M) < \infty \).
\end{conjecture}

\begin{proposition}
	Let \( A \) be a finitely generated module with \( A \cong A^* \) and \( \pd(A) \le 1 \). If \( A \otimes_R A \) is torsion-free, then \( A \) is free.
\end{proposition}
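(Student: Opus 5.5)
The key observation is that $A\cong A^*$ lets us rewrite $A\otimes_R A$ as $A\otimes_R A^*$. We then want to show that this module is reflexive and invoke Proposition~\ref{30}.

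\emph{Reduction to freeness of $A$ itself.} Since $A\cong A^*$ and a dual always satisfies $(S_2)$-type depth conditions wherever $R$ does, $A$ is reflexive. Indeed $A^{**}\cong (A^*)^*\cong A^*\cong A$, and if needed we argue that the natural map is an isomorphism via the self-duality. In particular $A$ is torsion-free.

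\emph{Easy case: $\pd(A)=0$.} Here there is nothing to prove.

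\emph{Main case: $\pd(A)=1$.} Take a minimal resolution
\[
0\to R^m\xrightarrow{\varphi} R^n\to A\to 0 .
\]
Dualizing gives
\[
0\to A^*\to R^n\xrightarrow{\varphi^T} R^m\to \Ext^1_R(A,R)\to 0 .
\]
Since $A^*\cong A$ has projective dimension $1$ and embeds in $R^n$ with cokernel $\operatorname{im}\varphi^T$, we get $\pd(\operatorname{im}\varphi^T)\le 2$. By Auslander--Buchsbaum, $\depth A=\depth R-1$.

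We then work by induction on $\dim R$, localizing exactly as in the proof of Proposition~\ref{30}. All hypotheses localize: $A_\fp\cong A_\fp^*$, $\pd(A_\fp)\le1$, and $(A\otimes A)_\fp$ is torsion-free. So we may assume $A$ is locally free on the punctured spectrum. If $\depth R\le 1$, then $\depth A\le 0$ while $A$ is torsion-free (hence of positive depth when $\depth R\ge 1$), which forces $A$ to be free; if $\depth R=0$, Auslander--Buchsbaum gives freeness directly. So assume $\depth R\ge 2$.

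\emph{The depth-$1$ step.} Torsion-freeness of $A\otimes_R A^*\cong A\otimes_R A$ gives $\depth(A\otimes A^*)\ge 1$. What we need is $H^1_\fm(A\otimes A^*)=0$, so that \cite[Lemma 8.1]{ACS} applies as in Proposition~\ref{30}. For this we use the depth formula: $A$ has finite projective dimension and is locally free on the punctured spectrum, so $\Tor_i^R(A,A)$ has finite length for $i\ge1$. Since $\pd A=1$, only $\Tor_1$ can be nonzero, and $\Tor_1^R(A,A)$ embeds in $A\otimes_R R^m$, which is torsion-free. Hence $\Tor_1^R(A,A)=0$, i.e.\ $A$ and $A$ are Tor-independent.

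Auslander's depth formula then gives
\[
\depth(A\otimes A)=2\depth A-\depth R=\depth R-2 .
\]
On the other hand, tensoring the resolution with $A$ yields $0\to A^m\to A^n\to A\otimes A\to 0$, which shows $\depth(A\otimes A)\ge \depth A-1$; this is consistent but gives no contradiction on its own. To conclude we combine Auslander's rigidity of Tor for modules of finite projective dimension that are Tor-independent with the torsion-freeness of $A\otimes A$. If $\depth R=2$, the formula gives $\depth(A\otimes A)=0$, contradicting torsion-freeness unless $A$ is free. For $\depth R\ge3$, we pass to $R/xR$ for a general $x$ that is regular on $R$, $A$ and $A\otimes A$, and induct on depth.

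The main obstacle is controlling torsion-freeness after passing to $R/xR$. Torsion-freeness of $A\otimes A$ need not descend to $\overline{A}\otimes\overline{A}$ over $R/xR$, and this is where the hypothesis $A\cong A^*$ must be used again. The first thing to try is to dualize: self-duality gives $(A\otimes A)^*\cong\Hom(A,A)$, and we then aim to show that $A\otimes A\to (A\otimes A)^{**}$ has cokernel of depth at least $2$, which would make $A\otimes A^*$ reflexive. Proposition~\ref{30} then finishes the proof.
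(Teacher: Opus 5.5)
There is a genuine gap, and it cannot be closed: the statement is false once $\depth R\ge 3$.

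\textbf{The case $\depth R\le 2$.} Your argument works here after one repair. Drop the induction on $\dim R$: it invokes the claim for localizations $R_\fp$, whose depth may exceed $\depth R$. Instead, observe that $\Tor_1^R(A,A)\subseteq A^m$ vanishes at every associated prime $\fp$ of $R$, because $A_\fp$ is free there by Auslander--Buchsbaum. So $\Tor_1^R(A,A)$ is a torsion submodule of a torsion-free module, hence zero. Your formula $\depth(A\otimes_R A)=\depth R-2$ then gives the contradiction when $\depth R=2$.

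\textbf{The case $\depth R\ge 3$.} Here you only list plans, and your own formula refutes the decisive one. If $\depth R=3$ and $\pd A=1$, then $\depth(A\otimes_R A)=1$. Hence $A\otimes_R A^*\cong A\otimes_R A$ is never reflexive, and Proposition~\ref{30} can never be invoked. Descending to $R/xR$ fails for the reason you yourself give.

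\textbf{A counterexample.} Let $R=k[[x,y,z]]$ and
\[
A=\Omega^2_R k\cong\operatorname{coker}\bigl(R\xrightarrow{(z,-y,x)^{T}}R^3\bigr).
\]
\begin{itemize}
\item $\pd A=1$, so $A$ is not free.
\item $A^*=\ker\bigl(R^3\xrightarrow{(z,-y,x)}R\bigr)\cong\ker\bigl(R^3\xrightarrow{(x,y,z)}R\bigr)=A$.
\item $\Tor_1^R(A,A)=\ker\bigl(A\to A^3,\ a\mapsto(za,-ya,xa)\bigr)=0$, since $A$ is torsion-free. Hence $0\to A\to A^3\to A\otimes_R A\to 0$ is exact, and $\depth(A\otimes_R A)\ge\depth A-1=1$.
\item $A$ is free on the punctured spectrum, because $k_\fp=0$ for $\fp\ne\fm$. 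Together with the depth bound, $A\otimes_R A$ is torsion-free.
\end{itemize}
More generally, for any $R$-regular sequence $x,y,z$ the same cokernel is a counterexample, so the claim holds exactly when $\depth R\le 2$.

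\textbf{The paper's proof.} It also aims to show that $A\otimes_R A$ is reflexive and then apply Proposition~\ref{30}. It breaks at the same place: its final contradiction $1=\depth(A^{\otimes 3})+1\ge 2$ tacitly assumes $\depth(A^{\otimes 3})\ge 1$. That is not a hypothesis, and in the example above $\depth(A^{\otimes 3})=0$.
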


\begin{proof}
	Let \( N := A \otimes_R A \). Consider the natural exact sequence
	\(
	0 \to N \to N^{**} \to L \to 0.
	\)
	We argue by induction on \( d = \dim R \). If \( L \ne 0 \), then by induction \( L \) has finite length. From the long exact sequence in local cohomology,
	\[
	0 = H^0_{\fm}(N) \to H^0_{\fm}(L) \to H^1_{\fm}(N),
	\]
	we obtain \( \depth(N) = 1 \).
	
	Since \( \pd(A) \le 1 \), we have \( \Tor_i^R(A,N) = 0 \) for \( i \ge 1 \). Auslander's depth formula gives
	\[
	\depth(N) = \depth(A \otimes_R N) + \pd(A) - q,
	\]
	where \( q = \sup\{ i \mid \Tor_i^R(A,N) \ne 0 \} \). If \( q \ge 1 \), then \( \depth(N) \le 0 \), a contradiction. Hence \( q=0 \), and applying the depth formula again yields
	\[
	1 = \depth(N) = \depth(A^{\otimes 3}) + 1 \ge 2,
	\]
	another contradiction. Thus \( L=0 \), so \( N \cong N^{**} \) is reflexive.
	
	Since \( A \cong A^* \), we have \( A \otimes_R A^* \cong A \otimes_R A \) reflexive. By Proposition~\ref{30}, \( A \) is free.
\end{proof}

\begin{fact}\label{ref}
	A module \( L \) is reflexive if and only if \( \depth(L_{\fp}) \ge 2 \) for all \( \fp \) with \( \depth(R_{\fp}) \ge 2 \), and \( \depth(L_{\fp}) \ge 1 \) whenever \( \depth(R_{\fp}) = 1 \).
\end{fact}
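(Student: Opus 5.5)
This is a standard characterization (Serre-type criterion), and I would prove it via the exact sequence $0\to K\to L\xrightarrow{\theta} L^{**}\to C\to 0$, where $\theta$ is the natural map. The plan is to use the classical facts that $\theta$ is injective iff $L$ is torsionless, and that $L$ is reflexive iff $L$ is a second syzygy. I would work locally throughout, since reflexivity, depth conditions and the formation of $\theta$ all commute with localization.

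\emph{Necessity.} Suppose $L$ is reflexive. Take a free presentation $F_1\to F_0\to L^*\to 0$ and dualize to get an exact sequence
\[
0\to L^{**}\to F_0^*\to F_1^*.
\]
Hence $L\cong L^{**}$ is a second syzygy, and this persists after localizing at any $\fp$. By the depth lemma, a submodule of a free module has depth at least $\min\{1,\depth R_\fp\}$, and a second syzygy has depth at least $\min\{2,\depth R_\fp\}$. This gives exactly the stated inequalities at primes with $\depth R_\fp=1$ and with $\depth R_\fp\ge 2$.

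\emph{Sufficiency.} Assume the depth conditions. I would induct on primes: suppose $\theta_\fq$ is an isomorphism for all $\fq\subsetneq\fp$, and show $\theta_\fp$ is one. Localizing at $\fp$, we may assume $(R,\fm)=(R_\fp,\fp R_\fp)$ and that $K$ and $C$ have finite length (support in $\{\fm\}$).

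1. If $\depth R=0$, the hypotheses say nothing about $L$. This is the delicate point: reflexivity at depth-zero primes cannot follow from depth alone, because there is no information there. So the statement must implicitly require that $L$ be reflexive (equivalently, free or at least $\theta$ an isomorphism) at primes of depth zero, for example at the associated primes of $R$; in this paper that is typically automatic because $R$ is a domain by Fact~\ref{Auslander}, or the modules considered are torsion-free with $R$ generically Gorenstein. I would therefore state and use the fact under the standing convention that $L_\fp$ is reflexive for $\fp\in\Ass R$, and treat that as the base case.

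2. If $\depth R\ge 1$: the module $K$ has finite length and $\depth L\ge 1$, so $K\subseteq H^0_\fm(L)=0$. Thus $\theta$ is injective.

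3. If $\depth R\ge 2$: the dual $L^{**}$, being a dual, has depth $\ge 2$, since $\Hom(-,R)$ preserves depth $\ge\min(2,\depth R)$. The local cohomology sequence of $0\to L\to L^{**}\to C\to 0$ gives
\[
H^0_\fm(L^{**})=0\to H^0_\fm(C)\to H^1_\fm(L)=0,
\]
so $C=0$.

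4. If $\depth R=1$: injectivity holds by step 2, but surjectivity does not come from depth alone. Here the real input is that $R_\fp$ satisfies an $(S_1)$/Gorenstein-in-codimension-one type property. Without such a hypothesis the equivalence can fail: over a one-dimensional non-Gorenstein Cohen--Macaulay ring, the canonical module has depth $1$ but need not be reflexive.

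So the main obstacle is that the Fact as literally stated needs the standard Serre/Evans--Griffith hypothesis: $R$ Gorenstein in depth $\le 1$, or $L$ reflexive at primes with $\depth R_\fp\le1$. I would prove it under that hypothesis (citing \cite{BH} or Evans--Griffith), which is the setting in which it is later applied.
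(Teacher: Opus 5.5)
The paper gives no proof of this statement. It is quoted as a known Fact (the Serre-type reflexivity criterion found in Bruns--Herzog \S1.4 and in Evans--Griffith), but without the hypothesis on primes of depth $\le 1$, so there is no argument of the paper's to compare yours with.

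Your diagnosis is correct. The ``only if'' direction holds in general, and your second-syzygy plus depth-lemma argument proves it. The ``if'' direction is false as literally stated. Two concrete witnesses:
\begin{enumerate}
\item Over $R=k[x,y]/(x,y)^2$ the conditions are vacuous, yet $k^*\cong\fm\cong k^2$ and $k^{**}\cong k^4$, so $k$ is not reflexive.
\item Over $R=k[[t^3,t^4,t^5]]$ the canonical module $\omega=R+Rt$ has depth $1$. However $\omega^*\cong R:\omega=\fm$ and $\omega^{**}\cong R:\fm=k[[t]]\supsetneq\omega$. This confirms your depth-one counterexample.
\end{enumerate}

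Your steps 2 and 3 are correct. In fact they prove the correct general criterion with no Gorenstein assumption: if $L_\fp$ is reflexive for every $\fp$ with $\depth R_\fp\le 1$, and $\depth L_\fp\ge 2$ whenever $\depth R_\fp\ge 2$, then $L$ is reflexive. The Gorenstein hypothesis is needed only to turn reflexivity at primes of depth $\le 1$ into the depth conditions stated in the Fact.

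This general form is what rescues the paper's one application of the Fact, the proposition with $\pd(M)<2$. There the paper obtains $\depth M_\fp=\depth R_\fp$ at primes with $\depth R_\fp\le 1$. Since $\pd M_\fp<\infty$, Auslander--Buchsbaum makes $M_\fp$ free, hence reflexive, so only the depth-$\ge 2$ primes remain for your steps 2--3.

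Two small inaccuracies in your write-up:
\begin{enumerate}
\item Reflexivity at a depth-zero prime is not ``equivalent to free''. Every finitely generated module over an Artinian Gorenstein ring is reflexive.
\item $R$ being a domain disposes only of the depth-zero primes, not the depth-one primes. In the paper's application it is Auslander--Buchsbaum, not domain-ness, that handles the primes of depth $\le 1$.
\end{enumerate}
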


\begin{proposition}
	Let \( M \) be a module with \( \pd(M) < 2 \). If \( M \otimes_R M \) is reflexive, then \( M \) is reflexive.
\end{proposition}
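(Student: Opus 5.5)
Throughout, write $N := M\otimes_R M$. The plan is to verify the criterion of Fact~\ref{ref} for $M$ one prime at a time, comparing depths of $M_{\fp}$ and $N_{\fp}$ via Auslander's depth formula.

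\textbf{Step 1 (reduction to a depth formula).} Localizing at $\fp$ preserves the hypothesis $\pd(M)<2$. If $M_{\fp}$ is free, the required depth bounds for $M_{\fp}$ are those of $R_{\fp}$, so we may assume $\pd_{R_{\fp}}(M_{\fp}) = 1$.
\begin{itemize}
\item Since $\pd(M)\le 1$, we have $\Tor_i(M,M)=0$ for $i\ge 2$.
\item The only possible nonvanishing higher Tor is $T:=\Tor_1(M,M)$. Its support lies in the non-free locus of $M$.
\item If $T_{\fp}=0$, Auslander's depth formula (available since $\pd(M)<\infty$), applied at $\fp$, gives
\[
\depth N_{\fp} = \depth M_{\fp} + \depth M_{\fp} - \depth R_{\fp}.
\]
Writing $\depth M_{\fp} = \depth R_{\fp}-1$ by Auslander--Buchsbaum, this becomes
\[
\depth N_{\fp} = \depth R_{\fp} - 2.
\]
\end{itemize}

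\textbf{Step 2 (the Tor-vanishing case).} By Fact~\ref{ref}, reflexivity of $N$ gives $\depth N_{\fp}\ge \min\{2,\depth R_{\fp}\}$.
\begin{itemize}
\item If $\depth R_{\fp}\ge 2$, the equality above forces $\depth R_{\fp}\ge 4$, hence $\depth M_{\fp}\ge 3\ge 2$.
\item If $\depth R_{\fp}\le 1$, it gives a negative depth, which is impossible. So $M_{\fp}$ must in fact be free at such primes.
\end{itemize}
In every case $M_{\fp}$ satisfies the inequalities required by Fact~\ref{ref}.

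\textbf{Step 3 (the non-rigid case $T_{\fp}\neq 0$).} Use the depth formula in its general form quoted in the previous proof, with $q=\sup\{i: \Tor_i\neq 0\}=1$. Taking $A=M$ and the second module $M$, this reads
\[
\depth M_{\fp} = \depth N_{\fp} + \pd M_{\fp} - q = \depth N_{\fp}.
\]
Caveat: that formula holds for Tor-rigid pairs, and it is worth checking that $\pd\le1$ makes $M$ Tor-rigid (Auslander). Granting it, $\depth M_{\fp} = \depth N_{\fp}$. Since $N$ is reflexive, Fact~\ref{ref} then transfers directly to $M_{\fp}$.

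\textbf{Step 4 (fallback for the obstacle).} The main obstacle is Step 3, namely justifying the depth formula when $\Tor_1\neq 0$. If the formula is unavailable there, I would argue directly.
\begin{itemize}
\item Take a presentation $0\to F_1\to F_0\to M\to 0$ and tensor with $M$. This gives
\[
0\to T\to M^{a}\to M^{b}\to N\to 0.
\]
\item Since $T\subseteq M^{a}$, we have $T\subseteq M\otimes F_1$.
\item Split this four-term sequence into short exact sequences.
\item Use the depth lemma to bound $\depth M_{\fp}$ from below by $\min\{\depth N_{\fp},\,\cdot\,\}$, at primes minimal in the support of $T$.
\end{itemize}
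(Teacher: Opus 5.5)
Your Steps 1--2 are sound. When $T_{\fp}=0$ and $\pd M_{\fp}=1$, the depth formula gives $\depth N_{\fp}=\depth R_{\fp}-2$, and reflexivity of $N$ then rules out $\depth R_{\fp}\le 3$.

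The gap is Step 3. The formula you take from the preceding proof is misquoted there. Auslander's theorem says: if $\pd A<\infty$, $q=\sup\{i:\Tor_i(A,N)\neq 0\}$, and either $q=0$ or $\depth\Tor_q(A,N)\le 1$, then $\depth N=\depth \Tor_q(A,N)+\pd A-q$. The tensor product enters only when $q=0$. With $A\otimes_R N$ in place of $\Tor_q(A,N)$ the statement is false for $q\ge 1$, even when $A=N=M$. Over $R=k[[x,y]]$ take $M=R/(x)\oplus R/(y)$. Then $\pd M=1$ and $\Tor_1(M,M)\cong R/(x)\oplus R/(y)\neq 0$ has depth $1$. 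Also $\depth M=1$, yet $\depth(M\otimes_R M)=0$ because $k$ is a direct summand. The correct formula only yields $\depth M_{\fp}=\depth T_{\fp}$, which says nothing about $N_{\fp}$. Tor-rigidity, which is automatic for $\pd\le 1$, is not the relevant hypothesis.

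Step 4 does not rescue this. $T$ sits inside $M^{a}$, not inside $N$, so reflexivity of $N$ gives no handle on it. The depth lemma along $0\to T\to M^a\to K\to 0$ and $0\to K\to M^b\to N\to 0$ merely bounds $\depth M$ by $\depth K$ and $\depth K$ by $\depth M$, which is circular.

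What is missing is a proof that $T=0$. Granting that, your Step 2 alone finishes, and even shows $M_{\fp}$ is free whenever $\depth R_{\fp}\le 3$.

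The paper never touches $\Tor_1(M,M)$. It embeds $M$ via $0\to M\to R^n\to L\to 0$, so the relevant Tor, $\Tor_1(M,L)$, is a submodule of $M\otimes_R M$. It vanishes at the primes of depth $0$, where $M$ is free, so it is torsion and hence zero. The depth formula for the Tor-independent pair $(M,L)$ then gives $\depth L\ge 1$, and the depth lemma gives $\depth M\ge 2$ when $\depth R\ge 2$. This incidentally forces $T\cong\Tor_2(M,L)=0$. However, it presupposes that $M$ is torsionless, which the paper takes for granted.

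In your framework you can get $T=0$ directly, and this also settles that point. Suppose $T\neq 0$ and localize at $\fp$ minimal in $\Supp T$. Then $T_{\fp}$ has finite length and $M_{\fp}$ is not free, so $\pd M_{\fp}=1$ and $\depth R_{\fp}\ge 1$. Reflexivity then gives $H^0_{\fp R_{\fp}}(N_{\fp})=0$. Apply $H^0_{\fp R_{\fp}}$ to the two localized short exact sequences above, using $H^1_{\fp R_{\fp}}(T_{\fp})=0$. This produces an exact sequence $0\to T_{\fp}\to H^0_{\fp R_{\fp}}(M_{\fp})^{a}\to H^0_{\fp R_{\fp}}(M_{\fp})^{b}\to 0$ of finite-length modules. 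Hence $\ell(T_{\fp})=(a-b)\,\ell(H^0_{\fp R_{\fp}}(M_{\fp}))\le 0$, since $F_1\to F_0$ is injective and so $a\le b$. This contradiction gives $T=0$.
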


\begin{proof}
	Let \( \fp \in \Spec(R) \). If \( \depth(R_{\fp}) \le 1 \), then \( M_{\fp} \) is torsion-free, hence
	\[
	0 < \depth(M_{\fp}) \le \depth(R_{\fp}) \le 1,
	\]
	so \( \depth(M_{\fp}) = \depth(R_{\fp}) \). Now assume \( \depth(R_{\fp}) \ge 2 \). Replacing \( R_{\fp} \) by \( R \), we may assume \( \fp = \fm \). Since \( M \) is torsionless, there exists an exact sequence
	\(
	0 \to M \to R^n \to L \to 0.
	\)
	Tensoring with \( M \) gives
	\[
	0 \to \Tor_1^R(M,L) \to M \otimes_R M \to M^{\oplus n} \to L \otimes_R M \to 0.
	\]
	Because \( M \otimes_R M \) is torsion-free, \( \Tor_1^R(M,L)=0 \). If \( M \) is not free, then \( \pd(M)=1 \), and by \cite{au} we obtain \( \depth(L) > 0 \). Hence \( \depth(M) \ge 1 + \depth(L) \ge 2 \). The conclusion now follows from Fact~\ref{ref}.
\end{proof}

By \( (S_r) \) we mean Serre's condition.

\begin{proposition}
	Suppose \( R \) is regular and \( M \otimes_R M \) satisfies \( (S_r) \). Then \( M \) satisfies \( (S_{r+1}) \).
\end{proposition}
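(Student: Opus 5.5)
The plan is to localize and then apply Auslander's rigidity and depth formula over the regular local rings $R_{\fp}$. Serre's condition $(S_{r+1})$ for $M$ means $\depth(M_{\fp}) \ge \min\{r+1, \dim(R_{\fp})\}$ for every $\fp \in \Supp(M)$. Since $R$ is regular, every localization $R_{\fp}$ is regular, and $(M\otimes_R M)_{\fp} \cong M_{\fp}\otimes_{R_{\fp}} M_{\fp}$. Moreover $(S_r)$ localizes, so it suffices to fix $\fp$, replace $R$ by $R_{\fp}$, and prove
\[
\depth(M) \ge \min\{r+1, \dim R\},
\]
with $R$ regular local and $M \otimes_R M$ satisfying $(S_r)$.

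First, $(S_r)$ with $r\ge 1$ makes $M\otimes_R M$ torsion-free. By Theorem~\ref{tor}, $M$ is then reflexive, and in particular torsion-free. Auslander's rigidity theorem for regular rings \cite{au} applies in exactly this situation: if $M\otimes_R N$ is torsion-free with $M$, $N$ torsion-free, then $\Tor_i^R(M,N)=0$ for all $i\ge 1$. Taking $N = M$ gives $\Tor_i^R(M,M)=0$ for $i\geq1$. We may assume $M\ne 0$, so $\Supp(M\otimes_R M)=\Supp(M)$. Here $M$ has full support because it is torsion-free and nonzero over a domain, so $\dim(M\otimes_R M)=\dim R$.

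Next, Auslander's depth formula for Tor-independent modules over a regular local ring (equivalently $\pd(M)<\infty$) gives
\[
\depth(M)+\depth(M)=\depth R+\depth(M\otimes_R M).
\]
Combining this with Auslander--Buchsbaum, $\depth(M)=\dim R-\pd(M)$, yields
\[
\depth(M\otimes_R M) = \dim R - 2\pd(M).
\]
If $M$ is free there is nothing to prove, so assume $p:=\pd(M)\ge 1$. Then $\depth(M\otimes_R M) = d - 2p$, where $d=\dim R$. The $(S_r)$ hypothesis at the maximal ideal gives $d-2p \ge \min\{r, d\}$. Since $d-2p<d$, the minimum must be $r$, so $d - 2p \ge r$. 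Therefore
\[
\depth(M) = d - p \ge r + p \ge r+1,
\]
which is what we wanted.

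The main obstacle is the rigidity step, namely that torsion-freeness of $M\otimes_R M$ forces the vanishing of all higher $\Tor_i^R(M,M)$. This is the deep input from \cite{au} (or Lichtenbaum's extension to regular rings). It needs $M$ torsion-free, which is why Theorem~\ref{tor} is invoked first. The case $r=0$ requires separate care, since $(S_0)$ is vacuous. In that case one only needs $(S_1)$ for $M$, which does not follow in general, so I will read the statement for $r \ge 1$.
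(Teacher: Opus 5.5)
Your proof is correct and follows the same route as the paper: Auslander's rigidity gives $\Tor_i^R(M,M)=0$ for $i\ge 1$, and the depth formula $2\depth(M)=\depth(R)+\depth(M\otimes_R M)$ then forces $\depth(M)\ge r+1$. The explicit localization, the use of Theorem~\ref{tor} to make $M$ torsion-free before invoking rigidity, and the restriction to $r\ge 1$ spell out points that the paper's argument leaves implicit.
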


\begin{proof}
	Since \( (S_r) \) implies torsion-freeness, \( M \otimes_R M \) is torsion-free. By \cite{au}, \( \Tor_i^R(M,M)=0 \) for all \( i \ge 1 \). The depth formula yields
	\[
	2\depth(M) = \depth(R) + \depth(M \otimes_R M).
	\]
	If \( r = \dim R \), then Auslander–Buchsbaum implies that \( M \) is free. Otherwise, assuming \( \depth(M) \le r \) leads to
	\[
	2\depth(M) = \dim R + r > 2r,
	\]
	a contradiction. Thus \( \depth(M) \ge r+1 \), and \( M \) satisfies \( (S_{r+1}) \).
\end{proof}

\begin{conjecture}
	If \( \pd(M) < \infty \) and \( M \otimes_R M \) is reflexive, then \( M \) is reflexive.
\end{conjecture}

\begin{corollary}
	If \( \pd(M) < \infty \), \( \depth(R)=3 \), and \( M \otimes_R M \) is reflexive, then \( M \) is free.
\end{corollary}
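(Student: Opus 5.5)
Set \( N := M \otimes_R M \). The plan is to combine the rigidity of Tor over modules of finite projective dimension with the depth formula, as in the preceding proposition for regular rings, and then use the Auslander--Buchsbaum formula.

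First, \( \pd(M) < \infty \) and \( \depth(R)=3 \) give \( \pd(M) = 3 - \depth(M) \le 3 \). We may assume \( M \neq 0 \). Since \( N \) is reflexive, it is torsionless and hence a submodule of a free module. If \( \depth(M) = 0 \), then \( M \) has a nonzero socle element \( x \), and \( x \otimes x \) should give depth zero in \( N \), contradicting \( \depth(N) \geq 1 \). This part needs care, so instead I would use localization and induction on \( \dim R \), in the style of Proposition~\ref{30}. For every non-maximal prime \( \fp \), the module \( N_{\fp} \) is reflexive and \( \pd(M_{\fp}) < \infty \).

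The key step is the vanishing \( \Tor_i^R(M,M)=0 \) for \( i \ge 1 \). Reflexive modules are torsion-free, and for \( \pd(M) < \infty \) the classical rigidity argument (Auslander's \cite{au}, Lichtenbaum for finite projective dimension) suggests that torsion-freeness of \( M\otimes_R M \) together with \( M \) being locally free in low codimension forces Tor to vanish. I would try to set this up by choosing the first syzygy sequence \( 0 \to \Omega M \to F \to M \to 0 \) and tensoring with \( M \), giving
\[
0 \to \Tor_1^R(M,M) \to \Omega M \otimes_R M \to F \otimes_R M \to N \to 0.
\]
I would then argue inductively that \( \Tor_1 \), which is supported in the non-free locus, embeds into a torsion module and must vanish. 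Rigidity then gives vanishing of all higher Tor. I expect this step to be the main obstacle, since rigidity of Tor fails in general beyond regular or hypersurface settings.

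Granting Tor vanishing, Auslander's depth formula gives
\[
\depth(M)+\depth(M) = \depth(R) + \depth(N) = 3 + \depth(N).
\]
Since \( N \) is reflexive and \( \depth R = 3 \ge 2 \), Fact~\ref{ref} gives \( \depth(N) \ge 2 \). Hence \( 2\depth(M) \ge 5 \), so \( \depth(M) \ge 3 = \depth(R) \). Auslander--Buchsbaum then yields \( \pd(M)=0 \), so \( M \) is free.

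If full Tor rigidity is unavailable, the fallback is the following case analysis. If \( \pd(M) \le 1 \), the preceding proposition shows \( M \) is reflexive, so \( \depth M \ge 2 \) and \( \pd M \le 1 \). The long exact sequence in the argument of that proposition, together with \( \depth N \ge 2 \), should then push \( \depth M \) to \( 3 \). The cases \( \pd M = 2, 3 \) correspond to \( \depth M \le 1 \); there I would try to contradict torsion-freeness of \( N \) via \( \Tor_1 \).
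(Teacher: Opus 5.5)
There is a genuine gap. The whole proof rests on $\Tor_i^R(M,M)=0$ for $i\ge 1$, and you never prove it. The general principles you appeal to are not available here:

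\begin{itemize}
\item The rigidity theorems you have in mind (Auslander, Lichtenbaum, Huneke--Wiegand) hold over regular rings or hypersurfaces. Over an arbitrary local ring, modules of finite projective dimension need not be Tor-rigid (Heitmann's counterexample to the rigidity conjecture). So you can quote neither ``$M\otimes_R M$ torsion-free $\Rightarrow \Tor_1^R(M,M)=0$'' nor ``$\Tor_1^R(M,M)=0 \Rightarrow$ higher $\Tor$ vanish''.
\item Your exact sequence does not close the gap. $\Tor_1^R(M,M)$ is indeed torsion, since it vanishes at every $\fp\in\Ass R$, where $M_\fp$ is free. But it embeds into $\Omega M\otimes_R M$, and the hypothesis says nothing about torsion in that module.
\item The induction on $\dim R$ has no inductive statement. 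The hypothesis $\depth R=3$ does not pass to $R_\fp$, and the analogous claim is false in depth $4$: take $M=\Omega^3 k$ over a $4$-dimensional regular ring. There $\Tor_{\ge 1}^R(M,M)=0$, the depth formula gives $\depth(M\otimes_R M)=2$, and $M\otimes_R M$ is free on the punctured spectrum, hence reflexive, while $M$ is not free.
\item The socle idea fails because a socle element $x$ may lie in $\fm M$, and then nothing forces $x\otimes x\neq 0$.
\end{itemize}

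Granting $\Tor$-vanishing, your depth-formula computation is correct. But in the fallback the cases $\pd M\in\{2,3\}$, i.e.\ $\depth M\le 1$, get no argument at all.

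The paper avoids rigidity altogether by reducing to $\pd M\le 1$. It takes $M$ reflexive, so $\depth M\ge 2$ and $\pd M\le 1$. In that range $\Tor$-vanishing is automatic: from $0\to M\to F\to C\to 0$ one gets $\Tor_1^R(M,M)\cong\Tor_2^R(C,M)=0$. Equivalently, from a resolution $0\to R^{b_1}\to R^{b_0}\to M\to 0$, $\Tor_1^R(M,M)$ is a torsion submodule of the torsion-free module $M^{b_1}$, hence zero. Then, if $M$ is not free, the depth formula gives $4=2\depth M=3+\depth N\ge 5$, a contradiction.

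Your fallback for $\pd M\le 1$ is in substance this argument, once you make the automatic $\Tor$-vanishing explicit. What is missing is the reduction step itself: showing $\depth M\ge 2$ when $\pd M\ge 2$.

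Be aware that the paper covers this step only with the sentence ``Then $M$ is reflexive''. For $\pd M\ge 2$ that is exactly the conjecture stated just before the corollary; the proposition the paper actually proves requires $\pd M<2$. So the paper's argument, like your fallback, settles the statement only when $\pd M\le 1$ (or assuming that conjecture). Ruling out $\depth M\le 1$ needs an idea that neither your sketch nor any general rigidity theorem supplies.
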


\begin{proof}
	Then \( M \) is reflexive, so \( \depth(M) \ge 2 \), and Auslander–Buchsbaum gives \( \pd(M) \le 1 \). If \( M \) were not free, tensoring a presentation \( 0 \to M \to F \to C \to 0 \) with \( M \) would yield \( \Tor_1^R(M,M)=0 \). The depth formula would then give
	\[
	2\depth(M) = \depth(R) + \depth(M \otimes_R M) = 3 + 2 = 5,
	\]
	a contradiction.
\end{proof}

Let \( \Tr(-) \) denote the Auslander transpose, see \cite{AB} for  its definition.

\begin{theorem}\label{44}
	Suppose \( R \) satisfies \( (S_2) \), \( M \) is finitely generated with \( \pd(M) < \infty \), and both \( M \otimes_R M \) and \( \Hom_R(M,M) \) are reflexive. Then \( M \) is free.
\end{theorem}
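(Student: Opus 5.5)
Induction on $d=\dim R$, with the goal of reducing everything to the hypothesis of Proposition~\ref{30}, namely that $M\otimes_R M^*$ is reflexive. The hypotheses localize: $(S_2)$ for $R$, $\pd(M)<\infty$, and reflexivity of $M\otimes_R M$ and $\Hom_R(M,M)$ all pass to $R_\fp$. So by induction $M_\fp$ is free for every $\fp\neq\fm$, i.e.\ $M$ is locally free on the punctured spectrum.

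The base cases are cheap. If $\depth R=0$, then $M\otimes_R M$ reflexive makes $M$ torsion-free (it is a submodule of the torsionless module $M\otimes M$ up to localization issues, or more directly: reflexivity of $M\otimes M$ forces $M_\fp$ torsion-free at minimal primes). Hence $\depth M\ge \depth R$, and Auslander--Buchsbaum gives $M$ free. If $d\le 1$, $(S_2)$ makes $R$ Cohen--Macaulay. Then torsion-freeness of $M$ gives $\depth M\ge\depth R$, and again $M$ is free. So assume $d\ge 2$, whence $\depth R\ge 2$ by $(S_2)$.

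The main step is to show that $\Hom_R(M,M)\cong M\otimes_R M^*$, or at least that the natural map $\theta\colon M\otimes_R M^*\to\Hom_R(M,M)$ is an isomorphism. Since $M$ is locally free on the punctured spectrum, $\theta$ is an isomorphism there, so $\ker\theta$ and $\Coker\theta$ have finite length. Now $M\otimes_R M^*$ is torsion-free: at minimal primes $M$ is free (by induction, as $R_\fp$ is Artinian with $\pd<\infty$). Torsion-freeness kills the finite-length $\ker\theta$, since $H^0_\fm(M\otimes M^*)=0$ once $\depth$ is positive. To see this, embed $M\otimes M^*$ into $(M\otimes M^*)^{**}$, which agrees with it on the punctured spectrum.

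For $\Coker\theta$, use the alternative route. Choose $0\to M\to R^n\to C\to 0$; it exists because $M$ is reflexive. Indeed, $\Hom(M,M)$ reflexive and locally free on the punctured spectrum contains $M$ as a direct summand via $R\to\Hom(M,M)$... This is delicate. Instead I would argue via depth: $M\otimes_R M$ reflexive gives $\depth(M\otimes M)\ge 2$. As in the proof of the Proposition with $\pd(A)\le1$, the depth formula for the Tor-rigid situation gives $\Tor_i^R(M,M)=0$ for $i\ge1$, because finite-length Tor modules together with $\pd<\infty$ and the acyclicity lemma force vanishing when $\depth(M\otimes M)\ge 1$. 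This yields $2\depth M=\depth R+\depth(M\otimes M)\ge \depth R+2$. Auslander--Buchsbaum then gives $\pd M\le \tfrac12(\depth R-2)$, i.e.\ $\depth M$ is large.

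With that depth bound, $M$ is locally free on the punctured spectrum with $\depth M\ge 2$, so \cite[Lemma 8.1]{ACS} applies once we know $H^1_\fm(M\otimes M^*)=0$. The exact sequence $0\to M\otimes M^*\to \Hom(M,M)\to \Coker\theta\to 0$ and $\depth\Hom(M,M)\ge2$ give $H^1_\fm(M\otimes M^*)\cong H^0_\fm(\Coker\theta)=\Coker\theta$.

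This identification is the main obstacle. I would try to show $\Coker\theta=0$ by noting that $\Coker\theta$ is a quotient of $\Ext^1_R(\Tr M,M)$, and that this Ext vanishes by the Tor vanishing above, through the standard Auslander--Bridger sequence $0\to\Ext^1(\Tr M,M)\to M\otimes M^*\to\Hom(M,M)\to\Ext^2(\Tr M,M)\to0$. This reduces the problem to proving $\Ext^2_R(\Tr M,M)=0$. The Tor vanishing together with the depth bound, via the depth lemma on a truncated resolution, is what I would use to get it. Then Lemma 8.1 of \cite{ACS} (or Proposition~\ref{30}) finishes.
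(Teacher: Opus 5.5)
\textbf{The failing step.} You identify $\Coker\theta$, for $\theta\colon M\otimes_R M^*\to\Hom_R(M,M)$, with $\Ext^2_R(\Tr M,M)$ (or with a quotient of $\Ext^1_R(\Tr M,M)$). That identification is wrong. The Auslander--Bridger sequence is
\[
0\to\Ext^1_R(\Tr M,N)\to M\otimes_R N\to\Hom_R(M^*,N)\to\Ext^2_R(\Tr M,N)\to 0 .
\]
For $N=M$ it describes the map $M\otimes_RM\to\Hom_R(M^*,M)$, not $\theta$. The sequence that describes $\theta$ is
\[
\Tor_2^R(\Tr M,M)\to M^*\otimes_R M\xrightarrow{\ \theta\ }\Hom_R(M,M)\to\Tor_1^R(\Tr M,M)\to 0 .
\]
So $\Coker\theta\cong\Tor_1^R(\Tr M,M)$, and nothing in the hypotheses controls it.

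Your final step is in fact the paper's route, and the paper's proof uses the same mislabeled sequence. Its embeddings $\Ext^i_R(\Tr M,M)\hookrightarrow H^{i-1}_\fm(M\otimes_RM)$, combined with reflexivity of $M\otimes_RM$, only give $M\otimes_RM\cong\Hom_R(M^*,M)$. They do not give $H^1_\fm(M\otimes_RM^*)=0$.

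Your earlier steps are also unsupported:
\begin{itemize}
\item The argument for $\ker\theta=0$ is circular: embedding $M\otimes_RM^*$ into its bidual presupposes that it is torsionless.
\item The acyclicity lemma applied to $F\otimes_RM$, with $F$ a minimal resolution of $M$, needs $\depth M\ge\pd M$, which is not available at that point. Finite projective dimension does not give Tor-rigidity in general.
\item Torsion-freeness of $M$ in the case $d=1$ is asserted rather than proved.
\end{itemize}

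\textbf{The gap cannot be closed: the statement fails once $\dim R=4$.} Let $R=k[[x_1,x_2,x_3,x_4]]$ and $M=\Coker(R\to R^4,\ 1\mapsto(x_1,\dots,x_4))\cong\Omega^3_R(k)$.
\begin{itemize}
\item $\pd M=1$ and $\depth M=3$. $M$ is free on the punctured spectrum but not free.
\item $\Tor_1^R(M,M)=(0:_M\fm)=0$, so $0\to M\to M^4\to M\otimes_RM\to0$ is exact. The depth formula gives $\depth(M\otimes_RM)=2$. Since $M\otimes_RM$ is also free on the punctured spectrum, it is reflexive.
\item Likewise $\Hom_R(M,M)=\ker(M^4\to M)$ has depth at least $2$, is free on the punctured spectrum, and so is reflexive.
\end{itemize}
Every intermediate claim of your plan holds here. $\Tr M=k$, so $\Ext^i_R(\Tr M,M)=\Ext^i_R(k,M)=0$ for $i\le2$. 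All $\Tor_i^R(M,M)$ with $i\ge1$ vanish, $\ker\theta=0$, and $2\depth M=\depth R+\depth(M\otimes_RM)$.

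Nevertheless $\Coker\theta\cong\Tor_1^R(k,M)\cong k\cong H^1_\fm(M\otimes_RM^*)$. Indeed $M^*\cong\Omega^2_R(k)$ and $\depth(M\otimes_RM^*)=1$. So \cite[Lemma 8.1]{ACS} cannot be invoked. Your inequality $2\depth M\ge\depth R+2$ forces freeness only when $\depth R\le 3$.
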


\begin{proof}
	We argue by induction on \( d = \dim R \). The result is known for \( d \le 2 \), so assume \( d>2 \). By induction, \( M \) is locally free on the punctured spectrum. If \( M \) is not free, Auslander–Bridger theory gives an exact sequence
	\[
	0 \to \Ext^1_R(\Tr M,M) \to M \otimes_R M^* \to \Hom_R(M,M) \to \Ext^2_R(\Tr M,M) \to 0.
	\]
	One shows that \( \Ext^2_R(\Tr M,M) \) has finite length. Moreover, there are embeddings (see \cite{ACS})
	\[
	\Ext^1_R(\Tr M,M) \hookrightarrow H^0_{\fm}(M \otimes_R M), \quad
	\Ext^2_R(\Tr M,M) \hookrightarrow H^1_{\fm}(M \otimes_R M).
	\]
	Since \( M \otimes_R M \) is reflexive, both local cohomology modules vanish. Hence \( M \otimes_R M^* \cong \Hom_R(M,M) \), and therefore
	\(
	H^1_{\fm}(M \otimes_R M^*) = 0.
	\)
	By \cite[Lemma 8.1]{ACS}, this forces \( M \) to be free.
\end{proof}\medskip
\section{Dimension of Ext Modules}

Recall that $\grade(I;M)$ is the length of the longest $M$-sequence contained in $I$. Also,
\(
\grade(M):= \grade(\Ann(M),R).
\)
We begin by recalling the following fact.

\begin{fact}\label{beger2}
	(See \cite[3.7]{Beder}.)  
	Let $A$ be a local ring and $M$ a finitely generated $A$-module of finite projective dimension. Assume that
	\(
	\grade(M) + \dim(M) = \dim(A).
	\)
	Let $d := \dim(A)$ and $r := \dim(M)$. Then
	\(
	\dim\big(\Ext^{d-r}_A(M,A)\big) = r.
	\)
\end{fact}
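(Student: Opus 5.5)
The statement is Fact~\ref{beger2}: for $M$ of finite projective dimension over $A$ with $\grade(M)+\dim(M)=\dim(A)$, we want $\dim\Ext^{c}_A(M,A)=r$, where $c:=d-r=\grade(M)$. I would prove the two inequalities separately. Both rest on the standard dictionary between $\Supp\Ext^{c}_A(M,A)$ and the primes $\fp\in\Supp(M)$ with $\grade(M_\fp)=\grade(M)$.

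\textbf{Upper bound.} Since $\Ext^{c}_A(M,A)$ is annihilated by $\Ann(M)$, its support lies in $\Supp(M)$. Hence $\dim\Ext^{c}_A(M,A)\le\dim(M)=r$, and this direction is immediate.

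\textbf{Lower bound.} Choose $\fp\in\Supp(M)$ with $\dim(A/\fp)=r$. Such a $\fp$ is minimal in $\Supp(M)$, so $M_\fp$ has finite length over $A_\fp$ and is nonzero.

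\textbf{Step 1: compute $\pd M_\fp$.} Since $\pd_{A_\fp}(M_\fp)<\infty$, the Auslander--Buchsbaum formula gives $\pd(M_\fp)=\depth A_\fp-\depth M_\fp=\depth A_\fp$. For a nonzero finite-length module of finite projective dimension, Ischebeck/Peskine--Szpiro give $\grade(M_\fp)=\pd(M_\fp)=\depth(A_\fp)$. In particular $\Ext^{\depth A_\fp}_{A_\fp}(M_\fp,A_\fp)\neq 0$, since the top Ext is always nonzero at the projective dimension.

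\textbf{Step 2: show $\depth A_\fp=c$.} It then suffices to show $\depth A_\fp=c$, so that $\fp\in\Supp\Ext^{c}_A(M,A)$ and $\dim\Ext^{c}_A(M,A)\ge\dim A/\fp=r$.

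On one side, $\grade(M)\le\grade(M_\fp)=\depth A_\fp\le\Ht\fp$. On the other side, we need $\depth A_\fp\le c$. This is the main obstacle, since a priori $\Ht\fp$ could exceed $d-r$ when $A$ is not catenary or not equidimensional, and the hypothesis $\grade(M)+\dim(M)=\dim(A)$ must be used here.

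The first thing I would try is the following. Let $I=\Ann(M)$. Then $\grade(M)=\min\{\depth A_\fq:\fq\in V(I)\}$, and by the grade formula $\grade(I)=\inf\{i:\Ext^i(A/I,A)\ne0\}$. Pick $\fq\supseteq I$ attaining $\depth A_\fq=\grade(M)$; then $\fq\in\Ass\Ext^{c}(M,A)$ after localization. Use $\depth A_\fq+\dim A/\fq\le$ something, and the inequality $\grade(M)+\dim M\le\dim A$, which always holds via $\Ht I+\dim A/I\le\dim A$. Equality forces $\Ht I=\grade I$ and extremality of chains through $V(I)$.

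Concretely, I would use the standard inequality $\depth A_\fq\ge\depth A-\dim A/\fq$ together with $\depth M_\fq\ge\depth M-\dim A/\fq$ and Auslander--Buchsbaum. If these are insufficient, I would instead argue that every minimal prime $\fp$ of $\Supp M$ with $\dim A/\fp=r$ must satisfy $\Ht\fp\le d-r$. Since $\grade(M_\fp)\ge c$ with equality forced by the equality hypothesis applied through $\dim A\ge\Ht\fp+\dim A/\fp\ge\grade(M_\fp)+r\ge c+r=d$, all inequalities collapse, giving $\depth A_\fp=\grade(M_\fp)=c$, as required.
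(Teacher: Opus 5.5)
Your proof is correct. Note that the paper gives no argument of its own here: the statement is quoted as a Fact from Beder \cite[3.7]{Beder}. So your proposal supplies a self-contained proof where the paper only has a citation.

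The real content is your last sentence. For any $\fp\in\Supp(M)$ with $\dim(A/\fp)=r$,
\[ d=c+r\le \grade(M_\fp)+r\le \Ht(\fp)+\dim(A/\fp)\le \dim(A)=d, \]
so $\grade(M_\fp)=c=\Ht(\fp)$. Hence $\Ext^{c}_{A_\fp}(M_\fp,A_\fp)\neq 0$, i.e.\ $\fp\in\Supp\Ext^{c}_A(M,A)$. Combined with $\Supp\Ext^{c}_A(M,A)\subseteq\Supp(M)$, this gives the equality. The ``first thing I would try'' paragraph can be deleted entirely: this chain dissolves the obstacle you flagged, and no depth inequalities along $V(I)$ are needed.

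Your route also shows something the citation hides: the hypothesis $\pd(M)<\infty$ plays no role in this conclusion. In Step 1 you invoke Auslander--Buchsbaum and Ischebeck/Peskine--Szpiro, but neither is needed. For a nonzero finite-length $A_\fp$-module the annihilator is $\fp A_\fp$-primary, so $\grade(M_\fp)=\depth(A_\fp)$ holds trivially. Moreover, $\Ext^{\grade(N)}_B(N,B)\neq 0$ for every nonzero finitely generated module $N$ over a local ring $B$. So the grade equation $\grade(M)+\dim(M)=\dim(A)$ alone forces $\dim\Ext^{d-r}_A(M,A)=r$.

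In the paper, finite projective dimension matters only because that is the setting in which the grade equation is expected to hold, and in which it is verified before the Fact is applied (e.g.\ in Lemma~\ref{45}).
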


\begin{lemma}\label{45}
	Let $R$ be complete, and let $P \neq 0$ be a perfect prime ideal of height $t$ and finite projective dimension. Then
	\[
	\dim\!\big(\Ext^{t-1}_R(P,R)\big) =
	\begin{cases}
	d - t & \text{if } t \ge 2, \\
	d & \text{if } t = 1.
	\end{cases}
	\]
\end{lemma}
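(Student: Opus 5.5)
The plan is to treat the two cases separately. Both rest on the short exact sequence $0\to P\to R\to R/P\to 0$ and on the fact that $R$ is a complete domain. By Fact~\ref{Auslander}, $R$ is a domain, because $P$ is a prime of finite projective dimension. Being a complete local domain, $R$ is catenary and equidimensional, so $\Ht(P)+\dim(R/P)=d$, that is, $\dim(R/P)=d-t$. Throughout, $d=\dim R$.

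\emph{Case $t=1$.} Here $\Ext^{0}_R(P,R)=P^*$, and we must show $\dim P^*=d$. Since $R$ is a domain and $P\neq 0$, we have $P\otimes_R Q(R)\cong Q(R)$, where $Q(R)$ is the fraction field. Hence $P^*\otimes_R Q(R)\cong \Hom_{Q(R)}(Q(R),Q(R))\neq 0$; concretely, multiplication by any nonzero element of $P$ gives a nonzero map $P\to R$. So $0\in\Supp(P^*)$, and therefore $\dim(P^*)=\dim(R/0)=d$.

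\emph{Case $t\ge 2$.} First we shift dimension. Applying $\Hom_R(-,R)$ to $0\to P\to R\to R/P\to 0$ and using $\Ext^{i}_R(R,R)=0$ for $i\ge1$ gives
\[
\Ext^{i}_R(P,R)\cong \Ext^{i+1}_R(R/P,R)\qquad (i\ge 1).
\]
Since $t-1\ge 1$, we get $\Ext^{t-1}_R(P,R)\cong\Ext^{t}_R(R/P,R)$.

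Next we apply Fact~\ref{beger2} to the module $M=R/P$, which has finite projective dimension. That fact requires $\grade(R/P)+\dim(R/P)=d$. By perfectness, $\grade(P)=\pd(R/P)$, and in the statement's meaning of ``perfect of height $t$'' this common value is $t=\Ht(P)$. Together with the catenary identity above, $\grade(R/P)+\dim(R/P)=t+(d-t)=d$. Fact~\ref{beger2} now gives $\dim\Ext^{d-r}_R(R/P,R)=r$ with $r=\dim(R/P)=d-t$. Since $d-r=t$, we conclude $\dim\Ext^{t-1}_R(P,R)=\dim\Ext^{t}_R(R/P,R)=d-t$.

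The main obstacle is confirming the hypothesis $\grade(R/P)+\dim(R/P)=d$, and specifically that $\grade(P)=\Ht(P)$. If perfectness is taken only to mean $\grade(P)=\pd(R/P)$, I would bypass Fact~\ref{beger2} and argue directly.
\begin{enumerate}
\item The upper bound $\dim \Ext^t_R(R/P,R)\le \dim(R/P)=d-t$ is automatic, since this module is annihilated by $P$.
\item For the lower bound, localize at $P$: $\Ext^t_R(R/P,R)_P\cong\Ext^t_{R_P}(k(P),R_P)$. This is nonzero exactly when $\depth R_P=t$.
\item The inequalities $\grade(P)\le\depth R_P\le \Ht(P)=t$ hold in general.
\item Auslander--Buchsbaum over $R_P$ gives $\depth R_P=\pd_{R_P}(k(P))\le\pd(R/P)=\grade(P)$.
\item Combining the last two items forces $\grade(P)=\depth R_P$, so the lower bound reduces to showing $\pd(R/P)=t$.
\end{enumerate}
That last equality is exactly the content of ``perfect of height $t$.''
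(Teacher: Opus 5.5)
Your overall route is the paper's: the same case split, the dimension shift $\Ext^{t-1}_R(P,R)\cong\Ext^{t}_R(R/P,R)$ followed by Fact~\ref{beger2} for $t\ge 2$, and for $t=1$ the observation that $P^*$ is generically nonzero. The paper phrases the $t=1$ case as the injection $R\hookrightarrow\Hom_R(P,R)$ coming from $\Hom_R(R/P,R)=0$. The one step you do not prove is $\grade(P)=\Ht(P)=t$, which you build into the phrase ``perfect of height $t$''. Under the standard definition, perfect only means $\grade(P)=\pd(R/P)$. That this common value equals the height is an extra fact. The paper gets it from the grade conjecture holding for $R/P$, equivalently Fact~\ref{beder}: $\Ht(P)=\grade(R/P)$ for primes of finite projective dimension. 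Your fallback in items 1--5 ends at the same unproved equality $\pd(R/P)=t$, so as written the gap is not closed.

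The missing idea is short, and your own items already set it up. Since $\pd_{R_P}(k(P))\le\pd_R(R/P)<\infty$ and $k(P)$ is the residue field of $R_P$, Serre's theorem makes $R_P$ regular. Hence $\depth R_P=\dim R_P=t$, and items 3--5 give $\grade(P)=\depth R_P=t=\pd(R/P)$, which is exactly the hypothesis Fact~\ref{beger2} needs.

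Regularity of $R_P$ also lets item 2 finish the proof directly. We have $\Ext^t_{R_P}(k(P),R_P)\cong k(P)\neq 0$, so $P\in\Supp\Ext^t_R(R/P,R)$ and $\dim\Ext^t_R(R/P,R)\ge\dim(R/P)=d-t$. Together with item 1, this settles $t\ge 2$ without using perfectness or Fact~\ref{beger2}.

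One small correction: the ``exactly when $\depth R_P=t$'' in item 2 is only justified once $R_P$ is known to be regular. For a general local ring $A$, $\Ext^i_A(k,A)\neq 0$ for every $i$ between $\depth A$ and $\id A$. The direction you actually need, nonvanishing at $i=\depth R_P$, holds regardless.
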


\begin{proof}
	By Fact \ref{Auslander}, $R$ is a domain. In particular, $R$ is catenary and equidimensional. Hence the grade conjecture holds, so Fact \ref{beger2} applies.
	Since $P$ is perfect,
	\[
	t = \dim(R) - \dim(R/P) = \grade(R/P) = \pd(R/P),
	\]
	where the first equality follows from \cite[Lemma 2, p.\ 250]{mat}. Fact \ref{beger2} now yields
	\[
	\dim\!\big(\Ext^{t}_R(R/P,R)\big) = \dim(R/P).
	\]
	If $t > 1$, then by shifting,
	\[
	\Ext^{t}_R(R/P,R) \cong \Ext^{t-1}_R(P,R),
	\]
	and the result follows.
	
	Now suppose $t=1$. From the short exact sequence
	\(
	0 \to P \to R \to R/P \to 0
	\)
	we obtain
	\[
	0 = \Hom(R/P,R) \lo R \lo \Hom(P,R),
	\]
	so $\dim(\Hom(P,R)) \ge \dim(R)$. The reverse inequality always holds, hence
	\(
	\dim(\Hom(P,R)) = \dim(R).
	\)
\end{proof}

\begin{lemma}\label{47}
	Let $(R,\mathfrak{m})$ be a regular local ring of dimension $4$ containing $\mathbb{Q}$, and let $M$ satisfy $(S_3)$. Then
	\(
	\dim\!\big(\Ext^{1}_R(M,R)\big) \le 2.
	\)
\end{lemma}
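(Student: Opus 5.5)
Since $R$ is regular of dimension $4$, $M$ has finite projective dimension. We will control $\Ext^1_R(M,R)$ prime by prime: the aim is $\Ext^1_R(M,R)_{\fp}=0$ for every prime $\fp$ of height $\le 1$, which gives $\dim \Ext^1_R(M,R)\le \dim R-2=2$. Over a regular local ring all saturated chains have the expected length and $\dim R/\fp=4-\Ht\fp$, so this reduction is exactly the bound we need.

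\emph{Step 1 (localization and Serre's condition).} For $\fp\in\Spec R$, the condition $(S_3)$ gives $\depth M_{\fp}\ge \min\{3,\dim M_{\fp}\}$. If $M$ has full support (e.g.\ if $M$ is torsion-free, which $(S_1)$-type conditions give on a domain when $\Supp M=\Spec R$), this reads $\depth M_{\fp}\ge\min\{3,\Ht\fp\}$. Since $R_{\fp}$ is regular, Auslander--Buchsbaum gives
\[
\pd_{R_{\fp}}M_{\fp}=\Ht\fp-\depth M_{\fp}\le \max\{0,\Ht\fp-3\}.
\]
Thus $M_{\fp}$ is free whenever $\Ht\fp\le 3$, and $\Ext^1_{R_{\fp}}(M_{\fp},R_{\fp})=0$ there. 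This already yields finite length, far stronger than needed. The only case to watch is when $M$ does not have full support.

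\emph{Step 2 (the non-full-support case).} If $\Supp M\ne\Spec R$, then $\grade(M)\ge 1$, and $(S_3)$ is interpreted relative to $\dim M_{\fp}$. Here I would use the general fact $\grade(\Ext^i_R(M,R))\ge i$ for modules over a regular (hence Gorenstein) ring, or more simply the vanishing $\Ext^i_R(M,R)_{\fp}=0$ for $i<\grade M_{\fp}$. For $\fp$ of height $\le1$ in $\Supp M$, $M_{\fp}$ has dimension $0$ or $1$ and $R_{\fp}$ is a field or a DVR. Over a DVR, $\Ext^1$ is nonzero only if $M_{\fp}$ has torsion, that is, $\dim M_{\fp}=0$ with $\Ht\fp=1$. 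The set of such primes is contained in the minimal primes of $\Supp M$ of height $1$. Over a UFD such a component corresponds to a divisor, and $\Ext^1_R(M,R)$ would then have dimension $3$ along it. So the statement must intend $M$ torsion-free or of full support; I would read $(S_3)$ in the convention $\depth M_{\fp}\ge\min\{3,\Ht\fp\}$ (the usual one for reflexivity criteria), which forces full support. I expect this interpretation issue to be the main obstacle.

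\emph{Step 3 (conclusion).} With the convention fixed, Step 1 shows $\Supp\Ext^1_R(M,R)\subseteq\{\fm\}$, so $\dim\Ext^1_R(M,R)\le 0\le 2$. The hypothesis $\mathbb{Q}\subseteq R$ seems unnecessary along this route. If the authors intend the weaker convention, I would fall back on $\grade\Ext^1_R(M,R)\ge 1$ together with the $(S_3)$ depth bound on the components of dimension $\ge 2$, to exclude height-one primes of $\Supp M$ at which $\depth M_{\fp}=0$.
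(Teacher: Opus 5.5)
Your proof is correct, and it takes a different and simpler route than the paper.

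\textbf{Your route.} You use only localization and Auslander--Buchsbaum. With the convention $\depth M_{\fp}\ge\min\{3,\Ht\fp\}$, every $M_{\fp}$ with $\fp\ne\fm$ is maximal Cohen--Macaulay over the regular ring $R_{\fp}$, hence free. So $\Ext^1_R(M,R)$ has finite length.

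\textbf{The paper's route.} The paper states this same local freeness in the first paragraph of its proof but never uses it. Instead it invokes Miller's Bourbaki-type sequence $0\to F\to M\to P\to 0$ (from Evans--Griffith; this is the only place $\mathbb{Q}\subseteq R$ is used), where $P$ is a height-two prime with $R/P$ normal. Then $R/P$ is two-dimensional and $(S_2)$, hence Cohen--Macaulay, so $P$ is perfect. Lemma~\ref{45} (Beder's formula) gives $\dim\Ext^1_R(P,R)=2$, and $\Ext^1_R(M,R)$ is a quotient of $\Ext^1_R(P,R)$.

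\textbf{Comparison.} The paper's argument ties the lemma to the perfect-prime computation of Lemma~\ref{45}. However, it only reaches the bound $2$ and needs the characteristic hypothesis. Your argument gives the sharper conclusion that $\Ext^1_R(M,R)$ has finite length. As you note, it needs no assumption on the characteristic.

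\textbf{Convention for $(S_3)$.} Your reading is right and agrees with the paper, which deduces from $(S_3)$ that $M$ is torsion-free and reflexive. Under the weaker convention $\depth M_{\fp}\ge\min\{3,\dim M_{\fp}\}$ the statement is false. Take $x\in\fm\setminus\fm^2$ and $M=R/xR$: then $M$ is Cohen--Macaulay, but $\Ext^1_R(M,R)\cong R/xR$ has dimension $3$. In this example $\grade\Ext^1_R(M,R)=1$, so the fallback at the end of your Step~3 cannot work. Drop it rather than keep it as a hedge.
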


\begin{proof}
	Since $M$ satisfies $(S_3)$, it is torsion-free and reflexive. For any $\mathfrak{p} \in \Spec(R)$ with $\operatorname{ht}(\mathfrak{p}) \le 3$, the Auslander–Buchsbaum formula implies that $M_{\mathfrak{p}}$ is free.
	By a construction of Miller (see \cite[Theorem 2.17]{syz}), there exists a free module $F$ and an exact sequence
	\[
	0 \lo F \lo M \lo P \lo 0 \quad(\ast),
	\]
	where $P$ is a prime ideal of height $2$ and $R/P$ is normal. By Serre’s criterion, $R/P$ satisfies $(S_2)$. Since
	\[
	\dim(R/P) = \dim(R) - \operatorname{ht}(P) = 2,
	\]
	it follows that $R/P$ is Cohen–Macaulay, so $P$ is perfect. Thus we are in the situation of Lemma \ref{45}.
From the long exact sequence of Ext-modules induced by $(\ast)$, we have
	\[
	\Ext^{1}_R(P,R) \lo \Ext^{1}_R(M,R) \lo \Ext^{1}_R(F,R) = 0,
	\]
	we see that $\Ext^{1}_R(M,R)$ is a quotient of $\Ext^{1}_R(P,R)$. Hence
	\[
	\dim\!\big(\Ext^{1}_R(M,R)\big) \le \dim\!\big(\Ext^{1}_R(P,R)\big) \le 2.
	\]
\end{proof}
By the punctured spectrum we mean $\Spec(R)^\circ:=\Spec(R)\setminus\{\fm\}$. 
\begin{theorem}\label{47thm}
	Let $(R,\mathfrak{m})$ be a regular local ring of dimension $4$ containing $\mathbb{Q}$. Let $M$ satisfy $(S_3)$ and $N$ be torsion-free and locally free over punctured spectrum. Then
	\(
	\dim\!\big(\Ext^{1}_R(M,N)\big) \le 2.
	\)
\end{theorem}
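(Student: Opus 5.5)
We reduce to Lemma~\ref{47}, which is the case $N=R$, by controlling the support of $\Ext^1_R(M,N)$ through localization. The key observation is that for $\fp\in\Spec(R)$ with $\operatorname{ht}(\fp)\le 3$, the module $M_\fp$ is free; this was already shown in the proof of Lemma~\ref{47} via $(S_3)$ and Auslander--Buchsbaum over the regular ring $R_\fp$. Hence $\Ext^1_R(M,N)_\fp\cong\Ext^1_{R_\fp}(M_\fp,N_\fp)=0$ for every $\fp\neq\fm$, so $\Ext^1_R(M,N)$ has finite length and its dimension is $0\le 2$. In this approach the hypothesis on $N$ is not even needed. If one prefers an argument parallel to the paper's, one can instead use Miller's sequence $(\ast)$ from the proof of Lemma~\ref{47}: $0\to F\to M\to P\to 0$ with $F$ free and $P$ a perfect prime of height $2$.

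For the alternative route, apply $\Hom_R(-,N)$ to $(\ast)$. This gives
\[
\Ext^1_R(P,N)\lo \Ext^1_R(M,N)\lo \Ext^1_R(F,N)=0,
\]
so $\Ext^1_R(M,N)$ is a quotient of $\Ext^1_R(P,N)\cong \Ext^2_R(R/P,N)$. Since $\Ext^2_R(R/P,N)$ is annihilated by $P$, its dimension is at most $\dim(R/P)=4-2=2$. This argument also uses neither the torsion-freeness nor the local freeness of $N$.

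The main point to double-check is the localization claim that $M_\fp$ is free when $\operatorname{ht}(\fp)\le 3$. By $(S_3)$ we have $\depth M_\fp\ge\min\{3,\dim M_\fp\}=\operatorname{ht}(\fp)$, because $M$ is torsion-free over a domain and hence $\Supp M=\Spec R$. Auslander--Buchsbaum over the regular local ring $R_\fp$ then gives $\pd M_\fp=0$. This is precisely the step the paper already invokes in Lemma~\ref{47}, so I expect no real obstacle. The only care needed is at $\fp=\fm$, which is excluded here, and that exclusion is exactly why we obtain a finite-length conclusion rather than vanishing. I would present the Miller-sequence argument as the main proof, since it mirrors Lemma~\ref{47}, and mention the finite-length sharpening as a remark.
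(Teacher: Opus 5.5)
Your proposal is correct, and your first argument takes a genuinely different and stronger route than the paper. The paper first reduces to $N=R$ via the Auslander--Bridger sequence
\[
\Tor_2^R(\Tr\Omega M,N)\to\Ext^1_R(M,R)\otimes_R N\to\Ext^1_R(M,N)\to\Tor_1^R(\Tr\Omega M,N)\to 0.
\]
It uses local freeness of $N$ on the punctured spectrum to make the $\Tor_1$ term of finite length, and full support of $N$ to get $\dim(\Ext^1_R(M,R)\otimes_R N)=\dim\Ext^1_R(M,R)$. It then invokes Lemma~\ref{47}, i.e.\ Miller's sequence $(\ast)$ together with Lemma~\ref{45}, to bound $\dim\Ext^1_R(P,R)$ by $2$.

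You instead observe that $(S_3)$ and Auslander--Buchsbaum already make $M$ free at every non-maximal prime, since all of these have height at most $3$. Hence $\Ext^1_R(M,N)$ has finite length for every finitely generated $N$. This uses neither $\mathbb{Q}\subseteq R$, nor Miller's theorem, nor any hypothesis on $N$, and it shows the bound $2$ here (and in Lemma~\ref{47}) is far from sharp.

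Your Miller-sequence variant is closer to the paper, but it still bypasses the Auslander--Bridger reduction. You apply $\Hom_R(-,N)$ to $(\ast)$ directly and use that $\Ext^1_R(P,N)\cong\Ext^2_R(R/P,N)$ is killed by $P$, so it too is valid for arbitrary $N$. Given all this, the finite-length argument deserves to be the main proof rather than a remark.

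The one input you borrow, that $(S_3)$ forces torsion-freeness, reflects the paper's convention $\depth M_\fp\ge\min\{3,\depth R_\fp\}$. Under the weaker convention using $\dim M_\fp$, the statement itself fails: take $M=R/fR$ with $0\ne f\in\fm$ and $N=R$; then $\Ext^1_R(M,R)\cong R/fR$ has dimension $3$. So this dependence is unavoidable and not a defect of your argument.
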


\begin{proof}
	Let $\Omega(-)$ denote the first syzygy. Using \cite{AB}, we have an exact sequence
	\[
	\Tor_2(\Tr \Omega(M),N) \to \Ext^{1}(M,R)\otimes N \xrightarrow{g}
	\Ext^{1}(M,N) \xrightarrow{f}
	\Tor_1(\Tr \Omega(M),N) \to 0.
	\]
	Set $K := \ker(f)$. Since $\Tor_1(\Tr \Omega(M),N)$ has finite length,
	\[
	0 \lo K \lo \Ext^{1}(M,N) \lo \Tor_1(\Tr \Omega(M),N) \lo 0
	\]
	implies
	\[
	\dim\!\big(\Ext^{1}(M,N)\big) = \dim(K).
	\]
	Let $L := \operatorname{im}(g)$. Then $L = K$, and there is a surjection
	\[
	\Ext^{1}(M,R)\otimes N \twoheadrightarrow L\to 0.
	\]
	Hence
	\[
	\dim\!\big(\Ext^{1}(M,N)\big) = \dim(L) \le \dim\!\big(\Ext^{1}(M,R)\otimes N\big).
	\]
	Because $N$ is locally free and torsion-free,  \begin{align*}\Supp(\Ext^{1}(M,R) \otimes N) &=\Supp(\Ext^{1}(M,R) )\cap \Supp( N) \\ &=\Supp(\Ext^{1}(M,R) )\cap \Spec(R)\\ &=\Supp(\Ext^{1}(M,R) ), \end{align*} In other words,
	\(
	\dim(\Ext^{1}(M,R)\otimes N) = \dim(\Ext^{1}(M,R)).
	\)
	Therefore
	\[
	\dim\!\big(\Ext^{1}(M,N)\big) = \dim\!\big(\Ext^{1}(M,R)\big) \le 2,
	\]
	by Lemma \ref{47}.
\end{proof}

\begin{remark}
	In Theorem \ref{47thm}, instead of assuming $\mathbb{Q} \subseteq R$, one may assume that there exists an infinite field $\bar{k} \subseteq R$ of characteristic $p>0$ such that $\bar{k}$ is separable over $k$; see \cite[Theorem 2.17]{syz}.
\end{remark}

\begin{proposition}
	Let $R$ be a $d$-dimensional Gorenstein ring and $M$ a module satisfying $(S_2)$ but not $(S_3)$. Then
	\(
	\dim\!\big(\Ext^{d-2}_R(M, N)\big) = 0,
	\)
where  $N$  is torsion-free and locally free over punctured spectrum.  
\end{proposition}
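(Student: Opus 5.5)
The plan is to show that $\Ext^{d-2}_R(M,N)$ vanishes after localizing at every non-maximal prime, so that its support is contained in $\{\fm\}$. As in Theorem~\ref{47thm}, the first step reduces from $N$ to $R$; here the reduction is easier because we localize away from $\fm$.

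\textbf{Step 1: reduce from $N$ to $R$.} Fix $\fp\in\Spec(R)^\circ$. Since $N$ is locally free on the punctured spectrum, $N_\fp\cong R_\fp^{\,n}$ for some $n$. Because Ext commutes with localization for finitely generated modules,
\[
\Ext^{d-2}_R(M,N)_\fp\cong \Ext^{d-2}_{R_\fp}(M_\fp,R_\fp)^{\oplus n}.
\]
So it suffices to show $\Ext^{d-2}_{R_\fp}(M_\fp,R_\fp)=0$ for all $\fp\neq\fm$. In other words, it suffices to prove that $\Ext^{d-2}_R(M,R)$ has finite length.

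\textbf{Step 2: use the failure of $(S_3)$ to force $d\ge 3$.} If $d\le 2$, then $\min(2,\dim R_\fp)=\min(3,\dim R_\fp)$ for every prime $\fp$, so $(S_2)$ and $(S_3)$ coincide. Hence the hypothesis that $M$ satisfies $(S_2)$ but not $(S_3)$ gives $d\ge 3$, so $d-2\ge 1$. This appears to be the only role of the "not $(S_3)$" assumption.

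\textbf{Step 3: local duality at non-maximal primes.} Let $\fp\neq\fm$ with $M_\fp\neq 0$, and put $h=\Ht(\fp)$. Since $R$ is Gorenstein, hence catenary and equidimensional, $h\le d-1$.
\begin{itemize}
\item If $h=0$, then $R_\fp$ is an Artinian Gorenstein ring, hence self-injective. So $\Ext^{j}_{R_\fp}(M_\fp,R_\fp)=0$ for all $j\ge 1$, in particular for $j=d-2$.
\item If $h\ge 1$, local duality over the Gorenstein ring $R_\fp$ identifies $\Ext^{d-2}_{R_\fp}(M_\fp,R_\fp)$ with the Matlis dual of $H^{h-d+2}_{\fp R_\fp}(M_\fp)$.
\end{itemize}
In the case $h\ge 1$, the index satisfies $h-d+2\le 1$. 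On the other hand, $(S_2)$ gives $\depth(M_\fp)\ge\min(2,h)\ge 1$. If $h=1$, the index is $3-d\le 0<\depth(M_\fp)$. If $h\ge 2$, the index is at most $1<2\le\depth(M_\fp)$. In both cases Grothendieck's vanishing below the depth kills the local cohomology module, so $\Ext^{d-2}_{R_\fp}(M_\fp,R_\fp)=0$.

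\textbf{Conclusion.} Steps 1--3 show that $\Supp\Ext^{d-2}_R(M,N)\subseteq\{\fm\}$, so $\dim\Ext^{d-2}_R(M,N)=0$. If this module happens to be zero, the statement is read with the usual convention, meaning finite length.

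The main point needing care is the $(S_2)$ bookkeeping in Step 3. One must check that primes outside $\Supp(M)$ cause no trouble; they do not, since the localized Ext is then zero. One must also verify that the degree $d-2$ is always strictly below $\depth(M_\fp)$ once it is shifted by the height of $\fp$, which is exactly where $d\ge 3$ is used.
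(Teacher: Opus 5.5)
Your proof is correct, but it takes a more hands-on route than the paper's.

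\textbf{The paper's route.} It first reduces to $N=R$ using the Auslander--Bridger four-term sequence involving $\Tor_i^R(\Tr\Omega^{d-2}M,N)$, whose Tor terms have finite length. It then invokes Grothendieck's finiteness theorem to get $f_{\fm}(M)>2$. Hence $H^2_{\fm}(M)$ is finitely generated, so of finite length, and this transfers to $\Ext^{d-2}_R(M,R)$ by local duality at $\fm$ and Matlis duality.

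\textbf{Your route.} You work prime by prime. The reduction to $N=R$ becomes the one-line isomorphism $\Ext^{d-2}_R(M,N)_\fp\cong\Ext^{d-2}_{R_\fp}(M_\fp,R_\fp)^{\oplus n}$ for $\fp\neq\fm$, which does not even use torsion-freeness of $N$. The vanishing at each such $\fp$ then comes from local duality over $R_\fp$ together with vanishing of local cohomology below the depth.

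\textbf{What each buys.} In the Gorenstein case your computation is exactly what underlies the finiteness theorem, so you are proving the needed special case directly. The paper's version is shorter given the citation. Yours is self-contained and makes clear that the hypothesis ``not $(S_3)$'' serves only to force $d\ge 3$. The paper's claim $f_{\fm}(M)>2$ likewise needs only $(S_2)$ and $d\ge 3$.

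\textbf{A convention to state explicitly.} Your inequality $\depth(M_\fp)\ge\min(2,\Ht\fp)$ uses the convention that $(S_n)$ means $\depth(M_\fp)\ge\min(n,\depth R_\fp)$. This is the paper's convention; compare its use in the proof of Lemma~\ref{47}. Under the other common convention, $\min(n,\dim M_\fp)$, the statement is false. For example, take $R=k[[x_1,\dots,x_5]]$ and $M=R/(x_1,x_2)\oplus R/(x_3,x_4,x_5)$. Then $M$ satisfies $(S_2)$ but not $(S_3)$ in that sense, yet $\Ext^3_R(M,R)\cong R/(x_3,x_4,x_5)$ has dimension $2$.
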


\begin{proof}
In view of the following exact sequence: $$
\Tor_2^R(\Tr\Omega^{d-2}M,N)\rightarrow\Ext^{d-2}_R(M,R)\otimes_RN\rightarrow\Ext^{d-2}_R(M,N)\rightarrow\Tor_1^R(\Tr\Omega^{d-2}M,N)\rightarrow0,
$$and in the vein similar to  {Theorem} \ref{47thm}, the argument can be reduced to assume in addition that $N=R$. Now,	Grothendieck’s finiteness theorem (\cite[Section 9]{BSh}) gives
	\[
	f_{\mathfrak m}(M)
	= \inf\{\depth_{R_{\mathfrak p}}(M_{\mathfrak p}) + \dim(R/\mathfrak p) 
	: \mathfrak p \in \Supp(M)\setminus\{\mathfrak m\}\}.
	\]
	Since $M$ satisfies $(S_2)$ but not $(S_3)$, we have $f_{\mathfrak m}(M) > 2$. By definition,
	\[
	f_{\mathfrak m}(M)
	= \inf\{i : H^i_{\mathfrak m}(M) \text{ is not finitely generated}\},
	\]
	so $H^2_{\mathfrak m}(M)$ is finitely generated. As its support is contained in $\{\mathfrak m\}$, it has finite length.
	Let $(-)^\vee$ denote the Matlis dual. By local duality (\cite[Section 8]{BSh}),
	\(
	H^2_{\mathfrak m}(M) \cong \Ext^{d-2}_R(M, R)^\vee.
	\)
	Since Matlis duality preserves finite length, $\Ext^{d-2}_R(M,R)$ is artinian, hence
	\(
	\dim\!\big(\Ext^{d-2}_R(M,R)\big) = 0.
	\)
\end{proof}

\begin{fact}\label{art}
	If there exists a nonzero Artinian $R$-module of finite projective dimension, then $R$ is Cohen–Macaulay.
\end{fact}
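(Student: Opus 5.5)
The plan is to pass to the Matlis dual. This turns an Artinian module, which need not be finitely generated, into a finitely generated module over $\widehat R$, and it converts finite projective dimension into finite injective dimension. Bass' question (proved via the New Intersection Theorem) then yields Cohen--Macaulayness. Let $A\neq 0$ be Artinian with $\pd_R(A)<\infty$. Let $E=E_R(k)$ and $(-)^\vee=\Hom_R(-,E)$.

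\emph{Step 1: reduce to flat dimension.} A projective resolution of $A$ is in particular a flat resolution, so $\operatorname{fd}_R(A)\le \pd_R(A)=:n<\infty$. Hence $\Tor_i^R(N,A)=0$ for all $R$-modules $N$ and all $i>n$.

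\emph{Step 2: dualize.} Since $E$ is injective, Hom--tensor adjunction gives natural isomorphisms
\[
\Ext^i_R(N,A^\vee)\cong \Hom_R(\Tor_i^R(N,A),E)
\]
for every $R$-module $N$. Consequently $\Ext^i_R(N,A^\vee)=0$ for $i>n$, so $\id_R(A^\vee)\le n$.

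By Matlis duality, $A$ Artinian implies that $A^\vee$ is a finitely generated $\widehat R$-module. Moreover $A^\vee\neq 0$, because $E$ is an injective cogenerator.

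\emph{Step 3: transfer to $\widehat R$.} Set $X=A^\vee$, viewed as a finitely generated $\widehat R$-module. Take $N=k$. Since $\widehat R$ is flat over $R$ and $k\otimes_R\widehat R\cong k$, base change gives
\[
\Ext^i_{\widehat R}(k,X)\cong \Ext^i_{\widehat R}(k\otimes_R\widehat R,X)\cong \Ext^i_R(k,X).
\]
To justify this, compute $\Ext_R(k,-)$ from a resolution of $k$ by finite free $R$-modules, tensor it with $\widehat R$, and use adjunction. By Step 2 the right-hand side vanishes for $i>n$. Since $X$ is finitely generated over the local ring $\widehat R$, vanishing of its Bass numbers in high degrees gives $\id_{\widehat R}(X)<\infty$.

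\emph{Step 4: conclude.} A local ring admitting a nonzero finitely generated module of finite injective dimension is Cohen--Macaulay. This is Bass' conjecture, proved by Peskine--Szpiro and Roberts as a consequence of the New Intersection Theorem. Hence $\widehat R$ is Cohen--Macaulay, and therefore so is $R$.

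The main obstacle is Step 3. One must check carefully that finiteness of injective dimension of $A^\vee$ over $R$ yields finiteness over $\widehat R$, since $A^\vee$ is generally not finitely generated over $R$ itself. Computing Bass numbers via $\Ext(k,-)$ together with flat base change is what handles this.
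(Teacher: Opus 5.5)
Your proof is correct. The paper does not prove this Fact; it cites it as standard, and the standard argument takes a different route from yours.

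Under the paper's convention that modules are finitely generated, $A$ has finite length. The usual argument is then direct. By Auslander--Buchsbaum, $\pd_R(A)=\depth(R)-\depth(A)=\depth(R)$. The (New) Intersection Theorem, applied to a minimal free resolution of $A$, gives $\dim(R)\le \pd_R(A)$: that resolution is a finite free complex of length $\pd_R(A)$ whose only homology is $H_0=A\neq 0$, of finite length. Hence $\dim(R)\le\depth(R)$.

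Your route instead dualizes and invokes Bass' conjecture. That conjecture is itself a corollary of the New Intersection Theorem, so you save no deep input. What you gain is generality: your argument handles an arbitrary, not necessarily finitely generated, Artinian module, and it only uses finite flat dimension. The price is Matlis duality and the passage to $\widehat R$ in Step 3. You handle that step correctly: Bass numbers at $\fm$ agree over $R$ and $\widehat R$, and for finitely generated $\widehat R$-modules they detect injective dimension.

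The paper only needs the finite-length case, e.g.\ $M_{\fp}$ with $\fp$ minimal in $\Supp(M)$, and there your Step 3 is unnecessary. In that case $A^\vee$ is itself of finite length over $R$. The isomorphism $\Ext^i_R(k,A^\vee)\cong \Tor_i^R(k,A)^\vee$ then gives $\id_R(A^\vee)=\pd_R(A)$, so Bass' theorem applies over $R$ directly.
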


\begin{fact}\label{dep}
	If $0 \neq A$ is an Artinian module, then
	\(
	\depth(R) = \inf\{ i : \Ext^i_R(A,R) \neq 0\}.
	\)
\end{fact}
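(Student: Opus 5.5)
This is a standard fact; I will prove it via Matlis duality and local duality after completion. First I reduce to $R$ complete. Since $A$ is Artinian, it is naturally an $\widehat{R}$-module with $A\otimes_R\widehat R\cong A$. Moreover, for every $i$,
\[
\Ext^i_R(A,R)\otimes_R\widehat R\cong \Ext^i_{\widehat R}(A,\widehat R),
\]
because $A$ has a resolution by finitely generated free modules and $\widehat R$ is flat. As $\widehat R$ is faithfully flat, vanishing of $\Ext^i_R(A,R)$ is equivalent to vanishing of $\Ext^i_{\widehat R}(A,\widehat R)$. Also $\depth R=\depth\widehat R$. So I may assume $R$ is complete.

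Rather than using that reduction directly, I will argue with Matlis duality. Let $E=E_R(k)$ and $(-)^\vee=\Hom_R(-,E)$. Because $A$ is Artinian, $A^\vee=:N$ is a finitely generated $\widehat R$-module and $A\cong N^\vee$. The key step is the isomorphism
\[
\Ext^i_R(A,R)=\Ext^i_R(\Hom_{\widehat R}(N,E),R)\cong \Ext^i_{\widehat R}(\Hom(N,E),\widehat R),
\]
with the last step justified by the flatness argument above. For finitely generated $N$, a result of the type proved by Ishikawa (and used in Foxby--Strooker style arguments) identifies this with $\Hom_{\widehat R}(N,H^{i}_{\fm}(\widehat R))$ in a spectral-sequence sense. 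The cleanest way to get it is the spectral sequence
\[
\Ext^p(N^\vee,\widehat R)\Leftarrow \Ext^p_{\widehat R}(N^\vee, \mathbf{R}\Gamma_{\fm}\widehat R),
\]
since $\Hom(A,-)$ only sees $\fm$-torsion: $\mathbf R\Hom(A,X)\simeq\mathbf R\Hom(A,\mathbf R\Gamma_\fm X)$ for $A$ $\fm$-torsion. Thus
\[
\mathbf R\Hom(A,R)\simeq \mathbf R\Hom(A,\mathbf R\Gamma_\fm R).
\]
The complex $\mathbf R\Gamma_\fm R$ has its lowest nonzero cohomology $H^t_\fm(R)$ in degree $t=\depth R$, and no cohomology below $t$. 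Hence $\Ext^i_R(A,R)=0$ for $i<t$, and
\[
\Ext^t_R(A,R)\cong\Hom_R(A,H^t_\fm(R)).
\]

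It remains to show $\Hom_R(A,H^t_\fm(R))\ne0$, which I expect to be the main obstacle. I would apply Matlis duality over $\widehat R$. Dualizing gives
\[
\Hom(A,H^t_\fm(R))^\vee\cong A\otimes H^t_\fm(R)^\vee\cong N^\vee\otimes K,
\]
where $K=H^t_\fm(R)^\vee$ is finitely generated and nonzero. Working with Hom directly is cleaner, though: $\Hom(A,H)\cong \Hom(H^\vee, A^\vee)=\Hom_{\widehat R}(K,N)$, with $K,N$ finitely generated and nonzero. This need not be nonzero in general, so instead I would use the socle. Since $A\ne0$ is Artinian, it surjects onto $k$, because $A/\fm A\ne0$ by Nakayama dualized: $N$ finitely generated nonzero has $\fm$ in its support, so $\Hom(N,k)\neq 0$. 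Hence $k\hookrightarrow N$ fails in general, but $A\twoheadrightarrow k$ does hold. Since $H^t_\fm(R)\ne0$ is $\fm$-torsion, it contains a copy of $k$. Composing
\[
A\twoheadrightarrow k\hookrightarrow H^t_\fm(R)
\]
gives a nonzero map, so $\Ext^t_R(A,R)\neq0$. This yields $\depth R=\inf\{i:\Ext^i_R(A,R)\ne0\}$.
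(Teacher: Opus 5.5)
Your reduction step is correct, but the final nonvanishing step is justified wrongly. Once that step is repaired, the argument works by a route different from the paper's.

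\textbf{What is correct.} For any $\fm$-torsion module $A$ one has $\mathbf{R}\Hom_R(A,R)\simeq\mathbf{R}\Hom_R(A,\mathbf{R}\Gamma_\fm R)$. To see this, take an injective resolution $I$ of $R$. Then $\Hom_R(A,I)=\Hom_R(A,\Gamma_\fm I)$, and $\Gamma_\fm I$ is a complex of injectives computing $\mathbf{R}\Gamma_\fm R$. Since $H^i_\fm(R)=0$ for $i<t:=\depth R$, this gives $\Ext^i_R(A,R)=0$ for $i<t$ and $\Ext^t_R(A,R)\cong\Hom_R(A,H^t_\fm(R))$. None of this needs completion or Matlis duality. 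Your first paragraph and the remarks about Ishikawa and the spectral sequence can be dropped.

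\textbf{The gap.} You assert $A\twoheadrightarrow k$ by ``Nakayama dualized''. But Matlis duality turns $A\twoheadrightarrow k$ into $k\hookrightarrow N$, i.e.\ $\Hom_R(k,N)\neq 0$. The statement you actually verify, $\Hom_R(N,k)\neq 0$, is dual to $k\hookrightarrow A$, i.e.\ to $A$ having nonzero socle. So the property you concede ``fails in general'' is exactly the one you need. For Artinian modules it really does fail: with $E=E_R(k)$, one has $\fm E=E$ whenever $\depth R>0$.

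\textbf{The repair.} Use the paper's standing convention that all modules are finitely generated. Then $A$ has finite length, $A\neq\fm A$ by Nakayama's lemma, and $A\twoheadrightarrow k\hookrightarrow H^t_\fm(R)$ is a nonzero map.

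This hypothesis is essential, because the statement is false for non-finitely generated Artinian modules. Take $R=k[[x,y,z]]/(xy,xz)$, which has depth $1$, and $A=\Hom_R(R/(x),E)\cong E_{k[[y,z]]}(k)$. Then $\Hom_R(A,R)=0$ because $\Gamma_\fm(R)=0$. Also $\Ext^1_R(A,R)\cong\Hom_R(A,H^1_\fm(R))=0$, for two reasons:
\begin{enumerate}
\item $H^1_\fm(R)$ is killed by $(y,z)^2$; this follows from the sequence $0\to R\to R/(x)\oplus R/(y,z)\to k\to 0$.
\item $yA=A$, since $A$ is injective over the domain $k[[y,z]]$.
\end{enumerate}
Hence $\inf\{i:\Ext^i_R(A,R)\neq0\}\geq 2>\depth R$.

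\textbf{Comparison with the paper.} The paper records this as a Fact without proof. The intended argument is the one the paper itself runs in the Observation that finite-length modules of finite G-dimension are quasi-perfect. It combines Rees' characterization $\inf\{i:\Ext^i_R(A,R)\neq 0\}=\grade(\Ann A,R)$ for finitely generated $A$ with $\sqrt{\Ann A}=\fm$. Together these give $\grade(\Ann A,R)=\grade(\fm,R)=\depth R$.

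That proof is more elementary: it is an induction on a regular sequence $\underline{x}$ in $\Ann A$. It uses finite generation at the same point yours must, namely to get $\Hom_R(A,R/(\underline{x}))\neq 0$ via $\Ass\Hom_R(A,N)=\Supp A\cap\Ass N$.

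Your local-cohomology route, once repaired, gives the sharper identification $\Ext^{\depth R}_R(A,R)\cong\Hom_R(A,H^{\depth R}_\fm(R))$. The cost is derived-category machinery where Rees' lemma suffices.
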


By 
$\operatorname{codim}(M)$, we mean \( \inf\{\operatorname{ht}(\mathfrak p) : \mathfrak p \in \Supp(M)\}.
\)

\begin{proposition}
	Let $c := \operatorname{codim}(M)$ and suppose $\pd(M) < \infty$. The following  are valid.
	
	\begin{enumerate}
		\item[(i)] \(\dim\!\big(\Ext^{c}_R(M,R)\big) \ge \dim(R) - c.\)
		
		\item[(ii)] If $M$ is equidimensional, then
		\(\dim\!\big(\Ext^{c}_R(M,N)\big) = \dim(M),\) where  $N$  is torsion-free and locally free over punctured spectrum. 
	\end{enumerate}
\end{proposition}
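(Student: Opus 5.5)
The plan is to localize at a prime of minimal height in the support, where $M$ becomes a finite-length module of finite projective dimension, and then use Facts~\ref{art} and \ref{dep}.

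\textbf{Part (i).} Choose $\fp\in\Supp(M)$ with $\Ht(\fp)=c$. Such a $\fp$ is minimal in $\Supp(M)$, so $M_\fp$ is a nonzero $R_\fp$-module of finite length. Since $\pd_{R_\fp}(M_\fp)\le\pd_R(M)<\infty$, $M_\fp$ is a nonzero Artinian module of finite projective dimension. Fact~\ref{art} then makes $R_\fp$ Cohen--Macaulay, so $\depth(R_\fp)=\dim(R_\fp)=c$. By Fact~\ref{dep}, $c=\inf\{i:\Ext^i_{R_\fp}(M_\fp,R_\fp)\neq 0\}$. In particular $\Ext^c_R(M,R)_\fp\cong\Ext^c_{R_\fp}(M_\fp,R_\fp)\neq 0$, so $\fp\in\Supp(\Ext^c_R(M,R))$. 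This gives
\[
\dim\!\big(\Ext^c_R(M,R)\big)\ \ge\ \dim(R/\fp).
\]

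\textbf{Main obstacle.} It remains to show $\dim(R/\fp)\ge\dim(R)-\Ht(\fp)$. In general only the reverse inequality holds, so this step is where the hypothesis $\pd(M)<\infty$ must be used a second time. My first attempt is to pass to the completion $\widehat R$, where $\pd$ and the Ext-modules behave well, and choose the prime $\fp$ more carefully among minimal primes of $\Supp(M)$ of height $c$. I would then invoke the intersection theorem in the form
\[
\dim(R)\le \pd(M)+\dim(M)
\]
(or its localized version) to control $\dim(R/\fp)$. If this fails in general, I would at least record that the argument goes through whenever $\Ht(\fp)+\dim(R/\fp)=\dim(R)$, for instance when $R$ is catenary and equidimensional, as in Lemma~\ref{45}.

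\textbf{Part (ii).} First reduce from $N$ to $R$ exactly as in Theorem~\ref{47thm}. Use the exact sequence
\[
\Tor_2^R(\Tr\Omega^{c}M,N)\to\Ext^{c}_R(M,R)\otimes_R N\to\Ext^{c}_R(M,N)\to\Tor_1^R(\Tr\Omega^{c}M,N)\to0.
\]
Because $N$ is locally free on the punctured spectrum, the Tor terms have finite length. Because $N$ is torsion-free, $\Supp(N)=\Spec(R)$. These two facts should give $\dim(\Ext^c_R(M,N))=\dim(\Ext^c_R(M,R))$ whenever the latter is positive; the finite-length case is handled separately.

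For the case $N=R$, the upper bound $\dim(\Ext^c_R(M,R))\le\dim(M)$ is immediate, since $\Supp(\Ext^c_R(M,R))\subseteq\Supp(M)$. For the lower bound, take $\fp$ as in part (i), minimal in $\Supp(M)$ with $\Ht(\fp)=c$. Equidimensionality of $M$ says that every minimal prime $\fp$ of $M$ satisfies $\dim(R/\fp)=\dim(M)$. Hence part (i) already yields $\dim(\Ext^c_R(M,R))\ge\dim(R/\fp)=\dim(M)$. This step does not need the problematic inequality from part (i), so part (ii) should go through cleanly.
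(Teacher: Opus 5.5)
\textbf{Part (i).} Up to $\fp\in\Supp(\Ext^c_R(M,R))$ your argument is exactly the paper's. The paper then simply writes $\dim(R/\fp)=\dim(R)-c$ with no justification. So the obstacle you isolate is a genuine gap, present in the paper's proof as well, and your proposal does not close it.

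It cannot be closed by a cleverer choice of $\fp$ or by completing (heights, dimensions and Ext-modules are preserved, but $\widehat R$ need not be equidimensional), because (i) is equivalent to the grade conjecture for $M$. First, $c=\grade(M)$ when $\pd(M)<\infty$. For $\fq\in\Supp(M)$ and $\fp\subseteq\fq$ minimal in $\Supp(M)$, one has $\depth(R_\fq)\ge\pd(M_\fq)\ge\pd(M_\fp)=\depth(R_\fp)=\Ht(\fp)\ge c$ by Fact~\ref{art}, and $\depth(R_\fp)=c$ when $\Ht(\fp)=c$. Second, $\Supp(\Ext^c_R(M,R))\subseteq\Supp(M)$. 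Hence (i) gives $\grade(M)+\dim(M)\ge\dim(R)$, so equality holds by Fact~\ref{psg}. Conversely, the grade equation gives (i) by Fact~\ref{beger2}. The paper itself treats this equation as an open hypothesis elsewhere (Proposition~\ref{pdef}).

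Your intersection-theorem idea gives only $\dim(M)\ge\dim(R)-\pd(M)$. Since $\pd(M)\ge c$, with equality exactly when $M$ is perfect, it proves (i) for perfect $M$ (where $\Supp(\Ext^c_R(M,R))=\Supp(M)$), not in general. What you can legitimately claim is your fallback: (i) holds when $\Ht(\fp)+\dim(R/\fp)=\dim(R)$ for the chosen $\fp$, e.g.\ when $R$ is catenary and equidimensional.

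\textbf{Part (ii).} For $N=R$ your argument is correct and genuinely better than the paper's. The paper converts equidimensionality into $\dim(R)-c=\dim(M)$ and then invokes (i), so it inherits the same gap (equidimensionality of $M$ says nothing about $\Ht(\fp)$). You use only $\fp\in\Supp(\Ext^c_R(M,R))$ and $\dim(R/\fp)=\dim(M)$, which is unconditional.

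The reduction from $N$ to $R$, however, uses a false claim, copied from Theorem~\ref{47thm}, where $R$ is a domain: over a non-domain, a nonzero torsion-free module need not have full support. Take $R=k[[x,y,z]]/(xy,xz)$, $M=R/(x+y)R$ and $N=R/(y,z)R\cong xR$. Then:
\begin{enumerate}
\item[(a)] $x+y$ is $R$-regular, so $\pd(M)=1$;
\item[(b)] $\Supp(M)=V((x,y)R)$ is irreducible of dimension $1$, with $c=1$;
\item[(c)] $N$ is torsion-free and locally free on the punctured spectrum;
\item[(d)] but $\Ext^1_R(M,N)\cong N/(x+y)N\cong k$ has dimension $0$.
\end{enumerate}
So (ii) for general $N$ is false as stated, in the paper too, and needs an extra hypothesis such as $N$ of positive rank. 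With that hypothesis you can drop the Tor sequence altogether. For $\fp\ne\fm$, one has $\Ext^c_R(M,N)_\fp\cong\Ext^c_{R_\fp}(M_\fp,R_\fp)^{\oplus r}\ne0$ with $r\ge1$. If $\fp=\fm$, then $c=\pd(M)$ and $\Ext^c_R(M,N)\cong\Ext^c_R(M,R)\otimes_RN\ne0$.
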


\begin{proof}
	(i) Choose $\mathfrak p \in \Supp(M)$ with $\operatorname{ht}(\mathfrak p)=c$. Then $\mathfrak p \in \Min(\Supp(M))$, so $M_{\mathfrak p}$ is Artinian. Since $\pd(M_{\mathfrak p}) < \infty$, Fact \ref{art} implies $R_{\mathfrak p}$ is Cohen–Macaulay, hence
	\(
	\depth(R_{\mathfrak p}) = \dim(R_{\mathfrak p}) = c.
	\)
	By Fact \ref{dep}, $\Ext^c_{R_{\mathfrak p}}(M_{\mathfrak p}, R_{\mathfrak p}) \neq 0$, so
	$\mathfrak p \in \Supp(\Ext^c_R(M,R))$. Thus
	\[
	\dim\!\big(\Ext^c_R(M,R)\big) \ge \dim(R/\mathfrak p) = \dim(R) - c.
	\]
	
	(ii) In the vein similar to  {Theorem} \ref{47thm}, the proof easily reduced to the case that $N:=R$. If $M$ is equidimensional, every saturated chain from $\mathfrak p \in \Min(\Supp M)$ to $\mathfrak m$ has length $\dim(M)$. Hence $\dim(R) - c = \dim(M)$, and the result follows from (i).
\end{proof}

\begin{proposition}
	Let $(R,\mathfrak m)$ be local of dimension $d$, and let $M$ be a finitely generated module with $\pd(M) < \infty$. Then $\Ext^{d}_R(M,N)$ has finite length, where  $N$  is torsion-free and locally free. 
\end{proposition}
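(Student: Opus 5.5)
The plan is to show that $\Ext^d_R(M,N)$ is supported only at the maximal ideal. A finitely generated module with $\Supp \subseteq \{\fm\}$ has finite length, so this suffices. Unlike Theorem~\ref{47thm}, I do not expect to need the Auslander--Bridger reduction to $N=R$: the vanishing away from $\fm$ should follow purely from projective dimension considerations, whatever $N$ is.

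\emph{Step 1 (localize).} Fix $\fp \in \Spec(R)\setminus\{\fm\}$. Since $M$ is finitely generated over the Noetherian ring $R$, Ext commutes with localization:
\[
\Ext^d_R(M,N)_{\fp}\cong \Ext^d_{R_{\fp}}(M_{\fp},N_{\fp}).
\]

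\emph{Step 2 (bound the projective dimension locally).} Localizing a finite free resolution of $M$ shows $\pd_{R_{\fp}}(M_{\fp}) \le \pd_R(M) < \infty$. If $M_{\fp}=0$ there is nothing to prove. Otherwise the Auslander--Buchsbaum formula over $R_{\fp}$ gives
\[
\pd_{R_{\fp}}(M_{\fp}) = \depth(R_{\fp})-\depth(M_{\fp}) \le \depth(R_{\fp}) \le \dim(R_{\fp}) = \Ht(\fp).
\]
Since $\fp\subsetneqq \fm$, any chain of primes ending at $\fp$ extends by $\fm$, so $\Ht(\fp)\le d-1$. Hence $\pd_{R_{\fp}}(M_{\fp})\le d-1<d$. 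It follows that $\Ext^d_{R_{\fp}}(M_{\fp},-)=0$ on every $R_{\fp}$-module, and in particular $\Ext^d_R(M,N)_{\fp}=0$.

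\emph{Step 3 (conclude).} By Step 2, $\Supp(\Ext^d_R(M,N))\subseteq\{\fm\}$. Since $\Ext^d_R(M,N)$ is finitely generated, it has finite length.

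I do not anticipate a real obstacle. The only point to watch is the inequality $\Ht(\fp)\le d-1$ for non-maximal primes, which uses that $R$ is local of dimension $d$. In particular, the hypotheses that $N$ is torsion-free and locally free are not used, so the statement holds for arbitrary finitely generated $N$.
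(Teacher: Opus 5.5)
Your proof is correct, and its core is the same as the paper's: localize at a non-maximal prime $\fp$ and use Auslander--Buchsbaum to get $\pd_{R_\fp}(M_\fp)\le \Ht(\fp)<d$, so that $\Supp(\Ext^d_R(M,N))\subseteq\{\fm\}$.

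The paper adds two detours that your argument does without.
\begin{enumerate}
\item It first reduces to $N=R$ via the Auslander--Bridger sequence $\Tor_2^R(\Tr\Omega^{d}M,N)\to\Ext^d_R(M,R)\otimes_R N\to\Ext^d_R(M,N)\to\Tor_1^R(\Tr\Omega^{d}M,N)\to 0$, as in Theorem~\ref{47thm}. This is where the hypotheses on $N$ are used.
\item It then observes that $\Ext^d_R(M,R)\neq 0$ forces $d\le \pd(M)\le\depth(R)$. Hence $R$ is Cohen--Macaulay, which gives $\depth(R_\fp)=\Ht(\fp)$.
\end{enumerate}

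Neither step is needed. The inequality $\depth(R_\fp)\le\dim(R_\fp)$ already gives the bound. And since $\pd_{R_\fp}(M_\fp)<d$ kills $\Ext^d_{R_\fp}(M_\fp,-)$ on every module, there is no reason to pass through $N=R$.

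Your route is therefore shorter and proves the stronger statement: $\Ext^d_R(M,N)$ has finite length for every finitely generated $N$. You are right that the torsion-free and locally free hypotheses are superfluous. The paper's route only adds the side remark that $R$ must be Cohen--Macaulay whenever $\Ext^d_R(M,R)\neq 0$.
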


\begin{proof}The proof easily reduced to the case that $N:=R$ (for example, see the argument of {Theorem} \ref{47thm}).
	Assume $\Ext^{d}_R(M,R) \neq 0$. Then $d \le \pd(M) \le \depth(R) \le d$, so $\depth(R)=d$, i.e.\ $R$ is Cohen–Macaulay.
If $\dim(M)=0$ the result is clear. Otherwise choose $\mathfrak p \in \Supp(M)$ with $\mathfrak p \ne \mathfrak m$. Then
	\[
	\pd(M_{\mathfrak p}) \le \depth(R_{\mathfrak p}) = \operatorname{ht}(\mathfrak p) < d,
	\]
	so $\Ext^{d}_{R_{\mathfrak p}}(M_{\mathfrak p}, R_{\mathfrak p}) = 0$. Hence $\mathfrak p \notin \Supp(\Ext^d_R(M,R))$, and therefore
	\(
	\Supp(\Ext^d_R(M,R)) \subseteq \{\mathfrak m\}.
	\)
	Thus $\Ext^d_R(M,R)$ has finite length.
\end{proof}

\begin{proposition}
	Let $R$ be $d$-dimensional, and let $M$ be a $1$-dimensional module with $\pd(M) < \infty$. Then 
	\(
	\dim(R/\mathfrak p) + \operatorname{ht}(\mathfrak p) = \dim(R) 
	\) for every $\mathfrak p \in \Supp(\Ext^{d-1}_R(M,R))$.
\end{proposition}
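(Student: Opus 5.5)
Fix $\mathfrak p \in \Supp(\Ext^{d-1}_R(M,R))$; in particular $\Ext^{d-1}_R(M,R)\neq 0$, so $\pd(M)\ge d-1$. I first bound depths. Auslander--Buchsbaum gives $\pd(M)=\depth(R)-\depth(M)\le \depth(R)\le d$. Since $\dim M=1$, we have $\depth(M)\le 1$, and therefore $d-1\le \pd(M)\le\depth(R)\le d$. So $\depth(R)\in\{d-1,d\}$, and $R$ is either Cohen--Macaulay or almost Cohen--Macaulay.

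Next I localize at $\mathfrak p$. We have $\Ext^{d-1}_{R_{\mathfrak p}}(M_{\mathfrak p},R_{\mathfrak p})\ne 0$ and $\mathfrak p\in\Supp(M)$. As $\dim M=1$, either $\mathfrak p=\mathfrak m$ or $\mathfrak p$ is minimal in $\Supp M$ with $\dim(R/\mathfrak p)=1$. If $\mathfrak p=\mathfrak m$, the equality $\dim(R/\mathfrak p)+\Ht(\mathfrak p)=0+d$ is trivial. So assume $\dim(R/\mathfrak p)=1$. In this case $M_{\mathfrak p}$ is a nonzero module of finite length and finite projective dimension over $R_{\mathfrak p}$. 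By Fact~\ref{art}, $R_{\mathfrak p}$ is Cohen--Macaulay, and by Auslander--Buchsbaum, $\pd(M_{\mathfrak p})=\depth R_{\mathfrak p}=\Ht\mathfrak p$. Moreover, Fact~\ref{dep} says $\Ext^i_{R_{\mathfrak p}}(M_{\mathfrak p},R_{\mathfrak p})$ vanishes for $i<\depth R_{\mathfrak p}=\Ht\mathfrak p$, and it also vanishes for $i>\pd M_{\mathfrak p}=\Ht\mathfrak p$. Hence nonvanishing in degree $d-1$ forces $\Ht(\mathfrak p)=d-1$, which gives $\dim(R/\mathfrak p)+\Ht(\mathfrak p)=1+(d-1)=d$.

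The key point is this last step: for a finite-length module of finite projective dimension over a Cohen--Macaulay local ring, $\Ext^i(-,R)$ is concentrated in the single degree $i=\Ht\mathfrak p$. I would justify the upper vanishing from $\pd$, and the lower vanishing from Fact~\ref{dep} (equivalently, from $\grade$). This pins down the height exactly, so no catenarity assumption on $R$ is needed. The only subtlety is checking that $\mathfrak p$ lies in $\Supp M$ and is minimal there when $\mathfrak p\neq\mathfrak m$, which follows because $\Supp \Ext\subseteq\Supp M$ and $\dim M=1$.
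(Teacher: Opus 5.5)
Your proof is correct and follows the paper's argument. Both reduce to $\mathfrak p\neq\mathfrak m$ minimal in $\Supp M$ with $\dim(R/\mathfrak p)=1$, get $R_{\mathfrak p}$ Cohen--Macaulay from Fact~\ref{art}, and compare $d-1$ with $\pd(M_{\mathfrak p})=\Ht\mathfrak p$ via Auslander--Buchsbaum. The one cosmetic difference is that you pin down $\Ht\mathfrak p=d-1$ using the lower vanishing from Fact~\ref{dep}, whereas the paper only extracts $\Ht\mathfrak p\ge d-1$ and closes with the general inequality $\dim(R/\mathfrak p)+\Ht\mathfrak p\le\dim R$; your opening paragraph bounding $\depth R$ is correct but not needed.
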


\begin{proof}
	If $\Ext^{d-1}_R(M,R)=0$, there is nothing to prove. Let $\mathfrak p \in \Supp(\Ext^{d-1}_R(M,R))$.
First, assume that $\mathfrak p = \mathfrak m$. The the equality is trivial. Otherwise $\mathfrak p \in \Min(\Supp(M))$, since $\Supp(\Ext^{d-1}(M,R)) \subseteq \Supp(M)$ and $\dim(M)=1$. Thus $\dim(M_{\mathfrak p})=0$.
	Since $\pd(M_{\mathfrak p})<\infty$, Fact \ref{art} implies $R_{\mathfrak p}$ is Cohen–Macaulay. By Auslander–Buchsbaum,
	\[
	\pd(M_{\mathfrak p}) \le \depth(R_{\mathfrak p}) = \dim(R_{\mathfrak p}) = \operatorname{ht}(\mathfrak p).
	\]
	On the other hand,
	\[
	\Ext^{d-1}_{R_{\mathfrak p}}(M_{\mathfrak p}, R_{\mathfrak p}) \neq 0
	\;\Rightarrow\;
	\pd(M_{\mathfrak p}) \ge d-1.
	\]
	Hence
	\[
	\operatorname{ht}(\mathfrak p) \ge d-1 = d - \dim(R/\mathfrak p),
	\]
	so
	\(
	\dim(R/\mathfrak p) + \operatorname{ht}(\mathfrak p) \ge d.
	\)
	The reverse inequality always holds, giving equality.
\end{proof}We now pose the following:
\begin{problem}
	What is the annihilator of $\Ext^i_R(M,N)$?
\end{problem}\medskip
\section{Inequalities for Dimensions, Depth, and Grade}

The following is well known.

\begin{fact}\label{71}
	Let $\mathfrak{p} \in \Supp(M)$. Then
	\(
	\dim(M) \ge \dim(M_{\mathfrak{p}}) + \dim(R/\mathfrak{p}).
	\)
\end{fact}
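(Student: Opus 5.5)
The plan is to compare lengths of chains of primes in $\Supp(M)$. Set $s:=\dim(M_{\mathfrak{p}})$ and $t:=\dim(R/\mathfrak{p})$. Both numbers are finite, since $R$ is Noetherian local and $M$ is finitely generated. The idea is to concatenate a chain realizing $s$ with a chain realizing $t$, which produces a chain in $\Supp(M)$ of length $s+t$.

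\emph{Step 1.} Since $\dim(M_{\mathfrak{p}})=\dim(R_{\mathfrak{p}}/\Ann(M)R_{\mathfrak{p}})$ and $\Supp(M_{\mathfrak{p}})$ corresponds to the primes $\mathfrak{q}\in\Supp(M)$ with $\mathfrak{q}\subseteq\mathfrak{p}$, we can pick a chain
\[
\mathfrak{q}_0\subsetneq \mathfrak{q}_1\subsetneq\cdots\subsetneq \mathfrak{q}_s=\mathfrak{p}
\]
with $\mathfrak{q}_0\in\Supp(M)$. Supports are closed under specialization, so every $\mathfrak{q}_i$ lies in $\Supp(M)$. We may take the chain to end at $\mathfrak{p}$ because any maximal-length chain in $\Spec(R_{\mathfrak{p}})$ above $\Ann(M)R_{\mathfrak{p}}$ can be extended to end at $\mathfrak{p}R_{\mathfrak{p}}$.

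\emph{Step 2.} Since $\dim(R/\mathfrak{p})=t$, choose a chain
\[
\mathfrak{p}=\mathfrak{p}_0\subsetneq\mathfrak{p}_1\subsetneq\cdots\subsetneq\mathfrak{p}_t.
\]
Each $\mathfrak{p}_j$ contains $\mathfrak{p}\supseteq\Ann(M)$, so every $\mathfrak{p}_j$ lies in $\Supp(M)=V(\Ann(M))$.

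\emph{Step 3.} Concatenating the two chains at $\mathfrak{p}$ gives a chain of length $s+t$ in $V(\Ann(M))$. Hence
\[
\dim(M)=\dim(R/\Ann(M))\ge s+t=\dim(M_{\mathfrak{p}})+\dim(R/\mathfrak{p}).
\]

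There is no real obstacle here. The only point needing care is Step 1: we must identify $\dim(M_{\mathfrak{p}})$ with a chain inside $\Supp(M)$ that terminates exactly at $\mathfrak{p}$. This is immediate from $\Supp(M_{\mathfrak{p}})=\{\mathfrak{q}R_{\mathfrak{p}} : \mathfrak{q}\in\Supp(M),\ \mathfrak{q}\subseteq\mathfrak{p}\}$ together with the fact that $\mathfrak{p}R_{\mathfrak{p}}$ is the unique maximal ideal of $R_{\mathfrak{p}}$.
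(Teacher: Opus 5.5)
Your proposal is correct and follows essentially the same argument as the paper: concatenate a maximal chain in $\Supp(M_{\mathfrak{p}})$ ending at $\mathfrak{p}$ with a maximal chain from $\mathfrak{p}$ up to $\mathfrak{m}$, obtaining a chain of length $\dim(M_{\mathfrak{p}})+\dim(R/\mathfrak{p})$ in $\Supp(M)$. Your version also justifies explicitly that the first chain can be taken to end exactly at $\mathfrak{p}$. The paper glosses over this point, writing $\mathfrak{q}_{t_2}\subseteq\mathfrak{p}$ and then setting $\mathfrak{p}=\mathfrak{q}_{t_2}$.
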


\begin{proof}
	Set $t_1 = \dim(R/\mathfrak{p})$ and $t_2 = \dim(M_{\mathfrak{p}})$. Choose a chain of primes in $\Supp(M_{\mathfrak{p}})$:
	\[
	\mathfrak{q}_0 \subsetneq \mathfrak{q}_1 \subsetneq \cdots \subsetneq \mathfrak{q}_{t_2} \subseteq \mathfrak{p}.
	\]
	Also choose a chain in $\Supp(M)$ extending beyond $\mathfrak{p}$:
	\[
	\mathfrak{p} = \mathfrak{q}_{t_2} \subsetneq \mathfrak{q}_{t_2+1} \subsetneq \cdots \subsetneq \mathfrak{q}_{t_2+t_1} = \mathfrak{m}.
	\]
	Concatenating these chains gives a chain of length $t_1+t_2$ in $\Supp(M)$, proving the claim.
\end{proof}

\begin{fact}\label{psg}
	(Peskine–Szpiro, \cite[Lemma II.1.4.8]{PS}.) One has
	\(
	\depth(R) \le \grade(M) + \dim(M) \le \dim(R).
	\)
\end{fact}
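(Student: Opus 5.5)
Set $I := \Ann(M)$, so that $\grade(M) = \grade(I,R)$ and $\dim(M) = \dim(R/I)$, since $\Supp(M) = V(I)$. We may assume $M \neq 0$. Both inequalities then become statements about the single ideal $I \neq R$, namely
\[
\depth(R) \le \grade(I,R) + \dim(R/I) \le \dim(R),
\]
and we treat them separately.

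\emph{Right inequality.} Choose a prime $\fp \supseteq I$ with $\dim(R/\fp) = \dim(R/I)$; such a $\fp$ is minimal over $I$. Grade does not increase when we enlarge the ideal, and it does not increase under localization, so
\[
\grade(I,R) \le \grade(\fp,R) \le \depth(R_\fp) \le \Ht(\fp).
\]
Concatenating a saturated chain of primes below $\fp$ with one from $\fp$ up to $\fm$, as in the argument of Fact~\ref{71}, gives $\Ht(\fp) + \dim(R/\fp) \le \dim(R)$. Combining the two displays yields $\grade(M) + \dim(M) \le \dim(R)$.

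\emph{Left inequality.} We prove $\depth(R) \le \grade(I,R) + \dim(R/I)$ for an arbitrary proper ideal $I$, by induction on $g := \grade(I,R)$.

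If $g = 0$, then $I$ consists of zero-divisors of $R$. By prime avoidance, $I \subseteq \fq$ for some $\fq \in \Ass(R)$. We then invoke the standard bound $\depth(R) \le \dim(R/\fq)$ for every $\fq \in \Ass(R)$ (see \cite[Proposition 1.2.13]{BH}). Since $I \subseteq \fq$, this gives $\depth(R) \le \dim(R/\fq) \le \dim(R/I)$.

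If $g > 0$, pick an $R$-regular element $x \in I$ and pass to $\bar R = R/xR$. The following hold:
\begin{itemize}
\item $\depth(\bar R) = \depth(R) - 1$;
\item $\grade(I\bar R, \bar R) = g - 1$, since a maximal $R$-sequence in $I$ may be chosen to begin with $x$;
\item $\bar R/I\bar R = R/I$, so the dimension of the quotient is unchanged.
\end{itemize}
The induction hypothesis applied to $\bar R$ and $I\bar R$ gives $\depth(R) - 1 \le (g-1) + \dim(R/I)$, which is the claim.

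The only step that is not routine is the base-case inequality $\depth(R) \le \dim(R/\fq)$ for associated primes. It follows because $\Hom_R(R/\fq, R) \neq 0$ forces $\depth(R) \le \dim(R/\fq)$, via the standard bound $\depth(N) \le \dim(L)$ whenever $\Hom_R(L,N) \neq 0$. If one prefers not to cite this, it can be proved by its own induction on $\depth(R)$, cutting down by a regular element $y$ and checking that $\fq$ survives: it lies inside an associated prime of $R/yR$ whose quotient has dimension at least $\dim(R/\fq) - 1$.
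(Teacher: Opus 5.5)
The paper gives no proof of this fact. It is simply quoted from Peskine--Szpiro, so there is no internal argument to compare yours against. Your proof is correct and self-contained.

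For the right-hand inequality you are in effect using $\grade(I,R)\le\Ht(I)$ together with $\Ht(\mathfrak{p})+\dim(R/\mathfrak{p})\le\dim(R)$. Your verbal justifications are reversed: passing to a larger ideal, or localizing, cannot \emph{decrease} grade. The displayed chain $\grade(I,R)\le\grade(\fp,R)\le\depth(R_\fp)$ is nevertheless the correct one.

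For the left-hand inequality, a common one-step proof uses the vanishing $\Ext^i_R(L,N)=0$ for $i<\depth(N)-\dim(L)$. This is proved by induction on $\dim(L)$ via prime filtrations and Nakayama. Taking $L=M$ and $N=R$, one reads off $\grade(M)=\min\{i:\Ext^i_R(M,R)\neq 0\}\ge\depth(R)-\dim(M)$. Your induction on $g$ instead removes regular elements from $I$ until the grade is zero. It then needs only the degree-zero case of that vanishing, namely $\depth(R)\le\dim(R/\mathfrak{q})$ for $\mathfrak{q}\in\Ass(R)$. This keeps everything at the level of $\Hom$ and associated primes. The modest cost is the routine facts that $x$ extends to a maximal $R$-sequence in $I$, and that depth and grade both drop by exactly one modulo $x$.

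One small correction to your optional sketch of $\depth(R)\le\dim(R/\mathfrak{q})$. The inequality that closes the induction on $\depth(R)$ is $\dim(R/\mathfrak{q}')\le\dim(R/\mathfrak{q})-1$ for some $\mathfrak{q}'\in\Ass(R/yR)$ containing $\mathfrak{q}$. This is automatic, because $y\in\mathfrak{q}'\setminus\mathfrak{q}$. The only thing to verify is that such a $\mathfrak{q}'$ exists. That follows from Nakayama applied to the injection $\Hom_R(R/\mathfrak{q},R)/y\Hom_R(R/\mathfrak{q},R)\hookrightarrow\Hom_R(R/\mathfrak{q},R/yR)$. The lower bound $\dim(R/\mathfrak{q}')\ge\dim(R/\mathfrak{q})-1$ that you state can be arranged, but it plays no role in the argument.
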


By Cohen–Macaulay defect, we mean
\(
\CMdef(M) := \dim(M) - \depth(M).
\)

\begin{observation}\label{4.3}
	Let $\mathfrak{p} \in \Supp(M)$. Then:
	
	\begin{enumerate}
		\item[(i)] 
		\(
		\dim(R_{\mathfrak{p}}) - \dim(M_{\mathfrak{p}})
		\ge \dim(R) - \dim(M) - \CMdef(R).
		\)
		
		\item[(ii)] The inequality in (i) is sharp in general.
	\end{enumerate}
\end{observation}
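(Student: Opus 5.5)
The plan is to reduce part (i) to two applications of the Peskine--Szpiro inequality (Fact~\ref{psg}): once over the localization $R_{\mathfrak p}$ and once over $R$ itself. Since $\CMdef(R)=\dim(R)-\depth(R)$, the right-hand side of (i) equals $\depth(R)-\dim(M)$. So it suffices to prove
\[
\dim(R_{\mathfrak p})-\dim(M_{\mathfrak p}) \;\ge\; \grade(M) \;\ge\; \depth(R)-\dim(M).
\]

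\emph{Step 1 (the right inequality).} This is exactly the left inequality of Fact~\ref{psg} over $R$: $\depth(R)\le \grade(M)+\dim(M)$.

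\emph{Step 2 (the left inequality).} Apply the right inequality of Fact~\ref{psg} to the $R_{\mathfrak p}$-module $M_{\mathfrak p}$, which is nonzero because $\mathfrak p\in\Supp(M)$. This gives
\[
\grade_{R_{\mathfrak p}}(M_{\mathfrak p})+\dim(M_{\mathfrak p})\le \dim(R_{\mathfrak p}).
\]
It then remains to compare grades, i.e.\ to show $\grade_{R_{\mathfrak p}}(M_{\mathfrak p})\ge \grade_R(M)$. Since $M$ is finitely generated, $\Ann_{R_{\mathfrak p}}(M_{\mathfrak p})=\Ann_R(M)R_{\mathfrak p}$. Grade does not drop under localization: if $I=\Ann(M)\subseteq\mathfrak p$, then
\[
\grade(I,R)=\inf\{i:\Ext^i_R(R/I,R)\neq0\},
\]
and $\Ext^i_R(R/I,R)_{\mathfrak p}=\Ext^i_{R_{\mathfrak p}}(R_{\mathfrak p}/IR_{\mathfrak p},R_{\mathfrak p})$. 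Hence the first nonvanishing index can only increase after localizing. Alternatively, an $R$-regular sequence in $I$ remains $R_{\mathfrak p}$-regular in $IR_{\mathfrak p}$. Combining the two displays yields (i). I expect this grade comparison to be the only point needing care, and it is standard.

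\emph{Part (ii).} For sharpness I would exhibit equality. Take $R$ Cohen--Macaulay, so $\CMdef(R)=0$, and $M=R$. Then for every $\mathfrak p$ both sides equal $0$, since $\dim(R_{\mathfrak p})-\dim(R_{\mathfrak p})=0=\dim R-\dim R-0$. If a non-Cohen--Macaulay witness is desired, I would look for $M$ and $\mathfrak p$ for which both Peskine--Szpiro inequalities used above are equalities and grade is preserved under localization. The simplest such choice is $\mathfrak p=\mathfrak m$ with $\grade(M)+\dim(M)=\depth(R)$ and $\grade(M)+\dim(M)=\dim(R)$ simultaneously. Since that forces $R$ to be Cohen--Macaulay, I would content myself with the Cohen--Macaulay example above.
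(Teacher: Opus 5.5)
Your proof of (i) is the paper's argument. It is the chain $\dim(R_{\fp})-\dim(M_{\fp})\ge\grade(M_{\fp})\ge\grade(M)\ge\depth(R)-\dim(M)$, with Fact~\ref{psg} applied over $R_{\fp}$ and over $R$. The middle step holds because vanishing of $\Ext$ into $R$ survives localization.

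Part (ii) has a genuine gap. Your example does show that equality can occur, but only trivially. Over any Cohen--Macaulay ring, equality in (i) at $\fp=\fm$ holds for every $M$, since both sides equal $\dim(R)-\dim(M)$. Because your witness has $\CMdef(R)=0$, it never tests the one nontrivial feature of (i), the correction term $\CMdef(R)$. The sharpness that matters, and that the paper exhibits in connection with Conjecture~\ref{4.4}, is that this term cannot be dropped. One needs $R,M,\fp$ with $\dim(R_{\fp})-\dim(M_{\fp})<\dim(R)-\dim(M)$; even better, with equality in (i) and $\CMdef(R)>0$. You concluded that no non-Cohen--Macaulay witness is available, but only after restricting to $\fp=\fm$, where equality does force $\CMdef(R)=0$.

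The missing idea is to let $\fp$ be a minimal prime of $R$. Then $\dim(R_{\fp})=\dim(M_{\fp})=\grade(M_{\fp})=0$. Now choose $M$ with $\grade(M)=0$ and $\dim(M)=\depth(R)$, so all three inequalities in your chain become equalities. Concretely, let $R=k[x,y,z]_{(x,y,z)}/(xy,xz)$, where $(xy,xz)=(x)\cap(y,z)$, and take $M=R/(y,z)$ and $\fp=(y,z)$. The ring $R$ is reduced of positive dimension, and $\fp\in\Ass(R)$ with $\dim(R/\fp)=1$, so $\depth(R)=1$. Since $\dim(R)=2$, this gives $\CMdef(R)=1$, and also $\dim(M)=1$. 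Because $x$ becomes a unit after localizing, $R_{\fp}\cong M_{\fp}\cong k(x)$, so the left side of (i) is $0$. On the other hand $\dim(R)-\dim(M)=1$, while the right side of (i) is $2-1-1=0$. More generally, take any minimal prime $\mathfrak{q}$ of $R$ with $\dim(R/\mathfrak{q})=\depth(R)<\dim(R)$, and set $M=R/\mathfrak{q}$ and $\fp=\mathfrak{q}$. This gives equality in (i) with positive defect, and the naive inequality without $\CMdef(R)$ fails.
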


\begin{proof}
	(i) Using Fact \ref{psg},
	\begin{align*}
	\dim(R_{\mathfrak{p}}) - \dim(M_{\mathfrak{p}})
	&\ge \grade(M_{\mathfrak{p}}) \\
	&\ge \grade(M) \\
	&\ge \depth(R) - \dim(M) \\
	&= \dim(R) - \dim(M) - \CMdef(R).
	\end{align*}
	Here we used in second inequality that $\Ext^i(M,R)=0$ implies $\Ext^i(M_{\mathfrak{p}},R_{\mathfrak{p}})=0$.
	
	(ii) Let $R = k[x,y,z]/((x)\cap(y,z))$ and $M = R/(y,z)$. Then
	$\dim(R)=2$ and $\dim(M)=1$. Localizing at $\mathfrak{p}=(y,z)$, we have
	$M_{\mathfrak{p}}=0$, so
	\[
	\dim(R_{\mathfrak{p}}) - \dim(M_{\mathfrak{p}}) = 0,
	\]
	while $\dim(R)-\dim(M)=1$.
\end{proof}

\begin{conjecture}\label{4.4}
	If $M$ has finite projective dimension, then for all $\mathfrak{p} \in \Supp(M)$,
	\[
	\dim(R_{\mathfrak{p}}) - \dim(M_{\mathfrak{p}})
	\ge \dim(R) - \dim(M).
	\]
\end{conjecture}

\begin{remark}
Recall that if the grade conjecture holds, then {Conjecture} \ref{4.4} is also true. Indeed, \[
\depth(R) - \dim( M) = \grade(M) \stackrel{(+)}\leq \grade(M_{\mathfrak{p}})  =\dim (R_{\mathfrak{p}}) - \dim (M_{\mathfrak{p}}),
\]where $(+)$  is in Observation \ref{4.3}(+).
\end{remark}

\begin{observation}
	Suppose $R$ is generalized Cohen–Macaulay. Then for any $\mathfrak{p} \in \Supp(M)$,
	\[
	\depth(R) - \dim (M )\leq \depth(R_{\mathfrak{p}}) - \depth(M_{\mathfrak{p}}).
	\]
\end{observation}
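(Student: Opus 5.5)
The plan is to split into the cases $\mathfrak p=\mathfrak m$ and $\mathfrak p\neq\mathfrak m$. The generalized Cohen--Macaulay hypothesis is used only in the second case. It enters through two standard properties of a generalized Cohen--Macaulay local ring $R$ of dimension $d$: for every $\mathfrak p\in\Spec(R)\setminus\{\mathfrak m\}$, the ring $R_{\mathfrak p}$ is Cohen--Macaulay, and $\dim(R/\mathfrak p)+\Ht(\mathfrak p)=d$. The second property holds because generalized Cohen--Macaulay local rings are equidimensional and catenary in this sense on the punctured spectrum; this is the classical result of Cuong--Schenzel--Trung, which I would quote rather than reprove.

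\emph{Case $\mathfrak p=\mathfrak m$.} Here the inequality reads
\[
\depth(R)-\dim(M)\le \depth(R)-\depth(M),
\]
which is just $\depth(M)\le\dim(M)$.

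\emph{Case $\mathfrak p\neq\mathfrak m$.} Since $R_{\mathfrak p}$ is Cohen--Macaulay, we have $\depth(R_{\mathfrak p})=\Ht(\mathfrak p)=d-\dim(R/\mathfrak p)$. By Fact~\ref{71},
\[
\depth(M_{\mathfrak p})\le\dim(M_{\mathfrak p})\le\dim(M)-\dim(R/\mathfrak p).
\]
Combining the two displays gives
\[
\depth(R_{\mathfrak p})-\depth(M_{\mathfrak p})\ \ge\ d-\dim(R/\mathfrak p)-\dim(M)+\dim(R/\mathfrak p)=d-\dim(M)\ \ge\ \depth(R)-\dim(M),
\]
where the last step uses $\depth(R)\le d$.

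The only non-formal ingredient is the equality $\Ht(\mathfrak p)=d-\dim(R/\mathfrak p)$ for non-maximal primes, and this is where I expect the main obstacle. Only the inequality $\Ht(\mathfrak p)\ge d-\dim(R/\mathfrak p)$ is actually needed, since the reverse one is automatic. I would justify it as follows. A generalized Cohen--Macaulay local ring has all local cohomology $H^i_{\mathfrak m}(R)$ of finite length for $i<d$. This gives unmixedness of the minimal primes and the dimension formula on the punctured spectrum; after passing to the completion, which is again generalized Cohen--Macaulay, one can use the standard characterization of these rings. Everything else in the argument is formal.
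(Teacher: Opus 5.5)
Your proof is correct, but it reaches the key lower bound by a different route from the paper. For $\fp\neq\fm$ the paper never uses a dimension formula. It runs the chain
\[
\depth(R)-\dim(M)\le\grade(M)\le\grade(M_{\fp})\le\dim(R_{\fp})-\dim(M_{\fp}).
\]
The two outer inequalities are the Peskine--Szpiro bounds (Fact~\ref{psg}), and the middle one holds because vanishing of $\Ext^i_R(M,R)$ survives localization. The paper then uses the generalized Cohen--Macaulay hypothesis only to get $\depth(R_{\fp})=\dim(R_{\fp})$.

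You replace the grade chain by $\Ht(\fp)+\dim(R/\fp)=\dim(R)$ combined with Fact~\ref{71}. The two routes buy different things:
\begin{itemize}
\item The paper's route is more economical. It proves the inequality in an arbitrary local ring, at every $\fp\in\Supp(M)$ with $R_{\fp}$ Cohen--Macaulay.
\item Your route yields the sharper bound $\dim(R)-\dim(M)\le\depth(R_{\fp})-\depth(M_{\fp})$ for $\fp\neq\fm$. That bound genuinely needs the dimension formula. In $R=k[[x,y,z]]/((x)\cap(y,z))$, whose punctured spectrum is regular, take $M=R/(y,z)$ and $\fp=(y,z)$. Then $\dim(R)-\dim(M)=1$, while $\depth(R_{\fp})-\depth(M_{\fp})=0$.
\end{itemize}

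So the extra structural input you flag really is used. It does hold for generalized Cohen--Macaulay rings, via completion. The ring $\widehat{R}$ is again generalized Cohen--Macaulay, hence equidimensional and catenary. One then descends along the flat map $R_{\fp}\to\widehat{R}_P$, choosing $P$ minimal over $\fp\widehat{R}$ with $\dim(\widehat{R}/P)=\dim(R/\fp)$. This input sits at the same level as the citation the paper itself uses for the Cohen--Macaulayness of $R_{\fp}$.
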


\begin{proof}
	We may assume $\mathfrak{p} \neq \mathfrak{m}$. Then
	\[
	\depth(R) - \dim (M) \stackrel{\ref{psg}}\leq \grade(M) \stackrel{(+)}\leq \grade(M_{\mathfrak{p}})\stackrel{\ref{psg}} \leq \dim (R_{\mathfrak{p}}) - \dim (M_{\mathfrak{p}}),
	\]where $(+)$  is in Observation \ref{4.3}(+).
	By   \cite[Exercise 9.5.7]{BSh}, $R_{\mathfrak{p}}$ is Cohen–Macaulay, so $\depth(R_{\mathfrak{p}}) = \dim (R_{\mathfrak{p}})$. Also, $\depth(M_{\mathfrak{p}}) \leq \dim( M_{\mathfrak{p}})$. Hence
	\(
	\dim (R_{\mathfrak{p}}) - \dim (M_{\mathfrak{p}}) \leq \depth(R_{\mathfrak{p}}) - \depth(M_{\mathfrak{p}}).
	\)
\end{proof}

\begin{proposition}\label{pdef}
One has  $\CMdef(M)\geq \CMdef(R)$ for any $M$ of finite projective dimension satisfying the grade conjecture.
\end{proposition}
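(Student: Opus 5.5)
We reduce the claimed inequality to the Auslander--Buchsbaum formula together with one extra inequality, $\grade(M)\le \pd(M)$. We read ``$M$ satisfies the grade conjecture'' as the equality
\[
\grade(M)+\dim(M)=\dim(R).
\]
Unwinding the definitions, the statement $\CMdef(M)\ge\CMdef(R)$ is equivalent to
\[
\dim(R)-\dim(M)\le \depth(R)-\depth(M).
\]
The left side is $\grade(M)$ by hypothesis. Since $\pd(M)<\infty$, the Auslander--Buchsbaum formula rewrites the right side as $\pd(M)$. So it suffices to show
\[
\grade(M)\le \pd(M).
\]

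For this last inequality we use the Ext characterization of grade: $\grade(M)=\grade(\Ann(M),R)=\inf\{i:\Ext^i_R(M,R)\neq 0\}$. We may assume $M\neq 0$; for $M=0$ the statement is vacuous or trivially interpreted.

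Let $n=\pd(M)$, and take a minimal free resolution $0\to F_n\to\cdots\to F_0\to M\to 0$ with $F_n\neq 0$. We claim $\Ext^n_R(M,R)\neq 0$. The module $\Ext^n_R(M,R)$ is the cokernel of $F_{n-1}^*\to F_n^*$. Minimality means the matrix of this map has entries in $\fm$, so by Nakayama the cokernel is nonzero, since $F_n^*\neq 0$. Hence $\grade(M)\le n=\pd(M)$, which closes the argument.

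The only point requiring care is to use the grade conjecture in exactly the form $\grade(M)=\dim R-\dim M$. The weaker Peskine--Szpiro bound of Fact \ref{psg} only gives $\grade(M)\ge \depth(R)-\dim(M)$, which points the wrong way. Everything else is routine.
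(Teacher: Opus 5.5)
Your proposal is correct and follows essentially the same route as the paper. The paper likewise combines the Auslander--Buchsbaum formula, the inequality $\grade(M)\le\pd(M)$, and the grade equality $\grade(M)+\dim(M)=\dim(R)$; the one difference is that the paper asserts $\grade(M)\le\pd(M)$ without comment, whereas you justify it via $\Ext^{\pd(M)}_R(M,R)\neq 0$ from a minimal free resolution and Nakayama.
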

\begin{proof}
We have\begin{align*}	\depth(R)+\CMdef(M)
&= \depth(R)+\dim(M)-\depth(M) \\
&=\pd(M)+\dim(M)\\
&\geq  \grade(M)+\dim(M)\\
&=\dim(R) \\
&=\depth(R)+\CMdef(R),
\end{align*}
and so $\CMdef(M)\geq \CMdef(R)$.
\end{proof}

\begin{proposition}
For any $\mathfrak{p} \in \Supp(M)$,
	\(\dim(M_{\mathfrak{p}}) - \depth(M_{\mathfrak{p}})
\leq  \dim M - \depth(M).
	\)
\end{proposition}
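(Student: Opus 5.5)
The plan is to combine Fact~\ref{71} with the inequality $\depth(M) \le \depth(M_{\fp}) + \dim(R/\fp)$, valid for every $\fp \in \Supp(M)$. Granting the latter, we get
\begin{align*}
\dim(M) - \depth(M) &\ge \big(\dim(M_{\fp}) + \dim(R/\fp)\big) - \big(\depth(M_{\fp}) + \dim(R/\fp)\big)\\
&= \dim(M_{\fp}) - \depth(M_{\fp}),
\end{align*}
where the inequality uses Fact~\ref{71} for the dimension term. So everything reduces to the depth inequality.

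To prove $\depth(M) \le \depth(M_{\fp}) + \dim(R/\fp)$ we induct on $n := \dim(R/\fp)$. If $n=0$ then $\fp=\fm$ and there is nothing to show. If $n \ge 1$, choose a saturated chain $\fp = \fp_0 \subsetneq \fp_1 \subsetneq \cdots \subsetneq \fp_n = \fm$; each $\fp_i$ lies in $\Supp(M)$ since supports are closed under specialization. It therefore suffices to prove the one-step case: if $\fp \subsetneq \fq$ with $\dim(R_{\fq}/\fp R_{\fq}) = 1$, then
\[
\depth(M_{\fq}) \le \depth(M_{\fp}) + 1.
\]
Applying this along the chain and summing gives the bound with $n$ replaced by the chain length. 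Taking the chain of maximal length $n = \dim(R/\fp)$ then gives exactly the needed inequality.

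The one-step claim is the main obstacle. After localizing at $\fq$ we may assume $\fq = \fm$ and $\dim(R/\fp) = 1$. We prove $\depth(M) \le \depth(M_{\fp}) + 1$ by induction on $\depth(M_{\fp})$, by the standard Ext argument (cf.\ \cite[Lemma~9.1.?]{BSh} or Matsumoto's Lemma 3 in \S 17 of \cite{mat}).

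In the base case $\depth(M_{\fp}) = 0$, we have $\fp \in \Ass(M)$. Then $R/\fp \hookrightarrow M$, and $H^1_{\fm}$ considerations, or directly $\Ext^1_R(k,M)\neq 0$, show $\depth(M) \le \dim(R/\fp) = 1$. The cleanest route is the known inequality $\depth(M) \le \dim(R/\fq)$ for every $\fq \in \Ass(M)$.

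In the inductive step $\depth(M_{\fp}) > 0$. If also $\depth(M) = 0$ the claim is trivial. Otherwise $\fm \notin \Ass(M)$, and prime avoidance gives an element $x \in \fp$ that is a nonzerodivisor on $M$: indeed $\fp \not\subseteq \bigcup \Ass(M)$, since $\depth(M_{\fp}) > 0$ forces $\fp$ not to be contained in any associated prime. Replacing $M$ by $M/xM$ lowers both $\depth(M)$ and $\depth(M_{\fp})$ by exactly one, and we conclude by induction. The delicate point is exactly the choice of $x$ inside $\fp$ (not merely in $\fm$) regular on $M$. This works because every associated prime of $M$ contained in $\fp$ would be associated to $M_{\fp}$, whose depth is positive, while every associated prime not contained in $\fp$ is avoided automatically.
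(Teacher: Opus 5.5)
Your proof is correct and takes the same route as the paper: both combine Fact~\ref{71} with $\depth(M)\le\depth(M_{\fp})+\dim(R/\fp)$. The paper cites this inequality from \cite[9.3.4]{BSh}, while you prove it by reducing to saturated steps and inducting on $\depth(M_{\fp})$. One slip in your prime-avoidance justification: the primes that could block the choice of $x\in\fp$ are the associated primes \emph{containing} $\fp$, not those contained in it. After localizing these are only $\fp$ and $\fm$, excluded by $\depth(M_{\fp})>0$ and $\depth(M)>0$ respectively, so your final sentence has the containment reversed, although the facts you need are already stated.
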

\begin{proof}Let $P\subseteq Q$.
In view of \cite[9.3.4]{BSh} we have 
\[\depth(M_{Q}) \leq  \depth(M_P)+\Ht(Q/P)
\quad(\ast).
\]
Set
$e:= \dim(R_{\mathfrak{p}})  -[\dim(R)-\dim(R/\fp)-\depth(M_{\mathfrak{p}})]$. Now, let $\mathfrak{p} \in \Supp(M)$. Then, we have

\begin{align*}	\dim(M_{\mathfrak{p}}) - \depth(M_{\mathfrak{p}})
	&=(\dim(M_{\mathfrak{p}}) - \dim(R_{\mathfrak{p}}))+[\dim(R)-\dim(R/\fp)-\depth(M_{\mathfrak{p}})-e]\\
	&=(\dim(M_{\mathfrak{p}}) - \dim(R_{\mathfrak{p}}))+[\dim(R)-\Ht(\fm/ \fp)-\depth(M_{\mathfrak{p}})]-e\\
	&\stackrel{(\ast)}\leq   \dim(M_{\mathfrak{p}})+[\dim(R)-\depth(M)]-e\\
	&=[\dim(M_{\mathfrak{p}})-\dim(R_{\mathfrak{p}})+\dim(R)-e] -\depth(M)\\
	&=[\dim(M_{\mathfrak{p}})+\dim(R/{\mathfrak{p}})]-\depth(M) \\
	&\stackrel{\ref{71}}\leq\dim (M )- \depth(M),
	\end{align*}
	and so $\CMdef(M)\geq \CMdef(M_{\mathfrak{p}})$.
\end{proof}
\begin{fact}
\label{beder}	Let $(R,\mathfrak{m})$ be complete   and $P\in\Spec(R)$ be of finite projective dimension. Then $\Ht(P)  = \grade(R/P)$.
\end{fact}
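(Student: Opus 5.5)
We prove Fact~\ref{beder} by comparing two standard descriptions of the height of a prime in a complete domain.

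\emph{Step 1: $R$ is a domain.} By Fact~\ref{Auslander}, since $P$ has finite projective dimension, $R$ is an integral domain. We may assume $P\neq 0$, since otherwise both sides are zero.

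\emph{Step 2: $\Ht(P)=\dim R-\dim(R/P)$.} A complete local ring is a quotient of a regular local ring by Cohen's structure theorem, so it is universally catenary. A catenary local domain $R$ satisfies
\[
\Ht(P)+\dim(R/P)=\dim R
\]
for every prime $P$; this is the result cited earlier as \cite[Lemma 2, p.\ 250]{mat}.

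\emph{Step 3: the grade conjecture for $R/P$.} Since $P$ has finite projective dimension, the short exact sequence $0\to P\to R\to R/P\to 0$ gives $\pd(R/P)<\infty$. The module $R/P$ is cyclic of finite projective dimension over the complete domain $R$. We now invoke the grade conjecture
\[
\grade(R/P)+\dim(R/P)=\dim R .
\]
The earlier proof of Lemma~\ref{45} already used this equality in exactly this setting: a complete domain, which is catenary and equidimensional, and a module of finite projective dimension. It is known to hold there by the new intersection theorem, through the results of Peskine--Szpiro and Roberts (see \cite{Beder}). Combining this with Step~2 yields $\Ht(P)=\grade(R/P)$.

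\emph{The main obstacle} is justifying Step~3. The only general tool available, Fact~\ref{psg}, gives
\[
\depth(R)-\dim(R/P)\le\grade(R/P)\le\Ht(P),
\]
and this closes the gap only when $R$ is Cohen--Macaulay. So the argument must genuinely import the grade conjecture for complete equidimensional catenary rings, as \cite{Beder} does, rather than derive it from results stated earlier in the paper.

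If we want to avoid quoting the conjecture in full, the fallback is to localize at a prime $\mathfrak q\in\Ass_R(\Ext^{g}_R(R/P,R))$, where $g=\grade(R/P)$. We would then apply the new intersection theorem to the finite free resolution of $(R/P)_{\mathfrak q}$ to bound $\Ht(P)$ above by $g$. This is essentially the content of the cited result.
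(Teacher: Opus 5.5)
Steps 1 and 2 are correct, but they do none of the real work. Once you have $\Ht(P)+\dim(R/P)=\dim R$, the equality $\grade(R/P)+\dim(R/P)=\dim R$ that you invoke in Step 3 is literally equivalent to $\Ht(P)=\grade(R/P)$. So Step 3 restates the claim and supports it only by a citation. The cited equality is true in this setting, and the paper itself records the statement as a Fact without proof, so nothing you wrote is false.

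However, your diagnosis that the argument "must genuinely import the grade conjecture" (or the new intersection theorem) is mistaken, and that is the idea you are missing. By Fact~\ref{grs},
\[
\grade(R/P)=\inf\{\depth(R_{\mathfrak q}) : \mathfrak q\supseteq P\},
\]
so it suffices to show $\depth(R_{\mathfrak q})\ge \Ht(P)$ for every $\mathfrak q\supseteq P$. For such $\mathfrak q$ we have $(R/P)_{\mathfrak q}\neq 0$. The Auslander--Buchsbaum formula over $R_{\mathfrak q}$, together with the fact that projective dimension cannot increase under localization, gives
\[
\depth(R_{\mathfrak q})=\pd_{R_{\mathfrak q}}\big((R/P)_{\mathfrak q}\big)+\depth\big((R/P)_{\mathfrak q}\big)\ \ge\ \pd_{R_{\mathfrak q}}\big((R/P)_{\mathfrak q}\big)\ \ge\ \pd_{R_P}\big(R_P/PR_P\big).
\]
Since $R_P/PR_P$ has finite projective dimension, $R_P$ is regular by Serre's theorem. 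Hence the last term equals $\depth(R_P)=\dim(R_P)=\Ht(P)$. This gives $\grade(R/P)\ge\Ht(P)$, and the reverse inequality holds for any ideal.

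This is exactly the argument the paper gives later in Section 6: the Proposition answering Question~\ref{6.1}, followed by Proposition~\ref{99a}, with $\Gdim$ and ``Gorenstein'' in place of $\pd$ and ``regular''. Fact~\ref{beder} is the special case $\pd(R/P)<\infty$. That argument shows that completeness, the domain property from Fact~\ref{Auslander}, catenarity, and every intersection theorem are superfluous, so your Steps 1 and 2 can be dropped entirely.

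Your fallback through $\mathfrak q\in\Ass_R(\Ext^g_R(R/P,R))$ can be completed, since such $\mathfrak q$ satisfy $\depth(R_{\mathfrak q})=g$. At that point, though, the displayed chain already finishes the proof; no appeal to the new intersection theorem is needed.
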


\begin{observation}\label{8}
	Let $(R,\mathfrak{m})$ be complete   and $P\in\Spec(R)$ be of finite projective dimension. Then  
	\(
\dim(R)-\dim(R/P)\leq \depth(R) - \depth(R/P).
	\)
\end{observation}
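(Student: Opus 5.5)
Since $\pd(P)<\infty$, we also have $\pd(R/P)<\infty$, and we can apply the Auslander--Buchsbaum formula to $R/P$. This gives
\[
\pd(R/P)=\depth(R)-\depth(R/P).
\]
The plan is therefore to show that $\dim(R)-\dim(R/P)\le \pd(R/P)$. We take the chain
\[
\dim(R)-\dim(R/P)\;=\;\Ht(P)\;=\;\grade(R/P)\;\le\;\pd(R/P),
\]
and then substitute the Auslander--Buchsbaum expression for $\pd(R/P)$.

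The middle equality is Fact~\ref{beder}, which uses completeness together with $\pd(P)<\infty$. The final inequality is the standard bound $\grade(M)\le \pd(M)$, valid for any module of finite projective dimension. It holds because $\Ext^i_R(M,R)\neq 0$ for $i=\grade(M)$, while $\Ext^i_R(M,R)=0$ for $i>\pd(M)$.

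The first equality, $\Ht(P)=\dim(R)-\dim(R/P)$, is the main point to justify. By Fact~\ref{Auslander}, $R$ is an integral domain, since $P$ is a prime of finite projective dimension (if $P=0$ the statement is trivial). A complete local domain is catenary and equidimensional. In such a ring, $\Ht(P)+\dim(R/P)=\dim(R)$ for every prime $P$, as in the argument of Lemma~\ref{45} using \cite[Lemma 2, p.~250]{mat}.

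Combining these three facts gives
\[
\dim(R)-\dim(R/P)\le \depth(R)-\depth(R/P),
\]
which is the claim. I expect no serious obstacle. The only subtle step is the dimension formula, and it is exactly where completeness enters, via Fact~\ref{Auslander} and the catenarity of complete local domains.
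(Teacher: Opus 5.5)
Your proposal is correct and matches the paper's proof essentially step for step. Fact~\ref{Auslander} makes $R$ a domain, which, being complete, is catenary and equidimensional, so $\Ht(P)=\dim(R)-\dim(R/P)$; the proof then continues with Fact~\ref{beder}, the bound $\grade(R/P)\le\pd(R/P)$, and Auslander--Buchsbaum. Strictly speaking, completeness is used only for catenarity, since Fact~\ref{Auslander} holds over any local ring.
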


\begin{proof}
By {Fact}
\ref{Auslander}, $R$ is integral domain. In particular, $R$ is equi-dimensional and catenary. Then
	\begin{align*}
	\dim(R)-\dim(R/P)
	=  \Ht(P)  
	&\stackrel{\ref{beder}}= \grade(R/P) \leq\pd(R/P) \\
	&= \depth(R) - \depth(R/P).
	\end{align*}
\end{proof}

\begin{corollary}\label{8c}
	Let $(R,\mathfrak{m})$ be complete   and $P\in\Spec(R)$ be of finite projective dimension. Then  
	\(
\CMdef(R)\leq \CMdef( R/P).
	\)
	In particular, if $R/P$ is Cohen-Macaulay, then $R$ is as well.
\end{corollary}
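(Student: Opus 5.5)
The plan is to rearrange the inequality of Observation~\ref{8} directly. Under the same hypotheses ($R$ complete, $P\in\Spec(R)$ with $\pd(P)<\infty$), Observation~\ref{8} gives
\[
\dim(R)-\dim(R/P)\leq \depth(R)-\depth(R/P).
\]
Moving terms across, this is equivalent to
\[
\dim(R)-\depth(R)\leq \dim(R/P)-\depth(R/P),
\]
which is exactly $\CMdef(R)\leq \CMdef(R/P)$. Note that $\pd(P)<\infty$ is equivalent to $\pd(R/P)<\infty$, via $0\to P\to R\to R/P\to 0$. This is the form needed for the Auslander--Buchsbaum step $\pd(R/P)=\depth(R)-\depth(R/P)$ used inside Observation~\ref{8}, so the hypotheses match.

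For the ``in particular'' clause, suppose $R/P$ is Cohen--Macaulay. Then $\CMdef(R/P)=0$, and the inequality gives $\CMdef(R)\le 0$. Since $\depth(R)\le\dim(R)$ always holds, $\CMdef(R)\ge 0$, so $\CMdef(R)=0$ and $R$ is Cohen--Macaulay.

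The argument is routine; the only real content sits in Observation~\ref{8}, namely Fact~\ref{beder} ($\Ht(P)=\grade(R/P)$ for complete $R$) and the identity $\dim(R)-\dim(R/P)=\Ht(P)$. That identity uses that $R$ is a catenary, equidimensional domain: it is a domain by Fact~\ref{Auslander}, and catenary because it is complete. If anything needs checking, it is that these are legitimately available here. Completeness ensures catenarity, and the domain property ensures equidimensionality, so no further obstacle is expected.
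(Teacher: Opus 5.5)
Your proposal is correct and follows the paper's argument: the paper likewise obtains the corollary immediately by rearranging Observation~\ref{8}. Your supporting checks are also accurate. These are that $R$ is a domain by Fact~\ref{Auslander}, that completeness gives catenarity, and that $\pd(P)<\infty$ if and only if $\pd(R/P)<\infty$.
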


\begin{proof}
This is immediate by Observation \ref {8}.
\end{proof}

\begin{fact}\label{ab}(Auslander-Buchsbaum \cite[Page 649]{ABU1}). One has
$\dim (R_{\mathfrak{p}}) - \depth(R_{\mathfrak{p}}) \leq \dim (R) - \depth(R)$.
\end{fact}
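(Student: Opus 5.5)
We need to prove Fact \ref{ab}: for any prime $\mathfrak p$, $\dim(R_{\mathfrak p})-\depth(R_{\mathfrak p})\le \dim(R)-\depth(R)$. This is the module case $M=R$ of the proposition proved just above (``For any $\mathfrak{p}\in\Supp(M)$, $\dim(M_{\mathfrak p})-\depth(M_{\mathfrak p})\le\dim M-\depth(M)$''). So the plan is simply to specialize that proposition to $M=R$: every prime lies in $\Supp(R)=\Spec(R)$, and $R_{\mathfrak p}$ is the localization of $M$, so the inequality reads exactly as claimed.

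Because that earlier argument is written somewhat loosely, I would also give a direct, self-contained argument as a fallback. The two ingredients are both standard. The first is the depth inequality \cite[9.3.4]{BSh}: for primes $\mathfrak p\subseteq\mathfrak q$ one has $\depth(R_{\mathfrak q})\le\depth(R_{\mathfrak p})+\dim(R_{\mathfrak q}/\mathfrak pR_{\mathfrak q})$. Apply it with $\mathfrak q=\mathfrak m$ to get
\[
\depth(R)\le \depth(R_{\mathfrak p})+\dim(R/\mathfrak p).
\]
The second is Fact \ref{71} with $M=R$:
\[
\dim(R_{\mathfrak p})+\dim(R/\mathfrak p)\le\dim(R),
\]
obtained by concatenating a maximal chain of primes below $\mathfrak p$ with one from $\mathfrak p$ up to $\mathfrak m$.

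Subtracting the first inequality from the second gives
\[
\dim(R_{\mathfrak p})-\depth(R_{\mathfrak p})\le \dim(R)-\dim(R/\mathfrak p)-\depth(R_{\mathfrak p})\le\dim(R)-\depth(R),
\]
which is the claim.

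The only step with real content is the depth inequality $\depth(R)\le\depth(R_{\mathfrak p})+\dim(R/\mathfrak p)$. If one does not want to cite \cite{BSh}, I would prove it by induction on $\dim(R/\mathfrak p)$. The base case $\dim(R/\mathfrak p)=0$ means $\mathfrak p=\mathfrak m$ and is trivial. For $\dim(R/\mathfrak p)\ge1$, pick a prime $\mathfrak q$ with $\mathfrak p\subsetneq\mathfrak q$ and $\dim(R/\mathfrak q)=\dim(R/\mathfrak p)-1$. The induction hypothesis applied to $\mathfrak q$, combined with the one-step estimate $\depth(R_{\mathfrak q})\le\depth(R_{\mathfrak p})+1$ for $\Ht(\mathfrak q/\mathfrak p)=1$, gives the result. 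That one-step estimate is the main obstacle. It follows from the nonvanishing of $\Ext^{\depth R_{\mathfrak p}}_{R_{\mathfrak p}}(k(\mathfrak p),R_{\mathfrak p})$ together with the standard lemma that $\Ext^i_{R_{\mathfrak q}}(k(\mathfrak p),R_{\mathfrak q})\neq0$ forces $\Ext^{i+1}_{R_{\mathfrak q}}(k(\mathfrak q),R_{\mathfrak q})\neq0$ (\cite[Lemma 3.1.16]{BH}, via a filtration of $R_{\mathfrak q}/\mathfrak pR_{\mathfrak q}$ and an element of $\mathfrak q\setminus\mathfrak p$). I would simply cite this.
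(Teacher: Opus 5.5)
Your proposal is correct and is essentially the paper's own argument. The paper only cites Auslander--Buchsbaum for Fact~\ref{ab}, but this is the case $M=R$ of the earlier proposition $\CMdef(M_{\mathfrak p})\le\CMdef(M)$. That proposition's computation, once the bookkeeping with $e$ is straightened out, is exactly your combination of $\depth(R)\le\depth(R_{\mathfrak p})+\dim(R/\mathfrak p)$ from \cite[9.3.4]{BSh} with $\dim(R_{\mathfrak p})+\dim(R/\mathfrak p)\le\dim(R)$ from Fact~\ref{71}.

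There is one slip in your optional sketch of the depth inequality. \cite[Lemma 3.1.16]{BH} is about Bass numbers, i.e.\ $\Ext^i_{R_{\mathfrak p}}(k(\mathfrak p),R_{\mathfrak p})\neq0\Rightarrow\Ext^{i+1}_{R_{\mathfrak q}}(k(\mathfrak q),R_{\mathfrak q})\neq0$ when $\Ht(\mathfrak q/\mathfrak p)=1$. It is not about $\Ext^i_{R_{\mathfrak q}}(k(\mathfrak p),R_{\mathfrak q})$, and for that module the implication is false: for a non-complete DVR $V$ with fraction field $K$ and residue field $k$, $\Ext^1_V(K,V)\neq0$ but $\Ext^2_V(k,V)=0$.
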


\begin{corollary}\label{70}
	Let $(R,\mathfrak{m})$ be complete   and $P\in\Spec(R)$ be of finite projective dimension. Then  
	\(
	\CMdef(R_P)\leq \CMdef( R/P).
	\)
	In particular, if $R/P$ is Cohen-Macaulay, then $R_P$ is as well.
\end{corollary}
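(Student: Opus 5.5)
The plan is to combine Fact~\ref{ab} with Corollary~\ref{8c}, which chain together directly:
\[
\CMdef(R_P)\le \CMdef(R)\le \CMdef(R/P).
\]

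\textbf{Step 1.} Apply Fact~\ref{ab} to the prime $P$. This gives
\[
\dim(R_P)-\depth(R_P)\le \dim(R)-\depth(R),
\]
that is, $\CMdef(R_P)\le \CMdef(R)$. This inequality uses no hypothesis on $P$.

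\textbf{Step 2.} Since $R$ is complete and $\pd(P)<\infty$, Corollary~\ref{8c} applies and gives $\CMdef(R)\le \CMdef(R/P)$. Combining the two inequalities yields $\CMdef(R_P)\le\CMdef(R/P)$.

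Two side remarks on the hypotheses. First, Corollary~\ref{8c} ultimately rests on Fact~\ref{beder} and Observation~\ref{8}, applied to the prime $P$ in the complete ring $R$. Its hypothesis is phrased with ``$P$ of finite projective dimension,'' and $\pd(P)<\infty$ is equivalent to $\pd(R/P)<\infty$, so there is no ambiguity in applying it. Second, $P\in\Supp(R)$ trivially, so localization at $P$ makes sense.

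\textbf{The ``in particular'' clause.} If $R/P$ is Cohen--Macaulay, then $\CMdef(R/P)=0$, so $\CMdef(R_P)\le 0$. Since $\depth(R_P)\le\dim(R_P)$ always, this forces equality, and $R_P$ is Cohen--Macaulay. In fact Corollary~\ref{8c} already shows that $R$ itself is Cohen--Macaulay in this case, and localizations of Cohen--Macaulay rings are Cohen--Macaulay, which gives the same conclusion.

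\textbf{Main point to watch.} There is no real obstacle. The only thing to check is that Fact~\ref{ab} is the Auslander--Buchsbaum localization inequality for the ring itself, and that it holds with no extra hypotheses. If one prefers not to cite it, it also follows from the Proposition above on $\CMdef(M_{\mathfrak p})\le\CMdef(M)$ applied with $M=R$, since $\Supp(R)=\Spec(R)$.
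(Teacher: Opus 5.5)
Your proof is correct and matches the paper's argument: Fact~\ref{ab} gives $\CMdef(R_P)\le\CMdef(R)$, and Observation~\ref{8} gives $\CMdef(R)\le\CMdef(R/P)$. You cite this second step as Corollary~\ref{8c}, which is the same inequality. The ``in particular'' clause then follows exactly as you describe.
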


\begin{proof}
	We have
	\begin{align*}
\CMdef(R_P)
= \dim(R_P) - \depth(R_P) 
	&\stackrel{\ref{ab}}\leq \dim(R  ) - \depth(R)\\
	&\stackrel{\ref{8}}\leq\dim(R/P) - \depth(R/P)  =\CMdef(R/P),
	\end{align*}as claimed.
\end{proof}

 \begin{remark}\label{54}Let $d := \dim (R)$ and $M$ be of finite projective dimension. If $\dim (M) = d-1$, then  
\(
\grade(M) + \dim (M )= d.
\)
\end{remark}
\begin{proof}
	Let $I := \Ann(M)$. If $I = 0$, then $\dim (M) = \dim (R)$, and the equality holds trivially. So assume $I \neq 0$. By   \cite[Lemma 2.28]{wol}  there exists $x \in I$ such that $x$ is $R$-regular. Hence
	\(
	\grade(M) = \grade(I, R) \geq 1.
	\)
	On the other hand,
	\(
	\grade(M) + \dim (M) \leq \dim (R).
	\)
	But $\dim( M) = d-1$, so $\grade(M) \leq 1$. Therefore $\grade(M) = 1$. In summary
	\(
	\grade(M) + \dim (M )= 1 + (d-1) = d.
	\)
\end{proof}
\begin{corollary}
	Let $d := \dim( R)$ and $M$ be of finite projective dimension. If $\dim (M )= d-1$, then  	\(
	\dim (\Ext^{1}_R(M, R)) = \dim (R) - 1.
	\)
\end{corollary}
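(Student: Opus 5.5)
The plan is to combine Remark~\ref{54} with Fact~\ref{beger2}. Set $d := \dim(R)$ and $r := \dim(M) = d-1$. Since $\pd(M) < \infty$ and $\dim(M) = d-1$, Remark~\ref{54} gives
\[
\grade(M) + \dim(M) = d ,
\]
so the grade conjecture holds for $M$. This is exactly the hypothesis of Fact~\ref{beger2}, with $A := R$. That fact then yields
\[
\dim\bigl(\Ext^{d-r}_R(M,R)\bigr) = r .
\]
Substituting $d - r = 1$ gives $\dim(\Ext^1_R(M,R)) = d-1 = \dim(R) - 1$, which is the claim.

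The only delicate point is checking that the hypotheses of Fact~\ref{beger2} are met exactly as stated. The ring is local, the module is finitely generated with finite projective dimension, and the grade equality is supplied by Remark~\ref{54}. No completeness or catenarity assumption is needed here, unlike in Lemma~\ref{45}, because the grade equality is obtained directly rather than through the domain/catenary argument.

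As a sanity check, and as a backup route, I would also verify the inequality $\dim \Ext^1_R(M,R) \ge d-1$ by hand. In the case $\Ann(M) \ne 0$, Remark~\ref{54} shows $\grade(M) = 1$, so $\Ext^1_R(M,R) \ne 0$. Pick a minimal prime $\fp$ of $\Supp(M)$ with $\dim(R/\fp) = d-1$. Then $M_\fp$ has finite length and finite projective dimension, so $R_\fp$ is Cohen--Macaulay by Fact~\ref{art}. If one can show $\Ht(\fp) = 1$, then Fact~\ref{dep} gives $\Ext^1_{R_\fp}(M_\fp,R_\fp) \ne 0$. This is the step where the earlier Proposition on $\operatorname{codim}$ would be used, and it is the only potential obstacle.

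The upper bound is easy: $\Ext^1_R(M,R)$ is supported in $\Supp(M)$, which has dimension $d-1$. Since the primary argument through Fact~\ref{beger2} already settles the statement, this backup route is only needed if one distrusts the cited fact.
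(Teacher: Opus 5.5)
Your main argument is exactly the paper's proof: Remark~\ref{54} gives $\grade(M)=\dim(R)-\dim(M)=1$, and Fact~\ref{beger2} then yields $\dim(\Ext^1_R(M,R))=\dim(M)=\dim(R)-1$. The backup route is not needed, and its one open step is in any case immediate, since $1=\grade(M)\le\grade(M_\fp)\le\Ht(\fp)\le d-\dim(R/\fp)=1$.
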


\begin{proof}Note that
	$\grade(M)=\dim(R)-\dim(M)=1,$ because of
 Remark \ref{54}. It remains to apply  Fact \ref{beger2}.
\end{proof}

\begin{remark}\label{80}
	Let $(R,\mathfrak{m})$ be a d-dimensional UFD, $M$ a generalized Cohen–Macaulay module of dimension at least $d-2$, with $\pd(M) < \infty$. Then
$
	\grade(M) + \dim (M) = \dim( R). $
\end{remark}

\begin{proof}
	Let $I = \Ann(M)$ and recall from
	\cite{Beder} that 
		$\grade(M)=\operatorname{ht}(I)$.
Let $\mathfrak{p} \in \Min(I)$ so that $\operatorname{ht}(I) = \operatorname{ht}(\mathfrak{p})$. Note that $\mathfrak{p} \neq \mathfrak{m}$.
 By \cite[Exercise 9.5.7]{BSh}, we know that $\dim( M) = \dim (R/\mathfrak{q})$ for all $\mathfrak{q} \in \Ass(M) \setminus \{\mathfrak{m}\}$. Hence $\dim (M) = \dim (R/\mathfrak{p})$.
	Also, if $\Ht(\mathfrak{p})=1$ by the UFD assumption, $\mathfrak{p}$ should be principal, say $\mathfrak{p}=xR$. This implies that $\dim(R/\fp)=\dim(R/xR)=\dim (R)-1$, contradicting the fact that $\dim (M) = \dim( R/\mathfrak{p}) \geq d-2$. So, $\Ht(\fp)\geq 2$.
	Let us collect all things together and observe that\begin{align*}	\grade(M) + \dim (M)
	&= \operatorname{ht}( {I})+\dim(R/\mathfrak{p}) \\
	&=\operatorname{ht}(\mathfrak{p})+\dim(R/\mathfrak{p}) \\
	 &\geq 2+ (d-2)
	=\dim(R) \\
	&\stackrel{\ref{psg}}\geq\grade(M) + \dim (M).
	\end{align*}
 
In summary, we proved that $\grade(M) + \dim (M) = \dim (R).$	
\end{proof}

\begin{fact}\label{khi}
The following are well-known:
\begin{enumerate}
	\item If $\dim (N) \leq 1$ and $\dim (M) + \dim (N) < \dim (R)$, then $\Tor_i(M,N) = 0$ for all $i$.
	\item If $\chi(M,N) = 0$, then the grade equation holds for $M$.
\end{enumerate}\end{fact}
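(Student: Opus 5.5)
The plan is to treat the two items separately. For (1) I would work by dévissage on $N$, and for (2) I would link $\chi$ to dimensions through Fact~\ref{psg}. Before that, one correction to how (1) has to be read. If $M$ and $N$ are both nonzero, then $\Tor_0(M,N)=M\otimes_R N\neq 0$ by Nakayama. So "$\Tor_i(M,N)=0$ for all $i$" cannot hold for $i=0$. I would prove it for all $i\ge 1$, in the setting where $\chi(M,N)$ is defined, namely $\pd(M)<\infty$ and $\ell(M\otimes_R N)<\infty$. I would state this restriction explicitly at the start of the proof.

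For (1), first reduce via a prime filtration of $N$. Its factors are $R/\fp$ with $\dim(R/\fp)\le 1$, and the long exact Tor sequences show it suffices to treat $N=R/\fp$. When $\dim(R/\fp)=0$ the factor is $k$, and $\Tor_i(M,k)=0$ exactly for $i>\pd(M)$. So the real content is to bound $\pd(M)$ by the dimension hypothesis. I would use $\pd(M)\le \depth(R)$ together with $\grade(M)+\dim(M)\le\dim(R)$ from Fact~\ref{psg}. For $\dim(R/\fp)=1$, pick $x\in\fm\setminus\fp$. Then $0\to R/\fp\xrightarrow{x}R/\fp\to R/(\fp+xR)\to 0$ expresses $\Tor_i(M,R/\fp)$ as $x$-torsion-free modules whose reductions are controlled by the length-finite case. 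From this, Nakayama forces vanishing in the same range of $i$.

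For (2), the plan is as follows. Assume $\chi(M,N)=0$ and the grade equation fails, so $\grade(M)+\dim(M)<\dim(R)$. Choose $N=R/(\underline{x})$ where $\underline{x}$ is a system of parameters for $M$, that is, a sequence of length $\dim(M)$ with $\ell(M\otimes N)<\infty$. Use the Peskine--Szpiro identity $\chi(M,R/(\underline{x}))=e(\underline{x};M)>0$ when $\underline{x}$ is $R$-regular, to derive a contradiction. The regularity of $\underline{x}$ I would arrange using $\depth(R)\ge\grade(M)+\dim(M)$.

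The main obstacle is the step in (1) bounding $\pd(M)$ below the index range. Without it, even the $i\ge 1$ statement fails, for example $\Tor_{\pd M}(M,k)\neq 0$. I expect this step may force strengthening the hypothesis to a regular or complete-intersection setting, where Serre's vanishing for $\dim M+\dim N<\dim R$ is available. I would first try to deduce the bound from Fact~\ref{psg}.
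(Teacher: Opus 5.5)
The paper gives no proof of this Fact; both items are quoted as well known. Your proposal therefore has to stand on its own, and it has genuine gaps in both parts.

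In (1), the version you set out to prove — $\Tor_i(M,N)=0$ for all $i\ge1$ whenever $\chi(M,N)$ is defined — is false, and no hypothesis on the ring repairs it. Take $N=k$; the hypotheses then just say $\dim M<\dim R$. A nonzero $M$ of finite projective dimension is free of rank $\sum_i(-1)^i\beta_i(M)$ at every associated prime of $R$, so $\dim M<\dim R$ forces rank $0$. Hence $M$ is not free, $p:=\pd(M)\ge1$, and $\Tor_p(M,k)\neq0$. Already over $R=k[[x,y]]$ with $M=N=k$ one has $\Tor_2(k,k)\neq0$.

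So the obstacle you flag is not a missing bound on $\pd(M)$, and passing to regular rings does not help. Serre's vanishing concerns the alternating sum $\chi(M,N)=\sum_i(-1)^i\ell(\Tor_i(M,N))$, not the individual Tor modules. The only tenable content of (1), and the one that (2) and the subsequent proposition in the paper actually use, is $\chi(M,N)=0$.

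For that statement your d\'evissage along a prime filtration of $N$ is fine, since $\chi(M,-)$ is additive. The case $N=k$ is exactly the rank computation above. Your treatment of $\dim(R/\fp)=1$, however, does not work. Each $\Tor_i(M,R/\fp)$ is supported in $\Supp(M)\cap V(\fp)=\{\fm\}$, so it is killed by a power of $x$ rather than being $x$-torsion-free. The sequence $0\to R/\fp\xrightarrow{x}R/\fp\to R/(\fp+xR)\to0$ then yields only the tautology $\chi(M,R/(\fp+xR))=0$, with no information about $\chi(M,R/\fp)$. This case needs a genuinely new input. One option is the determinant (MacRae invariant) of the generically exact complex $F_\bullet\otimes_R R/\fp$, where $F_\bullet$ is a finite free resolution of $M$: its order on the one-dimensional domain $R/\fp$ computes $\chi(M,R/\fp)$, and it is trivial when $\dim M\le\dim R-2$.

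In (2) the sketch does not hold together, for three reasons.
\begin{enumerate}
\item The inequality you use to make $\underline{x}$ an $R$-sequence, $\depth(R)\ge\grade(M)+\dim(M)$, is the reverse of Fact~\ref{psg}. It fails, for instance, whenever $R$ is not Cohen--Macaulay and $M$ satisfies the grade equation.
\item More seriously, $N=R/(\underline{x})$ with $\underline{x}$ a system of parameters for $M$ has $\dim N\ge\dim R-\dim M$. It therefore lies outside the range $\dim M+\dim N<\dim R$ in which vanishing is assumed or available. The equality $\chi(M,R/(\underline{x}))=e(\underline{x};M)>0$ for an $R$-regular such $\underline{x}$ holds whether or not the grade equation holds, and contradicts nothing. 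If instead ``$\chi(M,N)=0$'' refers to a single given $N$, you are not free to choose $N$ at all.
\item Tellingly, the assumed failure of the grade equation is never used.
\end{enumerate}
The missing idea is the whole content of (2). Starting from $\grade(M)+\dim(M)<\dim(R)$, you must produce a test module $N$ with $\ell(M\otimes_R N)<\infty$, $\dim M+\dim N<\dim R$ and $\chi(M,N)\neq0$. Nothing in your plan does this.
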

 Let us the above advanced results, to extend {Remark} \ref{80}: 
\begin{proposition}
	In fact, if $\dim (M) \geq \dim (R) - 2$ and $\pd(M) < \infty$, then
	\[
	\grade(M) + \dim( M) = \dim (R). \tag{*}
	\]
\end{proposition}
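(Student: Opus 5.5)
The plan is to split according to the three possible values of $\dim(M)$, namely $d$, $d-1$ and $d-2$, where $d:=\dim(R)$, and to reduce the last case to the codimension-one case of Remark~\ref{54} by cutting down by a regular element. Throughout, set $I:=\Ann(M)$. By Fact~\ref{psg} we always have $\grade(M)+\dim(M)\le \dim(R)$, so in each case only the inequality $\grade(M)\ge d-\dim(M)$ needs proof.

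\emph{Cases $\dim(M)=d$ and $\dim(M)=d-1$.} If $\dim(M)=d$, the bound just quoted already forces $\grade(M)=0$, and equality $(*)$ holds. If $\dim(M)=d-1$, this is exactly Remark~\ref{54}.

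\emph{Case $\dim(M)=d-2$.} Here we must show $\grade(M)\ge 2$. The steps are as follows.
\begin{enumerate}
\item Since $\dim(M)<d$, we have $I\neq 0$. Because $\pd(M)<\infty$, \cite[Lemma 2.28]{wol} supplies an $R$-regular element $x\in I$, exactly as in the proof of Remark~\ref{54}.
\item Set $\bar R:=R/xR$, so that $\dim(\bar R)=d-1$. As $xM=0$ and $x$ is $R$-regular, $M$ is an $\bar R$-module, and the standard change-of-rings result gives $\pd_{\bar R}(M)<\infty$.
\item Now $\dim(M)=d-2=\dim(\bar R)-1$. Applying Remark~\ref{54} over $\bar R$ gives $\grade_{\bar R}(M)=1$. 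Concretely, $\Ann_{\bar R}(M)=I/xR$ is nonzero because $\dim(M)<\dim(\bar R)$, so \cite[Lemma 2.28]{wol} over $\bar R$ yields an element $\bar y\in I/xR$ that is $\bar R$-regular.
\item Lift $\bar y$ to $y\in I$. Then $x,y$ is an $R$-regular sequence contained in $I$, so $\grade(M)=\grade(I,R)\ge 2$.
\end{enumerate}
Equivalently, one may use the relation $\grade_R(M)=\grade_{\bar R}(M)+1$, which holds because $x\in I$ is $R$-regular. Combined with the upper bound from Fact~\ref{psg}, this gives $\grade(M)=2$, and hence $\grade(M)+\dim(M)=d$.

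The only step requiring care is step~2, the descent of finite projective dimension to $\bar R$. I expect to handle it in one of two ways. One is to quote the standard fact that if $x$ is $R$-regular and $xM=0$, then $\pd_{R/xR}(M)=\pd_R(M)-1$. The other is to argue directly: since $\Tor^R_i(R/xR,N)=0$ for all $i\ge 1$ and every module $N$ annihilated by... more simply, reducing a minimal $R$-free resolution of $M$ modulo $x$ computes $\Tor^R_\ast(M,R/xR)$, which is concentrated in degrees $0$ and $1$, and this yields a finite $\bar R$-free resolution of $M$ via the usual change-of-rings spectral sequence or long exact sequence.

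Everything else is a direct application of Remark~\ref{54}, \cite[Lemma 2.28]{wol} and Fact~\ref{psg}. Note that the argument never uses the UFD or generalized Cohen--Macaulay hypotheses of Remark~\ref{80}, which is exactly why it extends that remark.
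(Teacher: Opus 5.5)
Your cases $\dim(M)=d$ and $\dim(M)=d-1$ are fine, and Steps~1, 3 and~4 would be fine if Step~2 held. But Step~2 is false, and the whole argument rests on it.

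\textbf{Step~2 fails.} The change-of-rings theorem you quote only goes upward: if $x$ is $R$-regular, $xM=0$ and $\pd_{R/xR}(M)<\infty$, then $\pd_R(M)=\pd_{R/xR}(M)+1$. Finite projective dimension does not descend from $R$ to $R/xR$. Take $R=k[[s,t]]$, $M=k$ (so $\dim M=0=d-2$) and $x=s^2$. This is an $R$-regular element of $\Ann(M)$, and nothing in \cite[Lemma 2.28]{wol} prevents it being the one you get. Then $\pd_R(k)=2$, but $\pd_{R/(s^2)}(k)=\infty$ because $R/(s^2)$ is not regular.

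Your direct argument fails at the same point. The complex $F\otimes_R\bar R$ has homology $M$ in degree $0$ and $\Tor^R_1(M,\bar R)\cong(0:_{M}x)=M$ in degree $1$, so it is not a resolution. In the change-of-rings long exact sequence comparing $\Tor^R_\ast(M,k)$ with $\Tor^{\bar R}_\ast(M,k)$, the connecting maps $\Tor^{\bar R}_n(M,k)\to\Tor^{\bar R}_{n-2}(M,k)$ need not vanish; in the example they are isomorphisms for $n\ge 4$. Without $\pd_{\bar R}(M)<\infty$, neither Remark~\ref{54} nor \cite[Lemma 2.28]{wol} applies over $\bar R$. So you have no $\bar R$-regular element in $\Ann_{\bar R}(M)$, and Step~3 collapses.

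\textbf{What can be rescued.} The choice of $x$ is what matters. By Nagata's theorem, if $x\in\mathfrak m\setminus\mathfrak m^2$ is $R$-regular and $xM=0$, then the Betti numbers satisfy $\beta^R_n(M)=\beta^{\bar R}_n(M)+\beta^{\bar R}_{n-1}(M)$ for all $n$. Hence $\pd_{\bar R}(M)=\pd_R(M)-1<\infty$. Prime avoidance (only one non-prime ideal, $\mathfrak m^2$, is involved) produces such an $x$ in $\Ann(M)$ whenever $\Ann(M)\not\subseteq\mathfrak m^2$. So your argument does prove $(*)$ under that extra hypothesis.

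When $\Ann(M)\subseteq\mathfrak m^2$, your argument gives nothing and a genuinely different input is needed. Since $\grade(M)=\Ht(\Ann(M))$ for modules of finite projective dimension (via the New Intersection Theorem), the case $\dim(M)=d-2$ amounts to excluding a height-one minimal prime $\mathfrak p$ of $\Ann(M)$. Such a prime would satisfy $\Ht(\mathfrak p)+\dim(R/\mathfrak p)<\dim(R)$, a failure of catenarity or equidimensionality that cutting down by a regular element does not detect.

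\textbf{The paper's route.} The paper does not cut down at all. It notes that $\dim(M)\ge d-2$ forces $\dim(N)\le 1$ for every $N$ with $\dim(M)+\dim(N)<\dim(R)$. It then invokes vanishing of the intersection multiplicity $\chi(M,N)$ in that range (Fact~\ref{khi}(1)). Finally it applies the Peskine--Szpiro implication from vanishing to the grade equation (Fact~\ref{khi}(2)).
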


\begin{proof}
	Let $N$ be such that $\dim (M )+ \dim( N )< \dim( R)$. Since $\dim (M) \geq d-2$, we have $\dim (N) \leq 1$. By {Fact} \ref{khi}(1), $\Tor_i(M,N) = 0$ for all $i$. By {Fact} \ref{khi}(2), this implies $(*)$.
\end{proof}

\begin{corollary}
Let $R$ be 4-dimensional and $P\in\Spec(R)$ be of finite projective dimension. Then $
\grade(R/P) + \dim (R/P) = \dim (R). $
\end{corollary}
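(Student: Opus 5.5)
The argument splits according to how large $R/P$ is, and reduces to the proposition just proved together with Remark~\ref{54}. By Fact~\ref{Auslander}, $R$ is an integral domain, and $\pd(R/P)<\infty$ because $\pd(P)<\infty$. The case $P=0$ is trivial: then $\grade(R/P)=0$ and $\dim(R/P)=\dim(R)=4$. So assume $P\neq 0$. Since $R$ is a domain, $\dim(R/P)\le 3$, while $\dim(R/P)\ge 0$ always holds. This leaves the cases $\dim(R/P)\in\{0,1,2,3\}$.

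\emph{Case $\dim(R/P)\ge 2=\dim(R)-2$.} Here I apply the preceding proposition to $M=R/P$: it has finite projective dimension and $\dim(M)\ge\dim(R)-2$, so $(*)$ holds. The subcase $\dim(R/P)=3=\dim(R)-1$ is also covered directly by Remark~\ref{54}.

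\emph{Case $\dim(R/P)=0$.} Then $P=\fm$, and $R/\fm=k$ has finite projective dimension, so $R$ is regular and in particular Cohen--Macaulay. Hence $\grade(k)=\depth(R)=4$, and $\grade(k)+\dim(k)=4$.

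\emph{Case $\dim(R/P)=1$.} This is the main obstacle. By Fact~\ref{psg} it suffices to show $\grade(R/P)\ge 3$. The approach I would try first is to exploit that the grade equation is the Peskine--Szpiro/Roberts statement that holds for modules of finite projective dimension over rings satisfying the appropriate chain conditions. Concretely, I would reduce to the complete case. Passing to $\widehat R$ preserves grade, dimension and finite projective dimension, but $P\widehat R$ need not be prime. I would therefore argue with $M=R/P$ directly, using Fact~\ref{khi}: take $N=R/(x_1,x_2)$ with $x_1,x_2$ part of a system of parameters chosen so that $\dim N\le 2$ and $\dim M+\dim N<4$. Then Serre vanishing gives $\chi(M,N)=0$, and Fact~\ref{khi}(2) yields the grade equation. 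Equivalently, the argument of the preceding proposition applies with roles swapped: when $\dim(M)\le 1$, every $N$ with $\dim(M)+\dim(N)<\dim(R)$ has $\dim(N)\le 2$, and Fact~\ref{khi}(1) is applied with $M$ in the role of the module of dimension at most $1$ (Tor is symmetric). Vanishing of all $\Tor_i(M,N)$ for all such $N$ then gives $\chi=0$, and Fact~\ref{khi}(2) applies exactly as in that proof.

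Combining the four cases proves the corollary. The only point I expect to need care is checking that the hypotheses of Fact~\ref{khi}(1) are symmetric in $M$ and $N$ in the $\dim(R/P)=1$ case.
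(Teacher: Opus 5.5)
Your cases $P=0$, $P=\fm$ (then $R$ is regular) and $\dim(R/P)\ge 2$ (via the preceding proposition, or Remark~\ref{54} when $\dim(R/P)=3$) are fine. The paper gives no separate proof of this corollary; it is presented as a consequence of that proposition. But the proposition only reaches $\dim(R/P)\ge 2$, so you are right that $\dim(R/P)\le 1$ needs its own argument. Your argument for $\dim(R/P)=1$ does not work, for three reasons.

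\emph{Serre vanishing.} Serre's vanishing theorem is a statement about regular local rings. Here $R$ is only known to be a domain, and if it were regular this case would be trivial.

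\emph{Swapping roles in Fact~\ref{khi}(1).} In the proposition that fact is applied with $M$ the module of finite projective dimension and $N$ an \emph{arbitrary} module of dimension $\le 1$. Symmetry of $\Tor$ does not make these hypotheses symmetric: after the swap, the module of dimension $2$ carries no homological restriction. Also, $\Tor_0(M,N)=M\otimes_RN\neq 0$, so ``vanishing of all $\Tor_i$'' can only mean $\chi(M,N)=0$. Vanishing of $\chi$ for a module of finite projective dimension and small dimension against an arbitrary module of dimension $2$ is exactly what fails in the Dutta--Hochster--McLaughlin example. There, a finite-length module of finite projective dimension over a $3$-dimensional hypersurface has nonzero Euler characteristic against a $2$-dimensional cyclic module.

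\emph{Fact~\ref{khi}(2).} Even granting $\chi(M,N)=0$ for your single choice $N=R/(x_1,x_2)$, nothing turns that one vanishing into the grade equation.

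The missing idea is more elementary: use the intersection theorem $\dim R\le \pd(M)+\dim(M)$, valid for every module $M$ of finite projective dimension. This is also what gives $\CMdef(M)\ge\CMdef(R)$ without assuming the grade conjecture. Apply it to $M=R/P$ with $\dim(R/P)=1$:
\begin{itemize}
\item $R/P$ is a one-dimensional domain, so $\depth(R/P)=1$.
\item Auslander--Buchsbaum gives $\pd(R/P)=\depth(R)-1$.
\item Hence $4=\dim R\le \depth(R)-1+1=\depth(R)$, so $R$ is Cohen--Macaulay.
\end{itemize}
A Cohen--Macaulay local ring is catenary and equidimensional, and grade equals height for its ideals. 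Therefore $\grade(R/P)=\Ht(P)=\dim R-\dim(R/P)=3$, which is the required equality.
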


\begin{proposition}\label{9}
	Let $P\in \Spec(R)$ be of finite projective dimension. If $\Ht(P)=\dim(R)-1$, then $R$ is Cohen-Macaulay.	In particular, $R/P$ is perfect.
\end{proposition}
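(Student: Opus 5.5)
We need to prove: $P$ prime with $\pd(P)<\infty$ and $\Ht(P)=\dim(R)-1$; then $R$ is Cohen--Macaulay, and $R/P$ is perfect.

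The strategy is to pin down $\grade(P)=\pd(R/P)$ by squeezing the Auslander--Buchsbaum formula between two bounds.

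First, the case $P=0$. Then $\dim R=1$, and by Fact~\ref{Auslander} $R$ is a domain, hence Cohen--Macaulay. Moreover $R/P=R$ is trivially perfect. So assume $P\neq 0$; then $R$ is a domain by Fact~\ref{Auslander}.

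Set $d=\dim R$. Since $\Ht(P)=d-1$ and $P\neq\fm$, we have $\dim(R/P)=1$; indeed $\dim(R/P)\ge 1$ and $\Ht(P)+\dim(R/P)\le d$. The domain $R/P$ has positive dimension, so $\fm\notin\Ass(R/P)$ and $\depth(R/P)=1$. Auslander--Buchsbaum then gives
\[
\pd(R/P)=\depth(R)-1 .
\]

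Next I need a lower bound for the grade. Since $\dim(R/P)=d-1$ fails in general, I will use Remark~\ref{54}-type reasoning applied to the grade equation instead. The module $M=R/P$ has $\dim M=1$ and finite projective dimension. I would like $\grade(R/P)=d-1$. Two routes are available:
\begin{itemize}
\item[(a)] If $d\le 3$, then $\dim M\ge d-2$, and the preceding Proposition gives $\grade(M)+\dim(M)=\dim R$, so $\grade(P)=d-1$.
\item[(b)] In general, argue directly. $P$ is a minimal prime of $\Ann(R/P)=P$, and localizing at $P$ gives that $R_P/PR_P$ is a nonzero module of finite length over $R_P$ with finite projective dimension. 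By Fact~\ref{art}, $R_P$ is Cohen--Macaulay, so $\depth R_P=d-1$. Since $\Ext^i_{R_P}(k(P),R_P)\neq0$ exactly from $i=\depth R_P$ (Fact~\ref{dep}), we get $\Ext^{d-1}_R(R/P,R)\ne 0$, hence $\pd(R/P)\ge d-1$.
\end{itemize}

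Combining (b) with Auslander--Buchsbaum: $d-1\le \pd(R/P)=\depth R-1\le d-1$. Hence $\depth R=d$, so $R$ is Cohen--Macaulay. Route (b) avoids the grade conjecture entirely, so I would use it.

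For perfectness, in the Cohen--Macaulay ring $R$ we have $\grade(P)=\Ht(P)=d-1=\pd(R/P)$. So $R/P$ is perfect; it is in fact Cohen--Macaulay of dimension one.

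The main obstacle is the lower bound $\pd(R/P)\ge \Ht P$ without knowing the grade conjecture. Localization at $P$ together with Facts~\ref{art} and \ref{dep} handles it. The only care needed is that $\Ext$ commutes with localization and that a nonzero localized $\Ext$ forces the global $\Ext$ to be nonzero.
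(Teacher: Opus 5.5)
Your argument is correct and is essentially the paper's proof: both squeeze $\Ht(P)\le \pd(R/P)=\depth(R)-\depth(R/P)=\depth(R)-1\le \dim(R)-1$ using $\depth(R/P)=1$, then read off $\depth(R)=\dim(R)$ and $\grade(R/P)=\pd(R/P)$. The only difference is where the lower bound comes from. The paper writes $\Ht(P)=\grade(R/P)\le\pd(R/P)$ via Fact~\ref{beder}, which is stated only for complete $R$, a hypothesis the proposition does not have. Your localization at $P$ with Facts~\ref{art} and~\ref{dep} gives $\pd(R/P)\ge\Ht(P)$ without completeness, so route (b) is self-contained, and route (a) can simply be dropped.
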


\begin{proof}
	Since $P\neq \fm$, $\depth(R/P)\neq 0$. Also, there is no prime ideal between $P$ and $\fm$. So, $$0<\depth(R/P)\leq \dim(R/P)=1\Rightarrow \depth(R/P)=1\quad(\ast)$$
	Then we have 	\begin{align*}
	\dim(R)-1
	&=  \Ht(P)  \\
	&\stackrel{\ref{beder}}= \grade(R/P) \leq\pd(R/P) \\
	&\stackrel{AB}= \depth(R) - \depth(R/P)\\
	&  \stackrel{(\ast)}=\depth(R) - 1
	\\
	&  \stackrel{(+)}\leq\dim(R) - 1.	\end{align*}This in particular, says that $(+)$ is an equality. In other words, $R$ is Cohen-Macaulay.	
\end{proof}
A ring is called almost Cohen-Macaulay
if $\depth(R)\geq \dim(R)-1$.
The following completes the proof of Theorem \ref{1.2}:
\begin{proposition}\label{10}
	Let $R$  be non-Cohen-Macaulay equipped with $P\in \Spec(R)$   of finite projective dimension. If $\Ht(P)=\dim(R)-2$, then

	\begin{enumerate}
		\item $R/P$ is perfect, 
		\item  	$R$ and $R/P$ are almost Cohen-Macaulay.
	\end{enumerate}
	
\end{proposition}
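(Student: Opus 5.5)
Mimic the proof of Proposition~\ref{9}: compare the height of $P$ with $\pd(R/P)$ via Fact~\ref{beder} and the Auslander--Buchsbaum formula, and pinch the depths between the available bounds. As in Proposition~\ref{9} and Observation~\ref{8}, I take $R$ complete, so that Fact~\ref{beder} applies; by Fact~\ref{Auslander}, $R$ is a domain, hence catenary and equidimensional. Write $d=\dim(R)$.

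First I bound $\depth(R/P)$. Since $\Ht(P)=d-2$, catenarity gives $\dim(R/P)=2$. Since $P\neq\fm$, the domain $R/P$ has positive depth, so $1\le \depth(R/P)\le 2$. Fact~\ref{beder}, the inequality $\grade\le\pd$, and Auslander--Buchsbaum then give
\[
d-2=\Ht(P)=\grade(R/P)\le \pd(R/P)=\depth(R)-\depth(R/P).
\]
Hence $\depth(R)\ge d-2+\depth(R/P)\ge d-1$. Because $R$ is not Cohen--Macaulay, $\depth(R)\le d-1$, so $\depth(R)=d-1$. This shows $R$ is almost Cohen--Macaulay.

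Next I feed $\depth(R)=d-1$ back into the display:
\[
d-2\le \pd(R/P)=d-1-\depth(R/P)\le d-2,
\]
where the last inequality uses $\depth(R/P)\ge 1$. So every inequality is an equality. In particular $\depth(R/P)=1\ge \dim(R/P)-1$, so $R/P$ is almost Cohen--Macaulay. Also $\grade(R/P)=\pd(R/P)=d-2$, which is exactly the statement that $R/P$ is perfect. This gives (1) and the rest of (2).

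The only delicate point is justifying Fact~\ref{beder} and $\dim(R/P)=d-\Ht(P)$. Both need completeness, or at least catenarity and equidimensionality. I would add the hypothesis ``$R$ complete'', matching Theorem~\ref{1.2}, or else pass to $\widehat R$. Passing to $\widehat R$ is more delicate, since $P\widehat R$ need not be prime. If needed, I would instead cite the result from \cite{Beder} that $\grade(M)=\Ht(\Ann M)$ for modules of finite projective dimension, as used in Remark~\ref{80}.
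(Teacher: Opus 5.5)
Your proof is correct and is essentially the paper's argument. Both run the chain $\dim(R)-2=\Ht(P)=\grade(R/P)\le\pd(R/P)=\depth(R)-\depth(R/P)$ and pinch it using $\depth(R/P)\ge 1$ and $\depth(R)\le\dim(R)-1$; the only difference is the order, since the paper first extracts $\depth(R/P)=1$ from Corollary~\ref{8c} (which is this same chain), whereas you first extract $\depth(R)=\dim(R)-1$.

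Your completeness caveat is apt, because the paper's proof silently relies on Fact~\ref{beder} and Corollary~\ref{8c}, both stated for complete $R$. Your ordering in fact needs only $\dim(R/P)\le 2$, which holds automatically, and $\grade(R/P)=\Ht(P)$, which holds for any $P$ of finite projective dimension (cf.\ the paper's answer to Question~\ref{6.1}); so in your version completeness can be dropped.
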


\begin{proof}
	By Fact~\ref{Auslander}, $R$ is an integral domain. From Corollary~\ref{8c} we have
	\(
	0 < \CMdef(R) \le \CMdef(R/P).
	\)
	Since $P$ is prime, $\depth(R/P) > 0$, and because $\dim(R/P)=2$, we obtain the following implication
	\[
	\CMdef(R/P) = 1 \quad \Longrightarrow \quad \depth(R/P) = 1. \tag{$\ast$}
	\]
Now, we compute the following:
	\begin{align*}
	\dim(R) - 2
	&= \dim(R) - \dim(R/P) \\
	&= \operatorname{ht}(P) \\
	&= \grade(R/P) \tag{\ref{beder}}\le \pd(R/P) \\
	&= \depth(R) - \depth(R/P) \tag{Auslander–Buchsbaum}\\
	&= \depth(R) - 1 \tag{$\ast$}\\
	&\le \dim(R) - 2.
	\end{align*}
	Again, all inequalities are equalities. Hence
	\[
	\depth(R) = \dim(R) - 1 \quad \text{and} \quad \depth(R/P) = \dim(R/P) - 1,
	\]
	so both $R$ and $R/P$ are almost Cohen–Macaulay. The equality
	\(
	\pd(R/P) = \grade(R/P)
	\)
	shows that $R/P$ is perfect.
\end{proof}
\medskip
\section{A question by Jorgensen}

This section {addresses the following question}

\begin{question}(Jorgensen \cite[Question 2.7]{Jo}).
	Suppose $\Ext^i(M,M) = 0$, $\pd(M) < \infty$, and $R$ is a complete intersection. Is $\pd(M) < i$?
\end{question}

The following result extends \cite[Proposition 2.5]{Jo} by Jorgensen, who proved {–over complete-intersection rings the following implication:–} {+over complete-intersection rings that}
$$\Ext^2(M,M)=0\Rightarrow \pd(M)<2\quad(+)$$

\begin{proposition}
Let $I$ be  a reflexive ideal
in a complete intersection ring $R$. If $\Ext^2(I,I) = 0$,   then $I$ is free.
\end{proposition}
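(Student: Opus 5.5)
Since $R$ is a complete intersection, Jorgensen's implication $(+)$ applies whenever $\pd(I)<\infty$. The plan is to check that it does apply, deduce $\pd(I)\le 1$, and then use reflexivity to push this down to $\pd(I)=0$. Without finiteness of projective dimension, Jorgensen's result still gives the conclusion through complete intersection dimension, since vanishing of $\Ext^2(I,I)$ over a complete intersection forces the vanishing of higher self-extensions (Avramov--Buchweitz support varieties). So the first step is to invoke Jorgensen \cite[Proposition 2.5]{Jo} in the form stated as $(+)$ and obtain $\pd_R(I)\le 1$. If one insists that $(+)$ requires $\pd(I)<\infty$ as a hypothesis, I would argue via support varieties. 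Over a complete intersection the vanishing $\Ext^2_R(I,I)=0$ gives $\Ext^{2j}_R(I,I)=0$ for all large $j$ (the degree-two cohomology operators act on $\Ext^*_R(I,I)$). Hence the variety $V(I,I)=V(I)$ is trivial, and so $\pd(I)<\infty$. This step is the main obstacle: the passage from vanishing of a single $\Ext$ to finite projective dimension. I would rely on the cited result rather than reprove it.

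Granting $\pd(I)\le 1$, assume $I$ is not free, so $\pd(I)=1$. Since $I$ is a nonzero ideal of finite projective dimension, it contains a nonzerodivisor; if $I=0$ there is nothing to prove. Reflexivity of $I$ gives, by Fact~\ref{ref}, $\depth(I_\fp)\ge 2$ whenever $\depth(R_\fp)\ge 2$. Choose $\fp$ minimal in the non-free locus of $I$. Then $I_\fp$ has projective dimension exactly $1$ over $R_\fp$, so Auslander--Buchsbaum gives $\depth(I_\fp)=\depth(R_\fp)-1$.

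There are now two cases. If $\depth(R_\fp)\ge 2$, reflexivity forces $\depth(I_\fp)\ge 2$, so $\depth(R_\fp)\ge 3$. Then, since $I_\fp$ is locally free on the punctured spectrum of $R_\fp$ and $R_\fp$ is a complete intersection of dimension at least $3$, I would apply the standard fact that a reflexive ideal of projective dimension $1$ has $R/I$ of projective dimension $2$. Computing $\Ext^2(I,I)\cong\Ext^2(I,R)\otimes$-type terms shows it contains $\Ext^1_{R_\fp}(I_\fp,R_\fp)\otimes I_\fp\neq 0$, a contradiction. More concretely, with a presentation $0\to R^{n-1}\to R^n\to I\to0$, the vanishing $\Ext^2(I,I)=0$ together with $\Ext^1(I,R)\neq0$ of finite length can be compared through the long exact sequence. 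The other case is $\depth(R_\fp)\le 1$. Here $\depth(I_\fp)=0$ would contradict torsion-freeness, so $\depth(R_\fp)=1$ and $\depth(I_\fp)=0$, which is impossible since a nonzero ideal has positive depth when $R_\fp$ has positive depth.

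Even simpler, and what I would try first: an ideal with $\pd(I)\le 1$ has $\pd(R/I)\le 2$, and reflexive ideals of grade $\ge 2$ equal $R$. Since $I\cong I^{**}$, the ideal $I$ is isomorphic to a divisorial ideal, and hence so is its image. If $\grade(I)\ge 2$ then $I^{*}=R$ and $I=R$ is free. If $\grade(I)=1$, a height-one unmixed ideal with $\pd(R/I)\le 2$ is handled by the Auslander--Buchsbaum factorization: $I=xJ$ with $\grade(J)\ge 2$, and reflexivity forces $J=R$, so $I\cong R$ is free.
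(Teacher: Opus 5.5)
Your last paragraph is a correct proof, by a genuinely different route from the paper's. The middle paragraph, however, does not work and should be dropped. In the case $\depth(R_\fp)\ge 3$ you have $\pd_{R_\fp}(I_\fp)=1$, so $\Ext^2_{R_\fp}(I_\fp,-)$ vanishes identically and no contradiction can come from $\Ext^2$. In particular $\Ext^2(I,I)$ cannot contain $\Ext^1_{R_\fp}(I_\fp,R_\fp)\otimes I_\fp\neq 0$. A contradiction there would have to come from $\Ext^1(I,I)=0$, and proving that is exactly the nontrivial content of the paper's argument.

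\textbf{Your final route, stated precisely.} By $(+)$, $\pd(I)\le 1$. A nonzero ideal of finite projective dimension contains a nonzerodivisor, hence has rank one. So there is an exact sequence $0\to R^{n-1}\xrightarrow{\phi}R^n\to I\to 0$. Hilbert--Burch together with the Buchsbaum--Eisenbud criterion gives $I=aJ$, where $a$ is a nonzerodivisor and $J=I_{n-1}(\phi)$ has grade at least $2$ (or $J=R$). Then $J^*\cong R$, and the natural map $J\to J^{**}\cong R$ is the inclusion. So reflexivity of $J\cong I$ forces $J=R$, and $I=aR$. The case split on $\grade(I)$ and the word ``unmixed'' are unnecessary.

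\textbf{The paper's route.} It inducts on $\dim R$ to make $I$ locally free on the punctured spectrum. It then uses the embedding $\Ext^1(I,I)\hookrightarrow H^2_{\fm}(\Hom(I,I))$ of \cite[Lemma 3.2]{ACS} and the sequence $0\to R\to\Hom(I,I)\to C\to 0$, with $C$ of finite length, to get $\Ext^1(I,I)=0$ when $\depth R\ge 3$. Only then does it combine $\pd(I)\le 1$ with $\Ext^{\pd(I)}(I,I)\neq 0$.

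\textbf{What each buys.} Your argument is shorter and shows more. A reflexive ideal with $\pd(I)\le 1$ over any local ring is principal, and with MacRae's factorization for ideals with a finite free resolution, $\pd(I)<\infty$ already suffices. So the complete intersection and $\Ext^2$ hypotheses serve only to force finite projective dimension. The paper's local cohomology step has a different merit: it produces $\Ext^1(I,I)=0$ without any finite projective dimension hypothesis. That is what the following corollary on strongly normal rings reuses.

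\textbf{The reduction to $\pd(I)<\infty$.} The paper merely asserts this step. The clean reason is as follows. Each Eisenbud operator acts on $\Ext^*_R(I,I)$ as Yoneda product with its value on $1_I$, and that value lies in $\Ext^2_R(I,I)=0$. So $\Ext^*_R(I,I)$, a finitely generated module over $R[\chi_1,\dots,\chi_c]$ killed by every $\chi_j$, vanishes in high degrees. Avramov--Buchweitz then gives $\pd(I)<\infty$.
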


\begin{proof}
	Recall that we may assume that $\pd(I) < \infty$.
	We argue by induction on $d:=\dim (R)$.
	Suppose first that $d\leq 2$. We apply the  Auslander–Buchsbaum formula and deduce that $I$ is free. 
Thus we may assume $d \geq 3$. By the inductive hypothesis, we may further assume that $I$ is locally free on the punctured spectrum.
	
	Recall that $I$ is reflexive. This implies that $\depth(I) \geq 2$. By \cite[Lemma 3.2]{ACS}, there is an embedding
	\[
	\Ext^1(I,I) \hookrightarrow H^2_{\mathfrak{m}}(\Hom(I,I)) 
	\quad(\ast)
	\]
	
	Let $r\in R$ and $x\in I$. The assignment
	$r \mapsto (x \mapsto rx)$
	defines a map $\varphi: R \to \Hom(I,I)$.  
	Clearly, $\varphi$ is injective since $I$ is torsion-free. Let $C := \Coker(\varphi)$.
	Since $I$ is locally free on the punctured spectrum, 
	$C_\mathfrak{p} = 0$ for all $\mathfrak{p} \neq \mathfrak{m}$,
	i.e., $C$ has finite length.	
	Then, by Grothendieck's vanishing theorem, $H^i_{\mathfrak{m}}(C) = 0$ for $i > 0$.
	From the exact sequence $0 \rightarrow R \rightarrow \Hom(I,I) \rightarrow C \rightarrow 0$,
	we deduce that
	\[
	0= H^2_{\mathfrak{m}}(R) \cong H^2_{\mathfrak{m}}(\Hom(I,I)),
	\]
	since $\depth(R)=d>2$. In view of $(\ast)$ we obtain
	$\Ext^1(I,I) = 0$.
	Since $\pd(I) \leq 1$ (see $(+)$ above), we conclude that $\pd(I) = 0$, because otherwise $\Ext^1(I,I) \neq 0$.
\end{proof}

Recall that a module $M$ is called rigid if $\Ext^1(M,M)=0$. The freeness of a rigid ideal over a 1-dimensional Gorenstein ring is a challenging problem in commutative algebra.

\begin{corollary}
	Let $R$ be a strongly normal ring of dimension at most $3$ and $I$ a reflexive ideal. Then $I$ is rigid.
\end{corollary}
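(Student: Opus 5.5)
The plan is to reuse the mechanism from the proof of the preceding proposition, without needing $\Ext^2$: embed $\Ext^1_R(I,I)$ into a local cohomology module of $\Hom_R(I,I)$, then show that module vanishes because $\Hom_R(I,I)\cong R$ and $R$ has enough depth. Throughout I read ``strongly normal'' as ``normal and satisfying $(S_3)$''. If the paper's definition also builds in a regularity condition in codimension two, that will only simplify the localization step below.

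The argument is an induction on $d=\dim R\le 3$, carried out locally.

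\textbf{Reductions.} Since $R$ is a normal domain and $I$ is a nonzero reflexive (divisorial) ideal, $\Hom_R(I,I)$ identifies with $\{x\in Q(R): xI\subseteq I\}$. By the determinant trick every such $x$ is integral over $R$, so normality gives $\Hom_R(I,I)=R$. This is sharper than in the proof of the proposition above: the cokernel $C$ of $\varphi\colon R\to\Hom(I,I)$ is zero outright, and no finite-length argument is needed. Next, $\Ext^1_R(I,I)$ localizes, so it suffices to show $\Ext^1_{R_\fp}(I_\fp,I_\fp)=0$ for each $\fp$. When $\Ht\fp\le1$, $R_\fp$ is a field or a DVR, so $I_\fp$ is free and there is nothing to prove. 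Hence we may assume $d\ge 2$, and by the induction hypothesis $\Ext^1_R(I,I)$ has finite length.

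\textbf{Main step ($d=3$).} Reflexivity gives $\depth I\ge 2$. The embedding of \cite[Lemma 3.2]{ACS} used in the previous proposition then gives
\[
\Ext^1_R(I,I)\hookrightarrow H^2_{\fm}(\Hom_R(I,I))=H^2_{\fm}(R).
\]
Condition $(S_3)$ with $\dim R=3$ gives $\depth R=3$, so $H^2_{\fm}(R)=0$ and $\Ext^1_R(I,I)=0$.

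\textbf{Obstacle.} The main obstacle is the exact hypotheses of \cite[Lemma 3.2]{ACS}. If it needs $I$ locally free on the punctured spectrum, I must control the height-two primes. There $R_\fp$ is a two-dimensional normal, hence Cohen--Macaulay, domain, and $I_\fp$ is maximal Cohen--Macaulay but not obviously free. I would handle this in one of two ways. First, use the codimension-two regularity in the definition of strongly normal if it is present: then $I_\fp$ is reflexive over a regular ring of dimension $2$, hence free. Otherwise, check that the lemma only needs $\Ext^1_R(I,I)$ to have finite length and $\depth I\ge2$; the induction above supplies both, provided the two-dimensional case is settled.

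\textbf{The case $d=2$.} The embedding into $H^2_{\fm}(R)\neq0$ gives nothing here, so this case is the real sticking point. I would try to obtain it from the $(S_3)$/codimension-two hypothesis, which forces $R$ to be regular, so that $I$ is free. Failing that, I would use the $d=2$ statement only at height-two primes inside a ring of dimension $3$, which is all the main step needs.
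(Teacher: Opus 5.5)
With the definition the paper actually uses, namely that strongly normal means $(R_2)$ and $(S_3)$, your argument is complete and is essentially the paper's. In outline:
\begin{itemize}
\item $(R_2)$ makes $R$ regular when $\dim R\le 2$, so a reflexive ideal is free.
\item When $\dim R=3$, $(R_2)$ makes $I$ locally free on the punctured spectrum, and $(S_3)$ gives $\depth R=3$.
\item Then $\Ext^1_R(I,I)\hookrightarrow H^2_{\fm}(\Hom_R(I,I))=H^2_{\fm}(R)=0$.
\end{itemize}
Your identification $\Hom_R(I,I)=R$ via the determinant trick is a slightly cleaner substitute for the paper's finite-length-cokernel step. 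You are also right that the embedding only needs $\depth I\ge 2$ and $\Ext^1_R(I,I)$ of finite length.

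What is wrong is treating $(R_2)$ as something that ``only simplifies'' the localization step. Under the reading you adopt throughout (normal plus $(S_3)$), the statement is false, so neither fallback route can be completed.

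A two-dimensional counterexample: let $R=k[[x,y,z]]/(xy-z^2)$ and $I=(x,z)$.
\begin{itemize}
\item $R$ is a normal Cohen--Macaulay domain, so $(S_3)$ holds.
\item $I$ is a height-one prime, hence reflexive.
\item Its presentation matrix $\phi=\begin{pmatrix} z & y\\ -x & -z\end{pmatrix}$ satisfies $\phi^2=(z^2-xy)\cdot 1$ over $k[[x,y,z]]$, so $\Omega I\cong I$.
\item Hence $\Ext^1_R(I,I)\cong\underline{\operatorname{Hom}}_R(\Omega I,I)\cong\underline{\operatorname{Hom}}_R(I,I)$, where $\underline{\operatorname{Hom}}$ denotes maps modulo those factoring through a free module.
\item This is nonzero, because the identity of $I$ does not factor through a free module ($I$ is not principal).
\end{itemize}

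In particular, $(S_3)$ does not force regularity in dimension $2$; only $(R_2)$ does. Base-changing to $k[[x,y,z,t]]/(xy-z^2)$ gives a three-dimensional normal Cohen--Macaulay ring that fails $(R_2)$ and has a non-rigid reflexive ideal. So ``using the $d=2$ statement at height-two primes'' is circular, and that statement is in fact unavailable without $(R_2)$. You should commit to $(R_2)$ from the start: it is the essential input, not an optional convenience.
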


\begin{proof}
	We may assume that $\dim(R)=3$. Strongly normal rings satisfy Serre's condition $(R_2)$. By this, $I$ is locally free on the punctured spectrum. Also, strongly normal rings satisfy Serre's condition $(S_3)$. Since $\dim(R)=3$, this means that $R$ is Cohen–Macaulay. Now we follow the previous argument to deduce that $\Ext^1(I,I) = 0$.
\end{proof}

\begin{fact}\label{rob}
	(See Roberts \cite[Theorem 2]{Ro2}). $\Ext^i_{\mathfrak{p}}(k, M) \neq 0$ iff $i \in [\depth(M), \id(M)]$.  
\end{fact}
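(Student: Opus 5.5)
The plan is to treat the two endpoint statements as classical and to put all the work into showing that there are no gaps in between. Here $\Ext^i_{\mathfrak p}$ is read as $\Ext^i_R$.

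\textbf{Step 1 (endpoints).} By Rees' theorem, $\Ext^i_R(k,M)=0$ for $i<\depth(M)$, and $\Ext^{\depth M}_R(k,M)\neq 0$.

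\textbf{Step 2 (upper bound).} If $\id(M)<\infty$, Bass' theorem gives
\[
\id(M)=\sup\{i:\Ext^i_R(k,M)\neq0\}=\depth(R).
\]
If $\id(M)=\infty$, then $\Ext^i_R(k,M)\ne 0$ for infinitely many $i$, and the no-gap argument below is what gives all of them. So the statement reduces to showing that the Bass numbers $\mu^i(M)=\dim_k\Ext^i_R(k,M)$ have no gaps between $\depth(M)$ and $\id(M)$.

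\textbf{Step 3 (pass to a dualizing complex).} Bass numbers are unchanged under $\mathfrak m$-adic completion, so we may assume $R$ is complete and therefore has a normalized dualizing complex $D$. Set $M^\dagger:=\mathbf{R}\Hom_R(M,D)$. Grothendieck duality gives $\mathbf R\Hom_R(k,M)\simeq \Hom_k(k\otimes^{\mathbf L}_R M^\dagger,k)$ up to a shift. Hence the Bass numbers of $M$ are, after reindexing, the Betti numbers of $M^\dagger$. Moreover, $\id(M)<\infty$ exactly when $M^\dagger$ has a bounded minimal free resolution $F$ by finite free modules, since $D$ exchanges the finite-injective and finite-projective subcategories. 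Note also $M\simeq\mathbf R\Hom_R(M^\dagger,D)$.

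\textbf{Step 4 (a gap forces a splitting).} Suppose $\mu^j(M)=0$ for some $\depth(M)<j<\id(M)$. Then $F$ has a zero term strictly inside its range. A complex with a zero term in some degree is the direct sum of its part above and its part below that degree, because no differential crosses the gap. So $F=F'\oplus F''$ with both pieces nonzero. Dualizing back, $M\simeq \mathbf R\Hom(F',D)\oplus\mathbf R\Hom(F'',D)$. Since $M$ is a module, each summand has homology only in degree $0$, so $M\cong M_1\oplus M_2$ with $M_1,M_2$ nonzero modules. The Bass numbers of $M_1$ are supported in degrees $<j$, and those of $M_2$ in degrees $>j$.

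\textbf{Step 5 (contradiction).} Each $M_i$ is a nonzero module of finite injective dimension, since it is a summand of $M$. By Bass' theorem from Step 2, $\id(M_1)=\depth(R)=\id(M_2)$. But $\id(M_1)<j<\depth(M_2)\le\id(M_2)$, which is a contradiction. So there are no gaps.

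The case $\id(M)=\infty$ is handled by the same splitting argument applied to a truncation, or by Roberts' original formulation. There a gap would again split $M^\dagger$, and one of the resulting module summands would have Bass numbers vanishing from some point on, hence finite injective dimension. Its injective dimension would then be $\depth R$, which contradicts the position of its Bass numbers relative to the gap.

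\textbf{Main obstacle.} I expect the hardest step to be Step 4. It requires making precise the duality that converts Bass numbers into Betti numbers of the bounded complex $M^\dagger$. It also requires checking carefully that the splitting of the minimal complex yields a genuine module decomposition of $M$, with the degree bookkeeping correct under the normalization of $D$.
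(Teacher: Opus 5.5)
Your argument is correct when $\id(M)<\infty$, and it follows Roberts' dualizing-complex proof, which the paper simply cites without giving an argument. The gap splits the minimal resolution of $M^\dagger$, and biduality gives $\widehat M\cong M_1\oplus M_2$. Both summands are nonzero, since the lowest term of a nonzero minimal complex carries nonzero homology by Nakayama. Bass' equality $\id M_1=\depth R=\id M_2$ then contradicts $\id M_1<j<\depth M_2$.

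The genuine gap is the case $\id(M)=\infty$, which you handle in one sentence. It is also the case the paper actually needs: in Lemma~\ref{118}, the vanishing of $\mu_{j+1}$ above the depth is used to conclude $\id(\mathfrak pR_{\mathfrak p})<\infty$. In this case the splitting gives $M_1\neq0$ with all Bass numbers in degrees $<j$, so $\id M_1=\depth R<j$. It also gives $M_2\neq0$ with $\depth M_2>j$ but $\id M_2=\infty$. Bass' equality for $M_1$ only tells you $\depth R<j$, and nothing about the position of $M_1$'s Bass numbers relative to the gap is contradicted. The comparison you used in Step 5 is unavailable, because $M_2$ no longer has finite injective dimension. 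As written, no contradiction is reached.

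The missing ingredient is the inequality $\depth M_2\le\depth R$. It holds because $R$ now has a nonzero finitely generated module, $M_1$, of finite injective dimension. Ischebeck's formula gives $\sup\{i:\Ext^i_R(M_2,M_1)\neq0\}=\depth R-\depth M_2$. The elementary induction on $\depth M_2$ that proves the formula shows this supremum is attained, so it is a nonnegative integer. Alternatively, the Bass conjecture, now a theorem via the New Intersection Theorem, makes $R$ Cohen--Macaulay, so $\depth M_2\le\dim M_2\le\dim R=\depth R$; this route is much heavier. Either way you get $j<\depth M_2\le\depth R=\id M_1<j$, which is the contradiction you need.

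No truncation is required: $F_j=0$ already splits the resolution whether or not it is bounded.
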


By $ \mu_{j}(\mathfrak{m}, M)$ we mean $\dim_k(\Ext^j_{R}(k, M))$, where $k$ is the residue field.

\begin{theorem}
	Let $(R,\mathfrak{m})$ be a $d$-dimensional Gorenstein ring with $d>1$, let $\pd(M) < \infty$, and suppose $\Ext^{d-1}(M,M)=0$. If $M$ is quasi-Buchsbaum, then $\pd(M) \leq d-2$.
\end{theorem}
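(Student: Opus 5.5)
The plan is to argue by contradiction. Suppose $\pd(M)\ge d-1$. Since $R$ is Gorenstein, it is Cohen--Macaulay with $\depth(R)=d$. The Auslander--Buchsbaum formula then gives $\depth(M)=d-\pd(M)\le 1$, so there are two cases, $\pd(M)=d-1$ and $\pd(M)=d$.

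\emph{Case $\pd(M)=d-1$.} This case is immediate. Let $F_\bullet$ be a minimal free resolution of $M$ of length $p=\pd(M)$. The last differential $F_p\to F_{p-1}$ has entries in $\fm$, so
\[
\Ext^p_R(M,N)\cong \Coker\big(\Hom(F_{p-1},N)\to\Hom(F_p,N)\big)
\]
surjects onto $N^{\oplus b_p}\otimes_R k\neq 0$ for every nonzero $N$. Taking $N=M$ and $p=d-1$ contradicts $\Ext^{d-1}(M,M)=0$.

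\emph{Case $\pd(M)=d$, i.e.\ $\depth(M)=0$.} This is where the quasi-Buchsbaum hypothesis has to be used.
\begin{enumerate}
\item Set $W:=H^0_{\fm}(M)\neq 0$. Since $M$ is quasi-Buchsbaum, $\fm W=0$ (assuming $\dim M>0$), so $W\cong k^{\oplus s}$. The case $\dim M=0$ is handled separately below.
\item Put $M':=M/W$, so that $\depth(M')\ge 1$. Apply $\Hom(M,-)$ to $0\to W\to M\to M'\to 0$ to get
\[
\Ext^{d-1}(M,M)\to \Ext^{d-1}(M,M')\to \Ext^d(M,W)\to \Ext^d(M,M)\to \Ext^d(M,M')\to 0 .
\]
\item Since $\pd(M)=d$, the minimal-resolution argument above gives $\Ext^d(M,W)\cong k^{\oplus s b_d}$.
\item The goal is to show that the connecting map $\Ext^{d-1}(M,M')\to\Ext^d(M,W)$ is not injective, or that $\Ext^{d-1}(M,M')$ is large enough to force a nonzero image from $\Ext^{d-1}(M,M)$.
\end{enumerate}

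The tool for step 4 is a length comparison. Because $\pd(M)<\infty$ over a Gorenstein ring, $\Ext^i(M,-)$ is computed by $\Hom(F_\bullet,R)\otimes_R -$. Combining this with local duality, $\Ext^{d-i}(M,R)\cong H^i_{\fm}(M)^\vee$, the top Ext modules $\Ext^{d}(M,R)$ and $\Ext^{d-1}(M,R)$ are Matlis duals of $H^0_{\fm}(M)$ and $H^1_{\fm}(M)$. Both of these are annihilated by $\fm$, by quasi-Buchsbaumness when $\dim M\ge 2$. I would also use Fact~\ref{rob} (Bass numbers of $M$ are nonzero exactly in $[\depth M,\id M]=[0,d]$) to see that $\mu_{d-1}(\fm,M)\neq 0$. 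Through the Ext/Tor exchange this gives a nonzero element of $\Ext^{d-1}(M,M)$ coming from $\Ext^{d-1}(M,R)\otimes M$, whose image in the Tor-correction terms is killed by $\fm$.

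The main obstacle is this last step: showing that $\Ext^{d-1}(M,M)\neq0$ when $\depth M=0$, since minimality alone only controls the top degree $d$. I would first try the exact sequence $\Ext^{d-1}(M,R)\otimes M\to\Ext^{d-1}(M,M)\to\Tor_1(\Tr\Omega^{d-1}M,M)\to0$ used earlier in the paper, and exploit that $\fm$ kills $\Ext^{d-1}(M,R)$ to compare lengths with $\Ext^d(M,W)$.

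The low-dimensional cases need separate treatment. If $\dim M=0$, then $M$ is Artinian of finite projective dimension. By duality $\Ext^i(M,R)=0$ for $i\neq d$, so $\Ext^{d-1}(M,M)$ can be computed from a resolution of $M^\vee$; using $d>1$, I would show it is nonzero by the same minimality count. If $\dim M=1$, the quasi-Buchsbaum hypothesis only gives $\fm W=0$, which suffices for steps 1--3, but the step-4 comparison would need checking again in this case.
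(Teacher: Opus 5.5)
Your treatment of $\pd(M)=d-1$ via the minimal resolution is correct, and it is exactly how the paper excludes that case. The gap is the case $\pd(M)=d$. Step~4 and the paragraph after it state a goal but contain no argument, and you yourself flag $\Ext^{d-1}(M,M)\neq 0$ as unresolved.

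The tools you picked are why it stalls:
\begin{enumerate}
\item Applying $\Hom_R(M,-)$ to $0\to W\to M\to M'\to 0$ leaves $\Ext^{d-1}_R(M,M')$ and the connecting map uncontrolled.
\item The sequence $\Ext^{d-1}_R(M,R)\otimes_R M\to\Ext^{d-1}_R(M,M)\to\Tor_1^R(\Tr\Omega^{d-1}M,M)\to 0$ cannot certify a nonzero element of $\Ext^{d-1}_R(M,M)$. The first map can have a large kernel, namely the image of $\Tor_2^R(\Tr\Omega^{d-1}M,M)$.
\item The Bass number $\mu_{d-1}(\fm,M)=\dim_k\Ext^{d-1}_R(k,M)$ that you want to use never occurs in any of your sequences, so Fact~\ref{rob} has nothing to act on.
\end{enumerate}

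The missing idea is to vary the \emph{first} argument. Applying $\Hom_R(-,M)$ to the same short exact sequence gives
\[
\Ext^{d-1}_R(M,M)\to\Ext^{d-1}_R(W,M)\to\Ext^{d}_R(M',M).
\]
Since $R$ is Gorenstein and $\pd(M)<\infty$, $M$ has finite injective dimension. Ischebeck's formula then gives
\[
\sup\{i:\Ext^i_R(M',M)\neq 0\}=\depth(R)-\depth(M')\le d-1 .
\]
Hence $\Ext^d_R(M',M)=0$, and $\Ext^{d-1}_R(M,M)$ surjects onto $\Ext^{d-1}_R(W,M)\cong\Ext^{d-1}_R(k,M)^{\oplus s}$ with $s\ge 1$. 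As $\id(M)=\depth(R)=d$ and $\depth(M)=0$, Fact~\ref{rob} gives $\Ext^{d-1}_R(k,M)\neq 0$. Alternatively, $\Ext^{d-1}_R(k,M)\cong\Tor_1^R(k,M)\neq 0$ because $M$ is not free. Either way this contradicts $\Ext^{d-1}(M,M)=0$. This is the paper's argument. It uses only $\fm W=0$, so it covers $\dim M=1$ with no separate check.

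Your $\dim M=0$ paragraph is likewise only a plan. The paper handles $M'=0$ by using quasi-Buchsbaumness to get $M\cong k^{\oplus s}$. Then $\pd_R(k)<\infty$ makes $R$ regular of dimension $d$, so $\Ext^{d-1}_R(k,k)\neq 0$, again a contradiction.
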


\begin{proof}
	Set  $\overline{M} := M/\Gamma_{\mathfrak{m}}(M)$.
	Suppose, for a contradiction, that $p:=\pd(M) \geq d-1$. Since
	$$\Ext^p(M,M) \neq 0 \Rightarrow \pd(M) \neq d-1,$$
and so $\pd(M) \geq d$. But,	the Auslander–Buchsbaum formula yields $\pd(M) \leq d$. Hence $\pd(M) = d$ and so $\depth(M) = 0$. In particular, $ \Gamma_{\mathfrak{m}}(M) \neq 0$.
	
	We may assume $\overline{M} \neq 0$. Indeed, if $\overline{M} = 0$, then $ M = \Gamma_{\mathfrak{m}}(M)$. Since $M$ is quasi-Buchsbaum, this implies $M=\Gamma_{\mathfrak{m}}(M)=\bigoplus k$. Consequently, $\Ext^{d-1}(k,k) = 0$, because it is a direct summand of $\Ext^{d-1}(M,M)=0$. But this is impossible, since it would imply
	$$\Tor^R_{d-1}(k,k)^\vee = \Ext^{d-1}(k,k^\vee) = \Ext^{d-1}(k,k),$$
	and hence $\pd(k) \leq d-1$, forcing $R$ to be regular of Krull dimension $\leq d-1$, a contradiction.
	
	Thus $\overline{M} \neq 0$. Since $R$ is Gorenstein, any module of finite projective dimension has finite injective dimension. Hence we may use Ischebeck's formula:
	\[
	\sup\{i \mid \Ext^i(\overline{M},M) \neq 0\} = \depth(R) - \depth(\overline{M})\leq d-1,
	\]
	since $\depth(\overline{M})>0$. Thus $\Ext^d(\overline{M},M) = 0$.
	From $0 \rightarrow \Gamma_{\mathfrak{m}}(M) \rightarrow M \rightarrow \overline{M} \rightarrow 0$, we obtain
	\[
	0= \Ext^{d-1}(M,M) \lo \Ext^{d-1}(\Gamma_{\mathfrak{m}}(M),M) \lo \Ext^d(\overline{M},M)=0.
	\]
But $\Gamma_{\mathfrak{m}}(M) = \bigoplus_X k$, since $M$ is quasi-Buchsbaum. As $\depth(M)=0$, we have $X\neq \emptyset$. Hence
	$$\Ext^{d-1}(\Gamma_{\mathfrak{m}}(M),M)=0 \Rightarrow \Ext^{d-1}(R/\mathfrak{m}, M) = 0.$$
	Recall from \cite[3.1.17]{BH} that $\id(M)=\depth(R)$. Using Fact \ref{rob}, we get
	$$\mu_{d-1}(\mathfrak{m},M) = 0 \Rightarrow d-1 \notin [\depth(M), \id(M)] = [0,d],$$
	a contradiction. This shows $\pd(M) \leq d-2$.
\end{proof}
\begin{fact}\label{depth}
	Suppose $\depth_R(R) >0$. Then $\depth_R(\mathfrak{m}) =1$.
\end{fact}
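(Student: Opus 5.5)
The statement to prove is Fact~\ref{depth}: if $\depth_R(R)>0$, then $\depth_R(\fm)=1$. I will use the short exact sequence
\[
0\lo \fm \lo R \lo k \lo 0
\]
and compare depths, either through the depth lemma or through the long exact sequence of local cohomology. The case $\dim R=0$ cannot occur, since then $\depth R=0$. So I assume $\depth(R)\ge 1$, which also gives $\dim R\ge 1$ and $\fm\neq 0$.

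First I show $\depth(\fm)\ge 1$. The module $\fm$ is a submodule of $R$, so $H^0_{\fm}(\fm)\subseteq H^0_{\fm}(R)=0$, because $\depth R>0$. Equivalently, an $R$-regular element of $\fm$ is $\fm$-regular on the submodule $\fm$.

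Next I show $\depth(\fm)\le 1$. From the long exact sequence in local cohomology I get
\[
0=H^0_{\fm}(R)\lo H^0_{\fm}(k)=k \lo H^1_{\fm}(\fm).
\]
The first map is zero and the sequence is exact, so $k$ injects into $H^1_{\fm}(\fm)$. Hence $H^1_{\fm}(\fm)\neq 0$, and therefore $\depth(\fm)\le 1$.

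Combining the two steps gives $\depth(\fm)=1$. Alternatively, one can use the $\Ext$ description of depth: $\Ext^0(k,R)=0$ and $\Ext^0(k,k)=k\hookrightarrow \Ext^1(k,\fm)$, so $\Ext^1(k,\fm)\ne 0$.

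No step is a real obstacle. The only thing to watch is to cite the standard identification of depth with the least $i$ such that $H^i_{\fm}(-)\ne 0$, which is valid for nonzero finitely generated modules. This applies here because $\fm\neq 0$.
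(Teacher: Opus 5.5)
Your proposal is correct and follows the paper's proof: the paper uses the same sequence $0\to\fm\to R\to k\to 0$ and the same injection $k=H^0_{\fm}(k)\hookrightarrow H^1_{\fm}(\fm)$ to get $\depth(\fm)\le 1$. The only difference is that you state the lower bound $\depth(\fm)\ge 1$ explicitly, via $H^0_{\fm}(\fm)\subseteq H^0_{\fm}(R)=0$, whereas the paper leaves this step implicit.
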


\begin{proof}
	We have $0\to \mathfrak{m}\to R\to k\to 0$. This yields
	$0=H^0_{\mathfrak{m}}(R)\to [k=H^0_{\mathfrak{m}}(k)]\to H^1_{\mathfrak{m}}(\mathfrak{m})$,
	hence $H^1_{\mathfrak{m}}(\mathfrak{m})\neq 0$. Therefore $\depth(\mathfrak{m}) =1$.
\end{proof}

\begin{lemma}\label{118}
	If $I$ is a radical ideal and $\Ext^j(I,I) = 0$ with $\pd(I) < \infty$, then
	\(
	j > \operatorname{ht}(I) - 1.
	\)
\end{lemma}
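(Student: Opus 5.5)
Write $h:=\operatorname{ht}(I)$. The claim is that $\Ext^j_R(I,I)=0$ forces $j\ge h$, so the plan is to show $\Ext^j_R(I,I)\neq 0$ for every $0\le j\le h-1$ by localizing at a suitable prime. Choose $\fp\in\Min(I)$ with $\Ht(\fp)=h$. Since $I$ is radical, $I_\fp=\fp R_\fp$, the maximal ideal of $A:=R_\fp$, which is local of dimension $h$. Localization commutes with $\Ext$ for finitely generated modules, so it suffices to show $\Ext^j_A(\fp A,\fp A)\neq 0$ for $0\le j\le h-1$. The hypothesis $\pd_R(I)<\infty$ gives $\pd_A(\fp A)<\infty$, hence $\pd_A(A/\fp A)<\infty$. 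A nonzero module of finite length and finite projective dimension exists, so Fact~\ref{art} makes $A$ Cohen--Macaulay, and Auslander--Buchsbaum gives $\pd_A(k_\fp)=h$ and $\pd_A(\fp A)=h-1$, where $k_\fp=A/\fp A$.

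We may assume $h\ge 1$; otherwise the claim $j>-1$ is trivial. Then $\depth A=h\ge1$, so Fact~\ref{depth} gives $\depth(\fp A)=1$. Put $\mathfrak n=\fp A$ and $k=k_\fp$. Applying $\Hom_A(\mathfrak n,-)$ to $0\to\mathfrak n\to A\to k\to 0$ gives
\[
\Ext^{j}_A(\mathfrak n,\mathfrak n)\to\Ext^j_A(\mathfrak n,A)\to\Ext^j_A(\mathfrak n,k)\to\Ext^{j+1}_A(\mathfrak n,\mathfrak n).
\]
Since $\Ext^j_A(\mathfrak n,k)\cong\Ext^{j+1}_A(k,k)$ for $j\ge1$, its dimension is the Betti number $\beta_{j+1}(k)$. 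This is nonzero for $j+1\le h$, and also for $j=0$ because $\Hom(\mathfrak n,k)\neq0$. Dually, $\Ext^j_A(\mathfrak n,A)\cong\Ext^{j+1}_A(k,A)=0$ for $1\le j\le h-2$, since $\depth A=h$.

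The argument then splits by degree.
\begin{itemize}
\item For $1\le j\le h-2$: exactness gives an injection $\Ext^j_A(\mathfrak n,k)\hookrightarrow\Ext^{j+1}_A(\mathfrak n,\mathfrak n)$, so $\Ext^{i}_A(\mathfrak n,\mathfrak n)\ne0$ for $2\le i\le h-1$.
\item For $j=h-1=\pd\mathfrak n$: the top Ext of a module into a nonzero module is nonzero by Nakayama. Take the minimal resolution; its last map has entries in $\mathfrak n$, so $\Ext^{h-1}(\mathfrak n,N)=N^{b}/(\text{entries})N^{b'}\neq0$.
\item For $j=0$: $\Hom(\mathfrak n,\mathfrak n)\ni\id\neq0$.
\item For $j=1$ when $h\ge3$: this is already covered by the first case, since $j+1=2$ with $1\le h-2$.
\end{itemize}

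The delicate point is therefore the edge cases for small $h$: for $h=2$ the list is $j\in\{0,1\}$ with $1=\pd$, and for $h=1$ the list is $j=0$ only. Both are handled by the top-degree and $\Hom$ arguments. I will double-check that the injection in the first case really needs $\Ext^{j}_A(\mathfrak n,A)\to\Ext^j_A(\mathfrak n,k)$ to be zero, which holds since $\Ext^{j}_A(\mathfrak n,A)=0$ in that range.
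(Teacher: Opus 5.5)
Your reduction is sound: localizing at a minimal prime $\fp$ of height $h$ gives $I_\fp=\mathfrak n$, and $A$ is Cohen--Macaulay (indeed regular, since $\pd_A k<\infty$). Your treatments of degree $0$, of degrees $2,\dots,h-1$ via $\Ext^j_A(\mathfrak n,k)\hookrightarrow\Ext^{j+1}_A(\mathfrak n,\mathfrak n)$, and of the top degree $h-1$ via Nakayama are also correct.

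The gap is in degree $1$ when $h\ge 3$, not in the small-$h$ edge cases you flagged. Your last bullet says degree $1$ is covered by the first case, but it is not. Running the first case with index $j=1$ gives $\Ext^1_A(\mathfrak n,k)\hookrightarrow\Ext^2_A(\mathfrak n,\mathfrak n)$, which is nonvanishing in degree $2$. The first case never reaches degree $1$. To land in $\Ext^1_A(\mathfrak n,\mathfrak n)$ you need the index-$0$ map $\Hom_A(\mathfrak n,k)\to\Ext^1_A(\mathfrak n,\mathfrak n)$ to be injective. Your vanishing argument does not apply there, because $\Hom_A(\mathfrak n,A)\neq 0$. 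So, for example with $h=3$, nothing you wrote excludes $\Ext^1_R(I,I)=0$.

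The repair is short. Since $\depth A=h\ge 2$, apply $\Hom_A(-,A)$ to $0\to\mathfrak n\to A\to k\to 0$ and use $\Hom_A(k,A)=\Ext^1_A(k,A)=0$. This shows restriction $A\to\Hom_A(\mathfrak n,A)$ is an isomorphism. So every map $\mathfrak n\to A$ is multiplication by an element of $A$, has image inside $\mathfrak n$, and becomes zero in $\Hom_A(\mathfrak n,k)$. Hence $0\neq\Hom_A(\mathfrak n,k)\hookrightarrow\Ext^1_A(\mathfrak n,\mathfrak n)$.

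Alternatively, argue uniformly. For $j\ge 1$ you have $\Ext^j_A(\mathfrak n,\mathfrak n)\cong\Ext^{j+1}_A(k,\mathfrak n)$. Applying $\Hom_A(k,-)$ to the same sequence gives injections of the nonzero modules $\Ext^j_A(k,k)$ into $\Ext^{j+1}_A(k,\mathfrak n)$ for $j<\depth A=h$. This covers all $1\le j\le h-1$ at once.

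This uniform version is the elementary counterpart of the paper's proof. The paper also turns $\Ext^j(\fp R_\fp,\fp R_\fp)$ into the Bass number $\mu_{j+1}(\fp R_\fp,\fp R_\fp)$. It then gets nonvanishing on $[1,\Ht(I)]$ from Roberts' theorem together with the Bass conjecture, whereas you get it from the Cohen--Macaulayness supplied by $\pd(I)<\infty$. With either patch your argument is complete.
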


\begin{proof}
	We have $I  = \bigcap_{\mathfrak{q}\in\min(I)} \mathfrak{q}$. Take $\mathfrak{p} \in \operatorname{Assh}(I)$. 
	Then $\operatorname{ht}(I)=\operatorname{ht}(\mathfrak{p})$.
Recall that
	\[
	IR_\mathfrak{p} = [\bigcap_{\mathfrak{q}\in\min(I)\setminus\{\mathfrak{p}\}}  \mathfrak{q}R_\mathfrak{p}]  \cap  \mathfrak{p} R_\mathfrak{p} = R_\mathfrak{p} \cap \mathfrak{p} R_\mathfrak{p}
	=\mathfrak{p} R_\mathfrak{p}.
	\]
Then
	\begin{align*}
	0
	&= \Ext^j(I,I)_\mathfrak{p}   \\
	&= \Ext^j(IR_\mathfrak{p}, I R_\mathfrak{p})  \\
	&= \Ext^j(\mathfrak{p} R_\mathfrak{p}, \mathfrak{p} R_\mathfrak{p}) \\
	&= \Ext^{j+1}(R_\mathfrak{p}/\mathfrak{p} R_\mathfrak{p}, \mathfrak{p} R_\mathfrak{p}) .
	\end{align*}
Consequently, $\mu_{j+1}(\mathfrak{p} R_\mathfrak{p}, \mathfrak{p} R_\mathfrak{p}) = 0$.
	By Fact \ref{rob}, this implies that $\id(\mathfrak{p} R_\mathfrak{p})<\infty$. By Bass' conjecture, $R_\mathfrak{p}$ is Cohen–Macaulay. By \cite[3.1.17]{BH}, we have 
	\[
	\id(\mathfrak{p} R_\mathfrak{p})=\depth(R_\mathfrak{p})=\dim(R_\mathfrak{p})=\operatorname{ht}(\mathfrak{p})=\operatorname{ht}(I)\quad(+).
	\]
Using Roberts' result again, we obtain
	\[
	j+1 \notin [\depth(\mathfrak{p} R_\mathfrak{p}), \id(\mathfrak{p} R_\mathfrak{p})]
	\stackrel{\ref{depth}}{=} [1, \id(\mathfrak{p} R_\mathfrak{p})]
	\stackrel{(+)}{=} [1, \operatorname{ht}(I)].
	\]
	Thus $j+1 > \operatorname{ht}(I)$, i.e., $j > \operatorname{ht}(I)-1$.
\end{proof}

The following completes the proof of Theorem \ref{1.4}:

\begin{theorem}
	Let $R$ be Cohen–Macaulay and $I \subset R$ a radical ideal with $\pd(I) < \infty$ such that $R/I$ is Cohen–Macaulay. If $\Ext^i(I,I) = 0$, then $\pd(I) < i$.
\end{theorem}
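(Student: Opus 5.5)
Combine Lemma~\ref{118} with the Auslander--Buchsbaum formula. The target is the chain
\[
\pd(I) \le \operatorname{ht}(I)-1 < i ,
\]
where the right inequality is exactly Lemma~\ref{118} applied with $j=i$. We may assume $I\neq 0$ and $I\neq R$, since otherwise $I$ is free and the claim is trivial.

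The key step is to compute $\pd(I)$ in terms of $\operatorname{ht}(I)$.
\begin{itemize}
\item From $0\to I\to R\to R/I\to 0$ and $I$ not free, we get $\pd(R/I)=\pd(I)+1$. This holds whenever $\pd(I)>0$; the free case is trivial anyway since $i\ge 1$ is implicit when $\Ext^i$ vanishing is meaningful. For $i=0$, $\Hom(I,I)\neq 0$, so that case cannot occur.
\item Since $\pd(R/I)<\infty$, Auslander--Buchsbaum gives $\pd(R/I)=\depth(R)-\depth(R/I)$.
\item $R$ is Cohen--Macaulay, so $\depth(R)=\dim(R)$; $R/I$ is Cohen--Macaulay, so $\depth(R/I)=\dim(R/I)$.
\item In a Cohen--Macaulay local ring, $\dim(R)-\dim(R/I)=\operatorname{ht}(I)$ (cf. \cite[Corollary 2.1.4]{BH}).
\end{itemize}
Together these give $\pd(R/I)=\operatorname{ht}(I)$, and hence $\pd(I)=\operatorname{ht}(I)-1$ when $I$ is not free.

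Now apply Lemma~\ref{118} with $j=i$. Its hypotheses hold: $I$ is radical, $\pd(I)<\infty$, and $\Ext^i(I,I)=0$. It yields $i>\operatorname{ht}(I)-1=\pd(I)$, which is the claim.

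The step needing the most care is the height equality $\dim R-\dim R/I=\operatorname{ht}(I)$, which needs the Cohen--Macaulay property of $R$. The other point is the reliance inside Lemma~\ref{118} on Bass' conjecture (a theorem, by Peskine--Szpiro/Roberts). In our setting even that is unnecessary, because $R_\mathfrak{p}$ is already Cohen--Macaulay as a localization of $R$. The rest is bookkeeping.
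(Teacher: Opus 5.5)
Your proposal is correct and is essentially the paper's own proof: Lemma~\ref{118} gives $\operatorname{ht}(I)-1<i$, and Auslander--Buchsbaum together with the Cohen--Macaulayness of $R$ and $R/I$ gives $\pd(I)=\pd(R/I)-1=\dim R-\dim R/I-1=\operatorname{ht}(I)-1$. Your additional points are sound refinements: the treatment of the degenerate cases ($I$ free, $i=0$), and the remark that $R_{\mathfrak p}$ is already Cohen--Macaulay, so Bass' conjecture is not needed inside Lemma~\ref{118}. Likewise, the lemma's use of $\operatorname{ht}(\mathfrak p)=\operatorname{ht}(I)$ for $\mathfrak p\in\operatorname{Assh}(I)$ is justified here by the unmixedness of $R/I$ over the Cohen--Macaulay ring $R$.
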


\begin{proof}
	By Lemma \ref{118} 
we know $	\operatorname{ht}(I) - 1  < i$. Now, we combine this with the Auslander–Buchsbaum formula, and deduce
	\begin{align*}
	\pd(I)
	&= \pd(R/I) - 1 \\
	&= [\depth(R)  - \depth(R /I)] - 1 \\
	&= [ \dim (R) - \dim ( R/I) ] - 1 \\
	&= \operatorname{ht}(I) - 1 < i,
	\end{align*}
	as claimed.
\end{proof}
\medskip
\section{Applications to $\Gdim(-)$}
Recall that $\Gdim_R(-)$ means G-dimension.
Our aim in this section is to address and study Conjecture \ref{con}. In this regard, we present the following results:

\begin{enumerate}
	\item[$\bullet$]	If $\Gdim(M) < \infty$, then 
	\( 
	\Gdim(M) = \sup\{ i \mid \Ext^i(M,R) \neq 0 \}.
	\) 
	\item[$\bullet$] 	Let $P \in \Spec(R)$ with $\Gdim(R/P^{(n)}) < \infty$ for some $n$. Then 
	$\grade(P^{(n)}; R) = \operatorname{ht}(P)$.
	\item[$\bullet$]  If $R$ is analytically unramified and $\Gdim_R(\overline{R}) < \infty$, then $R$ is quasi-normal.
\end{enumerate}

Also, we present additional results relating these together.
Suppose now that $(R,\mathfrak{m})$ is catenary and equidimensional and let $P \in \Spec(R)$ be such that $\Gdim(R/P) < \infty$. Consequently, we will show that
\(
\grade(R/P) + \dim (R/P) = \dim (R).
\) We close this section 6 with Remark~\ref{disc:hypersurface-question}.
We will use the following standard facts about $\Gdim_R(-)$ from \cite{AB}.

\begin{fact}\label{gd}
	If $\Gdim_R(k)<\infty$, then $R$ is Gorenstein.
\end{fact}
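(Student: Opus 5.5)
The plan is to translate finiteness of $\Gdim_R(k)$ into eventual vanishing of $\Ext^i_R(k,R)$, and then invoke the standard characterization of Gorenstein local rings through the Bass numbers of $R$. Only the definition of G-dimension from \cite{AB} and the classical criterion for finite injective dimension are needed.

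First, set $n:=\Gdim_R(k)<\infty$. By definition there is an exact sequence
\[
0\lo G_n\lo G_{n-1}\lo\cdots\lo G_0\lo k\lo 0
\]
in which every $G_j$ is totally reflexive. In particular $\Ext^i_R(G_j,R)=0$ for all $i\ge 1$ and all $j$. Split this sequence into short exact sequences $0\to K_{j+1}\to G_j\to K_j\to 0$, with $K_0=k$ and $K_n=G_n$. The associated long exact $\Ext(-,R)$ sequences give the isomorphisms $\Ext^{i}_R(K_j,R)\cong\Ext^{i-1}_R(K_{j+1},R)$ for $i\ge 2$. Iterating these $n$ times, we obtain for $i>n$
\[
\Ext^i_R(k,R)\cong \Ext^{i-n}_R(G_n,R)=0 .
\]
In terms of Bass numbers this says $\mu_i(\fm,R)=\dim_k\Ext^i_R(k,R)=0$ for all $i>n$.

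Next, I will use the standard fact that for a finitely generated module $N$ over a local ring,
\[
\id_R(N)=\sup\{i : \Ext^i_R(k,N)\neq 0\}
\]
(see \cite[Proposition 3.1.14]{BH}). Applied to $N=R$, the vanishing above gives $\id_R(R)\le n<\infty$. A Noetherian local ring of finite self-injective dimension is, by definition, Gorenstein, so the fact follows. As a side remark, the Auslander--Bridger formula refines the bound to $n=\depth(R)-\depth(k)=\depth(R)$; this matches $\id_R(R)=\depth(R)$ from \cite[3.1.17]{BH}, although the argument does not need it.

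I do not expect a genuine obstacle here. The only point requiring care is the dimension-shifting step: it must use total reflexivity of the $G_j$ (vanishing of $\Ext^{\ge1}(G_j,R)$), not merely reflexivity. The passage from vanishing Bass numbers at $\fm$ to finite injective dimension is the classical ingredient, and I will quote it rather than reprove it.
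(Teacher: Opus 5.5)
Your proof is correct. Dimension shifting along a totally reflexive resolution of length $n=\Gdim_R(k)$ gives $\Ext^i_R(k,R)=0$ for all $i>n$, and \cite[Proposition 3.1.14]{BH} then yields $\id_R(R)\le n$. The paper gives no argument of its own; it simply quotes the fact from \cite{AB}, and your route is the standard proof behind that citation.
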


\begin{fact}\label{gd2}
	If $\Gdim_R(M)<\infty$, then $\Gdim_R(M)=\depth(R)-\depth(M)$.
\end{fact}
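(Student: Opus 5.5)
This is the Auslander--Bridger formula. I would prove it by induction on $g:=\Gdim_R(M)$, working with totally reflexive modules (modules $G$ with $G\cong G^{**}$ and $\Ext^i_R(G,R)=\Ext^i_R(G^*,R)=0$ for $i\ge 1$). Here $\Gdim_R(M)\le g$ means that $M$ has a resolution of length $g$ by such modules.

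\textbf{Base case $g=0$.} Here $M$ is totally reflexive. Any totally reflexive $G$ sits in an infinite exact complex of free modules
\[
0\to G\to F_0\to F_{-1}\to\cdots,
\]
obtained by dualizing a free resolution of $G^*$ and using $\Ext^{\ge1}(G^*,R)=0$ and $G\cong G^{**}$. I would use this to show $\depth G\ge\depth R$ by induction on $\depth R$. If $\depth R>0$, the embedding $G\hookrightarrow F_0$ shows $G$ is torsionless. Pick $x$ regular on $R$; then $x$ is regular on $G$ and on $G^*$ (also torsionless), and $G/xG$ is totally reflexive over $R/xR$. This is a standard check: the relevant Ext vanishings pass to $R/xR$ via the long exact sequences for multiplication by $x$. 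By induction $\depth G/xG\ge\depth R-1$, so $\depth G\ge\depth R$. The reverse inequality $\depth G\le\depth R$ needs more care. I would deduce it from $G\cong G^{**}$ together with the dual statement applied to $G^*$. Alternatively, I would read it off the exact complex above by applying the depth lemma, using that $\Ext^i(G,R)=0$ for $i\ge1$.

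\textbf{Inductive step $g>0$.} Take a short exact sequence $0\to K\to F\to M\to 0$ with $F$ free. Then $\Gdim K=g-1$, and by induction $g-1=\depth R-\depth K$. If $\depth M<\depth R$, the depth lemma gives $\depth K=\depth M+1$, hence $g=\depth R-\depth M$.

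\textbf{Main obstacle.} The remaining case is ruling out $\depth M\ge\depth R$ when $g>0$, and this is where I expect the real difficulty. The key lemma I would establish is: if $\Gdim M<\infty$ and $\depth M\ge\depth R$, then $M$ is totally reflexive. With $g:=\Gdim M$, I would prove $\Ext^g(M,R)\ne0$ by localizing at a prime in its support, or by reducing modulo a maximal $R$-sequence to the depth-zero case, where $\Ext^g(M,R)\ne0$ forces $\depth M=0$. In either case the idea is that a top nonvanishing $\Ext$ obstructs large depth, in the same way the Auslander--Buchsbaum formula is proved. If this reduction turns out to be too technical, I would simply cite the Auslander--Bridger formula from \cite{AB}, consistent with the paper's phrase ``standard facts.''
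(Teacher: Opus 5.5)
The paper gives no proof of this; it quotes the Auslander--Bridger formula from \cite{AB}, so your fallback of citing it is exactly what the paper does. As a self-contained argument, however, your proposal has a genuine gap at the step you call the main obstacle, and the mechanism you suggest there would not close it.

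Combined with the induction hypothesis $\depth K=\depth R-g+1$, the depth lemma $\depth K\ge\min(\depth R,\depth M+1)$ shows that the only case left is $g=1$ with $\depth M\ge\depth R$. This case includes $\depth M=\depth R$. Reducing modulo a maximal $R$-sequence that is also $M$-regular lands you over a ring $\bar R$ of depth $0$ with $\depth\bar M=\depth M-\depth R\ge 0$. So reaching ``$\depth\bar M=0$'' is no contradiction at all. What has to be proved in depth zero is that $g\ge 1$ cannot occur: over a local ring of depth $0$, every module of finite G-dimension is totally reflexive.

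This does not follow by transplanting the socle argument from Auslander--Buchsbaum. A map out of a totally reflexive module with image in $\mathfrak mF$ need not kill socle elements; for instance, over $k[x]/(x^2)$ the inclusion $xR\hookrightarrow R$ is injective on the socle of $xR$. The missing idea is biduality. Take $0\to G\xrightarrow{\alpha}F\to M\to 0$ with $G$ totally reflexive and $F$ free, and suppose $\Ext^1_R(M,R)=\Coker(\alpha^*)\neq 0$. Since $\depth R=0$ there is an embedding $k\hookrightarrow R$, so the composite $G^*\to\Coker(\alpha^*)\twoheadrightarrow k\hookrightarrow R$ is a nonzero $\phi\in G^{**}\cong G$. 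It satisfies $\alpha^{**}(\phi)=\phi\circ\alpha^*=0$, contradicting injectivity of $\alpha$. Hence $\Ext^1_R(M,R)=0$, and $M$ is totally reflexive.

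The supporting steps also need different arguments from the ones you indicate.
\begin{itemize}
\item The nonvanishing $\Ext^g_R(M,R)\neq 0$ for $g=\Gdim M$ cannot come from localizing at a prime in its support, since that presupposes it. It is proved by showing that $\Ext^g_R(M,R)=0$ makes $\Omega^{g-1}M$ totally reflexive. To do this, push out $\Omega^gM\hookrightarrow F_{g-1}$ along an embedding $\Omega^gM\hookrightarrow F'$ with totally reflexive cokernel.
\item Your reduction tacitly uses that G-dimension does not drop on passing to $R/xR$. This needs the nonvanishing above, together with $\Ext^g_{R/xR}(M/xM,R/xR)\cong\Ext^g_R(M,R)/x\Ext^g_R(M,R)$ (from Rees' lemma and $\Ext^{g+1}_R(M,R)=0$) and Nakayama.
\item In the base case, $\depth G\le\depth R$ does not follow from the depth lemma on the complete resolution, since every short exact piece of it is compatible with $\depth G>\depth R$. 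It comes from the same depth-zero phenomenon: $\mathfrak m\in\Ass_R\Hom_R(N,R)$ whenever $\depth R=0$ and $N\neq 0$. Apply this to $\bar G\cong\Hom_{\bar R}(\bar G^*,\bar R)$ after reducing modulo a maximal $R$-sequence.
\item The claim $\Gdim K=g-1$ relies on the nontrivial fact that G-dimension can be read off from any free resolution. That fact is itself part of \cite{AB}.
\end{itemize}
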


\begin{fact}\label{grs}
	For a finitely generated module $M$ over a  ring $R$, recall from many classical works including Grothendieck that
	\[
	\grade(M) = \inf\{ \depth(R_{\mathfrak{p}}) \mid \mathfrak{p} \in \Supp(M) \} \qquad (\ast)
	\]
\end{fact}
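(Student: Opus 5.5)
We prove Fact~\ref{grs}, $\grade(M)=\inf\{\depth(R_{\fp}) \mid \fp\in\Supp(M)\}$, by establishing the two inequalities separately.

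Throughout we use the Ext-characterization of grade: for $I=\Ann(M)$, one has $\grade(I;R)=\inf\{i \mid \Ext^i_R(M,R)\neq 0\}$ (see \cite[1.2.10]{BH}). We also use that $\Supp(M)=V(I)$.

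\emph{Inequality $\le$.} Fix $\fp\in\Supp(M)$ and set $g=\grade(M)$. Then $\Ext^i_R(M,R)=0$ for all $i<g$. Localization is exact and commutes with Ext of finitely generated modules, so $\Ext^i_{R_{\fp}}(M_{\fp},R_{\fp})=0$ for $i<g$. This is exactly the argument labelled $(+)$ in Observation~\ref{4.3}. Since $M_{\fp}\neq 0$, the ideal $\Ann(M_{\fp})=IR_{\fp}$ is contained in $\fp R_{\fp}$. Hence
\[
g\le \grade(IR_{\fp};R_{\fp})\le \grade(\fp R_{\fp};R_{\fp})=\depth(R_{\fp}).
\]
Alternatively, any $R$-sequence in $I$ maps to an $R_{\fp}$-sequence in $\fp R_{\fp}$, because its image stays a nonzerodivisor sequence after localization and lies in the maximal ideal.

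\emph{Inequality $\ge$.} This is the main step: we must exhibit a prime $\fp\supseteq I$ with $\depth(R_{\fp})\le g$.

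Let $x_1,\dots,x_g\in I$ be a maximal $R$-sequence in $I$, and put $\overline R=R/(x_1,\dots,x_g)$. By maximality, $I$ consists of zero-divisors on $\overline R$. By prime avoidance, $I\subseteq\fp$ for some $\fp\in\Ass_R(\overline R)$; in particular $\fp\in V(I)=\Supp(M)$.

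Associated primes localize, so $\fp R_{\fp}\in\Ass(\overline R_{\fp})$, i.e.\ $\depth(\overline R_{\fp})=0$. The images of $x_1,\dots,x_g$ in $R_{\fp}$ remain an $R_{\fp}$-sequence, since they lie in $\fp$ and regularity is preserved under flat localization. Therefore
\[
\depth(R_{\fp})=g+\depth(\overline R_{\fp})=g.
\]
This gives $\inf\le g$, and combined with the first inequality we get equality.

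The only delicate point is the degenerate case $I=R$. Then $M=0$, $\Supp(M)=\emptyset$, and both sides are $\infty$ by the usual conventions $\grade(R;R)=\infty$ and $\inf\emptyset=\infty$.
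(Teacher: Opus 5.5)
Your proof is correct. The paper gives no argument of its own here: it records the statement as a classical fact attributed to Grothendieck, so there is nothing to match. What you wrote is the standard textbook proof. The statement is the case of Bruns--Herzog, Proposition~1.2.10(a), for the module $R$ and the ideal $\Ann(M)$, once one uses $\Supp(M)=V(\Ann(M))$ for finitely generated $M$.

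One presentational remark. You cite that same proposition of Bruns--Herzog for the Ext-characterization of grade, and part (a) of it is already the claim you are proving. To make your write-up genuinely self-contained, lead with your regular-sequence version of the first inequality: an $R$-sequence in $\Ann(M)\subseteq\fp$ stays an $R_{\fp}$-sequence in $\fp R_{\fp}$ by flatness of localization. Your second inequality is already elementary. It takes a longest $R$-sequence $\underline{x}$ in $\Ann(M)$, uses prime avoidance to get $\Ann(M)\subseteq\fp$ for some $\fp\in\Ass(R/(\underline{x}))$, and then uses localization of associated primes to obtain $\depth(R_{\fp})=g$.

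Compared with the paper's bare citation, your argument needs only prime avoidance and the behaviour of regular sequences and associated primes under localization. It also works over an arbitrary Noetherian ring, not just a local one.
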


\begin{question}\label{6.1}
	Is $\grade(M)$ equal to $\inf\{ \depth(R_{\mathfrak{p}}) \mid \mathfrak{p} \in \Min(M) \}$?
\end{question}

\begin{proposition}
	Question \ref{6.1} is true provided $\pd(M) < \infty$.
\end{proposition}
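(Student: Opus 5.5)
By Fact~\ref{grs} we already have $\grade(M)=\inf\{\depth(R_{\fp}) \mid \fp\in\Supp(M)\}\le \inf\{\depth(R_{\fp}) \mid \fp\in\Min(M)\}$, since $\Min(M)\subseteq \Supp(M)$. So the plan is to prove the reverse inequality: we will exhibit some $\fq\in\Min(M)$ with $\depth(R_{\fq})=\grade(M)$.

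Set $g:=\grade(M)$ and choose $\fp\in\Supp(M)$ with $\depth(R_{\fp})=g$, which is possible by $(\ast)$ of Fact~\ref{grs}. We then localize at $\fp$ and show that $M_{\fp}$ is perfect of grade $g$.
\begin{enumerate}
\item Since $\Ext^i_R(M,R)=0$ for $i<g$ localizes, $\grade(M_{\fp})\ge g$. Also $\grade(M_{\fp})\le \pd(M_{\fp})$, as $M_\fp\neq 0$ has finite projective dimension.
\item By Auslander–Buchsbaum, $\pd(M_{\fp})=\depth(R_{\fp})-\depth(M_{\fp})\le g$.
\item Hence $\grade(M_{\fp})=\pd(M_{\fp})=g$; that is, $M_{\fp}$ is perfect, and $\depth(M_{\fp})=0$.
\end{enumerate}

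Next we take $\fq\in\Min(M)$ with $\fq\subseteq\fp$; such a $\fq$ exists because $\fp\in\Supp(M)$. We check that the same squeeze persists at $\fq$.
\begin{enumerate}
\item Localizing once more gives $\grade(M_{\fq})\ge \grade(M_{\fp})=g$.
\item Since localization does not increase projective dimension, $\pd(M_{\fq})\le \pd(M_{\fp})=g$.
\item Together with $\grade\le\pd$, this forces $\pd(M_{\fq})=g$.
\end{enumerate}
Now $M_{\fq}$ is a nonzero module of finite length and finite projective dimension over $R_{\fq}$. By Fact~\ref{art}, $R_{\fq}$ is Cohen–Macaulay. Moreover $\depth(M_{\fq})=0$, so Auslander–Buchsbaum gives $g=\pd(M_{\fq})=\depth(R_{\fq})$. (Alternatively, Fact~\ref{dep} identifies $\depth(R_{\fq})$ as the least $i$ with $\Ext^i(M_{\fq},R_{\fq})\neq0$, which must be $g$.) Thus $\depth(R_{\fq})=g$ with $\fq\in\Min(M)$, which gives the reverse inequality.

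The main point to get right is the step at $\fp$. A priori the prime realizing the infimum in Fact~\ref{grs} need not be minimal in $\Supp(M)$, and depth of $R_{\fq}$ need not be monotone along $\fq\subseteq\fp$. The argument avoids this by showing that $M_{\fp}$ is perfect, and perfection (grade $=$ pd) survives further localization within the support by the two monotonicity facts above. Once this is in hand, the rest is bookkeeping with Auslander–Buchsbaum and Fact~\ref{art}.

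As a byproduct, since $R_{\fq}$ is Cohen–Macaulay for each $\fq\in\Min(M)$, the right-hand side of the statement equals $\Ht(\Ann(M))$. Thus the statement yields $\grade(M)=\Ht(\Ann(M))$ when $\pd(M)<\infty$.
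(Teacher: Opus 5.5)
Your proof is correct and follows essentially the same route as the paper. You pick $\fp\in\Supp(M)$ realizing the infimum in Fact~\ref{grs}, descend to a minimal prime $\mathfrak{q}\subseteq\fp$ of the support, and squeeze using Auslander--Buchsbaum at both primes together with $\pd(M_{\mathfrak{q}})\le\pd(M_{\fp})$.

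The only differences are cosmetic. You close the lower bound via $g\le\grade(M_{\mathfrak{q}})\le\pd(M_{\mathfrak{q}})$, whereas the paper uses $\depth(R_{\mathfrak{q}})\ge\grade(M)$ directly from Fact~\ref{grs}. Also, Fact~\ref{art} is not needed for the main conclusion, only for your byproduct $\grade(M)=\Ht(\Ann(M))$.
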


\begin{proof}
By definition, and in view of $(\ast)$ from Fact \ref{grs}, there exists $Q \in \Supp(M)$ such that $\grade(M) = \depth(R_Q)$.  
Let us choose $P \subseteq Q$ in $\Supp(M)$ minimal with respect to inclusion. This gives us the following:
\begin{enumerate}
	\item $\grade(M) = \depth(R_Q)$, 
	\item $\dim(M_P)=0$.
\end{enumerate}
In the sequel, by $(AB)$ we mean the Auslander--Buchsbaum formula. Then
	\begin{align*}	 
	\grade(M)
	&\stackrel{(1)}= \depth(R_Q) \\
	&\stackrel{AB}= \depth(M_Q) + \pd(M_Q)\\
	&\geq \pd_{R_Q}(M_Q) \\
	&\geq \pd_{R_P}(M_P)\\
	&= \depth(R_P) - \depth(M_P) \\
	&\geq \depth(R_P) - \dim(M_P)\\
	&\stackrel{(2)}= \depth(R_P)\\
	&\geq\inf\{\depth(R_{\mathfrak{p}}) \mid \mathfrak{p} \in \Supp(M)\}\\
	&\stackrel{(\ast)}=\grade(M),
	\end{align*}as desired claimed.
\end{proof}

\begin{remark}\label{98}
	In fact, if $\Gdim(M) < \infty$, the same conclusion holds. Indeed,
	\[
	\Gdim(M) = \sup\{ i \mid \Ext^i(M,R) \neq 0 \}
	\]
	when finite, together with the Auslander–Bridger formula (Fact \ref{gd2}).
\end{remark}

\begin{proposition}\label{99a}
	Let $P \in \Spec(R)$ be such that $\Gdim(R/P) < \infty$. Then
	\[
	\grade(R/P) = \operatorname{ht}(P) = \operatorname{ht}(\Ann(R/P)).
	\]
\end{proposition}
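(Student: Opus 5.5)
Since $\Ann(R/P)=P$, the second equality is immediate, so the content is $\grade(R/P)=\Ht(P)$. The inequality $\grade(R/P)=\grade(P,R)\le \Ht(P)$ is standard: a regular sequence in $P$ stays regular in $R_P$, so its length is at most $\depth(R_P)\le\dim(R_P)=\Ht(P)$. The work is therefore in the reverse inequality $\Ht(P)\le\grade(R/P)$.

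I would follow the argument given for Question~\ref{6.1}, but in the form of Remark~\ref{98}, with $\Gdim$ replacing $\pd$. By Fact~\ref{grs}, $\grade(R/P)=\inf\{\depth(R_{\mathfrak q}) : \mathfrak q\in V(P)\}$. Since $\Gdim$ localizes, $\Gdim_{R_{\mathfrak q}}(R_{\mathfrak q}/PR_{\mathfrak q})$ is finite for every $\mathfrak q\supseteq P$. The Auslander--Bridger formula (Fact~\ref{gd2}) then gives $\depth(R_{\mathfrak q})\ge \Gdim(R_{\mathfrak q}/PR_{\mathfrak q})\ge \Gdim_{R_P}(k(P))$. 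The last inequality is the monotonicity of $\Gdim$ under localization, which holds because $\Gdim$ equals the top nonvanishing degree of $\Ext^i(-,R)$ (Remark~\ref{98}). Consequently $\grade(R/P)\ge \Gdim_{R_P}(k(P))$, which agrees with $\Min(R/P)=\{P\}$ in Question~\ref{6.1}.

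It remains to compute $\Gdim_{R_P}(k(P))$. Because $R_P/PR_P$ is the residue field of $R_P$ and has finite G-dimension, Fact~\ref{gd} shows that $R_P$ is Gorenstein, in particular Cohen--Macaulay. Fact~\ref{gd2} then yields $\Gdim_{R_P}(k(P))=\depth(R_P)-0=\dim(R_P)=\Ht(P)$. Combining this with the previous paragraph gives $\grade(R/P)\ge\Ht(P)$, and hence equality.

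The main obstacle is the monotonicity $\Gdim_{R_{\mathfrak q}}(M_{\mathfrak q})\ge\Gdim_{R_P}(M_P)$ for $P\subseteq\mathfrak q$. My plan is to deduce it from the characterization $\Gdim M=\sup\{i:\Ext^i_R(M,R)\ne0\}$ together with the fact that Ext commutes with localization. The subtle point is that nonvanishing of $\Ext^i$ at $P$ forces nonvanishing at $\mathfrak q$, since $\Ext^i(M,R)_P$ is a localization of $\Ext^i(M,R)_{\mathfrak q}$. This settles the step, because the supremum is attained at $P$.
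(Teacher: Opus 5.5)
Your proof is correct and is essentially the paper's argument. The paper invokes Remark~\ref{98}, the $\Gdim$ analogue of the answer to Question~\ref{6.1}, to get $\grade(R/P)=\depth(R_P)$ from $\Min(R/P)=\{P\}$, and then applies Fact~\ref{gd} to $R_P$ to conclude $\depth(R_P)=\Ht(P)$. You simply unpack Remark~\ref{98} for $M=R/P$, making explicit the monotonicity of $\Gdim$ under localization via the top nonvanishing $\Ext^i(-,R)$, a step the paper leaves implicit.
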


\begin{proof}
	By Remark \ref{98}, $\grade(R/P) = \depth(R_Q)$ for some $Q \in \Min(R/P)=\{P\}$. Hence $Q=P$.  
	Since $\dim(R_P) = \operatorname{ht}(P)$, it suffices to show $\depth(R_P)=\dim(R_P)$.
Because $\Gdim(R/P) < \infty$, localization yields $\Gdim(R_P/PR_P) < \infty$. By Fact \ref{gd}, $R_P$ is Gorenstein, hence Cohen–Macaulay. Therefore
	\[
	\grade(R/P)= \depth(R_P) = \dim(R_P) = \operatorname{ht}(P).
	\]This is what we want to prove
\end{proof}

\begin{corollary}\label{99}
	Let $P \in \Spec(R)$ with $\Gdim(R/P^ n) < \infty$ for some $n$. Then $\grade(P^{n}; R) = \operatorname{ht}(P)$.
\end{corollary}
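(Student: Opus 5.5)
The plan is to copy the proof of Proposition~\ref{99a}, with $R/P^n$ in place of $R/P$. The convention $\grade(P^n;R)=\grade(R/P^n)=\grade(\Ann(R/P^n),R)$ holds because $\Ann(R/P^n)=P^n$. Since $\sqrt{P^n}=P$, we have $\Supp(R/P^n)=V(P)$ and $\Min(R/P^n)=\{P\}$. By Remark~\ref{98}, which is the $\Gdim$-version of the proposition answering Question~\ref{6.1}, finiteness of $\Gdim(R/P^n)$ gives
\[
\grade(R/P^n)=\inf\{\depth(R_{\mathfrak p}) \mid \mathfrak p\in\Min(R/P^n)\}=\depth(R_P).
\]
So it suffices to show that $\depth(R_P)=\dim(R_P)=\Ht(P)$, i.e.\ that $R_P$ is Cohen--Macaulay.

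To make sure Remark~\ref{98} is really available, I would rerun the chain of inequalities from the $\pd$ case. Replace the Auslander--Buchsbaum formula by the Auslander--Bridger formula (Fact~\ref{gd2}). Replace the localization inequality $\pd(M_Q)\ge\pd(M_P)$ by the corresponding inequality for $\Gdim$. This inequality follows from $\Gdim(M)=\sup\{i\mid \Ext^i(M,R)\neq0\}$, since vanishing of Ext localizes and $\Gdim$ stays finite under localization.

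For the Cohen--Macaulay property of $R_P$: localizing, $\Gdim_{R_P}(R_P/P^nR_P)<\infty$, and $R_P/P^nR_P$ is a nonzero module of finite length. With $M=R_P/P^nR_P$, Fact~\ref{gd2} gives $\Gdim(M)=\depth(R_P)-\depth(M)=\depth(R_P)$. From $\Gdim(M)=\sup\{i\mid\Ext^i(M,R_P)\neq0\}$ and Fact~\ref{dep}, $\Ext^i(M,R_P)\neq0$ exactly at $i=\depth(R_P)$ and vanishes above. Thus the $\Ext$ module is concentrated in the single degree $\depth(R_P)$.

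This step is the main obstacle. When $n=1$ we had Fact~\ref{gd} ($\Gdim(k)<\infty$ forces Gorenstein), but for $n>1$ there is no such direct statement. I would first try to invoke the $\Gdim$ analogue of Fact~\ref{art}: a nonzero finite-length module of finite G-dimension forces the ring to be Cohen--Macaulay. This is the G-dimension version of the Peskine--Szpiro/Roberts result, and it is known (e.g.\ via the new intersection theorem applied to the totally reflexive approximation, or via Yassemi/Christensen). If that citation is deemed unavailable, the fallback is to dualize. The finite-length module $M$ has $\Gdim(M)=\depth(R_P)=:t$ and $\Ext^i(M,R_P)=0$ for $i\neq t$, so $M$ is grade-perfect with $\grade(M)=t$. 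Then $\Ext^t(M,R_P)$ is again of finite length and finite G-dimension. One would then argue that $t\ge\dim(R_P)$, using that a grade-perfect module of finite length has grade equal to the dimension of the ring; this is exactly the grade conjecture (Fact~\ref{beger2}), known for $\Gdim$-perfect modules in this setting.

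Once $\depth(R_P)=\dim(R_P)$ is established, we get $\grade(P^n;R)=\depth(R_P)=\Ht(P)$, which completes the proof.
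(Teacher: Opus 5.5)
Your reduction matches the paper's. Since $\Min(R/P^n)=\{P\}$, Remark~\ref{98} gives $\grade(P^n;R)=\depth(R_P)$, so everything hinges on $R_P$ being Cohen--Macaulay. That step, which you correctly flag as the main obstacle, is where the proposal breaks.

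You want to invoke the statement that a nonzero module of finite length and finite G-dimension forces the ring to be Cohen--Macaulay. This is not an available theorem. It is the $\Gdim$-analogue of the new intersection theorem, and as far as I know it is open. You can see this inside the paper itself. The argument that Conjecture~\ref{f} implies Conjecture~\ref{con} uses Conjecture~\ref{f} only to get that $R_P$ is Cohen--Macaulay when $M_P\neq 0$ has finite length and finite G-dimension. So your claim would immediately make Conjecture~\ref{con} a theorem.

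Your suggested justification also fails. In the Auslander--Buchweitz hull $0\to M\to Y\to X\to 0$, the module $Y$ has finite projective dimension, but it maps onto the totally reflexive module $X$. So $Y$ is in general not of finite length, and the new intersection theorem says nothing about it.

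The fallback is circular. For $M$ of finite length, $\grade(M)=\depth(R_P)$, so ``$\grade(M)=\dim(R_P)$'' is literally the Cohen--Macaulay property you are trying to prove. Fact~\ref{beger2} takes the grade equation as a hypothesis, and it concerns finite projective dimension, so it cannot supply that equation.

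The missing idea is to use the specific shape of your module. $R_P/P^nR_P=R_P/(PR_P)^n$ is $R_P$ modulo a power of its maximal ideal, not an arbitrary finite-length module. For such quotients a stronger conclusion is known: finite G-dimension of $R_P/(PR_P)^n$ forces $R_P$ to be Gorenstein. This is \cite[Proposition 1.6]{CT}, and it is exactly the input the paper uses; it extends Fact~\ref{gd}, which is the case $n=1$. With $R_P$ Gorenstein you get $\depth(R_P)=\dim(R_P)=\Ht(P)$, and your argument closes as in Proposition~\ref{99a}.
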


\begin{proof}
	We may assume $P\neq 0$. Localization gives $\Gdim(R_P/P^ nR_P) < \infty$. By \cite[Proposition 1.6]{CT}, $R_P$ is Gorenstein. Now apply Proposition \ref{99a}.
\end{proof}

By definition, $P^{(n)}:=P^{n} R_P\cap R$ is the $n$th symbolic power.

\begin{corollary}\label{100}
	Let $P \in \Spec(R)$ with $\Gdim(R/P^{(n)}) < \infty$ for some $n$. Then $\grade(P^{(n)}; R) = \operatorname{ht}(P)$.
\end{corollary}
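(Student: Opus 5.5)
We will follow the pattern of Corollary~\ref{99}: reduce to the local ring at $P$, show it is Gorenstein, and then compute the grade through the minimal primes of the support, in the spirit of Proposition~\ref{99a}. We may assume $P\neq 0$ and $P^{(n)}\neq R$. The first observation is that $\Ann(R/P^{(n)})=P^{(n)}$ and $\sqrt{P^{(n)}}=P$, so
\[
\Supp(R/P^{(n)})=V(P) \quad\text{and}\quad \Min(R/P^{(n)})=\{P\}.
\]
Moreover, $P^{(n)}$ is $P$-primary, so $\Ass(R/P^{(n)})=\{P\}$.

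Next, we localize at $P$. G-dimension localizes, so $\Gdim_{R_P}(R_P/P^{(n)}R_P)<\infty$. Since $P^{(n)}R_P=P^nR_P$, this says $\Gdim_{R_P}(R_P/P^nR_P)<\infty$. The quotient $R_P/P^nR_P$ is a nonzero module of finite length over $R_P$ (an $\mathfrak{m}$-primary power quotient), so \cite[Proposition 1.6]{CT}, exactly as invoked in Corollary~\ref{99}, shows that $R_P$ is Gorenstein, and in particular Cohen--Macaulay. Hence
\[
\depth(R_P)=\dim(R_P)=\operatorname{ht}(P).
\]

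Finally, by Remark~\ref{98}, the finiteness of $\Gdim(R/P^{(n)})$ gives
\[
\grade(R/P^{(n)})=\inf\{\depth(R_{\mathfrak p})\mid \mathfrak p\in\Min(R/P^{(n)})\}=\depth(R_P).
\]
The argument behind this is the chain of the proposition answering Question~\ref{6.1}, with Fact~\ref{gd2} replacing Auslander--Buchsbaum. Combining the two displays, and using $\grade(P^{(n)};R)=\grade(R/P^{(n)})$ since $\Ann(R/P^{(n)})=P^{(n)}$, we obtain $\grade(P^{(n)};R)=\operatorname{ht}(P)$.

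The main obstacle is the Gorenstein step, namely applying \cite{CT} to the specific finite-length module $R_P/P^nR_P$. If that citation is only available for ordinary powers, the identification $P^{(n)}R_P=P^nR_P$ handles it, so the step is fine. A second point to check is the monotonicity $\Gdim_{R_Q}(M_Q)\ge\Gdim_{R_P}(M_P)$ for $P\subseteq Q$, which is needed in the Remark~\ref{98} chain. This holds because G-dimension does not increase under localization.
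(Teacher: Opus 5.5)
Your proof is correct and is essentially the paper's argument. The paper also uses $P^{(n)}R_P=P^nR_P$ to reduce to Corollary~\ref{99}: localize, invoke \cite[Proposition 1.6]{CT} to see that $R_P$ is Gorenstein, and then apply Remark~\ref{98} with $\Min(R/P^{(n)})=\{P\}$, as in Proposition~\ref{99a}, to get $\grade(P^{(n)};R)=\depth(R_P)=\operatorname{ht}(P)$.
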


\begin{proof}
	Since $P^{(n)} R_P =(P^{n} R_P\cap R) R_P= P^n R_P$, the claim follows as in Corollary \ref{99}.
\end{proof}

We conjecture the following.

\begin{conjecture}\label{con}
	If $\Gdim(M) < \infty$, then
	\(
	\grade(M) = \operatorname{ht}(\Ann(M)).
	\)
\end{conjecture}

There is a connection with the following well-known conjecture.

\begin{conjecture}\label{f}	
	If $\Gdim(M) < \infty$ and $M$ has finite length, then $R$ is Gorenstein.
\end{conjecture}

\begin{remark}
	Conjecture \ref{f} implies Conjecture \ref{con}.
\end{remark}

\begin{proof}
	Adapting the proof of Proposition \ref{99a}, let $P \in \Min(M)$. Then $\depth(R_P) = \grade(M)$. Since $M_P \neq 0$ has finite length, $R_P$ is Cohen–Macaulay. This suffices to deduce Conjecture \ref{con}.
\end{proof}

\begin{corollary}
	Let $(R,\mathfrak{m})$ be catenary and equidimensional. Let $P \in \Spec(R)$ be such that $\Gdim(R/P) < \infty$. Then
	\(
	\grade(R/P) + \dim (R/P )= \dim (R).
	\)
\end{corollary}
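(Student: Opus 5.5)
The plan is to combine Proposition~\ref{99a} with the standard dimension formula for catenary equidimensional local rings. First, by Proposition~\ref{99a}, since $\Gdim(R/P)<\infty$, we have
\[
\grade(R/P)=\operatorname{ht}(P).
\]
(Recall that its proof localizes at $P$, uses Fact~\ref{gd} to see that $R_P$ is Gorenstein, and then reads off $\grade(R/P)=\depth(R_P)=\dim(R_P)$ via Remark~\ref{98}.) So it remains to show that $\operatorname{ht}(P)+\dim(R/P)=\dim(R)$.

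For the second step we use the hypothesis that $R$ is catenary and equidimensional. In a local ring with these properties, every prime $P$ satisfies $\operatorname{ht}(P)+\dim(R/P)=\dim(R)$; this is \cite[Lemma 2, p.\ 250]{mat}, the same result invoked in the proof of Lemma~\ref{45}. To sketch why: take a saturated chain from $P$ to $\mathfrak m$ of length $\dim(R/P)$ and a saturated chain from some minimal prime $\mathfrak q\subseteq P$ up to $P$ of length $\operatorname{ht}(P)$. Concatenating gives a saturated chain from $\mathfrak q$ to $\mathfrak m$. By catenarity, every saturated chain from $\mathfrak q$ to $\mathfrak m$ has length $\dim(R/\mathfrak q)$, and equidimensionality gives $\dim(R/\mathfrak q)=\dim(R)$. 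Putting the two steps together,
\[
\grade(R/P)+\dim(R/P)=\operatorname{ht}(P)+\dim(R/P)=\dim(R).
\]

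The main obstacle is the subtlety in the dimension formula. Catenary plus equidimensional alone is, strictly speaking, not always enough for the formula: one needs every maximal chain from a minimal prime to $\mathfrak m$ to have the same length. Matsumura's lemma handles this for catenary equidimensional local rings, and we simply cite it. Alternatively, the issue disappears if Fact~\ref{Auslander} could be applied, since $R$ would then be a catenary domain; but that fact needs $\pd(P)<\infty$, which is stronger than the hypothesis $\Gdim(R/P)<\infty$ here. So we rely on the citation to \cite{mat}.
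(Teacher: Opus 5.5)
Your proof is correct and is exactly the paper's argument: Proposition~\ref{99a} gives $\grade(R/P)=\operatorname{ht}(P)$, and \cite[p.~250]{mat} gives $\operatorname{ht}(P)+\dim(R/P)=\dim(R)$. The caveat in your last paragraph is unnecessary. For a local ring, catenary plus equidimensional already forces every maximal chain of primes to have length $\dim(R)$, and your own concatenation sketch proves exactly that.
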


\begin{proof}
	We know that $\operatorname{ht}(P) + \dim (R/P) = \dim( R)$ (see \cite[Page 250]{mat}). From Proposition \ref{99a}, $\grade(R/P) = \operatorname{ht}(P)$. The result follows.
\end{proof}

\begin{observation}
	If $\Gdim(M) < \infty$ and $\ell(M) < \infty$, then $M$ is quasi-perfect. That is,
	\[
	\inf\{ i \mid \Ext^i_R(M,R) \neq 0 \} 
	= 
	\sup\{ i \mid \Ext^i_R(M,R) \neq 0 \}.
	\]
\end{observation}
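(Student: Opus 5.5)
We need to show that if $\Gdim(M)<\infty$ and $\ell(M)<\infty$, then $\inf\{i\mid \Ext^i_R(M,R)\neq 0\}=\sup\{i\mid \Ext^i_R(M,R)\neq 0\}$. We may assume $M\neq 0$. The plan is to compute both ends separately and show that each equals $\depth(R)$.

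\emph{The supremum.} By Remark \ref{98}, since $\Gdim(M)<\infty$,
\[
\sup\{ i \mid \Ext^i(M,R)\neq 0\}=\Gdim(M).
\]
By the Auslander--Bridger formula (Fact \ref{gd2}), $\Gdim(M)=\depth(R)-\depth(M)$. Since $M$ is a nonzero module of finite length, $\depth(M)=0$. Hence the supremum equals $\depth(R)$.

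\emph{The infimum.} This is $\grade(M)=\grade(\Ann(M),R)$, using the standard Ext characterization of grade. Because $\ell(M)<\infty$, the ideal $\Ann(M)$ is $\fm$-primary. Grade depends only on the radical of the ideal, so $\grade(M)=\grade(\fm,R)=\depth(R)$. Alternatively, Fact \ref{grs} gives the same conclusion directly: $\Supp(M)=\{\fm\}$, so $\grade(M)=\depth(R_\fm)=\depth(R)$. Fact \ref{dep} also gives this, since a finite-length module is Artinian.

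Combining the two computations, both the infimum and the supremum equal $\depth(R)$, so $M$ is quasi-perfect.

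No step here is a genuine obstacle. The only point needing care is invoking the equality $\Gdim(M)=\sup\{i\mid \Ext^i(M,R)\neq0\}$, which holds when $\Gdim(M)$ is finite; Remark \ref{98} supplies exactly this.
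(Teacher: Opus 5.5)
Your proposal is correct and follows the paper's own argument: the supremum is $\Gdim(M)=\depth(R)-\depth(M)=\depth(R)$ by the Auslander--Bridger formula, since $\depth(M)=0$. The infimum is $\grade(M)=\depth(R)$ because $\Ann(M)$ is $\fm$-primary; your alternative justifications via Fact~\ref{grs} and Fact~\ref{dep} are also valid.
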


\begin{proof}
	Since $\ell(M) < \infty$, we have $\depth(M) = \dim(M)= 0$. By  Auslander–Bridger formula (Fact \ref{gd2}),
	\[
	\depth(R) = \depth(M)+\Gdim(M)= \Gdim(M)=\sup\{ i \mid \Ext^i(M,R) \neq 0 \}.
	\]
	Also, $\Ann(M)$ is $\mathfrak{m}$-primary, so a maximal regular sequence in $\Ann(M)$ has length $\depth(R)$. Hence
	\[
	\depth(R) = \grade(M) = \inf\{ i \mid \Ext^i(M,R) \neq 0 \}.
	\]
	Thus $M$ is quasi-perfect.
\end{proof}

By $\overline{R}$ we mean the integral closure of $R$ in the total fraction ring of $R$. 

\begin{proposition}\label{116}
	If $R$ is analytically unramified and $\pd_R(\overline{R}) < \infty$, then $R$ is normal.
\end{proposition}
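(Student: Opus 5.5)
Since $R$ is analytically unramified, $\overline{R}$ is a finitely generated $R$-module, so $\pd_R(\overline{R})<\infty$ is a statement about a finitely generated module. The plan is to show first that $R$ is a domain, and then that the inclusion $R\subseteq\overline{R}$ is an equality. I would argue by induction on $\dim R$ and localize; the hypotheses pass to localizations: $R_{\fp}$ is analytically unramified, $\overline{R}_{\fp}=\overline{R_{\fp}}$, and $\pd_{R_\fp}(\overline{R}_\fp)<\infty$.

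\emph{Reduction to $R$ reduced, and a domain.} Analytically unramified local rings are reduced, so the total quotient ring is $\prod_i Q(R/\fp_i)$ over the minimal primes $\fp_i$, and $\overline{R}=\prod_i\overline{R/\fp_i}$ is a finite product of normal domains. Consider the exact sequence
\[
0\to R\to\overline{R}\to C\to0, \qquad C:=\overline{R}/R .
\]
Then $\pd(C)<\infty$ as well. At a minimal prime $\fp$ of $R$ we have $R_\fp=\overline{R}_\fp$ (a field), so $C_\fp=0$. Hence $\Ann(C)$ is not contained in any minimal prime, and so $\Ann(C)$ contains a nonzerodivisor (the ring is reduced). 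I expect to get that $R$ is a domain by showing $R\to\overline{R}$ is an isomorphism directly, rather than via Fact~\ref{Auslander}.

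\emph{Main step: $C=0$.} Suppose $C\neq0$ and choose $\fp\in\Min(C)$. Localizing at $\fp$ and using induction, we may assume $\fp=\fm$, so that $C$ has finite length and is nonzero. Now $C$ is a nonzero module of finite length and finite projective dimension, so Fact~\ref{art} makes $R$ Cohen–Macaulay, and moreover $\pd(C)=\depth R=d$. By the Auslander–Buchsbaum formula, $\pd(\overline{R})=d-\depth_R\overline{R}$. From the exact sequence, $\pd(C)\le\max\{\pd R,\pd\overline{R}\}+1=\pd(\overline{R})+1$, and therefore $\depth_R\overline{R}\le1$.

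\emph{Finishing the main step.} In dimension $d\ge2$, the normal ring $\overline{R}$ satisfies $(S_2)$ and is a finite $R$-module. For a semilocal normal ring of dimension $\ge2$ each of whose maximal ideals has height $d$, one gets $\depth_R\overline{R}\ge2$. This contradicts $\depth_R\overline{R}\le1$, so $C=0$. The cases $d\le1$ must be handled separately. If $d=0$, $R$ is reduced Artinian local, hence a field. If $d=1$, $R$ is Cohen–Macaulay of dimension $1$ and $\overline{R}$ is torsion-free, so $\pd(\overline{R})=0$, i.e. $\overline{R}$ is free. Then $C=\overline{R}/R$ is nonzero of finite length with $\pd(C)=1$, so $C$ has a resolution $0\to R^n\to R^{n+1}\to C\to 0$ in which $R$ appears as a direct summand of $\overline{R}$. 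Alternatively, since $\overline{R}$ is a free $R$-module and a ring containing $R$, $1$ is part of a basis, because $R\to\overline{R}\to k\otimes\overline{R}$ is nonzero (as $\fm\overline{R}\ne\overline{R}$). Then the conductor $R:_R\overline{R}$ equals $R\cap\fm\overline{R}$-type considerations, and $\overline{R}$ free forces $\overline{R}\otimes k$ to be a $k$-algebra of dimension $n$, whereas finite length of $C$ should force $n=1$ via multiplicity: $e(\overline{R})=n\cdot e(R)$ and $\overline{R}$ and $R$ have the same rank $1$ over the total quotient ring. Hence $n=1$ and $C=0$.

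\emph{Main obstacle.} The hard part is the depth estimate $\depth_R\overline{R}\ge2$ when $\overline{R}$ is not local, i.e. the equidimensionality of the maximal ideals of $\overline{R}$. To handle it I would pass to the completion $\widehat{R}$, which is reduced, and use $\overline{R}\otimes\widehat{R}=\overline{\widehat{R}}$ together with the Cohen–Macaulayness of $R$ already obtained; $R$ being unmixed then makes $\overline{\widehat R}$ equidimensional.
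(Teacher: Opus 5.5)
There is a genuine gap at exactly the step you single out. For $d\ge 2$ you need $\depth_R\overline{R}\ge 2$, which means every maximal ideal of the semilocal ring $\overline{R}$ must have height at least $2$. Your sketch for this rests on $\overline{R}\otimes_R\widehat{R}=\overline{\widehat{R}}$. That identity holds for excellent $R$, but it fails for analytically unramified rings in general. For instance, Heitmann's theorem gives a local UFD $R$ with $\widehat{R}\cong\mathbb{C}[[x,y,z]]/(xy)$; this $R$ is normal and analytically unramified, yet $\overline{R}\otimes_R\widehat{R}=\widehat{R}$ is not normal. In general one only has $\overline{R}\otimes_R\widehat{R}\cong\widehat{\overline{R}}\subseteq\overline{\widehat{R}}$. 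The height statement you want is nevertheless true at that point, because you already know $R$ is Cohen--Macaulay. Each $R/\fp_i$ is then a universally catenary local domain of dimension $d$. The dimension formula gives $\Ht\mathfrak{n}=d$ for every maximal ideal $\mathfrak{n}$ of $\overline{R/\fp_i}$, and with $(S_2)$ this yields $\depth_R\overline{R}\ge 2$.

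You can also bypass the issue altogether. Replace your bound $\pd C\le\pd\overline{R}+1$ by the sharper $\pd C\le\max\{\pd\overline{R},\pd R+1\}=\max\{\pd\overline{R},1\}$. Since $C\neq 0$ has finite length, Auslander--Buchsbaum gives $\pd C=\depth R$. So $\depth R\ge 2$ would force $\pd\overline{R}\ge\depth R$, i.e.\ $\depth_R\overline{R}=0$. This is impossible: an $R$-regular element is a unit of $Q(R)$ and hence $\overline{R}$-regular. Thus $\depth R\le 1$, and $\overline{R}$ is free by torsion-freeness and Auslander--Buchsbaum. Your rank argument then finishes the proof. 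Since $\overline{R}\otimes_R Q(R)\cong Q(R)$, the rank is one, and $1\notin\fm\overline{R}$ is part of a basis, so $\overline{R}=R$, contradicting $C\neq 0$. This version does not even need Fact~\ref{art}. In your $d=1$ paragraph, only this rank-and-basis argument carries weight; the resolution and multiplicity remarks can be dropped.

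For comparison, the paper proves normality via Serre's criterion. At height-one primes it shows $\overline{R_\fp}$ is free, so $R_\fp\to(\overline{R_\fp})_Q$ is a faithfully flat map into a DVR and $R_\fp$ is regular. At height $\ge 2$ it combines Auslander--Buchsbaum with $\depth\overline{R_\fp}\ge 2$. That is the very inequality you flagged, and the paper takes it for granted from normality of the semilocal ring $\overline{R_\fp}$. Your route of localizing at a minimal prime of $\overline{R}/R$, repaired as above, sidesteps that point and is more elementary.
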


\begin{proof}
	Let $\mathfrak{p} \in \Spec(R)$. Since being analytically unramified is a local property \cite[Proposition 9.1.4]{HS}, $R_\mathfrak{p}$ is analytically unramified.  
	Suppose $\operatorname{ht}(\mathfrak{p}) = 1$. Then $R_\mathfrak{p} \to \overline{R_\mathfrak{p}}$ is integral. Since $\pd_R(\overline{R}) < \infty$, we get $\pd_{R_\mathfrak{p}}(\overline{R_\mathfrak{p}}) < \infty$. As $\dim (R_\mathfrak{p}) = 1$, the Auslander--Buchsbaum formula implies $\overline{R_\mathfrak{p}}$ is free over $R_\mathfrak{p}$. Hence $\overline{R_\mathfrak{p}}$ is $1$-dimensional and normal.
Let $Q \in \Min(\overline{\mathfrak{p}R_\mathfrak{p}})$. The composition
	\[
	R_\mathfrak{p} \to \overline{R_\mathfrak{p}} \to (\overline{R_\mathfrak{p}})_Q
	\]
	is faithfully flat, and $(\overline{R_\mathfrak{p}})_Q$ is regular. By \cite[Theorem 2.2.12]{BH}, $R$ satisfies $(R_1)$.
Now let $\operatorname{ht}(\mathfrak{p}) \geq 2$. Since $\overline{R_\mathfrak{p}}$ is finite over $R_\mathfrak{p}$, and by \cite[1.2.26(b)]{BH}, we have
	\[
	\depth_{R_\mathfrak{p}}(\overline{R_\mathfrak{p}})
	=
	\depth_{\overline{R_\mathfrak{p}}} (\overline{R_\mathfrak{p}}).
	\]
	As $\overline{R_\mathfrak{p}}$ is normal, it satisfies $(S_2)$, so its depth, as an $\overline{R_\mathfrak{p}}$-module, is at least $2$. The Auslander--Buchsbaum formula gives
	\[
	\depth(R_\mathfrak{p}) 
	= \pd_{R_\mathfrak{p}}(\overline{R_\mathfrak{p}}) + \depth_{R_\mathfrak{p}}(\overline{R_\mathfrak{p}})
	= \pd_{R_\mathfrak{p}}(\overline{R_\mathfrak{p}}) + \depth_{\overline{R_\mathfrak{p}}}(\overline{R_\mathfrak{p}})
	\ge 2.
	\]
	Thus $R$ satisfies $(S_2)$. By Serre's criterion, $R$ is normal.
\end{proof}

In the same vein, we observe that:

\begin{proposition}\label{116q}
	If $R$ is analytically unramified and $\Gdim_R(\overline{R}) < \infty$, then $R$ is quasi-normal.
\end{proposition}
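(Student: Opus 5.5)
The plan is to follow the proof of Proposition~\ref{116} step by step. Wherever that proof uses the Auslander--Buchsbaum formula, we use the Auslander--Bridger formula (Fact~\ref{gd2}) instead. The regularity argument in codimension one gets replaced by a Gorenstein argument based on Fact~\ref{gd}. We take ``quasi-normal'' to mean that $R$ satisfies $(S_2)$ and $(G_1)$, i.e.\ $R_\fp$ is Gorenstein whenever $\Ht(\fp)\le 1$. Three preliminary points:
\begin{enumerate}
\item As in Proposition~\ref{116}, $R_\fp$ is analytically unramified for every $\fp$ by \cite[Proposition 9.1.4]{HS}.
\item Hence $\overline{R_\fp}=(\overline{R})_\fp$ is module-finite over $R_\fp$.
\item Finiteness of G-dimension localizes, so $\Gdim_{R_\fp}(\overline{R_\fp})<\infty$ for every $\fp$.
\end{enumerate}

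\textbf{Height zero.} If $\Ht(\fp)=0$, then $R_\fp$ is a reduced artinian local ring, hence a field, and in particular Gorenstein.

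\textbf{Height one.} Let $\Ht(\fp)=1$, put $A:=R_\fp$ and $S:=\overline{R_\fp}$. Then $S$ is a one-dimensional normal semilocal ring, hence regular. Choose a maximal ideal $Q$ of $S$ and set $k':=S/Q$. Since $k'$ has finite projective dimension over $S$ and $\Gdim_A(S)<\infty$, I would invoke the transitivity of finite G-dimension along the finite map $A\to S$ (Christensen's results on G-dimension of finite homomorphisms) to get $\Gdim_A(k')<\infty$. Because $Q$ lies over $\fp A$, the $A$-module $k'$ is killed by $\fp A$. It is therefore a finite-dimensional vector space over the residue field of $A$, so $k'\cong (A/\fp A)^{\oplus n}$ with $n\ge 1$. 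Finite G-dimension passes to direct summands, so $\Gdim_A(A/\fp A)<\infty$, and Fact~\ref{gd} gives that $A$ is Gorenstein. This establishes $(G_1)$.

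\textbf{Height at least two.} Let $\Ht(\fp)\ge 2$. Exactly as in Proposition~\ref{116}, \cite[1.2.26(b)]{BH} identifies $\depth_{R_\fp}(\overline{R_\fp})$ with the depth of $\overline{R_\fp}$ over itself, which is at least $2$ by normality. Fact~\ref{gd2} then gives
\[
\depth(R_\fp)=\Gdim_{R_\fp}(\overline{R_\fp})+\depth_{R_\fp}(\overline{R_\fp})\ge 2 .
\]
Together with the Cohen--Macaulayness of $R_\fp$ in height $\le 1$ from the previous two steps, this yields $(S_2)$. Combined with $(G_1)$, $R$ is quasi-normal.

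\textbf{Main obstacle.} The hard step is the height-one passage from $\Gdim_A(S)<\infty$ to $\Gdim_A(k')<\infty$. If transitivity along $A\to S$ is not available in the needed form, the fallback is a direct argument. Since $S$ is regular, $S$-projective resolutions of $k'$ are finite, so it suffices to show that every finitely generated projective $S$-module has finite G-dimension over $A$. This holds because such a module is a direct summand of some $S^m$, and $\Gdim_A(S^m)=\Gdim_A(S)<\infty$. Then $\Gdim_A(k')<\infty$ follows by induction along the finite $S$-free resolution of $k'$, using that finite G-dimension is closed under extensions and cokernels of injections in short exact sequences of finitely generated modules.
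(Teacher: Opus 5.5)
\textbf{Verdict: there is a genuine gap, in the height $\ge 2$ step, but your own height-one device repairs it.}

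Your arguments in heights $0$ and $1$ are correct. In height one you take a genuinely different route from Proposition~\ref{116}. There, regularity descends along a faithfully flat map because $\overline{R_\fp}$ is free. That is unavailable when $\overline{R_\fp}$ only has finite G-dimension. Your detour through $\Gdim_{R_\fp}(k)<\infty$, using the regular ring $\overline{R_\fp}$, is the right substitute. (The paper itself writes out no proof beyond ``in the same vein'' as Proposition~\ref{116}.)

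\textbf{The gap.} It is the sentence ``the depth of $\overline{R_\fp}$ over itself \ldots{} is at least $2$ by normality''.
\begin{itemize}
\item The ring $S:=\overline{R_\fp}$ is only semilocal, and $\depth_{R_\fp}(S)=\min_{\mathfrak n}\depth(S_{\mathfrak n})$, the minimum taken over the maximal ideals $\mathfrak n$ of $S$.
\item Normality gives only $\depth(S_{\mathfrak n})\ge\min\{2,\Ht\mathfrak n\}$.
\item Nothing forces the maximal ideals of $S$ to have height $\Ht\fp$. Nagata's two-dimensional local domain $A$ is a counterexample. It is the pullback of a regular semilocal ring $B$ with maximal ideals of heights $2$ and $1$ and isomorphic residue fields. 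Then $\overline{A}=B$ is finite over $A$, and $A$ is analytically unramified because $\widehat{A}\subseteq\widehat{B}$ is reduced. Yet $\depth_A(\overline{A})=1$.
\end{itemize}
In such a situation the Auslander--Bridger formula yields only $\depth(R_\fp)\ge 1$. The same step occurs in the proof of Proposition~\ref{116}, so appealing to that proof does not close the gap.

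\textbf{The repair.} Split into two cases.
\begin{itemize}
\item If every maximal ideal of $S$ has height $\ge 2$, your argument goes through unchanged.
\item Suppose some maximal ideal $\mathfrak n$ of $S$ has height one. Then $S_{\mathfrak n}$ is a discrete valuation ring. At every other maximal ideal $\mathfrak n$ localizes to the unit ideal, so $\mathfrak n$ is a projective $S$-module. Hence $0\to\mathfrak n\to S\to S/\mathfrak n\to 0$ gives $\pd_S(S/\mathfrak n)\le 1$, and your fallback argument gives $\Gdim_{R_\fp}(S/\mathfrak n)<\infty$. Because $S$ is integral over $R_\fp$, $\mathfrak n$ contracts to $\fp R_\fp$. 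So $S/\mathfrak n$ is a nonzero direct sum of copies of the residue field of $R_\fp$. Fact~\ref{gd} then makes $R_\fp$ Gorenstein, and $\depth(R_\fp)=\Ht\fp\ge 2$.
\end{itemize}
With this case split added, your proof is complete.
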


\begin{remark}\label{disc:hypersurface-question}
	\begin{enumerate}
		\item[(i)]	A recent paper \cite{C2} asked whether every local hypersurface domain $R$ satisfies the following property for any positive integer $c$: 
		\begin{quote}
			If $M$ and $M\otimes_R \Ext^{c-1}_R(M,R)$ are Cohen--Macaulay $R$-modules, both of grade $c-1$, then does $M$ have finite projective dimension?
		\end{quote}
		
		\item[(ii)] Let $R$ be a local hypersurface domain. If the condition in Remark~\ref{disc:hypersurface-question}(i) holds for $c := \dim(R)+1$, then $R$ is regular. In particular, any singular hypersurface domain fails to satisfy the condition.
	\end{enumerate}
\end{remark}

\begin{proof}
	Take $M := R/\mathfrak{m}$. Since $R$ is Cohen--Macaulay, $M$ has grade $\depth(R)=\dim(R) = c-1$. Also, $M$ is Cohen--Macaulay because it is Artinian.
	
	Moreover, $\Ext^{c-1}_R(M,R) \cong k$ by local duality (or via the Koszul complex), so
	\[
	M\otimes_R \Ext^{c-1}_R(M,R) \cong k \otimes_R k \cong k,
	\]
	which is again Cohen--Macaulay of grade $c-1$. The stated condition would then force $M$ to have finite projective dimension, which by the Auslander--Buchsbaum--Serre theorem implies that $R$ is regular. Hence any singular hypersurface provides a counterexample.
\end{proof}

\medskip

\section{When is $\pd_R(M\otimes_R N)=1$?}

The following question was raised in \cite[Question 3.3]{celp}.

\begin{question}
	Do there exist nonfree torsion-free modules $M$ and $N$ over a local ring $R$ with $\depth(R)=2$ such that
	\(
	\pd_R(M\otimes_R N)=1?
	\)
\end{question}

\subsection{Working over hypersurfaces}

Let $R$ be a hypersurface, that is, $R=S/(f)$ where $(S,\mathfrak n)$ is regular and $0\ne f\in\mathfrak n$.
If $R$ has singularity, it means that $f\in\mathfrak n^2$.
\begin{proposition}
	Let $R$ be a hypersurface of dimension $d>2$, and let $M,N$ be modules that are locally free on the punctured spectrum. Then
	\(
	\pd_R(M\otimes_R N)\ne 1.
	\)
\end{proposition}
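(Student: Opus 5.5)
The first step is a reduction that shows the hypothesis ``$M$ and $N$ nonfree'' cannot be dropped. As the statement is literally worded, I expect it to fail when one factor is free, so my plan separates that case off explicitly.

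\emph{The free case.} Take $M=R$ and let $x_1,\dots,x_d$ be a system of parameters. Set $N:=\Coker(R\xrightarrow{(x_1,\dots,x_d)^{T}}R^d)$.
\begin{itemize}
\item For $\fp\neq\fm$ some $x_i$ is a unit in $R_\fp$. The map then splits, so $N_\fp$ is free and $N$ is locally free on the punctured spectrum.
\item The map is injective, since $x_1$ is $R$-regular ($R$ being Cohen--Macaulay). Its image is not a direct summand because all entries lie in $\fm$. Hence $\pd_R(N)=1$ and $\pd_R(M\otimes_RN)=\pd_R(N)=1$.
\end{itemize}
So the assertion can only hold when both $M$ and $N$ are nonfree, and I would prove it in that form. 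This is the only reading consistent with \cite[Question 3.3]{celp}, which concerns nonfree modules.

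\emph{The nonfree case.} Suppose $\pd_R(M\otimes_RN)=1$. By Auslander--Buchsbaum, $\depth(M\otimes_RN)=d-1\ge 2$ because $d>2$.
\begin{itemize}
\item $M\otimes_RN$ is locally free on the punctured spectrum. Together with depth $\ge 2$ over the Gorenstein ring $R$, Fact~\ref{ref} shows that $M\otimes_RN$ is reflexive.
\item $M$ and $N$ have constant rank. The punctured spectrum is connected, by Hartshorne's connectedness theorem since $\depth R\ge2$, and both modules are locally free on it.
\end{itemize}
I then invoke the second rigidity theorem of Huneke--Wiegand for hypersurfaces. It gives $\Tor_i^R(M,N)=0$ for all $i\ge1$, and moreover that one of $M,N$ has finite projective dimension.

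With Tor vanishing, the depth formula yields
\[
\depth M+\depth N=d+\depth(M\otimes_RN)=2d-1 .
\]
So one of the modules, say $M$, has depth $d$ and the other has depth $d-1$. Suppose the factor of finite projective dimension is the maximal Cohen--Macaulay one. Then Auslander--Buchsbaum makes it free, contradicting nonfreeness.

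\emph{The remaining subcase.} Here $M$ is MCM of infinite projective dimension and $\pd N<\infty$, so $\pd N=1$ and $N$ is nonfree. I would use that Tor-independence over a hypersurface, with $\pd N<\infty$, forces $\pd(M\otimes_RN)=\pd M+\pd N$; this additivity follows from tensoring the minimal resolutions of $M$ and $N$. Since $\pd M=\infty$, this contradicts $\pd(M\otimes_RN)=1$.

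The main obstacle is verifying the hypotheses of the Huneke--Wiegand theorem (reflexivity and constant rank) in this generality. I would handle this precisely via Fact~\ref{ref} and connectedness, as above.
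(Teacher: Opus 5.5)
Your counterexample ($M=R$, and $N$ the cokernel of the map $R\to R^d$ given by a system of parameters) is valid, so the proposition as printed needs the hypothesis that $M$ and $N$ are nonfree; the paper's proof uses this tacitly when it derives a contradiction from Tor-vanishing via \cite[1.5]{celp}. For nonfree modules your argument is essentially the paper's (Auslander--Buchsbaum gives depth $d-1\ge 2$, hence reflexivity, then Huneke--Wiegand's second rigidity theorem gives $\Tor_{\ge1}^R(M,N)=0$, which is incompatible with $\pd_R(M\otimes_RN)=1$), with two additions: you check the constant-rank hypothesis via connectedness of the punctured spectrum, which the paper omits, and your additivity $\pd_R(M\otimes_RN)=\pd_R M+\pd_R N$ for Tor-independent modules already finishes the proof by itself, so the depth-formula case split is unnecessary.
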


\begin{proof}
	Suppose, for a contradiction, that $\pd_R(M\otimes_R N)=1$. By the Auslander–Buchsbaum formula,
	\[
	\depth_R(M\otimes_R N)=d-1\ge2.
	\]
	Thus $M\otimes_R N$ satisfies Serre's $(S_2)$ condition, and hence is reflexive by \cite[3.6]{syz}. The second rigidity theorem of Huneke–Wiegand \cite[2.7]{tensor} then gives $\Tor_i^R(M,N)=0$ for all $i>0$, contradicting \cite[1.5]{celp}.
\end{proof}

Recall that $R$ has a \emph{simple singularity} if there are only finitely many ideals $I\subseteq S$ such that $f\in I^2$.

\begin{proposition}
	Let $R$ be a $2$-dimensional hypersurface with a simple singularity, and let $M,N$ be nonfree torsion-free modules. Then
	\(
	\pd_R(M\otimes_R N)\ne 1.
	\)
\end{proposition}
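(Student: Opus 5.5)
We argue by contradiction: suppose $\pd_R(M\otimes_R N)=1$. Since $R$ is a $2$-dimensional hypersurface, it is Cohen--Macaulay with $\depth(R)=2$. The Auslander--Buchsbaum formula then gives
\[
\depth(M\otimes_R N)=2-1=1 .
\]

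The first step uses the simple singularity hypothesis, which means $R$ has finite Cohen--Macaulay type. A $2$-dimensional simple hypersurface singularity (an ADE singularity, in the classical setting of an algebraically closed field of characteristic zero) is normal; in particular it is an isolated singularity, so $R_\fp$ is regular for every $\fp\neq\fm$. Consequently every torsion-free module is locally free on the punctured spectrum, since over the $1$-dimensional regular rings $R_\fp$ torsion-free modules are free. Hence $M$ and $N$ are locally free on $\Spec(R)^\circ$, and therefore $\Tor_i^R(M,N)$ has finite length for all $i\ge 1$.

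The second step applies the Huneke--Wiegand machinery over hypersurfaces, as in the previous proposition. There the depth assumption $d>2$ gave reflexivity of $M\otimes_R N$, after which the second rigidity theorem \cite[2.7]{tensor} forced $\Tor_i^R(M,N)=0$ for $i>0$. Here we only know $\depth(M\otimes_R N)=1$, so that route is unavailable, and this is the main obstacle. Instead I would try one of the following.

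\begin{itemize}
\item[(a)] Use the classification: over an ADE surface singularity, each nonfree indecomposable MCM module $X$ satisfies $X\otimes_R X^*$ or $X\otimes_R Y$ with torsion, and one can compute depths directly via the McKay correspondence. Torsion-free modules over a $2$-dimensional normal ring differ from their reflexive hulls only by finite length. So one reduces to $M,N$ MCM, modulo controlling the finite-length cokernels $M^{**}/M$.
\item[(b)] Compute Auslander's depth formula directly. If the Tor modules vanished, the depth formula $\depth M+\depth N=\depth R+\depth(M\otimes_R N)$ would give $\depth M+\depth N=3$. This forces one of $M,N$ to have depth $1$ and the other depth $2$, i.e.\ MCM, and then $\pd(M\otimes_R N)<\infty$ would make $M\otimes_R N$ have a finite-projective-dimension tensor factor structure, contradicting \cite[1.5]{celp}.
\end{itemize}

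The third step handles the case where some $\Tor_i^R(M,N)\neq0$. Since these Tor modules have finite length, one uses the Tor-rigidity of hypersurfaces and the Hochster theta pairing. Over a $2$-dimensional isolated hypersurface singularity, $\theta(M,N)$ is defined, and for ADE singularities the class group is torsion, so $\theta\equiv 0$. By Dao's results, $\theta(M,N)=0$ implies $\Tor_i^R(M,N)=0$ for $i\gg0$, and then, by the rigidity of Murthy/Huneke--Wiegand for hypersurfaces, for all $i\ge 1$ (or for all $i\ge 2$, after suitable adjustment). This brings us back to step (b), and with \cite[1.5]{celp} it yields the contradiction.

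The delicate point is the passage from vanishing of $\Tor_{\gg0}$ to vanishing of all positive Tor. I would handle this by replacing $M$ with a syzygy and tracking depths through the resulting short exact sequences.
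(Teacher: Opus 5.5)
Your Step 1 is fine, at least in the classical setting you specify. Normality of a $2$-dimensional simple singularity makes $M$ and $N$ locally free on $\Spec(R)^\circ$ at once. This is more direct than the paper's argument, which localizes at height-one primes and uses total reflexivity over the one-dimensional hypersurface $R_{\fp}$ together with \cite[3.4]{celp}. Your option (b) is essentially a correct finish \emph{once} you know $\Tor_i^R(M,N)=0$ for all $i\ge 1$.

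The genuine gap is that nothing in the proposal proves this vanishing. In Step 3 you assert that $\theta(M,N)=0$ implies $\Tor_i^R(M,N)=0$ for $i\gg0$, and this is false. Dao's theorem says only that $\theta(M,N)=0$ makes the pair $(M,N)$ Tor-rigid: the vanishing of a single $\Tor_n^R(M,N)$ forces $\Tor_i^R(M,N)=0$ for all $i\ge n$. It produces no vanishing by itself. Two counterexamples:
\begin{itemize}
\item Over your ADE surface $\theta\equiv 0$. Yet for a nonfree maximal Cohen--Macaulay module $X$ one has $\Tor_i^R(X,X)\neq0$ for all $i\gg0$; otherwise Huneke--Wiegand would give $\pd_R(X)<\infty$, so $X$ would be free.
\item $\theta(k,k)=0$, because the Betti numbers of $k$ over a hypersurface are eventually constant, while $\Tor_i^R(k,k)\neq 0$ for every $i$.
\end{itemize}
Your follow-up claim, that eventual vanishing propagates down to all $i\ge1$, is also wrong; rigidity only propagates upward. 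If $\pd_R(L)=1$, then $\Tor_i^R(L,k)=0$ for $i\ge2$ but $\Tor_1^R(L,k)\neq0$. ``Replacing $M$ by a syzygy and tracking depths'' is not an argument, and option (a) is only a heuristic.

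The missing idea is to use $\depth(M\otimes_RN)=1$ to create the initial vanishing, not merely in the depth formula at the end.
\begin{itemize}
\item Since $R$ is a domain, $M$ is torsionless, so there is an exact sequence $0\to M\to F\to C\to 0$ with $F$ free.
\item Tensoring with $N$ gives an injection $\Tor_1^R(C,N)\hookrightarrow M\otimes_RN$ and isomorphisms $\Tor_i^R(M,N)\cong\Tor_{i+1}^R(C,N)$ for $i\ge1$.
\item As $N$ is locally free on $\Spec(R)^\circ$, each $\Tor_i^R(C,N)$ with $i\ge1$ has finite length. But $H^0_{\fm}(M\otimes_RN)=0$, so $\Tor_1^R(C,N)=0$.
\item Now $\theta(C,N)$ is defined and vanishes because $\operatorname{Cl}(R)$ is finite. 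Dao's rigidity then gives $\Tor_i^R(C,N)=0$ for all $i\ge1$, hence $\Tor_i^R(M,N)=0$ for all $i\ge1$.
\end{itemize}

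Only now does (b) apply.
\begin{itemize}
\item The depth formula gives $\{\depth M,\depth N\}=\{1,2\}$.
\item Huneke--Wiegand makes one of $M,N$ of finite projective dimension. It must be the one of depth $1$, since a module of finite projective dimension and depth $2$ would be free.
\item Apply $-\otimes^{\mathbf L}_Rk$ to the perfect complex $M\otimes^{\mathbf L}_RN\simeq M\otimes_RN$. The Poincar\'e series multiply, so the other module also has finite projective dimension. Having depth $2$, it is free, a contradiction. This is the point where the preceding proposition invokes \cite[1.5]{celp}.
\end{itemize}

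For comparison, the paper, after establishing local freeness on $\Spec(R)^\circ$, simply quotes \cite[A.12]{celp} to get $\pd_R(M),\pd_R(N)<\infty$, and then \cite[3.2]{celp} for the contradiction.
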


\begin{proof}
	Assume $\pd_R(M\otimes_R N)=1$. We first show that $M$ and $N$ are locally free on the punctured spectrum.
	Let $\mathfrak p$ be a height-one prime and set $A=R_{\mathfrak p}$. Then
	\[
	\pd_A(M_{\mathfrak p}\otimes_A N_{\mathfrak p})\le1.
	\]
	If this projective dimension is $0$, there is nothing to prove. Otherwise, suppose it equals $1$. Since $A$ is a hypersurface of dimension one, every torsion-free $A$-module is totally reflexive by the Auslander–Bridger formula. Hence
	\[
	\Ext^1_A(M_{\mathfrak p},\Omega(M_{\mathfrak p}\otimes_A N_{\mathfrak p}))=0.
	\]
	By \cite[3.4]{celp}, this implies that $M_{\mathfrak p}$ is free. The same argument shows $N_{\mathfrak p}$ is free.
	
	Thus $M$ and $N$ are locally free on the punctured spectrum. By \cite[A.12]{celp}, this implies $\pd_R(M)$ and $\pd_R(N)$ are finite, contradicting \cite[3.2]{celp}.
\end{proof}
Recall that $(-)^T$ denotes the trace of a matrix.
\begin{proposition}
	Suppose the presentation matrix of a nonfree torsionless module $M$ is symmetric. Then for every nonzero module $N$,
	\(
	\pd_R(M\otimes_R N)\ne 1.
	\)
\end{proposition}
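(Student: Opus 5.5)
The plan is to use the symmetry of the presentation matrix to make the minimal free resolution of $M$ periodic of period one, and then compute $M\otimes_R N$ and $\Tor^R_i(M,N)$ from that resolution tensored with $N$.

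\textbf{Step 1 (transpose and periodicity).} Fix a minimal presentation $R^n\xrightarrow{A}R^n\to M\to 0$ with $A=A^{T}$. Then $\Tr M=\Coker(A^{T})=\Coker(A)\cong M$. By Auslander--Bridger \cite{AB}, torsionlessness of $M$ is equivalent to $\Ext^1_R(\Tr M,R)=0$, so $\Ext^1_R(M,R)=0$.

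Dualizing the presentation gives $0\to M^*\to R^n\xrightarrow{A^{T}=A}R^n\to \Tr M\to 0$, so $M^*\cong\ker A=\Omega^2(M)$ up to free summands. I would then try to show $\Omega(M)\cong M$, so that the resolution takes the form $\cdots\xrightarrow{A}R^n\xrightarrow{A}R^n\to M\to 0$ with $A^2=0$. The hoped-for route is to dualize once more and use the vanishing $\Ext^1_R(M,R)=0$. This step is not fully checked, and it is the link the remaining steps depend on.

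\textbf{Step 2 (Tor via the periodic complex).} Since $M$ is nonfree and the resolution is periodic, $\pd_R(M)=\infty$. Tensoring the periodic resolution with $N$ gives the complex $\cdots\to N^n\xrightarrow{A}N^n\xrightarrow{A}N^n$. From it:
\begin{itemize}
\item $M\otimes_R N=\Coker(A\otimes N)$;
\item $\Tor_i^R(M,N)=\ker(A\otimes N)/\operatorname{im}(A\otimes N)$, the same module for every $i\ge1$.
\end{itemize}
So either all higher Tor vanish, or none do.

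\textbf{Step 3 (all higher Tor vanish).} Suppose $\pd_R(M\otimes_R N)=1$. In this case the complex above is acyclic, so $X:=M\otimes_R N\cong\operatorname{im}(A\otimes N)\cong\ker(A\otimes N)$. This gives a short exact sequence
\[
0\to X\to N^n\to X\to 0 .
\]
Since $\pd_R(X)=1$, this sequence forces $\pd_R(N)\le 2$, in particular $\pd_R(N)<\infty$.

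With all higher Tor vanishing, Auslander's depth formula applies to $M$ and $N$. I would combine it with Auslander--Buchsbaum for $X$ and $N$ to bound $\depth(M)$ from below, for instance $\depth(M)\ge\depth(R)$. That would make $M$ maximal Cohen--Macaulay with $M\cong\Omega(M)$. Then $\depth(N)=\depth(X)+\depth(R)-\depth(M)$ should contradict the short exact sequence, unless $M$ is free, which is excluded.

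\textbf{Step 4 (no higher Tor vanishes) --- the expected main obstacle.} Here $T:=\Tor_1^R(M,N)\ne0$ and $\Tor_i^R(M,N)\cong T$ for all $i\ge1$. From the complex in Step 2 one gets exact sequences
\[
0\to \operatorname{im}\to\ker\to T\to 0,\qquad 0\to\ker\to N^n\to\operatorname{im}\to0,\qquad 0\to \operatorname{im}\to N^n\to X\to 0 .
\]
The first thing I would try is to compare depths along these sequences using $\pd_R(X)=1$, that is $\depth(X)=\depth(R)-1$. The goal is to show that $T$ would have to be torsion-free, hence supported where $X$ fails to be free, and derive a contradiction by localizing at a minimal prime of $\Supp(T)$. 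There $T$ has finite length, and the same periodic picture over a smaller ring should let an induction on $\dim R$ close the argument.
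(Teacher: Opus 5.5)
\textbf{There is a genuine gap: the periodicity claim in Step 1 is false, and Steps 2--4 depend on it.}

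The first half of your Step 1 is correct and matches the paper. Since $A=A^{T}$, you get $\Tr M=\Coker(A^{T})=\Coker(A)\cong M$, and torsionlessness gives $\Ext^1_R(M,R)\cong\Ext^1_R(\Tr M,R)=0$. The problem is the next claim, $\Omega M\cong M$ with $A^2=0$.

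Every $1\times1$ matrix is symmetric, so take $R=k[[x,y]]/(xy)$ and $M=R/xR$, presented by $(x)$. Because $\Ann_R(y)=xR$, we have $M\cong yR\subseteq R$, so $M$ is torsionless, and it is not free. Its minimal resolution, however, is $\cdots\xrightarrow{y}R\xrightarrow{x}R\xrightarrow{y}R\xrightarrow{x}R\to M\to0$. Hence $\Omega M\cong xR\cong R/yR\not\cong M$, and $A^2=x^2\neq0$. In general, symmetry only gives $\Tr M\cong M$ and $M^*\cong\Omega^2M$.

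Consequently Step 2 also fails: here $\Tor_1^R(M,M)\cong k$ while $\Tor_2^R(M,M)=0$, so the higher Tor modules are not all the same. Even if periodicity held, Step 3 ends at ``should contradict'' rather than an actual contradiction, and Step 4 is only a hoped-for induction. So the proposal is not a proof as it stands.

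\textbf{The missing idea: you can finish right after obtaining $\Ext^1_R(M,R)=0$.} Suppose $\pd_R(M\otimes_RN)=1$, and take $0\to F_1\to F_0\xrightarrow{\pi}M\otimes_RN\to0$ with $F_1$ free. Then $\Ext^1_R(M,\Omega(M\otimes_RN))=\Ext^1_R(M,F_1)=0$. The paper concludes from this, via \cite[Lemma 3.4]{celp}, that $M$ is free.

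The mechanism behind that lemma is elementary:
\begin{enumerate}
\item The vanishing of $\Ext^1_R(M,F_1)$ means every homomorphism $M\to M\otimes_RN$ lifts through $\pi$. In particular each map $m\mapsto m\otimes n$ lifts.
\item If $M$ has no free summand, every map $M\to F_0$ lands in $\fm F_0$. Since the images of the maps $m\mapsto m\otimes n$ generate $M\otimes_RN$, this gives $M\otimes_RN\subseteq\pi(\fm F_0)=\fm(M\otimes_RN)$. By Nakayama, $M\otimes_RN=0$, which is impossible for $M,N\neq0$.
\item In general, write $M=R^a\oplus M'$ with $M'$ having no free summand. The conditions $\Ext^1_R(M',R)=0$ and $\pd_R(M'\otimes_RN)\le1$ pass to this summand. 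The same argument then gives $M'=0$, so $M$ is free.
\end{enumerate}
No periodicity, Tor computation, or depth formula is needed.
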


\begin{proof}
	Let $N\ne0$. There is a presentation
	\(
	R^n \xrightarrow{A} R^n \to M \to 0
	\)
	with $A^T=A$. Applying $\Hom_R(-,R)$ gives
	\[
	(R^n)^* \xrightarrow{A^T} (R^n)^* \lo \Tr(M) \lo 0.
	\]
	Identifying $(R^n)^*\cong R^n$, we obtain an isomorphism $M\cong \Tr(M)$ by the Five Lemma. Since $M$ is torsionless,
	\(
	\Ext^1_R(\Tr(M),R)=0.
	\) If $\pd_R(M\otimes_R N)=1$, then
	\[
	\Ext^1_R(M,\Omega(M\otimes_R N))=0.
	\]
	By \cite[Lemma 3.4]{celp}, this forces $M$ to be free, a contradiction.
\end{proof}

\begin{remark}
	The torsionless assumption is essential. Let $x$ be a regular element of a ring $R$ with positive depth and set $M=R/xR$. Then
	\(
	0\to R \xrightarrow{x} R \to M \to 0
	\)
	is a free resolution with a symmetric presentation matrix. For any nonzero free module $N$, one has $\pd_R(M\otimes_R N)=1$.
\end{remark}
Let us give an example.
\begin{example}
	Let $S = k[x, y, z, w]$, $f = x^{2} + y^{2} + z^{2} + w^{2}$, and set $R = S/(f)$, where $\operatorname{char}(k) \neq 2$.
	Define two $2 \times 2$ matrices
	\[
	\varphi_{xy} = \begin{pmatrix}
	x & y \\
	-y & x
	\end{pmatrix}, \qquad
	\varphi_{zw} = \begin{pmatrix}
	z & w \\
	-w & z
	\end{pmatrix}.
	\]
	Let
	\[
	\varphi := \begin{pmatrix}
	\varphi_{xy} & 0 \\
	0 & \varphi_{zw}
	\end{pmatrix},
	\qquad
	\psi := \varphi^{\mathrm{T}}.
	\]
	A direct computation shows:
	\[
	\varphi_{xy}\varphi_{xy}^{\mathrm{T}} = (x^{2} + y^{2})I_{2}, \quad
	\varphi_{zw}\varphi_{zw}^{\mathrm{T}} = (z^{2} + w^{2})I_{2},
	\]
	hence
	\[
	\varphi\psi = (x^{2}+y^{2}+z^{2}+w^{2})I_{4} = f I_{4},
	\qquad
	\psi\varphi = f I_{4}.
	\]
	Thus $(\varphi, \psi)$ is a matrix factorization of $f$.
	Over $R$, we obtain a $2$-periodic free resolution of $M := \operatorname{coker}(\varphi)$:
	\[
	\cdots \xrightarrow{\psi} R^{4} \xrightarrow{\varphi} R^{4} \xrightarrow{\psi} R^{4} \xrightarrow{\varphi} M \longrightarrow 0.
	\]
	
	Now, define the block-diagonal symmetric matrix $B := \operatorname{diag}(B_{2},B_{2})$, where
	\[
	B_{2} := \begin{pmatrix}
	1 & 0 \\
	0 & -1
	\end{pmatrix}.
	\]
	A straightforward calculation shows that $\varphi^{\mathrm{T}}B = B\varphi$.
	Since $B$ is invertible and symmetric, it induces an isomorphism
	\[
	M \stackrel{\cong}{\longrightarrow} M^{*} := \operatorname{Hom}_{R}(M,R).
	\]
	Thus $M$ is self-dual.
	If $k$ contains an element $\imath$ such that $\imath^{2} = -1$ (for example $k:=\mathbb{C}$  ), we may set $P_{2} = \operatorname{diag}(1,\imath)$ and $P = \operatorname{diag}(P_{2},P_{2})$.
	Then $P^{\mathrm{T}}BP = I_{4}$, and $\widetilde{\varphi} := P^{-1}\varphi P$ is symmetric:
	\[
	\widetilde{\varphi} =
	\begin{pmatrix}
	x & \imath y & 0 & 0 \\
	\imath y & x   & 0 & 0 \\
	0 & 0 & z & \imath w \\
	0 & 0 & \imath w & z
	\end{pmatrix},
	\]
	and it still satisfies $\widetilde{\varphi}^{\,2}=f I_{4}$ in the matrix factorization.
\end{example}

\subsection{Working with bimodules}

Let $F:R\to R$ denote the Frobenius endomorphism. The $n$th iterate defines a new $R$-module structure on the set $R$, denoted ${}^{{F^n}}\!R$, where
\[
a\cdot b = a^{p^n}b \quad (a,b\in R).
\]
For an ideal $I$, we write $I^{[p]}=\langle x^p : x\in I\rangle$.

\begin{example}
	There exist modules $M,N$ of infinite projective dimension over the ring
	\(
	R=\overline{\mathbb F}_p[[X,Y,Z]]/(X^p),
	\)
	which has depth $2$, such that $\pd_R(M\otimes_R N)=1$. Moreover, $M$ is maximal Cohen–Macaulay.
\end{example}

\begin{proof}
	Let $N=R/(x,y)$ and $M={}^{F^n}\!R$. Then
	\[
	M\otimes_R N \cong R/(x,y)^{[p]}=R/(x^p,y^p)=R/(y^p)
	\]
	by \cite[8.2.2(b)]{BH}. Since $y^p$ is $R$-regular,
	\(
	\pd_R(M\otimes_R N)=1.
	\) Because $R$ is not regular, Kunz's theorem \cite[8.2.8]{BH} implies $\pd_R(M)=\infty$. If $\pd_R(N)<\infty$, then $\pd_R(R/xR)<\infty$, which would force $(x)$ to contain a regular element, impossible since $x^p=0$. Hence $\pd_R(N)=\infty$.
\end{proof}
\begin{observation}
	Let $R$ be a one-dimensional domain of prime characteristic $p>0$ that is not weakly normal. Then there exist finitely generated $R$-modules $M$ and $N$ such that
	\[
	\pd_R(M\otimes_R N)=1 \quad \text{while} \quad \pd_R(M)=\pd_R(N)=\infty.
	\]
\end{observation}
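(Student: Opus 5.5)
The plan is to imitate the Frobenius construction of the preceding example. We use the failure of weak normality to produce a non-principal two-generated ideal whose Frobenius bracket power is principal. Recall that in prime characteristic a reduced ring is weakly normal exactly when every element $b$ of the total quotient ring with $b^p\in R$ already lies in $R$. Since $R$ is not weakly normal, we pick $b=u/v$ with $u,v\in R$, $v\neq 0$, $b^p\in R$ and $b\notin R$. Thus $u^p=rv^p$ for some $r\in R$. We then set
\[
I:=(u,v)\subseteq R,\qquad N:=R/I,\qquad M:={}^{F}\!R .
\]

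\textbf{Step 1 (the tensor product).} As in the previous example (via \cite[8.2.2(b)]{BH}), $M\otimes_R N\cong R/I^{[p]}$. Moreover $I^{[p]}=(u^p,v^p)=(rv^p,v^p)=v^pR$. Since $R$ is a domain, $v^p$ is a regular element. Hence $0\to R\xrightarrow{v^p}R\to R/v^pR\to 0$ is a free resolution, and $\pd_R(M\otimes_R N)=1$ because $v^p$ is not a unit (it lies in $\fm$, as otherwise $b\in R$).

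\textbf{Step 2 ($\pd(N)=\infty$).} Since $\depth R=1$, Auslander–Buchsbaum shows that if $\pd(N)<\infty$ then $\pd(R/I)\le 1$, so $I$ is free, i.e.\ principal, say $I=wR$. Write $u=\alpha w$ and $v=\beta w$. From $w\in(u,v)$ we get $1\in(\alpha,\beta)$, so $\alpha$ or $\beta$ is a unit in the local ring $R$.
\begin{itemize}
\item If $\beta$ is a unit, then $b=\alpha\beta^{-1}\in R$, a contradiction.
\item If $\alpha$ is a unit, then $b^{-1}\in R$, and $b=b^p\,(b^{-1})^{p-1}\in R$, again a contradiction.
\end{itemize}
So $I$ is not principal and $\pd(N)=\infty$.

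\textbf{Step 3 ($\pd(M)=\infty$).} The existence of $b$ shows that $R$ is not normal, hence not regular. By Kunz's theorem \cite[8.2.8]{BH}, ${}^{F}\!R$ has infinite projective dimension.

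The main obstacle is finite generation of $M={}^{F}\!R$: this needs $R$ to be $F$-finite, which is not part of the hypothesis. The first remedy I would try is to replace ${}^{F}\!R$ by the finite $R$-module $M':=R[b^{1/p}\text{-type elements}]$. Concretely, I would take $M'$ to be a finitely generated $R$-subalgebra of $R^{1/p}$ containing $R$ and large enough that $M'\otimes_R R/I\cong M'/IM'$ still has its computations controlled by $I^{[p]}$. Alternatively, I would pass to the completion (excellent, and $F$-finite after a suitable base change of the residue field) and descend the conclusions using faithful flatness. If neither works cleanly, I would instead use $M'=S:=R[b]$, which is module-finite since $b^p\in R$. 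Its projective dimension is infinite because it is torsion-free of rank one over a one-dimensional domain and not free, as $S\neq R$. I would then search for a choice of $N$ among the quotients $R/(u,v)$ for which $S/IS$ is cyclic with a principal annihilator generated by a regular element.
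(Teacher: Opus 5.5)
Your Steps 1--3 are essentially the paper's proof. The paper's $x,y$ are your $v,u$, and it also takes $N=R/(x,y)$ and $M={}^{F^n}\!R$, uses \cite[8.2.2(b)]{BH} and Kunz, and rules out $\pd(N)<\infty$ by non-principality; your $\alpha,\beta$ argument is a tidier version of that last step.

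However, Step 1 has a genuine gap, and the paper's argument already contains it: the two projective dimensions are computed for two \emph{different} $R$-module structures on $M$.
\begin{itemize}
\item The isomorphism $F(R/I)\cong R/I^{[p]}$ of \cite[8.2.2(b)]{BH} is for the Peskine--Szpiro Frobenius functor. There ${}^{F}\!R\otimes_R N$ is made an $R$-module through the untwisted side of the bimodule ${}^{F}\!R$, and for that structure $M=R$ is free.
\item Step 3, by contrast, uses the twisted structure $a\cdot m=a^pm$.
\item If $M$ and $N$ are honest $R$-modules, then $r$ acts on $M\otimes_R N$ through either factor. So $M\otimes_R N\cong M/IM$ with $IM=I^{[p]}=v^pR=vM$, i.e.\ $M\otimes_R N\cong M/vM$ with the twisted structure.
\item Since $v\in\fm$ is regular on $R$ and on $M$ and acts as zero on $\Tor_i^R(M,k)$, the sequence $0\to M\xrightarrow{v}M\to M/vM\to 0$ gives $\Tor_i^R(M,k)\hookrightarrow\Tor_i^R(M/vM,k)$. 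Hence $\pd_R(M\otimes_R N)\ge\pd_R(M)=\infty$.
\end{itemize}
Concretely, take $R=\overline{\mathbb F}_2[[t^2,t^3]]$ and $b=t=t^3/t^2$. Then $I=(t^2,t^3)=\fm$ and $N=k$, so $M\otimes_R k$ is a nonzero $k$-vector space, which does not have projective dimension $1$.

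No simple repair keeps the tensor product cyclic. Since $\mu(M\otimes_R N)=\mu(M)\mu(N)$, an isomorphism $M\otimes_R N\cong R/xR$ with $x$ regular forces $M=R/A$ and $N=R/B$ with $A+B=xR$. Then $A=x(A:x)$ and $B=x(B:x)$, and cancelling $x$ gives $(A:x)+(B:x)=R$. So $A=xR$ or $B=xR$, and one factor has projective dimension $1$.

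In particular your last remedy cannot work. $S/IS$ cyclic would force $\mu_R(S)=1$, hence $S=R$, because a cyclic ring extension of $R$ inside its fraction field is $R$ itself. Indeed $IS=vS$, and $\pd_R(S/vS)=\infty$ by the same $\Tor$ argument.

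The obstacle you did flag is also real, and the paper passes over it. ${}^{F}\!R$ is finitely generated exactly when $R$ is $F$-finite. Even then, Kunz only gives non-flatness, so you must add that ${}^{F}\!R$ is torsion-free over a one-dimensional domain, hence of depth $1$, so finite projective dimension would make it free.

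What your argument (and the paper's) actually proves, for $F$-finite $R$, is a bimodule statement: $\pd_R F(N)=1$ while $\pd_R({}^{F}\!R)=\pd_R(N)=\infty$. Producing two genuine $R$-modules with $\pd_R(M\otimes_R N)=1$ needs a different idea, necessarily with $M\otimes_R N$ non-cyclic.
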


\begin{proof}
	Since $R$ is not weakly normal, there exists a principal ideal $I=xR$ that is not Frobenius closed. Hence there is an element $y\in R$ such that
	\[
	y^p \in I^{[p]}=(x^p) \quad \text{but} \quad y\notin I. \tag{$\ast$}
	\]
	
	Set
	\[
	N:=R/(x,y), \qquad M:={}^{F^n}\!R
	\]
	for some $n\ge 1$, where ${}^{F^n}\!R$ denotes $R$ viewed as an $R$-module via the $n$-th Frobenius.
	
	By \cite[8.2.2(b)]{BH},
	\(
	M\otimes_R N \cong R/(x,y)^{[p^n]} = R/(x^{p^n},y^{p^n}).
	\)
	Since $y^p \in (x^p)$, we have $y^{p^n}\in (x^{p^n})$, and therefore
	\(
	(x^{p^n},y^{p^n})=(x^{p^n}).
	\)
	Thus
	\(
	M\otimes_R N \cong R/(x^{p^n}).
	\)
	Because $R$ is a one-dimensional domain and $x\neq 0$, the element $x^{p^n}$ is a nonzerodivisor. Hence
	\[
	\pd_R(M\otimes_R N)=\pd_R\big(R/(x^{p^n})\big)=1.
	\]
	
	Since $R$ is not regular, Kunz’s theorem implies that Frobenius is not flat, and therefore
	\(
	\pd_R(M)=\pd_R({}^{F^n}\!R)=\infty
	\)
	(see \cite[8.2.8]{BH}).
	Now suppose, toward a contradiction, that $\pd_R(N)<\infty$. Then $\pd_R(R/(x,y))<\infty$. Consider the short exact sequence
	\(
	0 \longrightarrow (x,y) \longrightarrow R \longrightarrow R/(x,y) \longrightarrow 0.
	\)
	It follows that $(x,y)$ also has finite projective dimension. Since $R$ is local and one-dimensional, the Auslander–Buchsbaum formula gives
	\[
	\pd_R((x,y)) + \depth_R((x,y)) = \depth(R)=1.
	\]
	But $(x,y)\subseteq R$ is torsion-free over the domain $R$, so $\depth_R((x,y))\ge 1$. Hence $\pd_R((x,y))=0$, and therefore $(x,y)$ is a free $R$-module of rank one. Thus $(x,y)$ is principal.
	Because $y\notin (x)$ by $(\ast)$, the generator must be $y$, so $(x,y)=(y)$. Hence $x=ry$ for some $r\in R$. Since $y\notin (x)$, the element $r$ is not a unit.
	From $(\ast)$ we also have $y^p = s x^p$ for some $s\in R$. Substituting $x=ry$ gives
	\[
	y^p = s(r y)^p = s r^p y^p.
	\]
	Since $R$ is a domain and $y\neq 0$, we can cancel $y^p$ to obtain
	\(
	1 = s r^p.
	\)
	Thus $r$ is a unit, a contradiction.
	Therefore $\pd_R(N)=\infty$, completing the proof.
\end{proof}

\begin{example}
	There exist $R$-modules $M,N$ of infinite projective dimension over the Artinian ring
	\(
	R=\overline{\mathbb{F}}_p[[X]]/(X^p)
	\)
	such that
	\(
	\pd_R(M\otimes_R N)=0.
	\)
\end{example}

\begin{proof}
	Let
	\[
	N:=R/xR=R/\mathfrak m, \qquad M:={}^{F^n}\!R
	\]
	for some $n\ge 1$.
	Then
	\(
	M\otimes_R N \cong R/(x)^{[p^n]} = R/(x^{p^n}).
	\)
	Since $x^p=0$ in $R$, we have $x^{p^n}=0$, and hence
	\(
	M\otimes_R N \cong R.
	\)
	Thus
	\(
	\pd_R(M\otimes_R N)=0.
	\)
	Because $R$ is not regular, Kunz’s theorem implies
	\(
	\pd_R(M)=\infty,
	\)
	and clearly $N=R/\mathfrak m$ also has infinite projective dimension since $R$ is not a field.
\end{proof}
\medskip
\section{Projective dimension and chain condition}

The following improves \cite[Proposition II.3]{S}:

\begin{observation}
	Let $(R,\mathfrak{m})$ be a UFD. Then any chain of prime ideals extends to length $3$.  
	In particular, a $3$-dimensional UFD satisfies the strict chain condition (i.e., every saturated chain of prime ideals has length $3$).
\end{observation}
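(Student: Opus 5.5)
The plan is to reduce everything to one structural fact about UFDs: every height-one prime is principal. We then use the fact that dimension drops by exactly one modulo a principal prime.

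I read the statement as follows: in a local UFD $(R,\mathfrak m)$ with $\dim R\ge 3$, no saturated chain of primes from $0$ to $\mathfrak m$ has length $\le 2$, so every chain can be refined to one of length at least $3$; when $\dim R=3$ this gives the strict chain condition. Since $R$ is a domain, any saturated chain starts at $0$, so I would study a saturated chain
\[
0=\fp_0\subsetneqq\fp_1\subsetneqq\cdots\subsetneqq\fp_n=\mathfrak m .
\]

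\textbf{Step 1: $\fp_1$ is principal of height one.} Pick $0\neq a\in\fp_1$. By Krull's principal ideal theorem, a minimal prime $\fq$ of $aR$ inside $\fp_1$ has height one. Saturation of $0\subsetneqq\fp_1$ forces $\fq=\fp_1$, so $\Ht(\fp_1)=1$. In a UFD, a height-one prime contains a prime element $x$ (a prime factor of $a$), and $0\subsetneqq xR\subseteq\fp_1$ forces $\fp_1=xR$. This also rules out $n=1$ when $\dim R\ge 2$: applying the same argument to $\mathfrak m$ would give $\Ht(\mathfrak m)=1$.

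\textbf{Step 2: $\dim R/xR=\dim R-1$.} Since $x$ is a nonzero element of the local domain $R$, we have $\dim R/xR=\dim R-1$. The bound $\ge$ is the standard system-of-parameters inequality, and $\le$ holds because $x$ lies in no minimal prime of $R$. Because $xR$ is prime, $R/\fp_1=R/xR$ is a local domain of dimension $\dim R-1$.

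\textbf{Step 3: iterate in the quotient.} The image of the chain is a saturated chain from $0$ to the maximal ideal in the local domain $R/\fp_1$, which has dimension $\dim R-1\ge 2$. Its first step has height one by the Krull argument of Step 1, so the chain cannot jump directly from $0$ to the maximal ideal. Hence $n-1\ge 2$, and the original chain has length $n\ge 3$. When $\dim R=3$, the ring $R/xR$ is a $2$-dimensional local domain. There every saturated chain has length exactly $2$: it has length $\ge 2$ by the argument just given and $\le 2$ by dimension. So every saturated chain in $R$ has length exactly $3$, which is the strict chain condition.

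\textbf{Main obstacle.} The delicate point is Step 3. The quotient $R/xR$ need not be a UFD, so the UFD argument cannot simply be repeated. The plan avoids this by using only Krull's theorem there, which works in any Noetherian local domain, together with the exact dimension count from Step 2. The UFD hypothesis is needed only at the first step, to guarantee that the quotient by the first prime of a saturated chain has dimension exactly $\dim R-1$.
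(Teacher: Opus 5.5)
Your proof is correct and takes essentially the same route as the paper: the first prime of a saturated chain has height one, so it is $xR$ by the UFD property, and Krull's height theorem then rules out a chain that is too short. The paper phrases the last step by taking $y\in\mathfrak m\setminus xR$ and noting that saturation makes $\mathfrak m$ minimal over $(x,y)$, hence $\Ht(\mathfrak m)\le 2$; this is the same as your principal ideal argument in $R/xR$ combined with $\dim R/xR=\dim R-1$, and your version additionally covers all $\dim R\ge 3$ and handles length-one chains explicitly.
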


\begin{proof}
	We prove the particular case. Let $0 \subsetneq P \subsetneq \mathfrak{m}$ be a strict chain of prime ideals. Since $0 \subsetneq P$ is strict, $P$ has height one. By the UFD property, $P = (x)$ for some $x \in R$. Let $y \in \mathfrak{m} \setminus P$. Let $Q \in \Spec(R)$ be minimal over $(x,y)$. Since $(x) \subset (x,y) \subseteq \mathfrak{m}$ is strict, we must have $Q = \mathfrak{m}$. Thus $\sqrt{(x,y)} = \mathfrak{m}$, so $x,y$ form part of a system of parameters. Hence $\dim (R/(x,y)) = 1$, while $\dim(R)=3$, so $\operatorname{ht}(x,y) = 2$. This shows that any saturated chain in $\Spec(R)$ has length $3$.
\end{proof}

The following is another example.

\begin{proposition}\label{s}
	Let $R$ be a non-catenary domain of dimension $3$. Then any nonzero prime ideal occurring in a bad saturated chain has infinite projective dimension.
\end{proposition}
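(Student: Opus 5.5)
Suppose $R$ is non-catenary of dimension $3$. Then a bad saturated chain is a saturated chain that is too short, namely
\[
0 \subsetneq P \subsetneq \mathfrak m
\]
of length $2$ (a saturated chain of length $3$ is maximal and not bad). So the nonzero primes that can occur are $P$ and $\mathfrak m$, and I would treat them separately.

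\textbf{The maximal ideal.} Suppose $\pd(\mathfrak m)<\infty$. Then $\pd(k)<\infty$ from $0\to\mathfrak m\to R\to k\to 0$. By Auslander--Buchsbaum--Serre, $R$ is regular. Regular rings are Cohen--Macaulay, hence universally catenary, which contradicts the non-catenary hypothesis. Thus $\pd(\mathfrak m)=\infty$.

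\textbf{The middle prime $P$.} Saturation of $0\subsetneq P$ gives $\operatorname{ht}(P)=1$. Saturation of $P\subsetneq\mathfrak m$ means there is no prime strictly between $P$ and $\mathfrak m$, so $\dim(R/P)=1$. Suppose, toward a contradiction, that $\pd(P)<\infty$. Then $\pd(R/P)<\infty$ by the same short exact sequence argument. Now
\[
\dim(R/P)=1=\dim(R)-2 .
\]
This is exactly the situation of the proposition in Section 4 asserting that $\grade(M)+\dim(M)=\dim(R)$ whenever $\dim(M)\ge \dim(R)-2$ and $\pd(M)<\infty$. Applying it to $M=R/P$ gives $\grade(R/P)=3-1=2$. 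On the other hand, $\grade(R/P)=\grade(P,R)\le \operatorname{ht}(P)=1$, which is a contradiction. Hence $\pd(P)=\infty$.

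\textbf{Fallback for the key step.} The one step carrying real weight is the appeal to the grade equality for modules of dimension $\ge\dim(R)-2$, which the paper derives from vanishing of Serre's intersection multiplicity $\chi$ (Fact~\ref{khi}). If that input is considered delicate, I would use Proposition~\ref{10} instead, since its hypothesis $\operatorname{ht}(P)=\dim(R)-2$ also holds here; I do not see how to get the contradiction from it alone. The route I would try is the equality $\grade(R/P)=\operatorname{ht}(P)$ from Fact~\ref{beder}, which needs completeness. Passing to $\widehat R$ should preserve the relevant chain data, but checking that passage is the part I expect to be the main obstacle.
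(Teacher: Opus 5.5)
Your proof is correct. Your treatment of $\mathfrak m$ is the paper's, but for the middle prime you take a different route. The paper argues directly: a height-one prime of finite projective dimension is principal (Auslander--Buchsbaum), so $P=(x)$. For $y\in\mathfrak m\setminus P$, saturation of $P\subsetneq\mathfrak m$ forces $\sqrt{(x,y)}=\mathfrak m$, and Krull's height theorem then gives $\dim R\le 2$. You instead apply the Section~4 grade equality for modules with $\dim M\ge\dim R-2$ to $M=R/P$, obtaining $\grade(R/P)=2>\Ht(P)$. Your version is a one-line contradiction, and it makes the conceptual point that a short saturated chain through $P$ is exactly a failure of $\grade(R/P)+\dim(R/P)=\dim R$. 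The paper's version is self-contained and uses only classical facts.

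The caveat is that all the weight of your argument rests on that Section~4 proposition, and the paper's proof of it is not reliable. Fact~\ref{khi}(1) cannot hold as written, since it would give $M\otimes_R N=\Tor_0^R(M,N)=0$. The intended input is the vanishing of $\chi(M,N)$, which is a nontrivial intersection-theoretic theorem, and Fact~\ref{khi}(2) is only loosely formulated. The statement itself is true, and in the case you need, its honest proof is the paper's own ingredient. Since $\Ht(P)=1$ and $\pd(R/P)<\infty$, one has $P=(x)$, hence $\dim(R/P)=\dim(R/xR)\ge\dim R-1=2$. For general $M$, MacRae's invariant plays the role of $x$. So once made rigorous, your route collapses to the paper's.

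Your fallback is unnecessary, and the completion step is beside the point. Even granting Fact~\ref{beder}, it only yields $\grade(R/P)=\Ht(P)=1$, which is no contradiction on its own. What is missing is a lower bound forced by finite projective dimension. If you want to avoid the principal-ideal fact, the intersection theorem supplies it: $\dim R\le\pd(R/P)+\dim(R/P)$. Moreover, $\pd(R/P)=\depth R-\depth(R/P)\le 1$, because a non-catenary ring is not Cohen--Macaulay and so $\depth R\le 2$. This gives $\dim R\le 2$, the desired contradiction.
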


\begin{proof}
	Let $0 \subsetneq P \subsetneq \mathfrak{m}$ be a strict chain of prime ideals. If $\pd_R(\mathfrak{m})$ were finite, then $\pd_R(k)<\infty$, and hence $R$ would be regular. But regular rings are catenary, a contradiction. Thus $\pd_R(\mathfrak{m})=\infty$.
	
	Suppose, toward a contradiction, that $\pd_R(P)<\infty$. Since $0 \subsetneq P$ is strict, $P$ has height one. Also, $\depth_R(R/P)>0$ because $P$ is prime. Since $\depth(R)=3$, the Auslander–Buchsbaum formula gives
	\[
	\pd(R/P)=\depth(R)-\depth(R/P)\leq 3-1=2.
	\]
	By \cite[Corollary 2]{ABU}, it follows that $P=(x)$ for some $x \in R$. Let $y \in \mathfrak{m} \setminus P$, and let $Q \in \Spec(R)$ be minimal over $(x,y)$. As before, $Q=\mathfrak{m}$, so $\sqrt{(x,y)}=\mathfrak{m}$. By the generalized principal ideal theorem,
	\( \dim(R)=\operatorname{ht}(\mathfrak{m})\leq 2,
	\)
	a contradiction. Hence $\pd_R(P)=\infty$.
\end{proof}

\begin{corollary}
	Let $R$ be a domain of dimension $d$ with the following strict saturated chain of primes:
	\[
	0=\mathfrak{p}_0 \subsetneq \cdots \subsetneq \mathfrak{p}_{d-2}\subsetneq \mathfrak{m}.
	\]
	Then $\pd(\mathfrak{p}_{d-2})$ is infinite.
\end{corollary}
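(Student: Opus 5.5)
Suppose $\pd(\fp_{d-2})<\infty$; we aim to show that $R$ is Cohen--Macaulay, which is incompatible with the given chain. Throughout, write $P:=\fp_{d-2}$, and note that we may take $d\ge 2$. From $0\to P\to R\to R/P\to 0$ we get $\pd(R/P)<\infty$.

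\emph{Depth and dimension of $R/P$.} Since the chain is saturated, there is no prime strictly between $P$ and $\fm$. Hence $\dim(R/P)=1$. As $P\neq\fm$ is prime, $\depth(R/P)>0$, so $\depth(R/P)=1$, exactly as in step $(\ast)$ of Proposition~\ref{9}. The Auslander--Buchsbaum formula then gives
\[
\pd(R/P)=\depth(R)-1 .
\]

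\emph{Forcing $R$ to be Cohen--Macaulay.} We apply the intersection theorem of Peskine--Szpiro and Roberts: for a nonzero module $M$ of finite projective dimension, $\dim(R)\le \pd(M)+\dim(M)$. With $M=R/P$ this yields
\[
d\le \pd(R/P)+\dim(R/P)=(\depth(R)-1)+1=\depth(R)\le d .
\]
Hence $\depth(R)=d$, so $R$ is Cohen--Macaulay. Note that this argument does not need to know whether $\Ht(P)$ equals $d-2$ or $d-1$, so no case split is required. In the case $\Ht(P)=d-1$ one could alternatively just cite Proposition~\ref{9}.

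\emph{The contradiction.} A Cohen--Macaulay local domain is catenary, indeed universally catenary, and it is equidimensional. Therefore every saturated chain of primes from $0$ to $\fm$ has length $\dim(R)=d$ (compare \cite[Page 250]{mat}). The given chain
\[
0=\fp_0\subsetneq\cdots\subsetneq\fp_{d-2}\subsetneq\fm
\]
is saturated and has length $d-1<d$, a contradiction. Hence $\pd(\fp_{d-2})=\infty$.

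The main obstacle is the middle step. The tools developed earlier in the paper, namely Fact~\ref{beder} and the Peskine--Szpiro inequality Fact~\ref{psg}, only yield $\depth(R)\ge d-1$ when $\Ht(P)=d-2$. That is not enough, and the argument genuinely needs the (new) intersection theorem to close the last gap of one.
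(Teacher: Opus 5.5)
Your proof is correct, but it is organized differently from the paper's.

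\textbf{The paper's route.} The paper first reduces to $d\ge 4$ via Proposition~\ref{s} and then splits on $\Ht(\fp_{d-2})\in\{d-2,d-1\}$. The case $d-1$ is excluded because $R_{\fp_{d-2}}$ is regular, hence catenary. In the case $d-2$, non-catenarity shows $R$ is not Cohen--Macaulay. The paper then cites Proposition~\ref{10} to make $R/\fp_{d-2}$ perfect, and combines the grade equation for perfect modules with Fact~\ref{beder} to force $\Ht(\fp_{d-2})=d-1$.

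\textbf{Your route.} You apply the intersection theorem once to $R/P$, using only $\dim(R/P)=\depth(R/P)=1$, to get that $R$ is Cohen--Macaulay, and then contradict catenarity. This needs no height computation, no case split, and no separate treatment of $d=3$.

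\textbf{Comparison.} Both proofs hinge on the intersection theorem at the decisive point: the grade equation for perfect modules follows from $\dim R\le\pd M+\dim M$ applied to a module with $\grade M=\pd M$. So yours is not more elementary, but it is shorter and avoids two shaky citations.
\begin{itemize}
\item Proposition~\ref{10} is proved using $\dim(R/P)=2$, i.e.\ $\Ht(P)+\dim(R/P)=\dim R$. This fails exactly in the present non-catenary situation, where $\Ht(\fp_{d-2})=d-2$ but $\dim(R/\fp_{d-2})=1$.
\item Fact~\ref{beder} is stated only for complete rings.
\end{itemize}
The paper's step can be repaired directly. Once $R$ is known not to be Cohen--Macaulay, $d-2=\Ht(P)=\grade(R/P)\le\pd(R/P)=\depth(R)-1\le d-2$, so $R/P$ is perfect. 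Moreover, $\Ht(P)=\grade(R/P)$ holds without completeness because $R_P$ is regular.

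\textbf{What your argument gives beyond the statement.} Any local ring with a prime $P$ such that $\pd(R/P)<\infty$ and $\dim(R/P)=1$ is Cohen--Macaulay. This is Proposition~\ref{9} without its height hypothesis. More generally, $\CMdef(R)\le\CMdef(M)$ for every nonzero $M$ with $\pd(M)<\infty$, without the grade-conjecture hypothesis of Proposition~\ref{pdef}.
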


\begin{proof}
	By Proposition \ref{s}, we may assume $d\geq 4$. Suppose, toward a contradiction, that $\pd(\mathfrak{p}_{d-2})$ is finite. Note that
	\(
	d-2\leq \operatorname{ht}(\mathfrak{p}_{d-2})\leq d-1.
	\) First assume $\operatorname{ht}(\mathfrak{p}_{d-2})=d-1$. Since $\pd_{R_{\mathfrak{p}_{d-2}}}(\mathfrak{p}_{d-2}R_{\mathfrak{p}_{d-2}})<\infty$, the ring $R_{\mathfrak{p}_{d-2}}$ is regular, hence catenary. But
	\[
	0=\mathfrak{p}_0R_{\mathfrak{p}_{d-2}} \subsetneq \cdots \subsetneq \mathfrak{p}_{d-2}R_{\mathfrak{p}_{d-2}}
	\]
	is a strict saturated chain in $\Spec(R_{\mathfrak{p}_{d-2}})$ of length less than $\dim(R_{\mathfrak{p}_{d-2}})$, a contradiction. Hence $\operatorname{ht}(\mathfrak{p}_{d-2})\neq d-1$, so $\operatorname{ht}(\mathfrak{p}_{d-2})= d-2$.
	The ring $R$ is not Cohen–Macaulay; otherwise it would be catenary. Thus Proposition \ref{10} applies, and $R/\mathfrak{p}_{d-2}$ is perfect. By the Peskine–Szpiro theorem, the grade conjecture holds for perfect modules, so
	\[
	\grade(R/\mathfrak{p}_{d-2})+\dim(R/\mathfrak{p}_{d-2})=\dim R=d.
	\]
	Since there is no prime strictly between $\mathfrak{p}_{d-2}$ and $\mathfrak{m}$, we have $\dim(R/\mathfrak{p}_{d-2})=1$, hence $\grade(R/\mathfrak{p}_{d-2})=d-1$. By Fact \ref{beder},
	\[
	\operatorname{ht}(\mathfrak{p}_{d-2})=\grade(R/\mathfrak{p}_{d-2})=d-1,
	\]
	a contradiction. Therefore $\pd_R(\mathfrak{p}_{d-2})=\infty$.
\end{proof}

\begin{remark}
	We construct a ring $R$ such that for every $\mathfrak{p} \in\Spec(R)\setminus\{ 0, \mathfrak{m}\}$, one has $\pd(R/\mathfrak{p}) = \infty$ but $\pd(R_{\mathfrak{p}}) < \infty$.
\end{remark}

\begin{proof}
	Let $A := k[X,Y,Z,W,T]/(X,Y)(Z,W)$. By work of Heitmann \cite{hei}, there exists a UFD $R$ with isolated singularities such that $\widehat{R} = A$.
	Let $B := k[x,y,z,w,t]$. A minimal free resolution of $(x,y)(z,w)$ over $B$ is
	\[
	0 \to B \to B^4 \to B^4 \to (x,y)(z,w) \to 0.
	\]
	By the Auslander–Buchsbaum formula,
	\[
	\depth(A) = \depth(B) - \pd_B(A) = 5 - 3 = 2.
	\]
	By \cite[Ex.~1.2.26]{BH}, $\depth(A) = \depth(R) = 2$.
	Let $\mathfrak{p} \in \Spec(R)$ and suppose $\pd(R/\mathfrak{p}) < \infty$. Then $\operatorname{ht}(\mathfrak{p}) \in \{2,3\}$.
First, we deal with $\operatorname{ht}(\mathfrak{p}) = 2$. Thus,
	\[
	2 = \operatorname{ht}(\mathfrak{p}) \stackrel{\ref{beder}}= \grade(R/\mathfrak{p})
	\leq \pd(R/\mathfrak{p}) = \depth(R) - \depth(R/\mathfrak{p}) \leq 2 - 1 = 1,
	\]
	a contradiction.
It remains to consider the case $\operatorname{ht}(\mathfrak{p}) = 3$. The same argument gives us
	\[
	3 = \operatorname{ht}(\mathfrak{p}) \stackrel{\ref{beder}}= \grade(R/\mathfrak{p})
	\leq \pd(R/\mathfrak{p}) \leq 1,
	\]
	again a contradiction. Hence $\pd(R/\mathfrak{p}) = \infty$ for all nonzero nonmaximal primes $\mathfrak{p}$.
\end{proof}

\end{document}